\def\End{\operatorname{End}}
\def\id{\operatorname{id}}
\def\Ind{\operatorname{Ind}}
\def\Hom{\operatorname{Hom}}
\def\C{\mathbb{C}}
\def\N{\mathbb{N}}
\def\G{\mathbb{G}}
\def\A{\mathbb{A}}
\def\A{\mathbb{A}}
\def\LL{\mathcal{L}}
\def\OO{\mathcal{O}}
\def\KK{\mathcal{K}}
\def\BB{\mathcal{B}}
\def\HH{\mathcal{H}}
\def\UU{\mathcal{U}}
\def\VV{\mathcal{V}}
\def\JJ{\mathcal{J}}
\def\MM{\mathcal{M}}
\def\GG{\mathcal{G}}
\def\QQ{\mathcal{Q}}
\def\FF{\mathcal{F}}
\def\DD{\mathcal{D}}
\def\NN{\mathcal{N}}
\def\PP{\mathcal{P}}
\def\XX{\mathcal{X}}
\def\YY{\mathcal{Y}}
\def\ZZ{\mathcal{Z}}
\def\Cs{\mathscr{C}}
\def\m{\mathfrak{m}}
\def\p{\mathfrak{p}}
\def\g{\mathfrak{g}}
\def\h{\mathfrak{h}}
\def\ol{\overline}
\def\Rep{\operatorname{Rep}}
\def\Spec{\operatorname{Spec}}
\def\Vec{\operatorname{Vec}}
\def\gr{\operatorname{gr}}
\def\ul{\underline}
\def\sub{\subseteq}
\newcommand{\cC}{\mathscr{C}}
\def\xto{\xrightarrow}
\def\onto{\twoheadrightarrow}
\def\gp{/_{\hspace{-0.2em}gp}}
\newtheorem{thm}{Theorem}[section]
\newtheorem{cor}[thm]{Corollary}
\newtheorem{lemma}[thm]{Lemma}
\newtheorem{prop}[thm]{Proposition}
\theoremstyle{definition}
\newtheorem{definition}[thm]{Definition}
\theoremstyle{remark}
\newtheorem{remark}[thm]{Remark}
\newtheorem{example}[thm]{Example}
\numberwithin{equation}{section}
\begin{document}

	\title{Homogeneous spaces in tensor categories}
		\author{Kevin Coulembier, Alexander Sherman}

        \subjclass[2020]{14M17, 18M05, 20G05}

	\begin{abstract}
		Let $\mathscr{C}$ be a symmetric tensor category of moderate growth, and let $\HH\leq\GG$ be algebraic groups in $\mathscr{C}$.  We prove that the homogeneous space $\GG/\HH$ exists as a scheme and is of finite type when $\mathscr{C}$ is geometrically reductive and maximally nilpotent, conditions that are conjecturally equivalent to incompressibility.  A key tool is the introduction of a Frobenius kernel of a group scheme.  We further show that while $\GG_0/\HH_0$ and $(\GG/\HH)_0$ need not be the same, they are close enough, so that $\GG/\HH$ is quasi-affine/affine/proper if and only if $\GG_0/\HH_0$ is.
	\end{abstract}

	\maketitle
	\pagestyle{plain}

	\section{Introduction}
	Let $\Bbbk$ be an algebraically closed field.
	A (symmetric) tensor category in the sense of \cite{Del1, EGNO} is a $\Bbbk$-linear abelian symmetric rigid monoidal category satisfying some finiteness properties, and this concept axiomatises the representation categories of group schemes. A finiteness condition that is typically not included in the definition of a tensor category is that of moderate growth. This condition demands that the length of the tensor powers of any given object grows at most exponentially fast, and is clearly satisfied for representation categories of affine group schemes.

	A classical theorem of Deligne in \cite{Del2} states that, if the characteristic of $\Bbbk$ is zero, every tensor category over $\Bbbk$ of moderate growth is the representation category of an affine group superscheme (a supergroup). In other words, the study of tensor categories of moderate growth then becomes the study of representation theory of supergroups. An essential tool in the representation theory of affine group schemes is the quotient scheme $\GG/\HH$ of an affine group scheme $\GG$ by a subgroup $\HH\leq\GG$. A standard example is the flag variety $\GG/\BB$ for a reductive group $\GG$ with Borel subgroup $\BB$. By Deligne's theorem, it becomes imperative to prove the existence of such quotient superschemes for supergroups. This was achieved (in all characteristics) in \cite{MT, MZ}.  Homogeneous supervarieties were used crucially in \cite{Brundan,GS,PenkovSer} for computing characters of irreducible representations of supergroups, and more recently in \cite{SS,SSV,SV} to apply ideas from modular representation theory to representations of supergroups over $\C$.
	
	If the characteristic of $\Bbbk$ is $p>0$, then only partial results about the structure theory of tensor categories of moderate growth are available. It is a general fact that every tensor category of moderate growth admits a tensor functor to an `incompressible' tensor category, see \cite{CEO1}, where the latter categories are defined by the property that every tensor functor out of them is an embedding of a tensor subcategory. Deligne's theorem can then be rephrased as stating that the only incompressible tensor categories in characteristic zero are the categories of (super) vector spaces. In general, it thus follows that the study of tensor categories of moderate growth becomes equivalent to the representation theory of affine group schemes internal to incompressible tensor categories.

	In \cite{BEO}, it was conjectured that in characteristic $p>0$ the only incompressible tensor categories are the tensor subcategories of an incompressible category $\mathrm{Ver}_{p^\infty}=\cup_n \mathrm{Ver}_{p^n}$. A partial result in this direction is that every `Frobenius exact' tensor category of moderate growth admits a tensor functor to $\mathrm{Ver}_p$, see \cite{CEO2} and references therein. For the same reasons as in characteristic zero, it thus becomes important to study the existence of quotients $\GG/\HH$ in some scheme theory for $\mathrm{Ver}_{p^\infty}$, or (potentially) more general incompressible tensor categories. In \cite{C2, C3}, the basics of algebraic geometry in tensor categories satisfying some technical conditions (maximal nilpotency and geometric reductivity) were developed. These conditions seem a plausible intrinsic characterisation of incompressible categories; for instance, they are satisfied in $\mathrm{Ver}_{2^\infty}$. The question of whether homogeneous spaces $\GG/\HH$ exist as schemes in such tensor categories is thus well-defined, conjecturally equivalent to the corresponding question for $\mathrm{Ver}_{p^\infty}$, and we answer it affirmatively in the current paper. In particular, this proves existence of homogeneous spaces in $\mathrm{Ver}_{2^\infty}$ and $\mathrm{Ver}_p$.

	Henceforth, we work over an algebraically closed field $\Bbbk$ of characteristic $p>0$.  As explained above, the relevant geometric questions in characteristic 0 are already well understood. Our approach relies heavily on the idea of Frobenius twisting, only available in positive characteristic. For the case of super vector spaces, this specific positive characteristic method to study homogeneous spaces was outlined in \cite{MZ}. In our generality of tensor categories, we define Frobenius twists and kernels of algebraic groups. One of the crucial observations is that the quotient by a large enough Frobenius kernel is an ordinary algebraic group over $\Bbbk$.  This fundamental structural result of Proposition~\ref{prop twist alg gp}, that all finite type affine group schemes in $\cC$ are glued together from infinitesimal group schemes and ordinary group schemes over $\Bbbk$, should have applications beyond the current paper. Using the traditional formalism of quotients in the category of faisceaux \cite{DG} and the language of algebraic geometry in tensor categories \cite{C3} allows us to reduce the existence of quotient schemes to the classical case of algebraic groups over $\Bbbk$ and the infinitesimal case in $\cC$. We also derive applications of our results to equivariant sheaves and present a more explicit description of quotients for the important special case $\cC=\mathrm{Ver}_p$, following the approach of \cite{MT}.
	
	The geometric properties of the scheme $\GG/\HH$ are known to have important consequences in representation theory, especially for the induction functor $\operatorname{Ind}_{\HH}^{\GG}(-)$.  For this reason, it is important to know when $\GG/\HH$ is quasi-affine, affine, or proper, see for instance~\cite{C3}.  We show that $\GG/\HH$ is (quasi-)affine or proper if and only if $\GG_0/\HH_0$ is, where $\GG_0$ is the `body' of $\GG$, the underlying algebraic group which is generally more easily understood than $\GG$.  This result is known in the super setting, where in fact we have that $(\GG/\HH)_0=\GG_0/\HH_0$.  We find that the latter equality fails when $\mathscr{C}$ is not a tensor subcategory of $\operatorname{Ver}_p$,  see Example~\ref{example ver4+ homog space}.   Nevertheless, there is a natural closed embedding $\GG_0/\HH_0\to(\GG/\HH)_0$, and this is a universal homeomorphism, see Theorem \ref{Main} (2).

	The paper is organised as follows. In Section~\ref{Prel}, we recall and establish some basic properties of maximally nilpotent and geometrically reductive tensor categories, and scheme theory therein. In Section~\ref{AGS} we focus on affine group schemes in such categories. In Section~\ref{FrobTw} we introduce and study a notion of the Frobenius twist of schemes and the corresponding Frobenius kernels of affine group schemes. In Section~\ref{Faisc} we write out the classical idea of constructing homogeneous spaces via the theory of faisceaux in the generality we need, and we illustrate the technique in Section~\ref{Additive} by giving a direct construction of homogeneous spaces for additive group schemes. Section~\ref{MainSec} contains the main result about the existence of quotients $\GG/\HH$ and their essential properties. In Section~\ref{Affine} we discuss affine homogeneous spaces. Finally, in Section~\ref{Verp}, we give a more detailed description of homogeneous spaces for the important special case $\mathrm{Ver}_p$.

	\subsection{Acknowledgements} The authors would like to thank Alex Youcis for helpful discussions. We also would like to thank Akira Masuoka for pointing out an error in an earlier version of this text.  K.C. was supported by ARC grant FT220100125 and A.S. was supported by ARC grant DP210100251 and by an AMS-Simons Travel Grant.

	\section{Preliminaries}\label{Prel}
	We work throughout over an algebraically closed field $\Bbbk$ of characteristic $p>0$.  
	
	\subsection{Tensor categories}\label{section tens cats} 
	An essentially small, $\Bbbk$-linear symmetric monoidal category $(\mathscr{A},\otimes,\mathbf{1})$ will be called a tensor category (over $\Bbbk$) if
	\begin{enumerate}
		\item $\mathscr{A}$ is abelian,
		\item $\Bbbk\xto{\sim}\End(\mathbf{1})$,
		\item $(\mathscr{A},\otimes,\mathbf{1})$ is rigid, and
		\item each object of $\mathscr{A}$ is of finite length.
	\end{enumerate}
	Given an object $X$ in a tensor category $\mathscr{A}$, we write $X^*$ for its dual and $S(X)$ for its symmetric algebra in the ind-completion of $\mathscr{A}$.
	
	Throughout this paper, we will work over a fixed tensor category $\mathscr{C}_{fin}$, and we write \linebreak $\mathscr{C}:=\Ind\mathscr{C}_{fin}$ for the ind-completion of $\mathscr{C}_{fin}$. By an abuse of terminology, we will also refer to $\cC$ as a tensor category.  The full subcategory $\mathscr{C}_{fin}\sub\mathscr{C}$ consists exactly of the objects of $\mathscr{C}$ of finite length, or equivalently those which are compact, or yet equivalently rigid.  Note that $\mathscr{C}$ is a symmetric monoidal category with an exact tensor product, is abelian, and every object is a union of its finite length subobjects.  The classical case of our considerations is $\cC=\mathrm{Vec}$, the category of all vector spaces over $\Bbbk$.

    We will use the term tensor functor to refer to exact $\Bbbk$-linear symmetric monoidal functors between tensor categories, or the cocontinuous extensions of those functors to the ind-completions.
    We have the tensor functor $\Vec\to \cC$, $V\mapsto V\otimes_{\Bbbk}\mathbf{1}$, and we will freely use its lax monoidal left adjoint $\cC\to\Vec$ which takes the maximal `invariant' subobject (or subalgebra in case we apply it to an algebra). Concretely,
	for $X\in\mathscr{C}$, we will write $X_{(0)}=\Hom(\mathbf{1},X)$, viewed canonically as a subobject of $X$.  
    
    For $X,Y\in\mathscr{C}$, we write $\ul{\Hom}(X,Y)$ for the internal $\Hom$ in $\mathscr{C}$, which is defined by the adjunction
	\[
	\Hom(Z\otimes X,Y)\cong\Hom(Z,\ul{\Hom}(X,Y)).
	\]
	Note that $\ul{\Hom}$ gives a bifunctor $\mathscr{C}^{op}\times\mathscr{C}\to\mathscr{C}$. We will also write $X^\ast:=\ul{\Hom}(X,\mathbf{1})$, which coincides with the dual whenever $X\in\cC_{fin}$.

     Let $R$ be an algebra in $\mathscr{C}$ (by this we technically mean a monoid object in $\Cs$) and let $S\sub R$ be a subalgebra.  Then for a right $S$-module $N$ and a right $R$-module $M$, we define the right $R$-module $\ul{\Hom}_S(R,N)$ by
	\begin{equation}\label{eqn tensor hom ring}
		\Hom_{R}(M,\ul{\Hom}_{S}(R,N))\cong \Hom_S(M,N).
	\end{equation}
	Alternatively, one may define $\ul{\Hom}_{S}(R,N)$ by the equalizer:
	\[
	\ul{\Hom}_{S}(R,N)\to \ul{\Hom}(R,N)\rightrightarrows\ul{\Hom}(R\otimes S,N).
	\]
	In this way we see that $\ul{\Hom}_{S}(R,N)$ may be viewed as a subobject of $\ul{\Hom}(R,N)$.  
    \subsubsection{Properties GR and MN}     
	{\em Henceforth we will fix a tensor category $\mathscr{C}$ and, as explained and motivated in the introduction, assume that $\mathscr{C}$ satisfies the following restrictive assumptions introduced in \cite{C2}:}
	\begin{enumerate}
		\item[(GR)] a non-zero morphism $\alpha:X\to \mathbf{1}$ is split on some symmetric power of $\alpha$.
		\item[(MN1)] if $L$ is a simple nontrivial module, then $S(L)$ is finite.
		\item[(MN2)] if $\alpha:\mathbf{1}\to X$ is nonsplit, then some symmetric power of $\alpha$ is 0.
	\end{enumerate}
	Note that in \cite{C2} it was assumed that the object $X$ in (GR) and (MN2) is compact, but it is straightforward to show that this is equivalent to having the property hold for all $X\in\mathscr{C}$.

	\begin{example}
		If $\GG$ is a classical affine algebraic group, then $\operatorname{Rep}\GG$ is a tensor category which satisfies (GR) if $\GG$ is (geometrically) reductive. In contrast, the combination (MN1-2) is only satisfied when $\GG$ is trivial.
	\end{example}

	\begin{example}
		Assume $\Bbbk$ is not of characteristic $2$, and let $\mathscr{C}=\operatorname{SVec}$, the category of super vector spaces.  Then $\mathscr{C}$ is semisimple and admits two simple objects, $\mathbf{1}$ and $\bar{\mathbf{1}}$.  We have $S(\bar{\mathbf{1}})=\mathbf{1}\oplus\bar{\mathbf{1}}$.  From these observations it is easy to check that $\mathscr{C}$ satisfies (GR) and (MN1-2). 
	\end{example}
	
	\begin{example}\label{example ver_p}
		Let $\mathscr{C}=\operatorname{Ver}_p$ (see \cite{O}),   
		the Verlinde-$p$ category.  Explicitly, $\operatorname{Ver}_p$ may be defined as the ind-completion of the semisimplification of the category of finite-dimensional $C_p$-modules, where $C_p$ is the cyclic group of order~$p$.  Recall that $\operatorname{Ver}_p$ is semisimple, has $p-1$ simple objects $L_1=\mathbf{1},\dots,L_{p-1}$,  and satisfies $S^{k+1}(L_{p-k})=0$ for $0<k<p-1$ (\cite[Prop.~2.4]{EOV}), so that (GR) and (MN1-2) hold.  Notice that $\operatorname{SVec}$ is the full subcategory of $\operatorname{Ver}_p$ generated by $L_{1}$ and $L_{p-1}$ when $p>2$.  Further, if we write $\operatorname{Ver}_p^+$ for the full subcategory of $\operatorname{Ver}_p$ generated by $L_i$ for $i$ odd, then $\operatorname{Ver}^+_p$ is a tensor subcategory of $\operatorname{Ver}_p$, and thus also satisfies (GR) and (MN1-2).

        For an object $X\in\operatorname{Ver}_p$, we write $X=X_{(0)}\oplus X_{\not=0}$, where $X_{\not=0}$ is a direct sum of $L_i$, $i\not=1$.
	\end{example}
	
	We refer to \cite{BE, BEO} for details on the incompressible categories $\operatorname{Ver}_{p^n}$ with $n\in\mathbb{Z}_{>0}\cup\{\infty\}$.
	
	\begin{example}\label{example ver4+}
		The category $\mathscr{C}:=\operatorname{Ver}_4^+$ is described, as a monoidal category, as modules over the Hopf algebra $\Bbbk[x]/x^2$.  The braiding $V\otimes W\to W\otimes V$ is given by $v\otimes w\mapsto w\otimes v+xw\otimes xv$. Since $\operatorname{Ver}_4^+$ has no nontrivial simple objects, (MN-1) holds trivially. 
		To check (GR) and (MN-2), we see that $\mathscr{C}$ has a unique nontrivial indecomposable $P$, and there is a short exact sequence 
		\[
		0\to \mathbf{1}\to P\to\mathbf{1}\to 0.
		\]
		Writing $P=\Bbbk\langle u,v\rangle$, where $x\cdot v=u$, one may compute that:
		\[
		S(P)=\Bbbk[u,v]/u^2
		\]
		Thus the image of the second symmetric power of $\mathbf{1}\to P$ vanishes, and similarly the second symmetric power of $P\to\mathbf{1}$ splits.  From this we may deduce that (GR) and (MN-2) hold. More generally, it was proven in \cite[\S 9]{CEO1} that $\operatorname{Ver}_{2^\infty}$ satisfies (GR) and (MN1-2).
	\end{example}
	
	\begin{remark}
		In \cite{C2}, it was conjectured that $\operatorname{Ver}_{p^\infty}$ satisfies (GR) and (MN1-2) also when $p>2$.  Indeed, the expectation (or hope) is that all incompressible, moderate growth tensor categories satisfy (GR) and (MN1-2). We also refer to \cite[\S 3.2]{C2} for partial results on showing that (MN1-2) implies incompressiblity.
	\end{remark}

	\begin{remark}  The assumptions (GR) and (MN1-2) imply the geometry (developed in \cite{C3}) should share characteristics similar to supergeometry, where there is an underlying classical geometry along with some extra infinitesimal `odd' parts that come along for the ride.  In particular, this setting leaves the door open for a  general theory of Harish-Chandra pairs for algebraic groups, which was developed in the $\operatorname{Ver}_p$ case in \cite{V1} and in the super case in several papers (e.g.~\cite{MS}).  Preliminary work suggests that a theory of Harish-Chandra pairs may hold in these categories, but it will be weaker in the sense that one has to replace the Lie algebra with an infinitesimal group larger than the one determined by the small enveloping algebra.  Developing such a theory will be taken up in future work.
	\end{remark}

	\begin{definition}
		For an object $X$ in $\mathscr{C}$, let us write $X_{nil}$ for the joint kernel of all maps to $\mathbf{1}$.  This defines an endofunctor of $\mathscr{C}$, which is neither monoidal nor exact; however we always have
		\begin{equation}\label{eqn nil}
		    (X\otimes Y)_{nil}\sub X_{nil}\otimes Y+X\otimes Y_{nil}.
		\end{equation}
	\end{definition}

	\subsection{Commutative algebra}  Fundamentals of commutative algebra and scheme theory in $\mathscr{C}$ were developed in \cite{C2} and \cite{C3}.  Throughout this paper, if $A$ is a commutative algebra in $\mathscr{C}$ then we will write $m_{A}=m$ for its multiplication morphism and $\eta_{A}=\eta:\mathbf{1}\to A$ for its unit.  We say that a commutative algebra $A$ is finitely generated if there exists a compact object $X$ in $\mathscr{C}$ and an algebra morphism $S(X)\onto A$ that is an epimorphism in $\cC$. A consequence of our assumptions on $\mathscr{C}$ is that finitely generated algebras are Noetherian, see \cite[Lem.~6.4.4]{C2}, so in particular ideals are finitely generated.

	From now on, an `algebra' will always refer to an algebra in $\mathscr{C}$. When we want specifically to refer to an ordinary algebra we will say $\Bbbk$-algebra or algebra in $\mathrm{Vec}$. The same rule will be followed for (affine group) schemes.

    Every commutative algebra $A$ in $\Cs$ has a natural quotient algebra and subalgebra in $\mathrm{Vec}$, as follows.  Write $J=J_A$ for the ideal generated by $A_{nil}\sub A$, and set $\bar{A}:=A/J$.  Then $\bar{A}$ and $A_{(0)}$ will be commutative algebras in $\mathrm{Vec}$, and we have morphisms of algebras
    \begin{equation}\label{eqn natl alg morphisms}
            A_{(0)}\hookrightarrow A\onto \bar{A}.
    \end{equation}
   Moreover, the assignments $A\mapsto A_{(0)}$ and $A\mapsto\bar{A}$ are functorial, and the morphisms in (\ref{eqn natl alg morphisms}) define natural transformations.
	
    \subsubsection{Artin-Rees lemma} 
    \begin{lemma}\label{lemma artin rees krull}
Let $I$ be an ideal of a commutative Noetherian algebra $R$ in $\cC$ and $M$ a finitely generated $R$-module.
\begin{enumerate}  
    \item  The Artin-Rees lemma holds in $\Cs$:  if $N\sub M$ is a submodule, then there exists $k\geq0$ such that for $n\geq k$
    \[
    I^nM\cap N=I^{n-k}(I^kM\cap N).
    \]
    \item The Krull intersection theorem holds in $\Cs$:  if $R$ is a local algebra and $I$ is a proper ideal of $R$, then $\bigcap\limits_{n=1}^\infty I^n=0$.
\end{enumerate}
\end{lemma}
\begin{proof}
    (1) Indeed, the proof of the classical case in \cite[Corollary~10.10]{AM} using the Rees algebra $\oplus_{n\ge 0}I^n$ carries over verbatim using (MN1) by \cite[Lem.~6.4.4]{C1} which shows that the Rees algebra (being finitely generated) is Noetherian. 

    (2) Applying (1) to the case of $N=\bigcap\limits_{n=1}^\infty I^n\sub M=R$, we obtain that $IN=N$.  By Nakayama's lemma for $\Cs$ (see \cite[Cor.~6.1.4]{C1}), we obtain $N=0$.
\end{proof}

	\subsubsection{Finite morphisms} We say that a morphism $A\to B$ of commutative algebras is finite if $B$ is a finitely generated $A$-module. 

    \begin{prop}\label{Cor:AM}
    Let $A$ be a finitely generated commutative algebra.
		\begin{enumerate}
			\item $A$ is finite over $A_{(0)}$;
			\item $A_{(0)}$ is a finitely generated algebra.
		\end{enumerate}    
    \end{prop}	

    For the proof of Proposition \ref{Cor:AM}, we will need several lemmas which will be useful later on.
    
	\begin{lemma}\label{lemma finite subalg}
		Let $A$ be a finitely generated commutative algebra with nilpotent ideal $I$, and write $C:=A/I$.  If $\psi:B\to A$ is an algebra morphism such that the composition $B\to C$ is finite, then $\psi$ is finite.
	\end{lemma}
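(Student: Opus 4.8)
The plan is to reduce the statement to the classical Artin–Tate style lemma by passing to the degree-zero (i.e.\ $\Hom(\mathbf 1,-)$) parts, where everything becomes a statement about ordinary $\Bbbk$-algebras, and then to bootstrap back up using Noetherianity and the nilpotence of $I$. First I would record the relevant commutative-algebra input in $\cC$: since $A$ is finitely generated, it is Noetherian by \cite[Lem.~6.4.4]{C2}, and the ideal $I$ being nilpotent means $A$ has a finite filtration $A\supseteq I\supseteq I^2\supseteq\cdots\supseteq I^m=0$ whose successive quotients $I^j/I^{j+1}$ are finitely generated $C$-modules (finite generation over $C=A/I$ is automatic because $A$ is Noetherian and $I^j$ is a finitely generated ideal). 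So as a $C$-module, $A$ has a finite filtration with finitely generated subquotients, hence $A$ is itself a finitely generated $C$-module. The essential point to extract is therefore: \emph{$A$ is a finite $C$-algebra}, so the composite $B\to A$ factoring a finite morphism $B\to C$ will be finite once we know that "finite" composes correctly — i.e.\ if $B\to C$ is finite and $C$ is a finite $A$-module via a surjection $A\twoheadrightarrow C$, we need $A$ finite over $B$.

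Next I would run the Artin–Tate argument internal to $\cC$. Choose a compact object $X\in\cC$ and a surjection $S(X)\twoheadrightarrow A$ exhibiting finite generation; composing with $A\twoheadrightarrow C$ we get $S(X)\twoheadrightarrow C$, so $C$ is finitely generated over $\Bbbk$, hence (since $B\to C$ is finite) $C$ is a Noetherian $B$-algebra that is finite as a $B$-module. Pick finitely many generators: a compact $Y$ and a morphism $Y\to C$ whose image generates $C$ as a $B$-module, and a compact $Z\to A$ such that the composite $Z\to C$ hits a generating set of the $B$-subalgebra over which $C$ is finite — here I would mimic the classical trick of writing the finitely many algebra generators of $C$ and the products of the finitely many module generators in terms of the module generators with coefficients in $B$, thereby producing a \emph{finitely generated} $B$-subalgebra $B'\subseteq A$ over which $C$ is already a finite module. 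Because $B'$ is a finitely generated $\Bbbk$-algebra it is Noetherian, so the submodule $A\cdot$(generators) — more precisely, the $B'$-submodule of $A$ generated by the images of $Z$ together with the preimages of the module generators of $C$ — being a submodule of a finite $B'$-module inside the finitely generated $A$, is itself finitely generated over $B'$; combined with $I$ nilpotent this forces $A$ to be a finite $B'$-module, and hence a finite $B$-module. This is the standard two-step Artin–Tate descent, and each step only uses that the relevant algebras are Noetherian, which our hypotheses guarantee in $\cC$.

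The main obstacle I expect is not conceptual but bookkeeping: in $\cC$ one cannot literally "pick elements", so every place where the classical proof chooses a finite generating set and writes multiplication tables has to be rephrased as a morphism out of a compact object together with a factorization of the relevant multiplication morphism through a finitely generated subalgebra. Concretely, the delicate point is constructing the intermediate finitely generated subalgebra $B'\subseteq A$ and verifying that the multiplication morphism $C\otimes C\to C$ (and the $B$-module structure maps) can be lifted/pushed through $B'$; this requires knowing that a finitely generated $A$-module has, for each compact "test object", its structure constants landing in a finitely generated subalgebra — which follows from $A$ being a filtered union of its finitely generated subalgebras together with compactness of the test object. Once that categorical translation of the Artin–Tate bookkeeping is in place, the nilpotence of $I$ does exactly what it does classically: it lets us conclude finiteness of $A$ from finiteness of $C=A/I$ via the finite $I$-adic filtration, with no completion needed.
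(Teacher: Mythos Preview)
Your first paragraph already contains the germ of a correct proof, but you then make a direction error that sends you on an unnecessary detour. You write ``$A$ is itself a finitely generated $C$-module'' and ``$A$ is a finite $C$-algebra'', but $C=A/I$ is a \emph{quotient} of $A$: there is no algebra map $C\to A$ and no $C$-module structure on $A$. What is true is that each subquotient $I^j/I^{j+1}$ of the $I$-adic filtration is a $C$-module (since $I$ acts trivially on it), finitely generated over $C$ by Noetherianity of $A$, hence finite over $B$ because $B\to C$ is finite. Since the filtration is finite, $A$ is then a finite $B$-module, and you are done. This is exactly the structure of the paper's proof, which just makes the generators explicit: lift a compact $X\subset C$ with $B\otimes X\twoheadrightarrow C$ to $X'\subset A$, pick compact $Y\subset I$ with $C\otimes Y\twoheadrightarrow I/I^2$, and observe that $B\otimes X'\otimes S(Y)\to A$ is surjective on the associated graded, hence surjective, with the image of $S(Y)$ finite since $I$ is nilpotent.

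The Artin--Tate manoeuvre in your second paragraph is both unnecessary and flawed. First, you assert that the intermediate $B$-subalgebra $B'\subset A$ is a finitely generated $\Bbbk$-algebra, but nothing in the hypotheses says $B$ (or its image in $A$) is finitely generated, so $B'$ need not be Noetherian. Second, and more seriously, your final step ``$A$ is a finite $B'$-module, and hence a finite $B$-module'' is a non sequitur: if $B'$ is only a finitely generated $B$-\emph{algebra} (as opposed to a finite $B$-module), finiteness of $A$ over $B'$ does not descend to finiteness over $B$. The Artin--Tate lemma (which appears in the paper as the \emph{next} lemma) runs in the opposite direction: it deduces finite generation of a subalgebra from finiteness of the extension, not the other way around. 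Drop the second and third paragraphs entirely and finish the argument as in the first.
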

	
	\begin{proof}
		Let $X\sub C$ be a compact object such that $B\otimes X\to C$ is an { epimorphism}.  Choose a compact object $X'\sub A$ such that $X\sub X'/I\cap X'$,  so that we have an  epimorphism $B\otimes X'\to C$.  Because $A$ is finitely generated, so is $I$, and thus we may choose a compact object $Y\sub I$ such that $C\otimes Y\to I/I^2$ is an { epimorphism}.   In particular, the composition 
        \[
        B\otimes X'\otimes S^n(Y)\to C\otimes S^n(Y)\to I^n/I^{n-1}
        \]
        is an  epimorphism for all $n$.  
        
        We obtain a natural map $\phi:B\otimes X'\otimes S^\bullet(Y)\to A$, which we claim is an  epimorphism.  Indeed, $B\otimes X'\otimes S^\bullet(Y)$ has a filtration coming from the grading on $S^\bullet(Y)$, and $A$ has a filtration defined by the powers of $I$. The map $\phi$ preserves this filtration, and the induced map on the associated graded is $B\otimes X'\otimes S^\bullet(Y)\to \bigoplus\limits_{n\in\N}I^n/I^{n-1}$, which on the $n$th graded component is the map $B\otimes X'\otimes S^n(Y)\to I^n/I^{n-1}$, which we have shown is an  epimorphism.    Since the associated graded of $\phi$ is an  epimorphism, $\phi$ is as well.   Because $I$ is nilpotent, $S^\bullet(Y)$ is compact, and thus so is $X'\otimes S^\bullet(Y)$.  This shows $A$ is finite over $B$.
	\end{proof}

	\begin{cor}
		Let $A,B$ be finitely generated commutative algebras.  Then a morphism $f:A\to B$ is finite if and only if the induced morphism $\bar{f}:\bar{A}\to\bar{B}$ is finite.
	\end{cor}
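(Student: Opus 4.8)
The plan is to derive both implications from Lemma~\ref{lemma finite subalg}, the only additional ingredient being that for a finitely generated commutative algebra the defining ideal $J$ is nilpotent.

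For the ``only if'' direction, assume $f\colon A\to B$ is finite. First note that $f$ carries $A_{nil}$ into $B_{nil}$: precomposing any morphism $B\to\mathbf 1$ with $f$ gives a morphism $A\to\mathbf 1$, which annihilates $A_{nil}$, so $f(A_{nil})$ is annihilated by every morphism $B\to\mathbf 1$. Hence $f(J_A)\sub J_B$, so the $A$-module structure on $\ol B=B/J_B$ factors through $\ol A$. Choosing a compact object $X$ together with a surjection $A\otimes X\onto B$ of $A$-modules, the composite $A\otimes X\onto\ol B$ kills $J_A\otimes X$ and therefore factors through a surjection $\ol A\otimes X\onto\ol B$; thus $\ol B$ is a finitely generated $\ol A$-module, i.e.\ $\ol f$ is finite.

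For the converse, assume $\ol f\colon\ol A\to\ol B$ is finite. Composing a surjection $\ol A\otimes X\onto\ol B$ with the surjection $A\otimes X\onto\ol A\otimes X$ (here $-\otimes X$ is exact) shows that $A\to\ol B$ is finite. Now I would apply Lemma~\ref{lemma finite subalg} with $B$ in the role of ``$A$'', its ideal $J_B$ in the role of ``$I$'' (so that ``$C$''$=\ol B$), and $f\colon A\to B$ in the role of ``$\psi$'': the hypothesis of the lemma is precisely the finiteness of $A\to\ol B$ just established, so it yields that $f$ is finite -- provided $J_B$ is nilpotent. This last point is the only place the standing hypotheses on $\mathscr C$ enter: $B$ is Noetherian by \cite[Lem.~6.4.4]{C2}, and $J_B\sub\mathrm{nil}(B)$ since $\Spec\ol B\to\Spec B$ is a homeomorphism (\cite[Thm.~5.3.1]{C2}), whence the nilradical of the Noetherian ring $B$, being nilpotent, forces $J_B$ to be nilpotent. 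I do not anticipate any real obstacle: all the substance lies in Lemma~\ref{lemma finite subalg}, and the corollary is just a matter of choosing the right instance of it together with the elementary fact that the surjection $A\onto\ol A$ both preserves and reflects finite generation of modules.
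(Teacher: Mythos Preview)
Your proof is correct and follows essentially the same approach as the paper: the paper's proof simply declares the forward direction ``clear'' and for the converse observes that $A\to\ol{B}$ is finite, then invokes Lemma~\ref{lemma finite subalg}. You have just spelled out the details the paper suppresses, including the nilpotency of $J_B$ (which the paper uses freely elsewhere, e.g.\ in Section~\ref{AGS}, as a consequence of $B$ being finitely generated under the standing assumptions on $\mathscr{C}$).
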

	\begin{proof}
		The forward direction is clear.  For the backward direction, we obtain that $A\to\bar{B}$ is a finite morphism.  Thus we may apply Lemma \ref{lemma finite subalg}.
	\end{proof}
	
	The following is essentially \cite[Prop.~7.8]{AM}.
	\begin{lemma}\label{lemma atiyah-mac}
		Suppose that $A$ is a finitely generated commutative algebra and $B\sub A$ is a subalgebra such that $A$ is finite over $B$.  Then $B$ is also finitely generated.
	\end{lemma}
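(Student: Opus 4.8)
The plan is to run the classical Artin–Tate argument, checking at each step that it survives the passage from $\mathrm{Vec}$ to $\mathscr{C}$. First I would fix a compact object $X$ in $\mathscr{C}$ together with a surjection $S(X)\onto A$ of commutative algebras, which exists since $A$ is finitely generated. Because $A$ is finite over $B$, choose a compact object $Y\sub A$ containing the image of $X$ such that the multiplication map $B\otimes Y\to A$ is surjective; concretely one can take $Y$ to contain $\mathbf{1}$ (the image of the unit) and the image of $X$, and then enlarge it to a $B$-module generating set, which is possible by finiteness. Now I want to produce a finitely generated subalgebra $B_0\sub B$ over which $A$ is already finite. The multiplication $Y\otimes Y\to A$ factors, via surjectivity of $B\otimes Y\to A$, through some morphism $Y\otimes Y\to B\otimes Y$; but $Y\otimes Y$ is compact, so its image in $B\otimes Y\cong \bigoplus$-type object lands in $B'\otimes Y$ for some compact subobject $B'\sub B$ — here I need that every map from a compact object into $B\otimes Y$, with $B$ a union of its compact (finite-length) subobjects and $Y$ compact, factors through $B'\otimes Y$ for $B'\sub B$ compact. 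This is the categorical replacement for "finitely many structure constants", and it follows because $\mathscr{C}=\mathrm{Ind}\,\mathscr{C}_{fin}$, the tensor product is exact, and $Y$ is compact (rigid), so $-\otimes Y$ preserves filtered colimits and compact objects.

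Let $B_0\sub B$ be the subalgebra generated by this compact object $B'$ (together with $\mathbf{1}$); then $B_0$ is finitely generated by construction, and the image of the multiplication map $B_0\otimes Y\to A$ is a $B_0$-submodule of $A$ that is closed under multiplication by $Y$ and contains $\mathbf{1}$, hence equals $A$. Thus $A$ is finite over $B_0$. Since $B_0$ is finitely generated and Noetherian (by \cite[Lem.~6.4.4]{C2}, using our standing assumptions on $\mathscr{C}$), and $A$ is a finitely generated $B_0$-module, $A$ is a Noetherian $B_0$-module; therefore its $B_0$-submodule $B$ is also finitely generated as a $B_0$-module. Finally, $B$ finitely generated as a module over the finitely generated algebra $B_0$ forces $B$ to be finitely generated as an algebra: pick a compact $Z\sub B$ with $B_0\otimes Z\to B$ surjective, and a compact generating object for $B_0$; together they generate $B$ as an algebra.

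The main obstacle is the middle step — extracting the finitely generated subalgebra $B_0$ — since this is precisely where "finitely many coefficients" in the classical proof must be replaced by a compactness/ind-object argument. Concretely one must verify: (i) $-\otimes Y$ commutes with the filtered colimit $B=\varinjlim B_\lambda$ over compact subobjects $B_\lambda\sub B$, so that $\Hom(Y\otimes Y, B\otimes Y)=\varinjlim \Hom(Y\otimes Y, B_\lambda\otimes Y)$ and hence the structure map factors through some $B_\lambda\otimes Y$; and (ii) the subalgebra generated by a compact object is again finitely generated, which is immediate from the definition of finite generation in \cite{C2}. Both are routine once stated correctly, but they are the only places the argument genuinely uses the ambient tensor-categorical structure rather than formal commutative algebra. (Everything else — Noetherianity, the submodule-of-Noetherian-module step, and the module-finite-over-f.g.-algebra implies f.g.-algebra step — transfers verbatim, the first by \cite[Lem.~6.4.4]{C2} and the others by the same proofs as in $\mathrm{Vec}$, working with compact objects in place of finite generating sets.)
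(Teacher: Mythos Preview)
Your approach is the classical Artin--Tate argument and matches the paper's proof in outline. There is, however, one genuine slip: you assert that the multiplication $Y\otimes Y\to A$ \emph{lifts} along the surjection $B\otimes Y\twoheadrightarrow A$ to a morphism $Y\otimes Y\to B\otimes Y$, and then invoke compactness to factor this lift through some $B'\otimes Y$. Such a lift would require $Y\otimes Y$ to be projective in $\mathscr{C}$, which is false in general (for instance in $\mathrm{Ver}_4^+$ the unit object is not projective, see Example~\ref{example ver4+}). Your item (i) is correct as stated, but it applies to a map into $B\otimes Y$ that you have not actually produced.

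The fix---and this is exactly what the paper does---is to work with subobjects of $A$ rather than with lifts. Since $A=BY$ and $B=\varinjlim B_\lambda$ over its compact subobjects, $A$ is the filtered union of the subobjects $B_\lambda Y\sub A$; the compact subobjects $X$ and $Y^2$ of $A$ must therefore each be contained in some $B_\lambda Y$. Take $Z\sub B$ compact with $X,Y^2\sub ZY$ and let $B_0$ be the subalgebra generated by $Z$. Then $B_0Y$ contains $X$ and is closed under multiplication by $Y$ (since $Y^2\sub ZY\sub B_0Y$), hence contains every $X^n$ and so equals $A$. From this point on your argument goes through unchanged.
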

	\begin{proof}
		Let $X\sub A$ be a compact object which generates $A$ as an algebra, and let $Y\sub A$ be a compact object such that $A=BY$.  Then choose a compact object $Z\sub B$ such that $ZY$ contains $X$ and $Y^2$.  Finally, write $B'$ for the subalgebra of $B$ generated by $Z$.  
		
		We claim that $B'Y$ equals $A$, so that $A$ is finite over $B'$.  Indeed, since $Y^2\sub B'Y$ and $Z\sub B'$, we see that:
		\[
		X^n\sub (ZY)^n\sub B'Y.
		\]
		Since $B'$ is Noetherian and $B$ is a $B'$-submodule of $A$, we may conclude $B$ is also finite over $B'$, and is therefore also finitely generated.
	\end{proof}
	
	\begin{proof}[Proof of Proposition~\ref{Cor:AM}]
		By Lemma~\ref{lemma atiyah-mac}, (2) follows from (1). For (1), by Lemma~\ref{lemma finite subalg} it suffices to show that $\overline{A}$ is finite over $A_{(0)}$. For this we can choose a finite set of generators $x_1,\dots,x_r\in \overline{A}$ and observe that by condition (GR), there exists an $N\in\N$ such that for all $i$, $x_i^N$ are in the image of $A_{(0)}\to \overline{A}$.
	\end{proof}
	
	\begin{remark}
		In \cite[Conjecture~3.1.6]{C2} it was conjectured that (2) of Proposition~\ref{Cor:AM} holds with solely the assumption that $\mathscr{C}$ satisfies (GR), so that this condition is equivalent to (GR), see \cite[Thm. 3.1.5]{C2}.
	\end{remark}

    \subsection{Scheme theory}
    
	Let $\operatorname{Lrs}_{\mathscr{C}}$ denote the category of locally $\mathscr{C}$-ringed spaces $\XX=(|\XX|,\OO_{\XX})$.  Here we write $|\XX|$ to refer to the underlying topological space of $\XX$, although we  will often employ an abuse of notation and simply write $\XX$ in place of $|\XX|$.   
	
	Schemes in $\mathscr{C}$ were introduced in \cite{C3}.  A scheme is a locally $\mathscr{C}$-ringed space $\XX$ that is locally isomorphic to an affine scheme.  Affine schemes are defined by a Spec construction similarly to the classical case.  If $\XX$ is a scheme and $\UU\sub\XX$ is an open subscheme, then we will write $\Bbbk[\UU]:=\OO_{\XX}(\UU)$, and this will be a commutative algebra in $\mathscr{C}$.  We say that a scheme $\XX$ is algebraic if it is of finite type, i.e.~there exists a finite affine covering $\XX=\bigcup\limits_{i}\UU_i$ such that $\Bbbk[\UU_i]$ is finitely generated for all $i$.   Note that the fibre product of algebraic schemes over an algebraic scheme is still algebraic.

	  \subsubsection{Schemes in $\Vec$ associated to $\XX$}  Under our assumptions, the map $\Spec\bar{A}\to\Spec A$ is a closed embedding (immersion) which induces the identity map on topological spaces (\cite[Thm.~5.3.1]{C2}).  On the other hand, $A_{(0)}$ is a subalgebra of $A$, and by \textit{loc.~cit.}, the inclusion $A_{(0)}\sub A$ induces a natural morphism $\operatorname{Spec}A\to\Spec A_{(0)}$ which is a homeomorphism of topological spaces.
	
	In general, for a scheme $\XX$, we write $\JJ_{\XX}$ for the ideal sheaf where $\JJ_{\XX}(\UU)=J_{\OO_{\XX}(\UU)}$.  Set $\XX_0$ to be the closed subscheme of $\XX$ determined by $\JJ_{\XX}$.  We may also view $\XX\mapsto\XX_0$ as a functor, given by the right adjoint to the inclusion $\operatorname{Lrs}_{\Vec}\to \operatorname{Lrs}_{\mathscr{C}}$.   
  
    We define $\XX_{(0)}$ to be the locally ringed space $(|\XX|,(\OO_{\XX})_{(0)})$.  This is a scheme, as affine locally it is isomorphic to $\Spec A_{(0)}$.  The assignment $\XX\mapsto\XX_{(0)}$ is functorial, and is the left adjoint to the inclusion $\operatorname{Lrs}_{\Vec}\to \operatorname{Lrs}_{\mathscr{C}}$.  

    Summarising, we have the following natural morphisms of schemes, which are the identity on underlying topological spaces:
    
    \begin{equation}\label{eqn natl morphism of schemes}
\XX_{0}\hookrightarrow\XX\to\XX_{(0)} .
    \end{equation}

    The morphisms in (\ref{eqn natl morphism of schemes}) correspond to the maps of algebras in (\ref{eqn natl alg morphisms}) for any affine open subscheme $\Spec A\sub \XX$.  
    We say that $\XX$ is \emph{purely even} if $\XX=\XX_0$, or equivalently the maps in (\ref{eqn natl morphism of schemes}) are identity maps.  
    
    Given a morphism of schemes $f:\XX\to\YY$, we obtain a commutative diagram:
     \begin{align}\label{diagram map of schemes}
       \xymatrix{
       \XX_0\ar[r] \ar[d]^{f_0} & \XX\ar[d]^f\ar[r] & \XX_{(0)}\ar[d]^{f_{(0)}} \\
       \YY_0\ar[r] & \YY\ar[r] & \YY_{(0)}.       }
\end{align}
The morphisms $f,f_{0}$, and $f_{(0)}$ induce the same map on underlying topological spaces.

	\subsection{Properties of schemes and morphisms}
	
	Recall that a morphism of schemes $f:\XX\to\YY$ is a universal homeomorphism if any base change of $f$ is a homeomorphism.  The following is proven in \cite[\href{https://stacks.math.columbia.edu/tag/0CNF}{Lem.~29.45.8}]{Stacks}.
    
	\begin{lemma}\label{lemma univ homeo prep}\label{lemma univ homeo}
		Suppose $\XX,\YY,\ZZ$ are algebraic schemes with morphisms $f:\XX\to\YY$, $g:\YY\to\ZZ$. If any two of $f,g,g\circ f$ are universal homeomorphisms, so is the third.
	\end{lemma}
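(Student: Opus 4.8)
The plan is to reduce the statement to three implications, writing $g\circ f\colon\XX\to\ZZ$ for the composite: (i) if $f$ and $g$ are universal homeomorphisms, then so is $g\circ f$; (ii) if $g\circ f$ and $f$ are, then so is $g$; and (iii) if $g\circ f$ and $g$ are, then so is $f$. The first two are formal. For (i), a base change along $\ZZ'\to\ZZ$ presents $g\circ f$ as the composite $\XX\times_\ZZ\ZZ'\to\YY\times_\ZZ\ZZ'\to\ZZ'$, in which the first arrow is a base change of $f$ and the second a base change of $g$; a composite of homeomorphisms is a homeomorphism. For (ii), in the same situation the base change $\XX\times_\ZZ\ZZ'\to\YY\times_\ZZ\ZZ'$ of $f$ and the base change $\XX\times_\ZZ\ZZ'\to\ZZ'$ of $g\circ f$ are both homeomorphisms, and as the latter is the composite of the former with $\YY\times_\ZZ\ZZ'\to\ZZ'$, this last map is a homeomorphism too.

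The content is in (iii). I would use the factorization $f=\mathrm{pr}_{\YY}\circ\Gamma_f$, where $\Gamma_f\colon\XX\to\XX\times_\ZZ\YY$ is the graph of $f$ and $\mathrm{pr}_\YY\colon\XX\times_\ZZ\YY\to\YY$ the projection. Now $\mathrm{pr}_\YY$ is the base change of $g\circ f$ along $g$, hence a universal homeomorphism by hypothesis, and $\Gamma_f$ is the base change of the diagonal $\Delta_{\YY/\ZZ}\colon\YY\to\YY\times_\ZZ\YY$ along $f\times\id_\YY\colon\XX\times_\ZZ\YY\to\YY\times_\ZZ\YY$. By (i) it therefore suffices to prove that $\Delta_{\YY/\ZZ}$ is a universal homeomorphism whenever $g$ is one.

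For this, observe that $\Delta_{\YY/\ZZ}$ is a section of the projection $\mathrm{pr}_1\colon\YY\times_\ZZ\YY\to\YY$, which is the base change of $g$ along itself and so is a universal homeomorphism; in particular $\mathrm{pr}_1$ is a bijection on underlying topological spaces, and from $\mathrm{pr}_1\circ\Delta_{\YY/\ZZ}=\id$ one gets that $\Delta_{\YY/\ZZ}$ is bijective as well. As always, $\Delta_{\YY/\ZZ}$ is an immersion, hence, being surjective, a closed immersion; since finitely generated algebras in $\mathscr{C}$ are Noetherian, its ideal sheaf is locally nilpotent, and a closed immersion cut out by a locally nilpotent ideal is a universal homeomorphism, because every base change of it is again a closed immersion defined by a nilpotent ideal. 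This proves (iii) and hence the lemma. I expect the only genuinely delicate point to be this reduction to the diagonal, together with the identification of the section $\Delta_{\YY/\ZZ}$ of $\mathrm{pr}_1$ as a nilpotent closed immersion; the remaining steps are formal manipulations of fibre products resting on standard facts about schemes in $\mathscr{C}$ (the immersion property of diagonals, Noetherianity of finitely generated algebras) available from the preliminaries and the foundations in \cite{C3}.
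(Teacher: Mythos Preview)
Your argument is correct and spells out in full the standard Stacks-project proof that the paper merely cites without detail. The graph-and-diagonal factorization for case (iii), together with the identification of $\Delta_{\YY/\ZZ}$ as a surjective immersion with locally nilpotent ideal sheaf, is precisely the adaptation the paper gestures at.
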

	
	The following is \cite[\href{https://stacks.math.columbia.edu/tag/0CNF}{Lem.~29.46.9}]{Stacks}.
	\begin{lemma}\label{lemma univ homeo stacks}
		Suppose that $\mathscr{C}=\operatorname{Vec}$ and $\phi:A\to B$ is a morphism of commutative algebras.  The following are equivalent:
		\begin{enumerate}
			\item The induced morphism $\Spec B\to\Spec A$ is a universal homeomorphism.
			\item The kernel of $\phi$ is a locally nilpotent ideal and for every $b\in B$ there exists $n>0$ such that $b^{p^n}\in\operatorname{Im}\phi$.
		\end{enumerate}
	\end{lemma}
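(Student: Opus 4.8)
The plan is to treat this as the classical statement it is — here $\mathscr{C}=\Vec$, so nothing about tensor categories enters — and to run the standard argument, using freely three facts from the general theory of schemes: a morphism is a universal homeomorphism if and only if it is integral, universally injective (radicial), and surjective; a morphism is integral exactly when it is affine and universally closed; and both surjectivity and universal injectivity are preserved under arbitrary base change (for surjectivity, because a non-empty scheme over a field remains non-empty after any field extension).

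For $(2)\Rightarrow(1)$ I would factor $\phi$ as $A\onto A/\ker\phi\embedsto B$ and set $C=A/\ker\phi$. The first arrow is a closed immersion cut out by a locally nilpotent ideal, so $\Spec C\to\Spec A$ is a universal homeomorphism. For the second, the hypothesis $b^{p^n}\in C$ for every $b\in B$ presents $b$ as a root of the monic polynomial $T^{p^n}-b^{p^n}$, so $B$ is integral over $C$; the map $\Spec B\to\Spec C$ is then surjective by lying over, and it is universally injective because for any field $K$ and ring homomorphisms $g_1,g_2\colon B\to K$ that agree on $C$ one has, in characteristic $p$, $(g_1(b)-g_2(b))^{p^n}=g_1(b)^{p^n}-g_2(b)^{p^n}=g_1(b^{p^n})-g_2(b^{p^n})=0$ for every $b$, whence $g_1=g_2$. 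Thus both stages, and hence the composite $\Spec B\to\Spec A$, are universal homeomorphisms.

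For $(1)\Rightarrow(2)$, surjectivity of $\Spec B\to\Spec A$ forces $V(\ker\phi)=\Spec A$, so $\ker\phi$ lies in every prime of $A$ and is locally nilpotent. For the power condition I would note that the universal homeomorphism, being affine and universally closed, is integral, so $B$ is integral over $A$; and that, being universally injective, the diagonal $\Spec B\to\Spec(B\otimes_AB)$ is surjective, which says exactly that $\ker(m\colon B\otimes_AB\to B)$ — an ideal generated by the elements $b\otimes1-1\otimes b$ — is locally nilpotent. Since $(b\otimes1-1\otimes b)^{p^n}=b^{p^n}\otimes1-1\otimes b^{p^n}$ in characteristic $p$, for each $b$ there is an $n$ with $b^{p^n}\otimes1=1\otimes b^{p^n}$ in $B\otimes_AB$.

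The step I expect to be the main obstacle is passing from this identity to the conclusion $b^{p^n}\in\im\phi$: since $\phi$ need not be flat, $\im\phi$ is not visibly the equalizer of the pair $B\rightrightarrows B\otimes_AB$. To reach it I would reduce the situation — replace $B$ by the subalgebra $A[b]$, which is finite over $A$ by integrality and over which $\Spec B$ remains a universal homeomorphism, so that $\Spec A[b]\to\Spec A$ is one as well by the two-out-of-three property (Lemma~\ref{lemma univ homeo}); then quotient out $\ker\phi$ and all nilpotents, which merely enlarges the exponent; and pass to the Noetherian case by the usual filtered-colimit argument. One is then reduced to a finite universal homeomorphism of Noetherian $\F_p$-algebras, for which the claim $b^{p^n}\in A$ for $n\gg0$ is precisely the Frobenius-domination characterization of universal homeomorphisms in positive characteristic; this is the content of the cited Stacks lemma, and at this point I would simply invoke it.
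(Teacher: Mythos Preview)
The paper does not give its own proof of this lemma: it simply records it as a quotation from the Stacks Project (the sentence preceding the lemma reads ``The following is \cite[Lem.~29.46.9]{Stacks}''). Your sketch is correct as far as it goes, and you yourself identify the genuinely nontrivial step --- passing from $b^{p^n}\otimes 1=1\otimes b^{p^n}$ in $B\otimes_A B$ to $b^{p^n}\in\operatorname{Im}\phi$ without flatness --- and ultimately invoke the same Stacks reference to close it; so in effect both you and the paper defer the heart of the matter to the same source, and there is no substantive difference in approach to compare.
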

	
	\begin{lemma}\label{lemma even embedding univ homeo}
		Let $\XX,\YY$ be algebraic schemes.
		\begin{enumerate}
			\item The morphism $\XX_0\to\XX$ is a finite, universal homeomorphism.
			\item The morphism $\XX\to\XX_{(0)}$ is a finite, universal homeomorphism.
            \item A morphism $f:\XX\to\YY$ is a universal homeomorphism if and only if $f_0$ is, and if and only if $f_{(0)}$ is.
 
		\end{enumerate}
	\end{lemma}
	\begin{proof}
		We can prove (3) assuming (1) and (2) by applying Lemma \ref{lemma univ homeo prep}.
		
		To prove (1) and (2), the property of being a universal homeomorphism is Zariski local  on the base \cite[\href{https://stacks.math.columbia.edu/tag/0CEX}{Lem.~35.23.11}]{Stacks}, so we may assume $\XX$ is affine, given by the spectrum of a commutative algebra $A$.  The morphism $A\to\bar{A}$ is the quotient by a nilpotent ideal which is always a universal homeomorphism.
		
		For (2), we may now invoke the case of $f_0$ for (3) which tells us that it suffices to show $\Spec \bar{A}\to\Spec A_{(0)}$ is a universal homeomorphism.  However by (GR), the morphism $A_{(0)}\to\bar{A}$ satisfies (2) of Lemma \ref{lemma univ homeo stacks}.  Finally, $A_{(0)}\to A$ is finite by Proposition~\ref{Cor:AM}.
	\end{proof}

	\begin{lemma}\label{lemma univ homeo fibre prods}
		Let $\XX,\YY$, and $\ZZ$ be algebraic schemes with morphisms $\XX\to\ZZ$, $\YY\to\ZZ$.   Then $(\XX\times_{\ZZ}\YY)_0=\XX_0\times_{\ZZ_0}\YY_0$, and the following are finite, universal homeomorphisms:
        \begin{enumerate}
            \item $\XX_0\times_{\ZZ_0}\YY_0\to\XX\times_{\ZZ}\YY$.
            \item $(\XX\times_{\ZZ}\YY)_{(0)}\to\XX_{(0)}\times_{\ZZ_{(0)}}\YY_{(0)}$.
            \item $\XX\times_{\ZZ}\YY\to\XX_{(0)}\times_{\ZZ_{(0)}}\YY_{(0)}$.
        \end{enumerate}     In particular, the natural morphisms $\XX_0\times_{\ZZ_0}\YY_0\to \XX\times_\ZZ\YY\to\XX_{(0)}\times_{\ZZ_{(0)}}\YY_{(0)}$ are homeomorphisms.
	\end{lemma}
	
	\begin{proof}
         The equality $(\XX\times_{\ZZ}\YY)_0=\XX_0\times_{\ZZ_0}\YY_0$ holds because the functor $\XX\mapsto\XX_0$ preserves limits, and thus fibre products.  Since $\XX_0\to \XX$ is a closed embedding for any scheme $\XX$, morphism (1) is a finite, universal homeomorphism by (1) of Lemma \ref{lemma even embedding univ homeo}. 
        
        Let us check morphism (2) is finite. This will show that morphism (3) is also finite, being the composition of morphism (2) and the finite morphism $\XX\times_{\ZZ}\YY\to(\XX\times_{\ZZ}\YY)_{(0)}$ (Lemma \ref{lemma even embedding univ homeo}, (2)). We may verify this affine locally, where it reduces to the morphism $A_{(0)}\otimes_{C_{(0)}}B_{(0)}\to (A\otimes_{C}B)_{(0)}$.  Because $A,B$ are finite over $A_{(0)},B_{(0)}$ respectively, we obtain that $A\otimes_{C_{(0)}}B$ is finite over $A_{(0)}\otimes_{C_{(0)}}B_{(0)}$. Thus using the epimorphism $A\otimes_{C_{(0)}}B\to A\otimes_CB$, we obtain that $A\otimes_CB$ is finite over $A_{(0)}\otimes_{C_{(0)}}B_{(0)}$.  Since $A_{(0)}\otimes_{C_{(0)}}B_{(0)}$ is Noetherian, the subalgebra $(A\otimes_CB)_{(0)}$ of $A\otimes_CB$ must be finitely generated over $A_{(0)}\otimes_{C_{(0)}}B_{(0)}$, which proves finiteness.  
    
		To show (2) and (3) are universal homeomorphisms, it suffices to show (3) is by Lemma \ref{lemma univ homeo prep}.  We once again may check this affine locally, and by (2) of Lemma \ref{lemma even embedding univ homeo}, it suffices to consider the morphism $A_{(0)}\otimes_{B_{(0)}}C_{(0)}\to\bar{A}\otimes_{\bar{B}}\bar{C}$.  Once again Lemma \ref{lemma univ homeo stacks} implies our result using the (GR) property.
	\end{proof}
	
	 We can define separated, affine, quasi-affine, finite, and proper morphisms of schemes in $\mathscr{C}$ in the exact way as for schemes in $\Vec$.

	\begin{prop}\label{prop rel properties}
		Let $\XX,\YY$ be algebraic schemes, and let $f:\XX\to\YY$ be a morphism.  Let (P) denote one of the properties of being separated, affine, quasi-affine, finite, proper, or a universal homeomorphism.  The following are equivalent:
		\begin{enumerate}
			\item $f:\XX\to\YY$ satisfies (P);
			\item $f_0:\XX_0\to\YY_0$ satisfies (P);
			\item $f_{(0)}:\XX_{(0)}\to\YY_{(0)}$ satisfies (P).
		\end{enumerate}
	\end{prop}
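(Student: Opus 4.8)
The plan is to reduce all three equivalences to the two canonical morphisms $a_{\XX}\colon\XX_0\to\XX$ and $b_{\XX}\colon\XX\to\XX_{(0)}$, which are as tame as one could wish. By the construction of $\XX_0$ and Lemma~\ref{lemma even embedding univ homeo}, $a_{\XX}$ is a closed immersion inducing a homeomorphism on underlying spaces, while $b_{\XX}$ is a finite morphism inducing a homeomorphism on underlying spaces; in particular $a_{\XX}$ and $b_{\XX}$ are surjective, affine, separated and universally closed, and they are natural in $\XX$, so that $f\circ a_{\XX}=a_{\YY}\circ f_0$ and $f_{(0)}\circ b_{\XX}=b_{\YY}\circ f$. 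One also checks that $\XX_0,\YY_0,\XX_{(0)},\YY_{(0)}$ are again algebraic (for the $(0)$-versions this is Corollary~\ref{Cor:AM}(2)), and that any morphism of algebraic schemes is of finite type (cancellation for finite type, all our schemes being Noetherian); hence for every morphism at hand, ``proper'' means ``separated and universally closed''.

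For $(\mathrm{P})$ affine I would argue affine-locally on the target. The topological spaces $|\XX|,|\XX_0|,|\XX_{(0)}|$ coincide, as do $|\YY|,|\YY_0|,|\YY_{(0)}|$, and $f,f_0,f_{(0)}$ induce the same continuous map; since moreover $(-)_0$ and $(-)_{(0)}$ commute with restriction to open subschemes, affine open covers of $\YY$, $\YY_0$ and $\YY_{(0)}$ correspond, and for an affine open $\VV\sub\YY$ one has $f_0^{-1}(\VV_0)=(f^{-1}\VV)_0$ and $f_{(0)}^{-1}(\VV_{(0)})=(f^{-1}\VV)_{(0)}$. By Proposition~\ref{prop scheme properties}, $f^{-1}\VV$ is affine if and only if $(f^{-1}\VV)_0$ is, if and only if $(f^{-1}\VV)_{(0)}$ is, so $(1)\Leftrightarrow(2)\Leftrightarrow(3)$.

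For $(\mathrm{P})$ separated, and then for $(\mathrm{P})$ proper, I would run a cancellation argument using four standard facts with the usual valuative-criterion-free proofs. For separatedness: $(\mathrm{i})$ if $W\xto{h}X\xto{g}Y$ with $g\circ h$ separated, then $h$ is separated, because $\Delta_h$ is an immersion whose image is the preimage, under the immersion $W\times_X W\embedsto W\times_Y W$, of the closed set $\im\Delta_{g\circ h}$; and $(\mathrm{ii})$ if $g\circ h$ is separated and $h$ is surjective and universally closed, then $g$ is separated, because $\Delta_g\circ h=(h\times_Y h)\circ\Delta_{g\circ h}$, the closed immersion $\Delta_{g\circ h}$ has closed image, the universally closed map $h\times_Y h$ sends it onto $\im(\Delta_g\circ h)=\im\Delta_g$, and an immersion with closed image is a closed immersion. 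Feeding the identities $f\circ a_{\XX}=a_{\YY}\circ f_0$ and $f_{(0)}\circ b_{\XX}=b_{\YY}\circ f$ into $(\mathrm{i})$ and $(\mathrm{ii})$ --- the latter being applicable since $a_{\XX}$ and $b_{\XX}$ are surjective and universally closed --- yields $(1)\Leftrightarrow(2)\Leftrightarrow(3)$ for separatedness. For properness it then remains to transfer universal closedness, and here I would use the companions: $(\mathrm{i}')$ if $g\circ h$ is universally closed and $h$ is surjective, then $g$ is universally closed; $(\mathrm{ii}')$ if $g\circ h$ is universally closed and $g$ is separated, then $h$ is universally closed (factor $h$ as a closed immersion, namely a base change of the diagonal of $g$, followed by a base change of $g\circ h$). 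The same four implications through the same identities --- using that $a_{\XX},b_{\XX}$ are surjective for $(\mathrm{i}')$ and that $a_{\YY},b_{\YY}$ are separated for $(\mathrm{ii}')$ --- give $(1)\Leftrightarrow(2)\Leftrightarrow(3)$ for universal closedness, hence for properness.

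The cancellation bookkeeping is routine; the part I expect to need the most care is checking that these standard permanence statements genuinely hold in the scheme theory of \cite{C3}: that closed immersions and finite morphisms are separated and universally closed and remain so under base change and composition, that separatedness and universal closedness satisfy $(\mathrm{i})$--$(\mathrm{ii}')$ (which rests on the diagonal being an immersion and on the fact that an immersion with closed image is a closed immersion), and that affineness may be tested on an affine open cover of the target. None of this is deep, but each has to be verified over $\mathscr{C}$; the only genuinely $\mathscr{C}$-specific input is through Lemma~\ref{lemma even embedding univ homeo}, Proposition~\ref{prop scheme properties} and Corollary~\ref{Cor:AM}, the remainder being formal.
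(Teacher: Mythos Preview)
Your argument is correct and close in spirit to the paper's, but organised differently.

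For (P) affine you and the paper both reduce to the absolute case via Proposition~\ref{prop scheme properties}; your explicit identification of preimages under $f,f_0,f_{(0)}$ is a more careful write-up of what the paper leaves implicit.

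For (P) separated the paper takes a slightly different shortcut: rather than your cancellation facts $(\mathrm{i})$--$(\mathrm{ii})$, it invokes Lemma~\ref{lemma univ homeo fibre prods}, which says the comparison $\XX\times_{\YY}\XX\to\XX_{(0)}\times_{\YY_{(0)}}\XX_{(0)}$ is a universal homeomorphism, so closedness of $\im\Delta_f$ and $\im\Delta_{f_{(0)}}$ transfer directly. Your route avoids that lemma at the cost of spelling out the permanence statements; both are short.

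For (P) proper the paper writes down the same commutative square you use (with the diagonal $\XX\to\YY_{(0)}$) and then cites \cite[Chpt.~I, \S 5, Prop.~2.2]{DG} for exactly the cancellation you prove by hand as $(\mathrm{i}')$--$(\mathrm{ii}')$. So here your proof is a self-contained unpacking of the reference the paper invokes.

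In short: same skeleton, but you give a uniform cancellation argument covering both separated and proper, while the paper treats separated via the fibre-product homeomorphism lemma and proper via an external reference. Your version is more self-contained; the paper's is terser. Your caveat about verifying the permanence statements in the $\mathscr{C}$-scheme theory of \cite{C3} is well-placed; the paper implicitly relies on the same facts (e.g.\ through its appeal to \cite{DG}).
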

    
	\begin{proof}
           
       We begin by recalling that a morphism is finite if and only if it is affine and proper (\cite[\href{https://stacks.math.columbia.edu/tag/01WN}{Lem.~29.44.11}]{Stacks}), thus we need not prove the statement for finiteness if we show it for properness and affinity.  The case when (P) is the property of being a universal homeomorphism is proven in Lemma \ref{lemma even embedding univ homeo}.
       
       If (P) is the property of being (quasi-)affine, we may apply the commutative diagram (\ref{diagram map of schemes}) to reduce the question to showing that $\XX$ is (quasi-)affine if and only if $\XX_0$ is, and if and only if $\XX_{(0)}$ is.  In \cite[Remark~5.4.3]{C3} it was shown that $\XX$ is quasi-affine if and only if $\XX_{(0)}$ is. Using the GR property, we see that $\XX$ is quasi-affine if and only if $\XX_0$ is by \cite[Prop.~5.4.2(5)]{C3}. It is clear that if $\XX$ is affine then $\XX_{(0)}$ is as well. If $\XX_{(0)}$ is affine, then we already know that $\XX$ is quasi-affine, but also that the open immersion in \cite[Proposition~5.4.2(2)]{C3} must be surjective, showing that $\XX$ is affine.  If $\XX$ is affine then $\XX_0$ is as well, and the converse is a special case of \cite[Lemma~5.3.4]{C3}.

         If (P) is the property of being separated, we may use Lemma \ref{lemma univ homeo fibre prods} which tells us that the topological spaces of the three fibre products $\XX_0\times_{\YY_0}\XX_0$, $\XX\times_\YY\XX$, and $\XX_{(0)}\times_{\YY_{(0)}}\XX_{(0)}$ are naturally identified.  This implies the image of the diagonal embedding as a topological space will be the same in each case, which proves the claim.  
		
		Finally let (P) be the property of being proper.  If $\ZZ\to\YY$ is a morphism of schemes, we may again apply Lemma \ref{lemma univ homeo fibre prods} to obtain that $\XX\times_{\YY}\ZZ\to \ZZ$ is closed if and only if $\XX_0\times_{\YY_0}\ZZ_0\to\ZZ_0$ is, and if and only if $\XX_{(0)}\times_{\YY_{(0)}}\ZZ_{(0)}\to\ZZ_{(0)}$ is.  Thus the claim follows. 
	\end{proof}
	   We say a scheme $\XX$ is separated, respectively proper, if the corresponding structure map $\XX\to\Spec\Bbbk$ is.  
    \begin{cor}\label{prop scheme properties}  Let $\XX$ be an algebraic scheme, and let (P) stand for one of the properties of being separated, affine, quasi-affine, or proper.  Then the following are equivalent.
		\begin{enumerate}
			\item $\XX$ satisfies (P);
			\item $\XX_0$ satisfies (P);
			\item $\XX_{(0)}$ satisfies (P).
		\end{enumerate}
	\end{cor}

	\begin{cor}\label{cor proper pushforward}
		If $f:\XX\to\YY$ is a proper morphism of algebraic schemes, then the pushforward of a coherent sheaf remains coherent.
	\end{cor}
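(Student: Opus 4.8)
The plan is to deduce this from the classical coherence theorem for proper morphisms of Noetherian $\Bbbk$-schemes (see, e.g., \cite{Stacks}), using the commutative square
\[
\xymatrix{
\XX \ar[r]^f \ar[d]_g & \YY \ar[d]^h \\
\XX_{(0)} \ar[r]_{f_{(0)}} & \YY_{(0)}
}
\]
Here $\XX_{(0)}$ and $\YY_{(0)}$ are algebraic, hence Noetherian, $\Bbbk$-schemes by Corollary~\ref{Cor:AM}(2); $f_{(0)}$ is proper by Proposition~\ref{prop rel properties}; and the vertical morphisms $g$ and $h$ are finite by Lemma~\ref{lemma even embedding univ homeo}(3). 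Since $f$ is proper it is quasi-compact and quasi-separated, so $f_*\FF$ is quasi-coherent and the coherence in question can be tested on affine opens of $\YY$.

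First I would observe that $g_*\FF$ is a coherent sheaf of $\OO_{\XX_{(0)}}$-modules in $\mathscr{C}$: the morphism $g$ is affine, so $g_*\FF$ is quasi-coherent, and on an affine open $\UU\sub\XX$ its module of sections is finitely generated over $\Bbbk[\UU]$, hence — as $\Bbbk[\UU]$ is finite over $\Bbbk[\UU]_{(0)}$ by Corollary~\ref{Cor:AM}(1) — also finitely generated over $\Bbbk[\UU]_{(0)}$. Next, the classical theorem on coherence of higher direct images along proper morphisms of Noetherian $\Bbbk$-schemes remains valid for sheaves of $\OO$-modules valued in $\mathscr{C}$. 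A clean way to see this: on a Noetherian $\Bbbk$-scheme $W$, the coherent $\OO_W$-modules in $\mathscr{C}$ are precisely the objects of the Deligne tensor product $\mathrm{Coh}(W)\boxtimes\mathscr{C}_{fin}$, and for a proper $\pi$ the functor $R\pi_*$ is computed by a finite complex of pushforwards along affine morphisms and hence commutes with $-\boxtimes G$ for $G\in\mathscr{C}_{fin}$; combined with a presentation of an arbitrary coherent $\mathscr{C}$-valued sheaf by finite direct sums of sheaves of the form $\GG\boxtimes G$ and descending induction on the cohomological degree, this reduces the statement to the classical one. (Alternatively, one may simply repeat the classical argument — Chow's lemma together with the explicit cohomology of the twisting sheaves on $\mathbb{P}^n$ — in the present setting.) Applying this to $f_{(0)}$ and the coherent sheaf $g_*\FF$ shows that $(f_{(0)})_* g_*\FF$ is coherent on $\YY_{(0)}$.

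Finally, commutativity of the square gives $(f_{(0)})_* g_*\FF \cong h_* f_*\FF$, so for every affine open $\VV\sub\YY$ the module $(f_*\FF)(\VV)$ is finitely generated over $\Bbbk[\VV]_{(0)}$, and therefore also over $\Bbbk[\VV]$; thus $f_*\FF$ is coherent on $\YY$. The same reasoning applied to each $R^if_*\FF$ yields coherence of all the higher direct images, which is the form needed for the (derived) induction functor. I expect the one genuinely non-formal ingredient to be the extension of the classical coherence theorem to $\mathscr{C}$-valued sheaves over $\Bbbk$-schemes — everything else is a diagram chase resting on Corollary~\ref{Cor:AM} and Lemma~\ref{lemma even embedding univ homeo}.
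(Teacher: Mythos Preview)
Your proof is correct and follows essentially the same route as the paper: reduce to the purely even situation via the finite morphisms $\XX\to\XX_{(0)}$ and $\YY\to\YY_{(0)}$, invoke properness of $f_{(0)}$ from Proposition~\ref{prop rel properties}, and then pull coherence back along $h$. The paper's proof is in fact much terser than yours --- it simply observes that $f_{(0)}$ is proper and that coherence on $\YY$ can be tested after pushing forward to $\YY_{(0)}$ --- so you have filled in the step the paper leaves implicit, namely why the classical coherence theorem for proper morphisms of Noetherian $\Bbbk$-schemes extends to $\mathscr{C}$-valued coherent sheaves.
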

	
	\begin{proof}
		By Proposition \ref{prop rel properties}, $f_{(0)}:\XX_{(0)}\to\YY_{(0)}$ is proper.  Now we may use that $\YY\to\YY_{(0)}$ is finite, and thus a quasi-coherent sheaf on $\YY$ is coherent if and only if its pushforward to $\YY_{(0)}$ is coherent.
	\end{proof}

	\subsection{Infinitesimal schemes}\label{section infinitesimal schemes}
	
	\begin{definition}\label{defn infinitsml}
		We say an algebraic scheme $\XX$ is infinitesimal if $|\XX|$ is a singleton.
	\end{definition}
	
	It is clear that an infinitesimal scheme $\XX$ must be affine, since $\XX$ has an affine cover.
	We say that an algebra is local if it admits a unique maximal ideal.

	\begin{lemma}\label{lemma inftsml is local fin length} The following are equivalent for a finitely generated commutative algebra $A$.
		\begin{enumerate}
			\item $\Spec A$ is infinitesimal.
			\item $\Spec A_{(0)}$ is infinitesimal.
			\item $\Spec\bar{A}$ is infinitesimal.
			\item $A$ is a local algebra.
			\item $A$ is a finite-length (as an object in $\cC$), local algebra.
			\item $A$ has a unique prime ideal, and it is nilpotent.
		\end{enumerate}
	\end{lemma}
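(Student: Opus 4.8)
The plan is to prove the three topological equivalences $(1)\Leftrightarrow(2)\Leftrightarrow(3)$ directly from the results recalled in Section~\ref{Prel}, and then to reduce the algebraic conditions $(4)$, $(5)$, $(6)$ to classical commutative algebra for the ordinary finitely generated $\Bbbk$-algebra $A_{(0)}$. For $(1)\Leftrightarrow(2)\Leftrightarrow(3)$: the inclusion $A_{(0)}\sub A$ induces a homeomorphism $|\Spec A|\xto{\sim}|\Spec A_{(0)}|$ and the quotient $A\onto\ol{A}$ induces the identity $|\Spec\ol{A}|=|\Spec A|$, so the three spaces are all homeomorphic, and one is a singleton iff all are.

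For the remaining conditions I would run the cycle $(1)\Rightarrow(4)\Rightarrow(5)\Rightarrow(6)\Rightarrow(1)$. By Corollary~\ref{Cor:AM}, $A_{(0)}$ is a finitely generated $\Bbbk$-algebra and $A$ is finite over $A_{(0)}$. Being finitely generated over $\Bbbk$, $A_{(0)}$ is Jacobson, so the classical equivalences hold for it: $\Spec A_{(0)}$ is a singleton $\Leftrightarrow$ $A_{(0)}$ is local $\Leftrightarrow$ $A_{(0)}$ is a finite-dimensional local $\Bbbk$-algebra, and in these cases $A_{(0)}$ has a unique prime ideal, which is nilpotent; here I use that $\Bbbk$ is algebraically closed (so the residue field at the unique point is $\Bbbk$) and that a finitely generated nil ideal of a Noetherian ring is nilpotent. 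Moreover, since the homeomorphism $|\Spec A|\to|\Spec A_{(0)}|$ is induced by $\p\mapsto\p\cap A_{(0)}$, it is an inclusion-preserving bijection of prime ideals (this is part of the theory of \cite{C2}), hence matches maximal ideals with maximal ideals. Therefore $A$ is local iff $A_{(0)}$ is, and $A$ has a unique prime iff $A_{(0)}$ does. This gives $(1)\Rightarrow(4)$ at once, and also $(6)\Rightarrow(1)$ (a unique prime of $A$ makes $|\Spec A|$ a point).

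It then remains to show $(4)\Rightarrow(5)$ and $(5)\Rightarrow(6)$. Assuming $(4)$: $A_{(0)}$ is local, hence finite-dimensional over $\Bbbk$ by the above, so $A_{(0)}\cong\mathbf{1}^{\oplus n}$ as an object of $\cC$; picking a surjection $A_{(0)}\otimes W\onto A$ with $W$ compact (available since $A$ is finite over $A_{(0)}$) realizes $A$ as a quotient of the finite-length object $W^{\oplus n}$, so $A$ has finite length, and $A$ is local by hypothesis --- this is $(5)$. Assuming $(5)$: $A$ is local, so $(4)$ holds, so $A$ has a unique prime ideal $\m$ by the prime-ideal bijection; for nilpotence, the descending chain of subobjects $\m\supseteq\m^2\supseteq\cdots$ stabilizes (as $A$ has finite length), say $\m^k=\m^{k+1}=:I$, whence $\m I=I$, and since $I$ is a finitely generated $A$-module and $\m$ is the Jacobson radical of the local algebra $A$, Nakayama's lemma in $\cC$ (see \cite{C2}) forces $I=0$, i.e.\ $\m^k=0$.

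The step I expect to require the most care is the interface between the notions occurring in $(4)$--$(6)$, which are internal to $\cC$, and ordinary commutative algebra: one must know that ``local'' and ``having a unique prime ideal'' for the algebra $A$ correspond to the same statements for the $\Bbbk$-algebra $A_{(0)}$ (which follows from the bijection of prime spectra underlying $\Spec A\cong\Spec A_{(0)}$, from \cite{C2}), and that ``finite length in $\cC$'' is the right analogue of ``finite-dimensional'' (which follows from Corollary~\ref{Cor:AM} together with Nakayama's lemma in $\cC$). Once these are granted, the argument is the classical one for Artinian local rings.
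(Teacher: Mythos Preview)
Your proof is correct and follows essentially the same architecture as the paper's: the same topological identifications for $(1)\Leftrightarrow(2)\Leftrightarrow(3)$, the same cycle $(1)\Rightarrow(4)\Rightarrow(5)\Rightarrow(6)\Rightarrow(1)$, and the same Nakayama argument for $(5)\Rightarrow(6)$. The only substantive variation is in $(4)\Rightarrow(5)$: the paper passes through $\ol{A}$ (local, finitely generated in $\Vec$, hence finite-dimensional) and then invokes Lemma~\ref{lemma finite subalg} to lift finiteness back to $A$, whereas you pass through $A_{(0)}$ and use Corollary~\ref{Cor:AM} directly; both routes are short and equally valid.
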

	\begin{proof}
		(1)$\iff$(2)$\iff$(3) By \cite{C2} we have identifications $|\Spec\bar{A}|=|\Spec A_{(0)}|=|\Spec A|$.  Note that by (2) of Proposition \ref{Cor:AM}, $A_{(0)}$ is finitely generated.
		
		(1)$\Rightarrow$(4) This is obvious.
		
		(4)$\Rightarrow$ (5) Observe that $\bar{A}$ will again be a local algebra finite-dimensional algebra by assumption, and is therefore finite dimensional.  Hence we obtain that the composition of maps $\mathbf{1}\to A\to \bar{A}$ is finite, allowing us to conclude by Lemma \ref{lemma finite subalg}.

		(5)$\Rightarrow$(6) Write $\m$ for the unique maximal ideal of $A$.  Then since $A$ is finite length, for some $n>0$ we must have $\m^n=\m^{n+1}$.  By Thm.~6.1.5 of \cite{C2}, Nakayama's lemma holds, and thus we have $\m^n=0$.
In particular $\m$ is included in any prime ideal and thus the only prime ideal.
        
		(6)$\Rightarrow$(1) This is clear.
	\end{proof}
    \begin{example}\label{ex:finloc}
        Consider a commutative algebra $A\in\cC_{fin}$. Then it is a finite product of local algebras. Indeed, we can apply this classical result to the $\Bbbk$-algebra $A_{(0)}$, use the corresponding orthonormal idempotents of $A_{(0)}$ to find a product $A=\prod_i A_i$ with each $(A_i)_{0}$ local, so that $A_i$ is local by Lemma~\ref{lemma inftsml is local fin length}.
    \end{example}

	\section{Affine group schemes} \label{AGS}
    Recall that throughout we assume that $\cC$ is a (symmetric) tensor category satisfying (GR) and (MN1-2).

    \subsection{Algebraic groups}
	An affine group scheme is a representable functor from the category of commutative algebras in $\mathscr{C}$ to the category of groups.  An affine group scheme $\GG$ is represented by a commutative Hopf algebra $\Bbbk[\GG]$ in $\mathscr{C}$.  An \textbf{algebraic group} in $\mathscr{C}$ is an affine group scheme $\GG$ such that $\Bbbk[\GG]$ is finitely generated. We write $\Delta=\Delta_{\GG}:\Bbbk[\GG]\to\Bbbk[\GG]\otimes\Bbbk[\GG]$ for the comultiplication morphism on $\Bbbk[\GG]$, $\varepsilon=\varepsilon_{\GG}:\Bbbk[\GG]\to\mathbf{1}$ for the counit, and $\eta=\eta_{\GG}:\mathbf{1}\to\Bbbk[\GG]$ for the unit.  Set $\m_{\varepsilon}=\Bbbk^+[\GG]$ to be the augmentation ideal, i.e.~the kernel of $\varepsilon$.    Unless further specified, for the rest of the section $\GG$ is some fixed algebraic group.

    A homomorphism $\GG\to\HH$ of algebraic groups is a morphism of schemes such that $\GG(A)\to\HH(A)$ is a homomorphism of groups for all commutative algebras $A$.  Equivalently, the pullback morphism $\Bbbk[\HH]\to\Bbbk[\GG]$ is a morphism of Hopf algebras.
	
	\subsubsection{$\GG$-modules }A left $\GG$-module $M$ will be synonymous with a right $\Bbbk[\GG]$-comodule, and we use the symbol $\rho_{M}=\rho:M\to M\otimes\Bbbk[\GG]$ for the coaction. 
	Given a $\GG$-module $M$, write $M^{\GG}$ for the subobject of $\GG$-invariants, which is defined by the following equalizer for $\rho_M$ and $\id_M\otimes\eta$:
	\[
	M^{\GG}\to(M\rightrightarrows M\otimes\Bbbk[\GG]).
	\]
	We write $\operatorname{Rep}\GG$ for the category of right $\Bbbk[\GG]$-comodules (not necessarily of finite-length) in $\cC$.  Note that $\operatorname{Rep}\GG$ is once again the ind-completion of a tensor category, see \cite[\S 7.3]{C2}.

	\subsection{Subgroups, factor groups, and images}\label{sec subgrps} We refer to \cite[Sec.~7]{C2} for details on this section.
	
	A subgroup $\HH\leq\GG$ is a  representable subfunctor of $\GG$, and it corresponds to a quotient Hopf algebra of $\Bbbk[\GG]$.  By \cite[Lem.~7.1.3]{C2}, subgroups of $\GG$ are in bijective correspondence with monomorphisms $\HH\to\GG$ of algebraic groups up to isomorphism.  We say that $\HH$ is normal if $\HH(A)$ is normal in $\GG(A)$ for all commutative algebras $A$.  If $\HH$ is normal in $\GG$, we may define the quotient $\GG/_{\hspace{-0.2em}gp}\HH$ to be the co-equalizer of $\HH\rightrightarrows\GG$ in the category of affine group schemes, where one map is the natural inclusion and the other map is the trivial homomorphism.   One can show that $\GG/_{\hspace{-0.2em}gp}\HH$ is represented by $\Bbbk[\GG]^{\HH}$, see \cite[7.2.3]{C2}.

	\subsubsection{Factor groups} Given a morphism $\phi:\GG\to\GG'$ of algebraic groups, $\operatorname{ker}\phi$ is the subfunctor of $\GG$ given by $(\operatorname{ker}\phi)(A)=\operatorname{ker}(\phi(A):\GG(A)\to\GG'(A))$ for any commutative algebra $A$.  It is standard to show that $\operatorname{ker}\phi$ is representable and a normal subgroup of $\GG$, see \cite[7.2.1]{C2}.  	A factor group of $\GG$ is an algebraic group $\mathcal{Q}$ with a morphism of algebraic groups $\phi:G\to\mathcal{Q}$ such that $\Bbbk[\mathcal{Q}]\to\Bbbk[\GG]$ is injective.  By \cite[Thm.~6.3.1]{C2}, this is equivalent to $\phi$ being faithfully flat.   
		
	By \cite[Thm.~7.2.2]{C2}, there is a natural bijection between factor groups of $\GG$ and kernels of morphisms $\GG\to\GG'$.  Further, if $\phi:\GG\to\mathcal{Q}$ is a factor group, then $\mathcal{Q}$ is isomorphic to the sheafification in the fppf topology (see Section \ref{sec fppf topology}) of the functor $A\mapsto \GG(A)/\operatorname{ker}\phi(A)$.  

    We observe that if $\HH\trianglelefteq\GG$ is a normal subgroup, then $\GG/_{\hspace{-0.2em}gp}\HH$ is a factor group of $\GG$ under the canonical projection $\GG\to\GG/_{\hspace{-0.2em}gp}\HH$.  However, it is not clear that the kernel of $\GG\to\GG/_{gp}\HH$ is $\HH$, and it was asked in \cite{C2} if this is the case.  This would provide a bijection between normal subgroups and factor groups. We will answer this question in the affirmative in Section~\ref{MainSec}.

    \subsubsection{Image subgroup}  Let $\phi:\GG\to\GG'$ be a morphism of algebraic groups.  Define the image of $\phi$, written $\phi(\GG)$, to be the subfunctor of $\GG'$ given by the sheafification of the image subfunctor $A\mapsto\operatorname{Im}(\phi(A):\GG(A)\to\GG'(A))$.  An equivalent definition is to consider the image $B$ of the pullback morphism $\Bbbk[\GG']\to\Bbbk[\GG]$.  Then $B$ will be a Hopf subalgebra of $\Bbbk[\GG]$, and thus corresponds to a factor group of $\GG$, and we may define $\phi(\GG)=\Spec(B).$

	\begin{lemma}
		 Let $\phi:\GG\to\GG'$ be a morphism of algebraic groups. Then $\phi(\GG)\cong\GG/_{\hspace{-0.2em}gp}\operatorname{ker}\phi$ and is a subgroup of $\GG'$. 
	\end{lemma}
	
	\begin{proof}
		It is clear, by definition, that $\phi$ factors through $\GG/_{\hspace{-0.2em}gp}\operatorname{ker}\phi\to \GG'$.  Because the latter morphism has trivial kernel, it must be a monomorphism, and thus gives a subgroup of $\GG'$.  
	\end{proof}
	
	\subsection{A filtration}
	
	Recall that 
	we write $J=J_{\GG}=J_{\Bbbk[\GG]}$ for the ideal of $\Bbbk[\GG]$ generated by $\Bbbk[\GG]_{nil}$.  We have a filtration on $\Bbbk[\GG]$ by powers of $J$, which is finite since $\Bbbk[\GG]$ is finitely generated:
	\begin{equation}\label{eqfbp}
		0=J^{N+1}\subsetneq J^N\subsetneq\cdots \subsetneq J\subsetneq J^0=\Bbbk[\GG],
	\end{equation}
	for some $N\in\mathbb{N}$. Observe that (see (\ref{eqn nil}))
	\[
	\Delta(\Bbbk[\GG]_{nil})\subset(\Bbbk[\GG]\otimes\Bbbk[\GG])_{nil}\sub\Bbbk[\GG]_{nil}\otimes\Bbbk[\GG]+\Bbbk[\GG]\otimes\Bbbk[\GG]_{nil},
	\]
	which implies that 
	\[
	\Delta(J)\sub J\otimes\Bbbk[\GG]+\Bbbk[\GG]\otimes J,
	\]
	meaning that $J$ is a Hopf ideal.  Thus we have
	\begin{equation}\label{eqn J Delta}
		\Delta(J^k)\sub\sum\limits_{j}J^j\otimes J^{k-j}.   
	\end{equation}
	It follows that $\GG_0=\Spec\Bbbk[\GG]/J$ is an affine group scheme in $\Vec$.  Write $p_{\GG}:\Bbbk[\GG]\to\Bbbk[\GG_0]$ for the canonical quotient map.  Observe that the powers of $J$ are themselves left $\GG_0$-modules via
	\begin{equation}\label{eq module struc JJ^i}
		J^i\xto{\Delta}\sum\limits_{j}J^j\otimes J^{i-j}\xto{1\otimes p_\GG}J^i\otimes\Bbbk[\GG_0]. 
	\end{equation}

    \begin{remark}\label{rem:red}
        By replacing $J$ with the maximal nilpotent ideal, we can repeat the above, leading to the reduced subgroup (since $\Bbbk$ is algebraically closed) $\GG_{\mathrm{red}}\le \GG_{0}\le \GG$.
    \end{remark}

	\subsection{Connected components}\label{section conn components}
	As a finitely generated commutative algebra $A$ in $\cC$ is Noetherian, $|\Spec A|$ is a Noetherian topological space. In fact, by \cite[Cor. 5.3.2]{C2}, $|\Spec A|$ is naturally homeomorphic to $|\Spec A_{(0)}|$ and $A_{(0)}$ itself is Noetherian, by Proposition~\ref{Cor:AM}. Subsequently, the connected components of $\Spec A$ are the fibres of
	$$|\Spec A|\;\to\; |\Spec \pi(A)|,$$
	with $\pi(A)$ the maximal $\Bbbk$-\'etale subalgebra of $A$ (equivalently of $A_{(0)}$). We write $\pi_0(\Spec A)=\Spec \pi(A)$. As in the classical case, it follows that the component of our algebraic group $\GG$ containing the identity $\GG^\circ$ is a subgroup, leading to a short exact sequence
	$$1\to\GG^{\circ}\to \GG\to\pi_0(\GG)\to 1,$$
	and it follows that $\Bbbk[\GG^{\circ}]$ is $\Bbbk[\GG]e$, with $e$ the primitive idempotent not in the kernel of $\varepsilon$.

\begin{example}\label{ex:pi}
    If $\GG$ is finite, then it follows from Example~\ref{ex:finloc} that the composite homomorphism $\GG_{\mathrm{red}}\to\GG\to \pi_0(\GG)$ is a bijection.
\end{example}

	\subsection{Distribution Hopf algebra} 
We will devote a large portion of this section and the next to developing the fundamental theory of distribution algebras for affine group schemes in $\cC$. We expect this to be of independent interest. For the current paper, its relevance toward the main theorem is contained in the role of the distribution algebra towards proving the structural result for algebraic groups in Corollary~\ref{cor G0 to Gr surj}.

    \begin{definition}
    For an algebraic group $\GG$, define
    \[
    \Bbbk[\GG]^\circ:=\sum\limits_{I}(\Bbbk[\GG]/I)^*\,\subset\,\underline{\Hom}(\Bbbk[\GG],\mathbf{1}),
    \]
    where $I$ runs over all cofinite ideals of $\Bbbk[\GG]$.  Further, set
    \[
    \Gamma=\Gamma_{\GG}:=\sum\limits_{n\in\N}(\Bbbk[\GG]/\m_\varepsilon^n)^*\subset\Bbbk[\GG]^{\circ},
    \]
    where $\m_\varepsilon\sub\Bbbk[\GG]$ denotes the augmentation ideal.
\end{definition}
It is standard that $\Bbbk[\GG]^\circ$ has the structure of a cocommutative Hopf algebra in $\Cs$, and $\Gamma_{\GG}$ is a Hopf subalgebra with a unique simple subcoalgebra (\emph{irreducible} in the language of \cite{Mo}).  We consider the inclusion
\[G(\Bbbk)\subset \Bbbk[G]^{\circ},\;\, g\mapsto \delta_g.\]
Then the elements $\delta_{g}$ span the Hopf subalgebra $\Bbbk G(\Bbbk)\sub\Bbbk[G]^\circ$.

  In the following, if $A,B$ are Hopf algebras such that $A$ is a left $B$-module algebra, then we write $A\#B$ for the smash product of $A$ and $B$ (see \cite[Sec.~4.1]{Mo}).

\begin{lemma}\label{lemma decom k[G] circ}
    We have an isomorphism of Hopf algebras $\Bbbk[G]^\circ\cong\Gamma_{\GG}\#\Bbbk G(\Bbbk)$.  In particular, $\Gamma_{\GG}$ and $G(\Bbbk)$ generate $\Bbbk[G]^\circ$ as an algebra.
\end{lemma}

\begin{proof}
    The classical proof given in \cite[Sec.~5.6]{Mo} easily carries over to our setting.
\end{proof}

From the definition of $\Bbbk[\GG]^\circ$, we have a natural pairing
\begin{equation}\label{eqn pairing}
(-,-):\Bbbk[\GG]^\circ\otimes\Bbbk[\GG]\to\mathbf{1}.   
\end{equation}

\begin{prop}\label{prop nondegenerate pairing}
   The pairing (\ref{eqn pairing}) is nondegenerate.  Moreover, if $\GG=\GG^\circ$, then its restriction to $\Gamma_{\GG}\otimes\Bbbk[\GG]$ is nondegenerate. 
\end{prop}

For the proof of Proposition \ref{prop nondegenerate pairing}, we need a lemma. 

\begin{lemma}\label{lemma conn zero div}
    Let $\GG$ be connected.  Then $f\in\Bbbk[\GG]_{(0)}$ is a zero divisor if and only if $f$ is nilpotent.
\end{lemma}
\begin{proof}
Let $I=\sqrt{0}$ be the nilradical of $\Bbbk[\GG]$ (\cite[\S 2.2]{C2}), which is nilpotent since $\Bbbk[\GG]$ is noetherian.  Then $\Bbbk[\GG]/I$ is a reduced $\Bbbk$-algebra, and hence so is $(\Bbbk[\GG]/I)^{\otimes 2}$. It follows that $I$ is a Hopf ideal and
 $\GG_{\text{red}}=\Spec(\Bbbk[\GG]/I)$ is a subgroup of $\GG$. Taking the associated graded with respect to the filtration generated by powers of $I$, we obtain
    \[
\gr\Bbbk[\GG]=\bigoplus\limits_{i}I^i/I^{i+1},
    \]
    and because $I$ is $\GG_{\text{red}}$-stable, this will be a $\GG_{\text{red}}$-equivariant $\Bbbk[\GG_{\text{red}}]$-module, and thus by the fundamental theorem for Hopf modules it is free over $\Bbbk[\GG_{\text{red}}]$.  If $f$ is a zero divisor, write $\ol{f}$ for the image of $f$ in $\Bbbk[\GG_{\text{red}}]$.  If $\ol{f}=0$, then clearly it is nilpotent.  If $\ol{f}\neq0$, then because $\gr\Bbbk[\GG]$ is free over $\Bbbk[\GG_{\text{red}}]$, $\ol{f}$ must be a zero divisor in $\Bbbk[\GG_{\text{red}}]$.  However, for an algebraic group in $\Vec$, being reduced and connected implies it is integral, giving a contradiction.
\end{proof}

Recall from \cite[\S 4 and \S 5]{C2} that, by our assumptions on $\cC$, for every prime ideal $\mathfrak{p}$ in a commutative algebra $A$, there exists a local ring $A_{\mathfrak{p}}$, with the standard universal property, defined by inverting all elements in $A_{(0)}\backslash\mathfrak{p}_{(0)}$.

\begin{cor}\label{cor localisation embedding}
    Suppose that $\GG$ is connected.  Then for $\m\sub\Bbbk[\GG]$ a maximal ideal, the localisation map $\Bbbk[\GG]\to\Bbbk[\GG]_{\m}$ is a monomorphism.
\end{cor}
\begin{proof}
    This follows from Lemma~\ref{lemma conn zero div}.
\end{proof}

\begin{proof}[Proof of Proposition \ref{prop nondegenerate pairing}]
    It is clear that the pairing is nondegenerate in $\Bbbk[\GG]^\circ$, so let us show it is nondegenerate in $\Bbbk[\GG]$.  Suppose $X\sub\Bbbk[\GG]$ lies in the kernel of this pairing, then $e_gX$ also lies in the kernel, for any primitive idempotent $e_g$ in the maximal \'etale subalgebra of $\Bbbk[\GG]$ corresponding to some $g$ in the component group $\pi_0(\GG)=\GG/\GG^\circ$, see \S 3.3. Since $X\subset\oplus_g e_gX$ and each $e_g X$ lies in the kernel as well, we may thus assume that $X$ is contained in one component of $\Bbbk[\GG]$. Using left translation under $\GG(\Bbbk)$, we may assume this component is $\GG^\circ$, the connected component of the identity.  Therefore we may assume $\GG$ is connected.

If we write $\m_{\varepsilon}\sub\Bbbk[\GG]$ for the augmentation ideal, then since $\m_{\varepsilon}^n$ is cofinite for all $n\in\N$, it follows that $X\sub\bigcap\limits_{n\in\N}\m_{\varepsilon}^n$.  However Lemma \ref{lemma artin rees krull}(2) and Corollary  \ref{cor localisation embedding} imply that $\bigcap\limits_{n\in\N}\m_{\varepsilon}^n=0$, and this forces $X=0$ which finishes the proof.
\end{proof}

 As for general schemes, we say that an affine group scheme  $\HH$ is purely even if $\HH_0=\HH$. 
	\begin{lemma}\label{lemma gamma even GG even}
		$(\Gamma_{\GG})_{(0)}=\Gamma_{\GG}$ if and only if $\GG$ is purely even.
	\end{lemma}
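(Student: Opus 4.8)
The plan is to unwind the definition $\Gamma_\GG=\bigcup_n(\Bbbk[\GG]/\m_\varepsilon^n)^\ast$, reducing the statement first to one about the truncations $\Bbbk[\GG]/\m_\varepsilon^n$, and then to the identity component of $\GG$. Call an object $X$ of $\cC$ \emph{purely even} if the canonical monomorphism $X_{(0)}\to X$ is an isomorphism, equivalently if $X$ is a direct sum of copies of $\mathbf{1}$; since $\mathbf{1}$ is simple, subobjects, quotients, direct summands and filtered unions of purely even objects are again purely even, and a finite-length object $Y$ is purely even if and only if $Y^\ast$ is. I will use throughout the elementary remark that $\GG$ is purely even if and only if $\Bbbk[\GG]$ is a purely even object: if $\GG_0=\GG$ then $\Bbbk[\GG]=\Bbbk[\GG]/J$ is the coordinate Hopf algebra of the $\Vec$-group $\GG_0$, hence lies in $\Vec$; conversely, if $\Bbbk[\GG]$ is a direct sum of copies of $\mathbf{1}$ then $\Bbbk[\GG]_{nil}$, being contained in the joint kernel of the coordinate projections onto the summands, vanishes, so $J=0$ and $\GG_0=\GG$.

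First I would note that, since $\mathbf{1}$ is compact, $\Hom(\mathbf{1},-)$ commutes with the filtered colimit $\Gamma_\GG=\bigcup_n\Gamma^n$; as each $\Gamma^n$ is a subobject of $\Gamma_\GG$, this shows that $(\Gamma_\GG)_{(0)}=\Gamma_\GG$ holds if and only if every $\Gamma^n=(\Bbbk[\GG]/\m_\varepsilon^n)^\ast$ is purely even, and --- dualizing the finite-length algebra $\Bbbk[\GG]/\m_\varepsilon^n$ --- if and only if $\Bbbk[\GG]/\m_\varepsilon^n$ is purely even for every $n$. By the remark above, the lemma thus reduces to the claim that $\Bbbk[\GG]/\m_\varepsilon^n$ is purely even for all $n$ precisely when $\Bbbk[\GG]$ is, the implication ``$\Leftarrow$'' being clear since each $\Bbbk[\GG]/\m_\varepsilon^n$ is a quotient of $\Bbbk[\GG]$.

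For the converse I would first reduce to the case $\GG=\GG^\circ$. Writing $e$ for the idempotent with $\Bbbk[\GG^\circ]=\Bbbk[\GG]e$, one has $\Bbbk[\GG](1-e)\subseteq\m_\varepsilon$, so that $\Bbbk[\GG]/\m_\varepsilon^n=\Bbbk[\GG^\circ]/(\Bbbk^+[\GG^\circ])^n$ (this recovers $\Gamma_\GG=\Gamma_{\GG^\circ}$ as well); and, as $\Bbbk$ is algebraically closed, every connected component of $\GG$ has a $\Bbbk$-point, hence is isomorphic as a scheme to $\GG^\circ$, so $\Bbbk[\GG]$ is purely even if and only if $\Bbbk[\GG^\circ]$ is. Assuming $\GG=\GG^\circ$, we have $\bigcap_n\m_\varepsilon^n=0$, exactly as in \cite[Prop.~4.35]{V1}. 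Now given any finite-length subobject $W\subseteq\Bbbk[\GG]$, the subobjects $W\cap\m_\varepsilon^n$ of $W$ form a descending chain with intersection $W\cap\bigcap_n\m_\varepsilon^n=0$; by finiteness of length the chain stabilizes, so $W\cap\m_\varepsilon^n=0$ for some $n$, whence $W$ embeds into the purely even object $\Bbbk[\GG]/\m_\varepsilon^n$ and is therefore purely even. As $\Bbbk[\GG]$ is the union of its finite-length subobjects, it is purely even, which proves the claim and hence the lemma.

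I expect the only real content to be the Krull-type vanishing $\bigcap_n\m_\varepsilon^n=0$ for connected $\GG$, which is already available; the rest is formal manipulation of the functor $(-)_{(0)}$ together with the simplicity of $\mathbf{1}$. The one point requiring care is to ensure that ``purely even'' is tested only on finite-length objects (where it is a semisimplicity statement for $\mathbf{1}$-isotypic objects) --- on infinite-length objects such as $\Bbbk[\GG]$ or $\Gamma_\GG$ one must argue through finite-length subobjects, as above.
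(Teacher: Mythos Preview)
Your proof is correct and rests on the same key input as the paper's: the nondegeneracy of the pairing $\Gamma_\GG\otimes\Bbbk[\GG^\circ]\to\mathbf{1}$, i.e.\ $\bigcap_n\m_\varepsilon^n=0$ for connected $\GG$. The paper packages this tersely as an embedding $\Bbbk[\GG^\circ]\hookrightarrow\Gamma_\GG^*$ with $\Gamma_\GG^*$ purely even, whereas you unpack the same idea through the finite-length truncations $\Bbbk[\GG]/\m_\varepsilon^n$; the arguments are essentially the same, with yours being a bit more explicit about why pure evenness survives the duality.
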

	\begin{proof}
		The backward direction is clear, so let us suppose that $(\Gamma_{\GG})_{(0)}=\Gamma_{\GG}$.  By Proposition \ref{prop nondegenerate pairing}, the pairing $(-,-):\Gamma_{\GG}\otimes\Bbbk[\GG^\circ]\to\mathbf{1}$ is nondegenerate, which induces an inclusion $\Bbbk[\GG^\circ]\sub\Gamma_{\GG}^*$.  However the latter is purely even, so we obtain that the same is true for $\Bbbk[\GG^\circ]$.  The algebra $\Bbbk[\GG]$ is a direct sum of copies of $\Bbbk[\GG^\circ]$ as an object of $\mathscr{C}$, so $\Bbbk[\GG]$ is also purely even.
	\end{proof}

    \subsubsection{$\Gamma_{\GG}$ as a filtered Hopf algebra} Write $\Gamma=\Gamma_{\GG}$.  We call $\Gamma^{n}:=(\Bbbk[\GG]/\m_{\varepsilon}^{n+1})^*\sub\Gamma$ the distributions of degree $n$ (supported at $\varepsilon$).  Since $\Bbbk[\GG]$ is Noetherian, $\m_{\varepsilon}$ is finitely generated, so we obtain that $\Bbbk[\GG]/\m_{\varepsilon}^{n+1}$ is of finite length in $\mathscr{C}$, meaning the same is true of $\Gamma^{n}$.   This gives an ascending filtration
    \begin{equation}\label{eqn degree filt gama}
        \Bbbk\varepsilon=\Gamma^0\sub\Gamma^1\sub\Gamma^2\sub\cdots, \ \ \ \Gamma=\bigcup\limits_{n\in\N}\Gamma^n.
    \end{equation}
    Moreover we have $\Gamma^n\Gamma^m\sub\Gamma^{n+m}$, and thus $\Gamma^\bullet$ is a filtered algebra by subobjects of finite-length.
    
	It is now a standard fact that using the pairing $(-,-)$, $\Gamma$ inherits from $\GG$ the structure of a cocommutative Hopf algebra so that $(-,-)$ is a Hopf pairing.  We write $m_{\Gamma}$, resp.~$\Delta_{\Gamma}$, for its product, resp.~coproduct, $\eta_{\Gamma}$ and $\varepsilon_{\Gamma}$ for its unit and counit, and $\Gamma_{\GG}^+$ for its augmentation ideal.

	\begin{definition}
		Given a Hopf algebra $H$ in $\mathscr{C}$ with coproduct $\Delta$ and unit $\eta$, define $\operatorname{prim}H\sub H$ to be the kernel of 
		\[
		\Delta-\id\otimes\eta-\eta\otimes\id:H\to H\otimes H.
		\]
	\end{definition}
  
    We may view a Hopf algebra $H$ as a Lie algebra via the commutator.  A standard exercise shows that $\operatorname{prim}H$ is then a Lie subalgebra of $H$.
	\begin{definition}\label{defn Lie algebra}
		We define the Lie algebra of $\GG$, written $\operatorname{Lie}(\GG)=\g$, to be $\operatorname{prim}\Gamma_{\GG}$.
	\end{definition}
	
\begin{remark}
    	   We have $\g\sub\Gamma^1$, and a canonical decomposition  $\Gamma^{1}=\mathbf{1}\oplus\g$.
\end{remark}

	
	The filtration \eqref{eqfbp} on $\Bbbk[\GG]$ by powers of $J$ induces a finite filtration on $\Gamma$:
	\begin{equation}\label{eqn finite filt gamma}
	0\subsetneq\Gamma_0=\Gamma_{\GG_0}\subsetneq\Gamma_1\subsetneq\cdots\subsetneq\Gamma_{N}=\Gamma,
	\end{equation}
	where $\Gamma_i$ is the kernel of
	\[
	\Gamma\otimes J^{i+1}\to \mathbf{1}.
	\]
	By (\ref{eqn J Delta}) we have $\Gamma_i\Gamma_j\sub\Gamma_{i+j}$ and, furthermore,
	\begin{equation}\label{eqn Gamma delta}
    \Delta(\Gamma_k)\sub\sum\limits_j\Gamma_j\otimes\Gamma_{k-j}.    
	\end{equation}
	
	\begin{remark}
		We have now constructed two filtrations on $\Gamma$, denoted by $\Gamma^\bullet$ and $\Gamma_\bullet$, both giving $\Gamma$ the structure of a filtered algebra. The first one is a direct analogue of the classical filtration in $\mathrm{Vec}$, while the second one is only relevant for $\cC\not=\mathrm{Vec}$. We will use both filtrations, but $\operatorname{gr}\Gamma$ will always mean the associated graded of $\Gamma$ with respect to the finite filtration $\Gamma_{\bullet}$.
	\end{remark}
	
	Observe that if $\phi:\HH\to\GG$ is a morphism of algebraic groups, then  $\epsilon_{\GG}=\epsilon_{\HH}\circ \phi$.  Therefore, there is an induced morphism of distributions $\Gamma_\phi:\Gamma_{\HH}\to\Gamma_{\GG}$, and one may check that this is a morphism of Hopf algebras.

	\begin{lemma}\label{lemma ses dist algs}
		If $1\to\NN\to\GG\to\mathcal{Q}\to1$ is a short exact sequence of algebraic groups, then we have a short exact sequence:
		\[
		0\to(\Gamma_{\NN}^+)\to\Gamma_{\GG}\to\Gamma_{\mathcal{Q}}\to0.
		\]where $(\Gamma_{\NN}^+)$ is the ideal of $\Gamma_{\GG}$ generated by the augmentation ideal of $\Gamma_{\NN}$.
	\end{lemma}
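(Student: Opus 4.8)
The plan is to prove exactness of $0\to(\Gamma_{\NN}^+)\to\Gamma_{\GG}\to\Gamma_{\mathcal Q}\to0$ by dualizing the corresponding statement about the Hopf algebras $\Bbbk[\mathcal Q]\hookrightarrow\Bbbk[\GG]$ and using that $\Bbbk[\GG]$ is faithfully flat over $\Bbbk[\mathcal Q]$ (which holds since $\GG\to\mathcal Q$ is a factor group, cf.\ Section~\ref{sec subgrps}). Concretely, the short exact sequence $1\to\NN\to\GG\to\mathcal Q\to1$ means $\Bbbk[\mathcal Q]=\Bbbk[\GG]^{\NN}$ and $\Bbbk[\NN]=\Bbbk[\GG]/\Bbbk[\GG]\Bbbk^+[\mathcal Q]$, i.e.\ $\Bbbk[\NN]=\Bbbk[\GG]\otimes_{\Bbbk[\mathcal Q]}\mathbf 1$. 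First I would pass to finite-length truncations: writing $\m_{\GG},\m_{\mathcal Q},\m_{\NN}$ for the augmentation ideals, one has $\Gamma^n_{\GG}=(\Bbbk[\GG]/\m_{\GG}^n)^*$, and the map $d(\GG\to\mathcal Q):\Gamma_{\GG}\to\Gamma_{\mathcal Q}$ is the transpose of $\Bbbk[\mathcal Q]\hookrightarrow\Bbbk[\GG]$, while the inclusion $\Gamma_{\NN}\hookrightarrow\Gamma_{\GG}$ is the transpose of $\Bbbk[\GG]\twoheadrightarrow\Bbbk[\NN]$.

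The core is a local statement about the completed local rings at the identity. Set $R=\Bbbk[\GG]$, $S=\Bbbk[\mathcal Q]$, and consider the faithfully flat extension $S\to R$; modulo the augmentation ideals we get $\Bbbk[\NN]=R\otimes_S\mathbf 1$, so $\m_{\NN}=\m_{\GG}/\m_{\GG}\cdot m_S(R)$ where $m_S(R)$ denotes the image of $\m_{\mathcal Q}$ in $R$. The key algebraic input, proved exactly as in the classical case (SGA3 / Jantzen I.7, or \cite{V1} in the $\mathrm{Ver}_p$ case) using that $R$ is flat over $S$ and Noetherian, is that the natural map
\[
S/\m_{\mathcal Q}^n\otimes_{\mathbf 1}\cdots
\]
— more precisely, that $\gr_{\m_{\GG}}R\cong \gr_{\m_{\mathcal Q}}S\otimes\gr_{\m_{\NN}}\Bbbk[\NN]$ as graded algebras, equivalently that $R$ is "formally smooth along the fibre" so that the completion $\widehat R_{\m_{\GG}}$ is a flat $\widehat S_{\m_{\mathcal Q}}$-algebra with closed fibre $\widehat{\Bbbk[\NN]}_{\m_{\NN}}$, and in fact $\widehat R\cong \widehat S\,\widehat\otimes\,\widehat{\Bbbk[\NN]}$. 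Dualizing this isomorphism of (pro-)finite-length objects gives $\Gamma_{\GG}\cong\Gamma_{\mathcal Q}\otimes\Gamma_{\NN}$ as objects of $\cC$ (not as algebras), and more importantly identifies the cokernel of $\Gamma_{\NN}^+\cdot\Gamma_{\GG}\hookrightarrow\Gamma_{\GG}$ with $\Gamma_{\mathcal Q}$. I would organize this by checking on each filtered piece $\Gamma^n$, where everything lives in $\cC_{fin}$ and exactness can be tested after applying the fibre functor / on associated gradeds, and then take the union over $n$.

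The main obstacle is establishing the flatness/smoothness of $R=\Bbbk[\GG]$ along the closed fibre of $\GG\to\mathcal Q$ in the tensor-categorical setting — i.e.\ that $\Bbbk[\GG]$ is faithfully flat over $\Bbbk[\mathcal Q]$ with "nice" fibre $\Bbbk[\NN]$, which is what lets one split the completed local ring as a completed tensor product. Faithful flatness itself is available (Section~\ref{sec subgrps}, via \cite[Thm.~6.3.1]{C2}), but the passage from faithful flatness of a Hopf-algebra quotient to the local structure of the completion requires the usual descent/base-change argument for the cotangent complex or, more elementarily, an induction on the filtration by powers of $\m$ using that $\NN$, $\mathcal Q$, $\GG$ are algebraic so all the relevant modules are finitely generated — this is routine in $\mathrm{Vec}$ but needs the Noetherianity and Nakayama inputs from \cite{C2} (Thm.~6.1.5, Lem.~6.4.4) to run in $\cC$. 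Once that structural fact is in place, the exactness of the displayed sequence is a formal consequence of dualizing, together with the observation that $(\Gamma_{\NN}^+)=\ker(\Gamma_{\GG}\to\Gamma_{\mathcal Q})$ because the ideal generated by $\Gamma_{\NN}^+$ is precisely the annihilator of $\Bbbk[\mathcal Q]\subseteq\Bbbk[\GG]$ under the pairing.
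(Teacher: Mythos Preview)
Your approach is in the same spirit as the paper's---both ultimately rest on dualizing the relationship between $\Bbbk[\mathcal Q]$, $\Bbbk[\GG]$, and $\Bbbk[\NN]$---but you take a considerably longer route. The paper dispenses with the lemma in one line: it dualizes the equalizer sequence
\[
0\to \Bbbk[\mathcal Q]\to \Bbbk[\GG]\to \Bbbk[\GG]\otimes\Bbbk[\NN],
\]
where the right-hand map is the difference of the $\NN$-coaction and the trivial coaction. The transpose of that map, read on distributions, is $\gamma\otimes\nu\mapsto \gamma\cdot(\nu-\varepsilon(\nu))$, whose image is precisely $\Gamma_{\GG}\cdot\Gamma_{\NN}^+=(\Gamma_{\NN}^+)$; exactness then gives $(\Gamma_{\NN}^+)=\ker(\Gamma_{\GG}\to\Gamma_{\mathcal Q})$. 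This is exactly your closing remark about the annihilator of $\Bbbk[\mathcal Q]$ under the pairing---so you do isolate the key point, but only after a detour.

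The detour is your attempt to establish a completed tensor decomposition $\widehat{\Bbbk[\GG]}\cong \widehat{\Bbbk[\mathcal Q]}\,\widehat\otimes\,\widehat{\Bbbk[\NN]}$ (equivalently $\Gamma_{\GG}\cong\Gamma_{\mathcal Q}\otimes\Gamma_{\NN}$ as objects). That is a genuinely stronger statement, and you correctly flag it as the main obstacle; but it is not needed for the lemma, and the paper simply sidesteps it. What your approach would buy, if carried through, is a PBW-type splitting that the paper does not assert here (though something of this flavour appears later via $\operatorname{gr}\GG$). Conversely, the paper's equalizer argument is shorter and requires only the identification $\Bbbk[\mathcal Q]=\Bbbk[\GG]^{\NN}$, which is already available from the definition of factor group in Section~\ref{sec subgrps}. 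In short: your plan works, but the ``main obstacle'' you identify is an artefact of your chosen route and can be avoided entirely.
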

	\begin{proof}
	For a proof of the fact that $\Gamma_{\GG}\to\Gamma_{\QQ}$ is an epimorphism, we note that the proof of \cite[Chpt.~I, Prop.~7.6]{J} (including the referenced result \cite[Chpt.~I, \S3, Prop.~9]{Bour}) carries over to our setting, where we use that $\GG\to\QQ$ is flat.  
    
    To prove exactness in the middle, observe that we have an exact sequence
		$$0\to \Bbbk[\mathcal{Q}]\to\Bbbk[\GG]\xrightarrow{(1\otimes q)\circ\Delta} \Bbbk[\GG]\otimes \Bbbk[\NN]/\Bbbk 1,$$
        where we denote by $q$ the composite of the defining Hopf algebra quotient $\Bbbk[\mathcal{G}]\to \Bbbk[\mathcal{N}]$ with the map $\Bbbk[\mathcal{N}]\to \Bbbk[\mathcal{N}]/\Bbbk 1$ quotienting out the algebra unit. Appropriately dualising this sequence gives middle exactness, as desired.
	\end{proof}

	\begin{cor}\label{cor iso alg gps criteria}
		For a morphism $\phi:\GG\to\GG'$ of algebraic groups, the following are equivalent:
		\begin{enumerate}
			\item $\phi$ is an isomorphism,
			\item $\phi_0:\GG_0\to\GG_0'$ and $\Gamma_\phi:\Gamma_{\GG}\to\Gamma_{\GG'}$ are isomorphisms, and
			\item $\phi(\Bbbk):\GG(\Bbbk)\to\GG'(\Bbbk)$ and $\Gamma_\phi$ are isomorphisms. 
		\end{enumerate}
	\end{cor}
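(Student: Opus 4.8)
The implications $(1)\Rightarrow(2)$ and $(1)\Rightarrow(3)$ are immediate, so the content lies in the two converses, which I would treat in parallel. The plan is to factor $\phi$ as $\GG\twoheadrightarrow\phi(\GG)\hookrightarrow\GG'$, where the first map is faithfully flat with kernel $\NN:=\ker\phi$ and the second is a closed embedding, and then to prove separately that (a) $\NN$ is trivial and (b) the closed embedding $\phi(\GG)\hookrightarrow\GG'$ is an isomorphism.

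For (a): under $(2)$ the body $\NN_0=\ker(\phi_0)$ is trivial because $\XX\mapsto\XX_0$ is a right adjoint, hence preserves kernels, and $\phi_0$ is mono; under $(3)$ the group $\NN(\Bbbk)$ is trivial, so $|\NN|$ is a point. In either case $\NN$ is infinitesimal, so by Lemma~\ref{lemma inftsml is local fin length} the algebra $\Bbbk[\NN]$ has finite length and nilpotent augmentation ideal, giving $\Gamma_{\NN}=\Bbbk[\NN]^{*}$. Now $d\phi$ is the composite $\Gamma_{\GG}\twoheadrightarrow\Gamma_{\phi(\GG)}\hookrightarrow\Gamma_{\GG'}$, surjectivity of the first arrow being part of Lemma~\ref{lemma ses dist algs} for $1\to\NN\to\GG\to\phi(\GG)\to1$, and injectivity of the second holding because $\phi(\GG)\hookrightarrow\GG'$ is a closed embedding. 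Since $d\phi$ is an isomorphism, $\Gamma_{\GG}\to\Gamma_{\phi(\GG)}$ is injective, hence an isomorphism, so the ideal $(\Gamma_{\NN}^{+})$ of Lemma~\ref{lemma ses dist algs} vanishes; as $\Gamma_{\NN}\hookrightarrow\Gamma_{\GG}$ this forces $\Gamma_{\NN}^{+}=0$, i.e.~$\Gamma_{\NN}=\mathbf{1}$, whence $\Bbbk[\NN]=\Gamma_{\NN}^{*}=\mathbf{1}$ and $\NN$ is trivial. Thus $\phi$ is a closed embedding, with associated surjection of Hopf algebras $q\colon\Bbbk[\GG']\twoheadrightarrow\Bbbk[\GG]$ and $K:=\ker q$.

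For (b): inside $\Gamma_{\GG'}$ one has $\Gamma^{n}_{\GG}=\{\lambda\in\Gamma^{n}_{\GG'}:\lambda(K)=0\}$ and $\Gamma^{n}_{\GG'}\cap\Gamma^{m}_{\GG}=\Gamma^{n}_{\GG}$ for $m\ge n$ (both sides are the functionals vanishing on $K+(\Bbbk^{+}[\GG'])^{n}$); since $d\phi$ is an isomorphism, $\Gamma_{\GG'}=\bigcup_{m}\Gamma^{m}_{\GG}$ inside $\Gamma_{\GG'}$, and intersecting with the fixed subobject $\Gamma^{n}_{\GG'}$ gives $\Gamma^{n}_{\GG}=\Gamma^{n}_{\GG'}$. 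Non-degeneracy of the evaluation pairing on the rigid object $\Bbbk[\GG']/(\Bbbk^{+}[\GG'])^{n}$ then forces $K\subseteq(\Bbbk^{+}[\GG'])^{n}$, so $K\subseteq\bigcap_{n}(\Bbbk^{+}[\GG'])^{n}$, which — as $\bigcap_{n}(\Bbbk^{+}[\GG'])^{n}=0$ on the identity component — is precisely the ideal of the non-identity components of $\GG'$. In particular $K\cap\Bbbk[\GG'^{\circ}]=0$, so $q$ restricts to an isomorphism $\Bbbk[\GG'^{\circ}]\xrightarrow{\sim}\Bbbk[\GG^{\circ}]$. It remains to check $q$ is an isomorphism on every component. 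Under $(2)$ the hypothesis that $\phi_0$ is an isomorphism gives $\pi_0(\GG)\cong\pi_0(\GG')$; under $(3)$, the same computation applied to the bodies shows $d\phi_0$ is an isomorphism (again because $K\subseteq(\Bbbk^{+}[\GG'])^{n}$ for all $n$), so $\phi_0$ is an isomorphism of $\Bbbk$-algebraic groups by the classical case $\mathscr{C}=\Vec$ of the statement, and once more $\pi_0$ matches. Either way $q$ induces a bijection on connected components; choosing a $\Bbbk$-point in each component (legitimate since $\Bbbk$ is algebraically closed and $\phi$ is bijective on $\Bbbk$-points) and left-translating, compatibility with $q$ reduces the isomorphism claim on that component to the one already established on the identity component. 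Hence $q$, and therefore $\phi$, is an isomorphism.

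The hard part is step (b): the equality of distribution algebras only pins $K$ down on the identity component, so one genuinely has to feed in the $\pi_0$-information coming from $\phi_0$ (respectively from $\phi(\Bbbk)$ together with the classical criterion for $\GG_0$) and run a translation argument to kill $K$ on the remaining components; one must also be careful to invoke non-degeneracy of the distribution pairing only in the forms actually available, namely in the $\Gamma$-variable unconditionally and in $\Bbbk[\GG']$ only on connected groups.
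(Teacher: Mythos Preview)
Your argument is correct, but it takes a different and somewhat longer route than the paper. The paper observes $(1)\Rightarrow(2)\Rightarrow(3)$ (the middle implication using $\GG(\Bbbk)=\GG_0(\Bbbk)$) and then proves $(3)\Rightarrow(1)$ in essentially the \emph{reverse} order from yours: it first shows $\phi^\ast$ is injective, and only then kills the kernel $\NN$. Concretely, with $I=\ker\phi^\ast$, surjectivity of $d\phi$ forces the pairing $\Gamma_{\GG'}\otimes I\to\Bbbk$ to vanish; surjectivity of $\phi(\Bbbk)$ makes $I$ stable under all right translations by $\GG'(\Bbbk)$, so if $I\neq 0$ then $I_{\mathfrak{m}_e}\neq 0$, contradicting nondegeneracy of the pairing at the identity. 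Once $\phi$ is a quotient, Lemma~\ref{lemma ses dist algs} and $\NN(\Bbbk)=1$ finish immediately. Both proofs therefore use the same three ingredients (nondegeneracy of the distribution pairing at $e$, translation by $\Bbbk$-points, and Lemma~\ref{lemma ses dist algs}), but the paper's ordering avoids your step (b) entirely, which is where most of your effort goes.

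One small wrinkle in your write-up: under $(3)$ you justify that $\pi_0(\GG)\cong\pi_0(\GG')$ by arguing that $d\phi_0$ is an isomorphism and invoking the classical case. The claim ``$K\subseteq(\Bbbk^+[\GG'])^n$ for all $n$ implies $d\phi_0$ is an isomorphism'' is not transparent as stated, since it is not clear that the kernel of $\phi_0^\ast$ lies in all powers of the augmentation ideal of $\Bbbk[\GG'_0]$. This detour is in any case unnecessary: you have already shown $q|_{\Bbbk[\GG'^{\circ}]}$ is injective, and since $\Bbbk[\GG'^{\circ}]$ is indecomposable as a ring its image $\Bbbk[\GG]q(e_0')$ must be a single component, necessarily $\Bbbk[\GG^{\circ}]$; now bijectivity of $\phi(\Bbbk)$ (which holds under both $(2)$ and $(3)$) together with the translation argument you already give handles the remaining components directly, without passing through $\phi_0$.
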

	
	\begin{proof}
		Clearly $(1)\Rightarrow(2)\Rightarrow(3)$, since $\GG(\Bbbk)=\GG_0(\Bbbk)$.  To show (3) $\Rightarrow$ (1), write $I$ for the kernel of the pullback $\phi^*:\Bbbk[\GG']\to\Bbbk[\GG]$.  We have an induced morphism $\Bbbk[\GG]^\circ\to\Bbbk[\GG']^\circ$, and by Lemma \ref{lemma decom k[G] circ} and our assumptions, it will be an isomorphism.  Thus $\Bbbk[\GG']^\circ$ must pair with $I$ to 0 under (\ref{eqn pairing}), implying that $I=0$ by Proposition \ref{prop nondegenerate pairing}. It follows that $\phi^*$ is a monomorphism, and thus $\phi$ is a quotient morphism of algebraic groups.  If we write $\NN$ for its kernel,  then $\NN(\Bbbk)=\operatorname{ker}(\phi(\Bbbk))$ is trivial.  This implies $\NN$ is infinitesimal, so that $\Gamma_{\NN}=\Bbbk[\NN]^*$ (see $\S$\ref{section infinitesimal schemes}).  However, by Lemma \ref{lemma ses dist algs} we see that $\Gamma_{\NN}=1$, and thus $\Bbbk[\NN]=\mathbf{1}$, so that $\NN$ is trivial, and $\phi$ is an isomorphism. 
	\end{proof}

	\subsection{Semi-direct products}
	In the following, we refer to \cite{Mol} for the notion of a smash (co)product of Hopf algebras.  
	
	Let $\NN$ and $\HH$ be algebraic groups, and suppose that we have a left action $a:\HH\times\NN\to\NN$ of $\HH$ on $\NN$ which respects the group structure on $\NN$.  Then we may form the semi-direct product $\HH\ltimes\NN$, which as a scheme is simply the product $\HH\times\NN$, and the group structure is given on points by $(\HH\ltimes\NN)(A)=\HH(A)\ltimes\NN(A)$.  In particular, as commutative algebras we have $\Bbbk[\HH\ltimes\NN]\cong \Bbbk[\HH]\otimes\Bbbk[\NN]$, and as a Hopf algebra $\Bbbk[\HH\ltimes\NN]$ is the smash coproduct of $\Bbbk[\NN]$ by $\Bbbk[\HH]$. 
	
	\begin{lemma}\label{lemma semidirect prod dist alg}
		We have that $\Gamma_{\HH\ltimes\NN}$ is the smash product of $\Gamma_{\NN}$ by $\Gamma_{\HH}$.  In particular, the multiplication map $\Gamma_{\NN}\otimes\Gamma_{\HH}\to\Gamma_{\HH\ltimes\NN}$ is an isomorphism of objects in $\mathscr{C}$.
	\end{lemma}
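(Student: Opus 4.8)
The plan is to derive everything from the fact that, after forgetting the group structure, $\HH\ltimes\NN$ is just the product scheme $\HH\times\NN$, together with the compatibility of the distribution functor with products. For the latter, I would first record that for any two algebraic groups $\GG_1,\GG_2$ there is a natural isomorphism of Hopf algebras $\Gamma_{\GG_1\times\GG_2}\cong\Gamma_{\GG_1}\otimes\Gamma_{\GG_2}$ (with the tensor product Hopf structure on the right). Writing $A=\Bbbk[\GG_1]$ and $B=\Bbbk[\GG_2]$, the augmentation ideal of $\Bbbk[\GG_1\times\GG_2]=A\otimes B$ is $\m_A\otimes B+A\otimes\m_B$, and one has $\m_\varepsilon^{2n}\sub \m_A^n\otimes B+A\otimes\m_B^n\sub\m_\varepsilon^n$. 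Since $\otimes$ is exact on $\mathscr{C}$, $(A\otimes B)/(\m_A^n\otimes B+A\otimes\m_B^n)\cong (A/\m_A^n)\otimes(B/\m_B^n)$, and all these objects are of finite length; taking $\Bbbk$-linear duals (and using that $\otimes$ commutes with filtered colimits and that the two resulting direct systems are cofinal) gives $\Gamma_{\GG_1\times\GG_2}=\varinjlim_n \Gamma_{\GG_1}^n\otimes\Gamma_{\GG_2}^n\cong\Gamma_{\GG_1}\otimes\Gamma_{\GG_2}$; that this is an isomorphism of Hopf algebras is a direct check on structure maps.

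Next I would use that the scheme morphism $\theta\colon\NN\times\HH\to\HH\ltimes\NN$, $(n,h)\mapsto\iota_\NN(n)\iota_\HH(h)$, is an isomorphism, which one checks on functors of points (it is the standard bijection $\NN(A)\times\HH(A)\xto{\sim}\HH(A)\ltimes\NN(A)$). As $\theta$ fixes the identity, it induces an isomorphism $d\theta\colon\Gamma_{\NN\times\HH}\xto{\sim}\Gamma_{\HH\ltimes\NN}$, and composing with the isomorphism of the previous paragraph gives an isomorphism of objects $\Gamma_{\NN}\otimes\Gamma_{\HH}\xto{\sim}\Gamma_{\HH\ltimes\NN}$. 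Since $\theta$ factors as $\NN\times\HH\xto{\iota_\NN\times\iota_\HH}(\HH\ltimes\NN)\times(\HH\ltimes\NN)\xto{\operatorname{mult}}\HH\ltimes\NN$, and $d(\operatorname{mult})=m_\Gamma$ under the identification $\Gamma_{(\HH\ltimes\NN)\times(\HH\ltimes\NN)}\cong\Gamma_{\HH\ltimes\NN}^{\otimes 2}$, this isomorphism is exactly the multiplication map $m_\Gamma\circ(d\iota_\NN\otimes d\iota_\HH)$, proving the last sentence of the lemma.

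Finally, to see that the transported Hopf structure on $\Gamma_{\NN}\otimes\Gamma_{\HH}$ is the smash product $\Gamma_{\NN}\#\Gamma_{\HH}$, I would note that the comultiplication of $\Gamma_{\HH\ltimes\NN}$ is dual to the multiplication of $\Bbbk[\HH\ltimes\NN]=\Bbbk[\HH]\otimes\Bbbk[\NN]$, which is the tensor-product-of-algebras multiplication, so the transported comultiplication is the tensor product coalgebra structure; while the multiplication of $\Gamma_{\HH\ltimes\NN}$ is dual to the comultiplication of $\Bbbk[\HH\ltimes\NN]$, which is the smash coproduct of $\Bbbk[\NN]$ by $\Bbbk[\HH]$ recalled before the lemma, and dualizing turns this into the smash product with $\Gamma_{\HH}$ acting on $\Gamma_{\NN}$ through the differentiated action $da\colon\Gamma_{\HH}\otimes\Gamma_{\NN}\to\Gamma_{\NN}$ of $a$ (cf.~\cite{Mol}). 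The genuinely non-formal point is the cofinality and finite-length bookkeeping in the first paragraph; the step most in need of care is this last duality, but the form of the smash product is forced once one knows the Hopf subalgebras $\Gamma_{\NN}$, $\Gamma_{\HH}$ and the conjugation action of $\Gamma_{\HH}$ on $\Gamma_{\NN}$ embed in $\Gamma_{\HH\ltimes\NN}$ compatibly with $a$.
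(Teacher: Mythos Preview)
Your proof is correct and is essentially an explicit unpacking of what the paper does in one line: the paper simply invokes the natural generalization of \cite[Thm.~5.1]{Mol} to $\mathscr{C}$, which is precisely the duality statement that a smash coproduct Hopf algebra has distribution algebra given by the corresponding smash product. Your argument supplies the details that such a citation presupposes---the cofinality argument giving $\Gamma_{\GG_1\times\GG_2}\cong\Gamma_{\GG_1}\otimes\Gamma_{\GG_2}$, the identification of the multiplication map via the pointed-scheme isomorphism $\NN\times\HH\cong\HH\ltimes\NN$, and the verification that dualizing the smash coproduct on $\Bbbk[\HH\ltimes\NN]$ yields the smash product on $\Gamma$---so the two approaches are the same in substance, with yours being self-contained rather than relying on the reader to carry out the generalization of Molnar's theorem.
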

	
	\begin{proof}
		This follows from the natural generalization of \cite[Thm.~5.1]{Mol} to $\mathscr{C}$.
	\end{proof}

\begin{example}
    If $\GG$ is finite (meaning $\Bbbk[\GG]$ is compact), then it follows from Example~\ref{ex:pi} that $\GG\cong \pi_0(\GG)\ltimes \GG^{\circ}$ and that $\GG^{\circ}$ is infinitesimal. So each finite group scheme is a semidirect product of an abstract finite group, and an infinitesimal group scheme.
\end{example} 
	
	\subsection{The group $\operatorname{gr}\GG$} Using the finite filtrations (\ref{eqfbp}) and (\ref{eqn finite filt gamma}) on $\Bbbk[\GG]$ and $\Gamma_{\GG}$, we may form the associated graded objects $\gr\Bbbk[\GG]$ and $\gr\Gamma_{\GG}$. By (\ref{eqn J Delta}) and (\ref{eqn Gamma delta}), these will once again be Hopf algebras, and we have an induced pairing:
	\[
	\gr\Bbbk[\GG]\otimes\gr\Gamma_{\GG} \to \mathbf{1}.
	\]
	Notice that $\gr\Bbbk[\GG]$ is generated, as an algebra, by $\Bbbk[\GG]/J=\Bbbk[\GG_0]$ and $J/J^2$.  The former is a finitely generated algebra, and the latter is a finitely generated module over $\Bbbk[\GG_0]$, implying that $\gr\Bbbk[\GG]$ is finitely generated.  Write $\operatorname{gr}\GG$ for the algebraic group represented by $\gr\Bbbk[\GG]$. Since $\operatorname{gr}\m_{\varepsilon}=\m_{\operatorname{gr}\varepsilon}$, the pairings produce a factorisation
	\[
	\xymatrix{
		\gr\Gamma_{\GG}\ar[rr]\ar@{-->}[rd]_{\psi}&&\Bbbk[\gr\GG]^\ast\\
		&\Gamma_{\gr\GG}\ar@{^{(}->}[ru]
	}
	\]
	where $\psi$ is a morphism of Hopf algebras. 

    Any filtration on an object $V$ in $\cC$ induces a filtration on any subobject $U\subset V$ and we have a natural inclusion $\gr U\subset\gr V$. We apply this in particular to the subobject $\Gamma^n\subset \Gamma$ from the degree filtration (\ref{eqn degree filt gama}) on $\Gamma$, leading to $\gr \Gamma^n\subset \gr \Gamma$.
	
	\begin{lemma}\label{lemma gr dist algs commutes}
		The map $\psi$ is an isomorphism of Hopf algebras and we have $\psi(\operatorname{gr}\Gamma_{\GG}^{n})\sub \Gamma_{\operatorname{gr}\GG}^{n}$.  
	\end{lemma}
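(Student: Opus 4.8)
The plan is to establish three facts about the Hopf-algebra morphism $\psi\colon\gr\Gamma_{\GG}\to\Gamma_{\gr\GG}$: that it respects the distribution-degree filtrations, that it is injective, and that it is surjective; the surjectivity is where the real work sits. Write $R=\Bbbk[\GG]$, $\m=\m_{\varepsilon}$, $J=J_{\GG}$, and $A=\gr_J R=\Bbbk[\gr\GG]$, which is $\N$-graded with $A_0=\Bbbk[\GG_0]$ and $A_i=J^i/J^{i+1}$. Since $J\sub\m$ one gets $\gr_J\m=\m_0\oplus A_{>0}$, where $\m_0\sub A_0$ is the augmentation ideal of $\Bbbk[\GG_0]$, so that $\m_{\gr\varepsilon}=\gr_J\m$ is a graded ideal. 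For the filtration statement I would use only the elementary inclusion $(\gr_J\m)^i\sub\gr_J(\m^i)$ (obtained by multiplying homogeneous representatives of products): a distribution $\xi\in\Gamma^i_{\GG}$, i.e.\ a functional on $R$ killing $\m^i$, induces on the associated graded a functional killing $\gr_J(\m^i)$, hence killing $\m^i_{\gr\varepsilon}=(\gr_J\m)^i$; this gives $\psi(\gr\Gamma^i_{\GG})\sub\Gamma^i_{\gr\GG}$ (and, taking $i$ large, re-derives that $\psi$ lands in $\Gamma_{\gr\GG}$).

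For injectivity I would exploit that the pairing $\Gamma_{\GG}\otimes R\to\mathbf{1}$ is nondegenerate in $\Gamma_{\GG}$ and that, by construction, $\Gamma_i=\ker\!\big(\Gamma_{\GG}\to (J^{i+1})^{\ast}\big)$. Given a compact subobject $V\sub\gr_i\Gamma_{\GG}$, lift it to a compact $\widetilde V\sub\Gamma_i$; then $\widetilde V\not\sub\Gamma_{i-1}$, so $\widetilde V$ pairs nontrivially with $J^i$, and since $\widetilde V$ kills $J^{i+1}$ this pairing factors through a nonzero map $\widetilde V\otimes A_i\to\mathbf{1}$, which depends only on the image $V$. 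Hence $\gr_i\Gamma_{\GG}\to\ul\Hom(A_i,\mathbf{1})$ is injective, and therefore so is $\psi$.

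For surjectivity, since $\psi$ is graded (for the gradings induced by $\gr_J$ on both sides) it is enough to show it is onto in each degree $i$. Using that duality is exact on $\mathscr{C}_{fin}$, from $\Gamma^n_{\GG}\cap\Gamma_i=(R/(\m^n+J^{i+1}))^{\ast}$ one obtains $(\gr\Gamma^n_{\GG})_i\cong (A_i/U_n)^{\ast}$, where $U_n\sub A_i$ is the image of $\m^n\cap J^i$; passing to the union over $n$ identifies $\gr_i\Gamma_{\GG}$ with $\bigcup_n(A_i/U_n)^{\ast}$ inside $\ul\Hom(A_i,\mathbf{1})$. On the $\gr\GG$-side I would compute $(\m^n_{\gr\varepsilon})_i=\m_0^{n-i}A_i=:W_n$ for $n>i$ — here using that in the graded ring $A$ one has $A_{d_1}\cdots A_{d_j}=A_{d_1+\cdots+d_j}$, which in turn comes from $J^aJ^b=J^{a+b}$ — so that $(\Gamma_{\gr\GG})_i=\bigcup_n(A_i/W_n)^{\ast}$, again inside $\ul\Hom(A_i,\mathbf{1})$, and under these identifications $\psi_i$ is exactly the inclusion of the first union into the second. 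It then remains to see that $\{U_n\}_n$ and $\{W_n\}_n$ are cofinal families of subobjects of $A_i$: the inclusion $W_n=\overline{\m^{n-i}J^i}\sub\overline{\m^n\cap J^i}=U_n$ is immediate from $\m^{n-i}J^i\sub\m^n$, and the reverse cofinality $U_{n'}\sub W_n$ for $n'\gg0$ is precisely the content of the Artin–Rees lemma for the submodule $J^i\sub R$ and the ideal $\m$ (it gives $\m^{n'}\cap J^i\sub\m^{n'-c}J^i$, hence $U_{n'}\sub W_n$ for $n'\ge n-i+c$). Cofinality forces the two unions of duals to coincide, so $\psi_i$ is an isomorphism, completing the proof.

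The step I expect to be the main obstacle is the surjectivity, and within it the cofinality statement above; its essential ingredient is the Artin–Rees lemma inside $\mathscr{C}$, which I expect to follow from the standard Rees-algebra argument, given that finitely generated commutative algebras in $\mathscr{C}$ are Noetherian. (One could first reduce to the connected case via $\Gamma_{\GG}=\Gamma_{\GG^{\circ}}$ and $(\gr\GG)^{\circ}=\gr(\GG^{\circ})$, but the argument sketched above does not require it.)
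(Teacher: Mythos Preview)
Your proof is correct and, for injectivity and the filtration statement, essentially matches the paper's argument. The substantive difference is in the surjectivity step.

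The paper proceeds by asserting the equality $\gr_J(\m_\varepsilon^n)=\m_{\gr\varepsilon}^n$, from which $\gr(\Bbbk[\GG]/\m_\varepsilon^n)$ and $\gr(\Bbbk[\GG])/\m_{\gr\varepsilon}^n$ have equal length; since $\psi$ restricts to an injection $\gr\Gamma_\GG^n\hookrightarrow\Gamma_{\gr\GG}^n$ between finite-length objects of the same length, it is an isomorphism on each level and hence globally. This is short, but the equality $\gr_J(\m^n)=(\gr_J\m)^n$ is stated without justification; only the inclusion $(\gr_J\m)^n\sub\gr_J(\m^n)$ is automatic, and the reverse direction (equivalently $\m^n\cap J^i\sub\m^{n-i}J^i+J^{i+1}$) is not obvious in general.

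Your route sidesteps this by computing both $(\gr\Gamma_\GG)_i$ and $(\Gamma_{\gr\GG})_i$ as nested unions $\bigcup_n(A_i/U_n)^\ast$ and $\bigcup_n(A_i/W_n)^\ast$ inside $\ul\Hom(A_i,\mathbf{1})$, and then invoking Artin--Rees to obtain cofinality of $\{U_n\}$ and $\{W_n\}$ rather than equality. This is more robust: you only need $U_{n'}\sub W_n$ for $n'\gg0$, which is exactly what Artin--Rees delivers. The price is that you must import Artin--Rees into $\mathscr{C}$; as you note, the standard Rees-algebra proof goes through verbatim once one knows finitely generated commutative algebras are Noetherian (established in \cite{C2}), so this is a mild addition. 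In effect, your argument supplies a rigorous substitute for the step the paper leaves implicit, while the paper's length-counting argument is cleaner provided one accepts the asserted equality of graded ideals.
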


    \begin{proof}
		Suppose that $X\sub \Gamma_{i+1}/\Gamma_i$ is non-zero and of finite length.  Then there exists a finite-length subobject $Y\sub \Gamma_{i+1}$ such that $X\cong Y/\Gamma_{i}\cap Y$.  By nondegeneracy in $\Gamma$ of the pairing $\Gamma\otimes\Bbbk[\GG]\to\Bbbk$, there exists a compact subobject $U\sub J^{i+1}$ such that the pairing $Y\otimes U\onto X\otimes U\to \mathbf{1}$ is non-zero. Thus if $V=U/U\cap J^{i+2}$, we have that $X\otimes V\to\mathbf{1}$ is non-zero. 
		It follows that $\psi$ is a monomorphism.  
		
		To prove surjectivity, observe that $\gr\m_{\varepsilon}^n=\m_{\gr\epsilon}^n$, and hence $\psi(\gr\Gamma^{n})\sub\Gamma_{\gr \GG}^{n}$.  Since $\gr(\Bbbk[\GG]/\m_{\varepsilon}^n)$ and $\gr(\Bbbk[\GG])/\m_{\gr\varepsilon}^n$ have the same length, the same is true of $\Gamma_{\operatorname{gr}\GG}^n$ and $\operatorname{gr}\Gamma_{\GG}^n$, and thus $\psi$ is an  epimorphism, and therefore an isomorphism.
	\end{proof}
	Recall that $\g$ denotes the Lie algebra of $\GG$ (Definition \ref{defn Lie algebra}).
	\begin{cor}\label{cor gr Lie algebra}
		We have $\operatorname{Lie}\operatorname{gr}\GG=\g_{(0)}\ltimes\g/\g_{(0)}$, where $\g/\g_{(0)}$ is an abelian ideal of $\operatorname{Lie}\operatorname{gr}\GG$.   
	\end{cor}
	\begin{proof}
		Indeed, $\Gamma^{1}=\mathbf{1}\oplus\g$, $\g\cap\Gamma_{\GG_0}=\g_{(0)}$, and $\g\sub\Gamma_1$ (since $J^2\sub\m^2)$.  Thus $\operatorname{gr}\Gamma^1=\mathbf{1}\oplus\g_{(0)}\oplus\g/\g_{(0)}$, and the Lie bracket structure is easy to check.
	\end{proof}

	We say that an algebraic group $\NN$ is infinitesimal if it is infinitesimal as a scheme. In particular, by Lemma~\ref{lemma inftsml is local fin length}, $\Gamma_{\NN}$ then has finite length and $\Bbbk[\NN]\cong\Gamma_{\NN}^*$ as Hopf algebras.
	
	\begin{lemma}\label{lemma grG iso}
		We have an isomorphism:
		\[
		\operatorname{gr}\GG\cong \NN\rtimes \GG_0,
		\]
		where $\NN$ is a commutative, infinitesimal, algebraic group with $\operatorname{Lie}(\NN)=\g/\g_{(0)}$.  Further, multiplication induces an algebra isomorphism:
		\[
		\Gamma_{\GG_0}\otimes\Gamma_{\NN}\cong\Gamma_{\operatorname{gr}\GG}.
		\]
	\end{lemma}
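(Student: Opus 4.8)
The plan is to realise the required $\NN$ as the kernel of a natural retraction of $\gr\GG$ onto $\GG_0$. First I would note that by \eqref{eqn J Delta} the grading on $\gr\Bbbk[\GG]$ coming from the powers of $J$ is compatible with the Hopf structure; hence its degree-zero component $\Bbbk[\GG]/J=\Bbbk[\GG_0]$ is a Hopf subalgebra, and the projection $\gr\Bbbk[\GG]\onto\Bbbk[\GG_0]$ onto degree zero is a morphism of Hopf algebras. The inclusion $\Bbbk[\GG_0]\embedsto\gr\Bbbk[\GG]$ and this projection then correspond to morphisms of algebraic groups $q\colon\gr\GG\to\GG_0$ and $s\colon\GG_0\to\gr\GG$ with $q\circ s=\id$. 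Put $\NN:=\ker q$; it is a closed normal subgroup, and since $q$ has a section, a functor-of-points argument (each $g\in\gr\GG(A)$ is uniquely $n\cdot s(q(g))$ with $n:=g\cdot s(q(g))^{-1}\in\NN(A)$) gives $\gr\GG\cong\NN\rtimes\GG_0$, with $\GG_0$ acting on $\NN$ by conjugation through $s$; in particular the underlying scheme of $\gr\GG$ is $\NN\times\GG_0$.

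It remains to analyse $\NN$. The coordinate Hopf algebra $\Bbbk[\NN]$ is the fibre $\gr\Bbbk[\GG]\otimes_{\Bbbk[\GG_0]}\mathbf 1$ of $q$ over the identity, so it inherits the grading, with degree-zero component $\mathbf 1$; because $\gr\Bbbk[\GG]$ is generated in degrees $0$ and $1$, the algebra $\Bbbk[\NN]$ is generated by its degree-one component $V:=(J/J^2)\otimes_{\Bbbk[\GG_0]}\mathbf 1$, which is of finite length. Since the $J$-adic filtration \eqref{eqfbp} has finite length $N$, both $\gr\Bbbk[\GG]$ and $\Bbbk[\NN]$ are concentrated in degrees $0,\dots,N$; hence $\Bbbk[\NN]$ is of finite length, its augmentation ideal $\Bbbk^+[\NN]=\bigoplus_{i>0}\Bbbk[\NN]_i$ satisfies $(\Bbbk^+[\NN])^{N+1}=0$, and as $\Bbbk[\NN]/\Bbbk^+[\NN]=\mathbf 1$ this is the unique prime ideal. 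By Lemma~\ref{lemma inftsml is local fin length}, $\NN$ is infinitesimal (and $\Bbbk[\NN]\cong\Gamma_\NN^*$).

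For commutativity I would argue that for $x\in V$ the element $\Delta(x)$ lies in the degree-one part of $\Bbbk[\NN]\otimes\Bbbk[\NN]$, i.e.\ in $\mathbf 1\otimes V\oplus V\otimes\mathbf 1$, and the counit axioms force $\Delta(x)=x\otimes 1+1\otimes x$; thus $V$ consists of primitives, and since $\Bbbk[\NN]$ is generated as a commutative algebra by $V$, the canonical surjection $S(V)\onto\Bbbk[\NN]$ is a morphism of Hopf algebras out of the cocommutative Hopf algebra $S(V)$, so $\Bbbk[\NN]$ is cocommutative and $\NN$ is commutative. For the Lie algebra, $\gr\GG\cong\NN\rtimes\GG_0$ gives $\operatorname{Lie}(\gr\GG)=\operatorname{Lie}(\NN)\rtimes\operatorname{Lie}(\GG_0)$ with $\operatorname{Lie}(\GG_0)=\g_{(0)}$ (primitives of the Hopf subalgebra $\Gamma_{\GG_0}\sub\Gamma_\GG$ are $\g\cap\Gamma_{\GG_0}=\g_{(0)}$); comparing with Corollary~\ref{cor gr Lie algebra}, where the distinguished copy of $\g_{(0)}$ is the image of $\operatorname{Lie}(\GG_0)$ under $\operatorname{Lie}(s)$, identifies $\operatorname{Lie}(\NN)=\ker\big(\operatorname{Lie}(q)\big)=\g/\g_{(0)}$. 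Finally, the isomorphism $\Gamma_{\GG_0}\otimes\Gamma_\NN\cong\Gamma_{\gr\GG}$ induced by multiplication is Lemma~\ref{lemma semidirect prod dist alg} applied to $\gr\GG=\GG_0\ltimes\NN$; both orderings of the tensor factors work because, as a scheme, $\gr\GG=\NN\times\GG_0$.

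The main things requiring care — though none are deep — are in the second paragraph: verifying the fibre description of $\Bbbk[\NN]$, checking that the grading is genuinely bounded (so that finite length and nilpotence of $\Bbbk^+[\NN]$ hold), and making sure that the passage from ``finite length with nilpotent augmentation ideal'' to ``infinitesimal'' via Lemma~\ref{lemma inftsml is local fin length}, and from ``primitively generated'' to ``cocommutative'', go through in $\mathscr C$ exactly as over $\Bbbk$.
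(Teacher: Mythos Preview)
Your argument is correct and follows essentially the same plan as the paper: construct $\NN$ as the kernel of the projection $\gr\GG\to\GG_0$, verify infinitesimality from the bounded grading, verify commutativity from primitivity of the degree-one generators, and read off the Lie algebra via Corollary~\ref{cor gr Lie algebra}. The one genuine difference is how you establish the scheme isomorphism $\gr\GG\cong\NN\times\GG_0$: you use the functor-of-points splitting $g\mapsto(g\cdot s(q(g))^{-1},q(g))$ and Yoneda, whereas the paper constructs the explicit map $\phi=(\pi_\NN\otimes\pi_\GG)\circ\Delta$ and proves it is an isomorphism degree by degree, invoking that each $J^i/J^{i+1}$ is a $\GG_0$-Hopf module and hence free over $\Bbbk[\GG_0]$. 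Your route is shorter and avoids the Hopf-module lemma; the paper's route has the side benefit of making the freeness of $J^i/J^{i+1}$ over $\Bbbk[\GG_0]$ explicit, which is itself useful elsewhere (e.g.\ in Proposition~\ref{lemma str sheaf desc}).
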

	
	\begin{proof}
		We have a splitting of Hopf algebras $\Bbbk[\GG_0]\to \Bbbk[\operatorname{gr}\GG]$ of the natural surjection $\Bbbk[\operatorname{gr}\GG]\to\Bbbk[\GG_0]$, so we define the algebraic group $\NN$ by $\Bbbk[\NN]:=\Bbbk[\gr \GG]/(\Bbbk^+[\GG_0])$.  In other words, we have a split short exact sequence
        \[
        1\to \NN\to\gr\GG\to\GG_0\to1.
        \]
         In particular, we have an isomorphism $\operatorname{gr}\GG\cong\GG_0\ltimes\NN$.  Since $\gr\Bbbk[\GG]$ is a finitely generated $\Bbbk[\GG_0]$-module, $\Bbbk[\NN]$ has finite length. Since it is also an $\N$-graded algebra with degree 0 part isomorphic to $\mathbf{1}$, the group $\NN$ must be infinitesimal. To show that $\NN$ is commutative, we need to prove that $\Bbbk[\NN]$ is cocommutative.  Since $\Bbbk[\NN]$ is generated in degree one as an $\N$-graded algebra, it suffices to show that its degree one part is primitive.  But this follows from the fact that the coproduct on $\Bbbk[\NN]$ preserves the grading, and the degree 0 part is isomorphic to $\mathbf{1}$.  Finally, to prove the statement on Lie algebras, we use Corollary \ref{cor gr Lie algebra}.
	\end{proof}

	\begin{cor}\label{cor fg gamma}
		$\Gamma_{\GG}$ is a finitely generated $\Gamma_{\GG_0}$-module under left multiplication.  
	\end{cor}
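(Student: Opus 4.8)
The plan is to run the standard ``a filtered module with finitely generated associated graded is finitely generated'' argument, using the finite filtration $0\subsetneq\Gamma_0=\Gamma_{\GG_0}\subsetneq\Gamma_1\subsetneq\cdots\subsetneq\Gamma_N=\Gamma_{\GG}$ and the computation of its associated graded. Since $\Gamma_i\Gamma_j\sub\Gamma_{i+j}$, each $\Gamma_i$ is in particular a left $\Gamma_{\GG_0}$-submodule of $\Gamma_{\GG}$, so $\gr\Gamma_{\GG}=\bigoplus_i\Gamma_i/\Gamma_{i-1}$ is a graded left $\Gamma_{\GG_0}$-module. By Lemma~\ref{lemma gr dist algs commutes} we have $\gr\Gamma_{\GG}\cong\Gamma_{\gr\GG}$ as Hopf algebras, and by Lemma~\ref{lemma grG iso} multiplication gives $\Gamma_{\GG_0}\otimes\Gamma_{\NN}\cong\Gamma_{\gr\GG}$ with $\NN$ infinitesimal, hence (Lemma~\ref{lemma inftsml is local fin length}) $\Gamma_{\NN}$ of finite length. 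Composing, $\gr\Gamma_{\GG}$ is free as a left $\Gamma_{\GG_0}$-module on the finite-length object $\Gamma_{\NN}$; in particular each direct summand $\Gamma_i/\Gamma_{i-1}$ is finitely generated over $\Gamma_{\GG_0}$, say generated by a finite-length subobject $V_i\sub\Gamma_i/\Gamma_{i-1}$.

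Then I would prove by induction on $i$ that $\Gamma_i$ is finitely generated over $\Gamma_{\GG_0}$, with $i=0$ trivial. For the step, suppose $\Gamma_{\GG_0}\otimes X_{i-1}\onto\Gamma_{i-1}$ with $X_{i-1}$ finite length, and let $\bar q\colon\Gamma_i\onto\Gamma_i/\Gamma_{i-1}$ be the ($\Gamma_{\GG_0}$-linear) quotient. If we can find a finite-length subobject $W_i\sub\Gamma_i$ with $\bar q(W_i)=V_i$, then the $\Gamma_{\GG_0}$-module map $\Gamma_{\GG_0}\otimes(X_{i-1}\oplus W_i)\to\Gamma_i$ (the given surjection on the first summand, the action extending $W_i\hookrightarrow\Gamma_i$ on the second) has image containing $\Gamma_{i-1}$ and mapping onto $\Gamma_i/\Gamma_{i-1}$ under $\bar q$, hence is surjective; taking $i=N$ finishes the corollary.

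To build $W_i$, I would use that $\Gamma_{\GG}=\bigcup_n\Gamma^n$ is an increasing union of finite-length subobjects, so $\Gamma_i=\bigcup_n(\Gamma_i\cap\Gamma^n)$ and therefore $\Gamma_i/\Gamma_{i-1}=\bigcup_n\bar q(\Gamma_i\cap\Gamma^n)$. Since $V_i$ is compact, its inclusion factors through $\bar q(\Gamma_i\cap\Gamma^n)$ for some $n$; let $W_i$ be the preimage of $V_i$ under $\Gamma_i\cap\Gamma^n\onto\bar q(\Gamma_i\cap\Gamma^n)$. Then $W_i$ is a subobject of the finite-length object $\Gamma_i\cap\Gamma^n$, hence of finite length, it maps onto $V_i$, and it lies in $\Gamma_i$, as required.

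The only genuinely delicate point is this last lifting step: $\mathscr{C}$ has no projectives in general, so one cannot simply lift $V_i$ itself through $\bar q$, and the argument instead relies on the $\Ind\mathscr{C}_{fin}$-structure of $\mathscr{C}$ to replace $V_i$ by a finite-length subobject of $\Gamma_{\GG}$ surjecting onto it. Everything else is routine.
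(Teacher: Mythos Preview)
Your proof is correct and follows essentially the same strategy as the paper: use Lemmas~\ref{lemma gr dist algs commutes} and~\ref{lemma grG iso} to identify $\gr\Gamma_{\GG}\cong\Gamma_{\GG_0}\otimes\Gamma_{\NN}$ with $\Gamma_{\NN}$ of finite length, then lift finite-length generators through the finite filtration. The paper's version is a bit terser---it chooses a single compact $X\subset\Gamma_{\GG}$ with $\gr X\supseteq\Gamma_{\NN}$ in one step and observes that $\Gamma_{\GG_0}\otimes X\to\Gamma_{\GG}$ is surjective on associated graded, hence surjective---but your layer-by-layer induction and your explicit handling of the lifting via the $\Gamma^n$-exhaustion amount to the same argument spelled out more carefully.
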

	\begin{proof}
		By Lemmas \ref{lemma gr dist algs commutes} and \ref{lemma grG iso}, we have $\gr\Gamma_{\GG}=\Gamma_{\operatorname{gr}\GG}=\Gamma_{\GG_0}\otimes\Gamma_\NN$ where $\Gamma_{\NN}$ is of finite-length.  Let $X\sub\Gamma_{\GG}$ be a compact subobject such that $\gr X$ contains $\Gamma_\NN$.  Then the multiplication map $\Gamma_{\GG_0}\otimes X\to \Gamma_{\GG}$ becomes an {epimorphism} after taking the associated graded, and thus is an {epimorphism}, implying finite generation. 
	\end{proof}

	\begin{example}
		If $\XX$ is the additive group scheme associated to a compact object $X^*$ in $\mathscr{C}$, then we have a short exact sequence 
		\[
		0\to X_{nil}\to X\to X/X_{nil}\to0.
		\]
		Now \cite[Thm.~3.3.2]{CEO1} implies that $\operatorname{gr}SX\cong  S(X/X_{nil})\otimes R$, where $R:=\operatorname{Im}(S(X_{nil})\to S(X))$.  Thus $\operatorname{Spec}S(X/X_{nil})\cong \XX_0$ and $\Spec R\cong \NN$ in the notation of Lemma \ref{lemma grG iso}.
	\end{example}
	
	\begin{example}
		Suppose that $\mathscr{C}=\operatorname{Ver}_p$, and $\GG$ is an algebraic group corresponding to a Harish-Chandra pair $(\GG_0,\mathfrak{g})$ as introduced in \cite{V1}.  Then $\operatorname{gr}\GG$ corresponds to the Harish-Chandra pair $(\GG_0,\g_{(0)}\ltimes\g_{\neq0})$, where $\g=\g_{(0)}\oplus\g_{\neq0}$.
	\end{example}
	
	\begin{example}
		Suppose that $\mathscr{C}=\operatorname{Ver}_4^+$ and let $P$ denote the unique projective indecomposable.  Then $\operatorname{gr}\GG\LL(P)\cong \G_m\times\G_a\times \alpha_2^2$, where $\alpha_2$ is the first Frobenius kernel of $\G_a$.
	\end{example}
	\subsection{A presentation for $\Bbbk[\GG]$}
	
	Consider the object $\ul{\Hom}_{\Gamma_{\GG_0}}(\Gamma,\Bbbk[\GG_0])$, where $\Gamma$ is a left $\Gamma_{\GG_0}$ module under left multiplication, and $\Bbbk[\GG_0]$ is a left $\Gamma_{\GG_0}$-module under infinitesimal right translation, i.e.
	\begin{equation}\label{eqn right trans}
		\Gamma_{\GG_0}\otimes\Bbbk[\GG_0]\xto{\sim}\Bbbk[\GG_0]\otimes \Gamma_{\GG_0}\xto{\Delta_{\GG_0}\otimes 1} \Bbbk[\GG_0]\otimes\Bbbk[\GG_0]\otimes\Gamma_{\GG_0}\xto{1\otimes (-,-)}\Bbbk[\GG_0]
	\end{equation}
	where we have used the braiding on $\Vec$ in the first isomorphism.
	Let us give $\ul{\Hom}_{\Gamma_{\GG_0}}(\Gamma,\Bbbk[\GG_0])$ the structure of a commutative algebra.  First observe that we have an isomorphism
	\[
	\ul{\Hom}_{\Gamma_{\GG_0}}(\Gamma,\Bbbk[\GG_0])\otimes \ul{\Hom}_{\Gamma_{\GG_0}}(\Gamma,\Bbbk[\GG_0])\to \ul{\Hom}_{\Gamma_{\GG_0}\otimes\Gamma_{\GG_0}}(\Gamma\otimes\Gamma,\Bbbk[\GG_0]\otimes\Bbbk[\GG_0]),
	\]
	which can be proven using the $\otimes-\ul{\Hom}$ adjunction (\ref{eqn tensor hom ring}).  The multiplication is described as follows:
	\[
	m_{\Hom}=\ul{\Hom}(\Delta_{\Gamma},m_{\GG_0})
	\]  
	We set $\epsilon_{\Hom}=\ul{\Hom}(\eta_{\Gamma},\epsilon_{\GG_0})$ for the unit.  Here we use that $\ul{\Hom}$ is a bifunctor to define our morphisms, and that $\ul{\Hom}_{\Gamma_{\GG_0}}$ and $\ul{\Hom}_{\Gamma_{\GG_0}\otimes\Gamma_{\GG_0}}$ are subfunctors of $\ul{\Hom}$ on their respective categories of modules.

	In the following, we view $\Bbbk[\GG]$ as a left $\Gamma_{\GG}$-module under the action by infinitesimal right translation (as in (\ref{eqn right trans})).  We view $\ul{\Hom}_{\Gamma_{\GG_0}}(\Gamma_{\GG},\Bbbk[\GG_0])$ as a left $\Gamma_{\GG}$-module via the action of $\Gamma_{\GG}$ on itself by right multiplication.  
	
	The following presentation for $\Bbbk[\GG]$ was first observed in the super case over the complex numbers by Koszul \cite{K}.  For more general proofs, see \cite{MS} for the super case and \cite{V1} for the case of $\operatorname{Ver}_p$ of the following theorem.
	
	\begin{thm}\label{thm presentation k[G]}
		We have a natural isomorphism of algebras and $\Gamma$-modules
		\[
		\Phi:\Bbbk[\GG]\to\ul{\Hom}_{\Gamma_{\GG_0}}(\Gamma_{\GG},\Bbbk[\GG_0]),
		\]
		induced, via adjunction, by the natural quotient map $\Bbbk[\GG]\to\Bbbk[\GG_0]$ of left $\Gamma_{\GG_0}$-modules.
	\end{thm}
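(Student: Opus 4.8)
The plan is to construct the map $\Phi$ via the adjunction \eqref{eqn tensor hom ring} and then verify it is an isomorphism of algebras and $\Gamma_{\GG}$-modules by reducing to the associated graded with respect to the filtration $\Gamma_{\bullet}$ (equivalently, the $J$-adic filtration on $\Bbbk[\GG]$). First I would observe that the quotient map $q\colon\Bbbk[\GG]\to\Bbbk[\GG_0]$ is a morphism of left $\Gamma_{\GG_0}$-modules, where $\Bbbk[\GG]$ carries the infinitesimal right translation action \eqref{eqn right trans} restricted along $\Gamma_{\GG_0}\hookrightarrow\Gamma_{\GG}$. Since $\ul{\Hom}_{\Gamma_{\GG_0}}(\Gamma_{\GG},\Bbbk[\GG_0])$ is defined by the adjunction $\Hom_{\Gamma_{\GG}}(M,\ul{\Hom}_{\Gamma_{\GG_0}}(\Gamma_{\GG},N))\cong\Hom_{\Gamma_{\GG_0}}(M,N)$, the $\Gamma_{\GG_0}$-linear map $q$ corresponds to a unique $\Gamma_{\GG}$-linear map $\Phi\colon\Bbbk[\GG]\to\ul{\Hom}_{\Gamma_{\GG_0}}(\Gamma_{\GG},\Bbbk[\GG_0])$, which is $\Gamma_{\GG}$-linear for the action of $\Gamma_{\GG}$ on itself by right multiplication; this also makes naturality in $\GG$ manifest. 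That $\Phi$ is an algebra map should follow from the description $m_{\Hom}=\ul{\Hom}(\Delta_{\Gamma},m_{\GG_0})$ together with the fact that $q$ is an algebra morphism and that $\Delta_{\GG}$ is compatible, under the pairing, with $\Delta_{\Gamma}$ — this is a diagram chase using the $\otimes$-$\ul{\Hom}$ adjunction and the isomorphism $\ul{\Hom}_{\Gamma_{\GG_0}}(\Gamma,\Bbbk[\GG_0])^{\otimes 2}\cong\ul{\Hom}_{\Gamma_{\GG_0}^{\otimes 2}}(\Gamma^{\otimes 2},\Bbbk[\GG_0]^{\otimes 2})$ already recorded above.

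The main work is showing $\Phi$ is an isomorphism of objects in $\mathscr{C}$. Here I would pass to the associated graded of the finite filtration $\Gamma_{\bullet}$ on $\Gamma_{\GG}$ and the $J$-adic filtration on $\Bbbk[\GG]$; one checks $\Phi$ is filtered (it carries $J^i$ into the subobject of $\ul{\Hom}_{\Gamma_{\GG_0}}(\Gamma_{\GG},\Bbbk[\GG_0])$ that pairs trivially against $\Gamma_{i-1}$). By Lemmas~\ref{lemma gr dist algs commutes} and \ref{lemma grG iso}, $\gr\Gamma_{\GG}\cong\Gamma_{\gr\GG}\cong\Gamma_{\GG_0}\otimes\Gamma_{\NN}$ with $\Gamma_{\NN}$ of finite length, and $\gr\GG\cong\NN\rtimes\GG_0$. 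On associated graded, $\gr\Phi$ becomes the comparison map for $\gr\GG=\NN\rtimes\GG_0$, and using the semidirect product structure and Lemma~\ref{lemma semidirect prod dist alg} one reduces to showing
\[
\Bbbk[\NN]\otimes\Bbbk[\GG_0]\;\xrightarrow{\ \sim\ }\;\ul{\Hom}_{\Gamma_{\GG_0}}(\Gamma_{\GG_0}\otimes\Gamma_{\NN},\Bbbk[\GG_0]).
\]
Since $\Gamma_{\GG_0}$ is free over itself, the right-hand side is $\ul{\Hom}(\Gamma_{\NN},\Bbbk[\GG_0])\cong\Gamma_{\NN}^{*}\otimes\Bbbk[\GG_0]$; because $\NN$ is infinitesimal, $\Gamma_{\NN}^{*}\cong\Bbbk[\NN]$ as Hopf algebras (as noted just before Lemma~\ref{lemma grG iso}), and one identifies this with $\Phi$ explicitly. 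Thus $\gr\Phi$ is an isomorphism, hence $\Phi$ is an isomorphism since the filtrations are finite and exhaustive.

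The step I expect to be the main obstacle is the careful bookkeeping that $\Phi$ is genuinely filtered and that $\gr\Phi$ is identified with the semidirect-product comparison map — i.e. tracking how the infinitesimal right-translation action of $\Gamma_{\GG}$ on $\Bbbk[\GG]$ interacts with the $J$-adic filtration, and verifying that on associated graded the $\Gamma_{\GG}$-module structure degenerates to the expected $\Gamma_{\NN}\rtimes\Gamma_{\GG_0}$-module structure compatible with Lemma~\ref{lemma grG iso}. The freeness input — that $J^i/J^{i+1}$ is free as a $\Bbbk[\GG_0]$-comodule, used in the proof of Lemma~\ref{lemma grG iso} — is what makes the $\ul{\Hom}$ over $\Gamma_{\GG_0}$ behave well, and dualizing that statement correctly in $\mathscr{C}$ (rather than in $\Vec$) is the delicate point; everything else is a formal consequence of the adjunctions. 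Once the graded statement is in place, faithful flatness/length counting (as in Lemma~\ref{lemma gr dist algs commutes}) upgrades it to the filtered statement, and the algebra- and $\Gamma$-module-compatibilities, being closed conditions, follow on the nose.
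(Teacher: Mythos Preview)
Your proposal is correct and follows essentially the same route as the paper: construct $\Phi$ by adjunction from the quotient $\Bbbk[\GG]\to\Bbbk[\GG_0]$, check that $\Phi$ is filtered with respect to the $J$-adic filtration on the source and the filtration induced by $\Gamma/\Gamma_\bullet$ on the target, and then pass to associated graded, where Lemmas~\ref{lemma gr dist algs commutes} and~\ref{lemma grG iso} reduce the claim to the semidirect product $\NN\rtimes\GG_0$ and the identification $\Gamma_{\NN}^*\cong\Bbbk[\NN]$. Your write-up in fact spells out more carefully than the paper the algebra-compatibility and the bookkeeping of the filtered-to-graded reduction, but the architecture of the argument is identical.
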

	
	\begin{proof}
		The finite cofiltration $\Gamma/\Gamma_\bullet$ on $\Gamma$ induces a descending filtration $\ul{\Hom}_{\Gamma_{0}}(\Gamma/\Gamma_\bullet,\Bbbk[\GG_0])$ on $\ul{\Hom}_{\Gamma_{\GG_0}}(\Gamma,\Bbbk[\GG_0])$, and this filtration respects the algebra structure.  We claim that $\Phi(J^i)\sub\ul{\Hom}_{\Gamma_{\GG_0}}(\Gamma/\Gamma_{i-1},\Bbbk[\GG_0])$.   Indeed, it is clear that $J$ lands in $\ul{\Hom}(\Gamma/\Gamma_{\GG_0},\Bbbk[\GG_0])$, and from this the claim follows.
		
		Now we have a morphism of filtered algebras; taking the associated graded, we obtain the morphism
		\[
		\operatorname{gr}\Bbbk[\GG]\to \ul{\Hom}_{\Gamma_{\GG_0}}(\operatorname{gr}\Gamma,\Bbbk[\GG_0]),
		\]
		or by Lemma \ref{lemma grG iso},
		\[
		\Bbbk[\NN]\otimes\Bbbk[\GG_0]\to\ul{\Hom}_{\Gamma_{\GG_0}}(\Gamma_{\GG_0}\otimes\Gamma_{\NN},\Bbbk[\GG_0])\cong \ul{\Hom}_{\Gamma_{\GG_0}}(\Gamma_{\GG_0},\Bbbk[\GG_0])\otimes(\Gamma_{\NN})^*.
		\]
		Since $\NN$ is infinitesimal and algebraic, $(\Gamma_{\NN})^*\cong\Bbbk[\NN]$.  From this, the isomorphism is not hard to show. 
	\end{proof}
	
	Now if $\UU\sub \GG$ is an open subscheme, then $\Gamma_{\GG_0}$ acts naturally on the left on $\Bbbk[\UU_0]$ by infinitesimal right translation, and so we may construct a presheaf of $\mathscr{C}$-algebras on $|\GG|$ by the association:
	\[
	\UU\mapsto \ul{\Hom}_{\Gamma_{\GG_0}}(\Gamma,\Bbbk[\UU_0]).
	\]
	
	\begin{prop}\label{lemma str sheaf desc}
		The above assignment is isomorphic to the structure sheaf of $\GG$.
	\end{prop}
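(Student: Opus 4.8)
The plan is to deduce this from Theorem~\ref{thm presentation k[G]} by a sheafification argument. Since an algebraic group is affine, $\GG=\Spec\Bbbk[\GG]$ and $\OO_{\GG}$ is the quasi-coherent sheaf associated to $\Bbbk[\GG]$ on $|\GG|=|\Spec\Bbbk[\GG_0]|$. First I would note that the assignment $\FF\colon\UU\mapsto\ul{\Hom}_{\Gamma_{\GG_0}}(\Gamma,\Bbbk[\UU_0])$ is already a sheaf of algebras: it is $\ul{\Hom}_{\Gamma_{\GG_0}}(\Gamma,\OO_{\GG_0})$, and $\ul{\Hom}_{\Gamma_{\GG_0}}(\Gamma,-)$ preserves products and equalizers — being cut out by an equalizer from $\ul{\Hom}(\Gamma,-)=\lim_n\bigl((-)\otimes(\Gamma^{n})^{\ast}\bigr)$, a limit of the exact functors $(-)\otimes(\Gamma^{n})^{\ast}$ — so it carries the structure sheaf $\OO_{\GG_0}$ of $\GG_0$ (a sheaf on $|\GG_0|=|\GG|$) to a sheaf. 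Second, via the adjunction \eqref{eqn tensor hom ring} applied to the $\Gamma_{\GG_0}$-linear quotient $\OO_{\GG}\to\OO_{\GG_0}$ — where $\OO_{\GG}$ carries the action of $\Gamma_{\GG}$ by infinitesimal right translation and $\Gamma_{\GG_0}$ acts by restriction along $\GG_0\hookrightarrow\GG$ — the isomorphism $\Phi$ of Theorem~\ref{thm presentation k[G]} extends to a morphism of sheaves of algebras $\Phi\colon\OO_{\GG}\to\FF$, an isomorphism on global sections.

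Both sides being sheaves on the Noetherian space $|\Spec\Bbbk[\GG_0]|$, it suffices to check $\Phi$ is an isomorphism on the basis of principal opens $D(f)$, $f\in\Bbbk[\GG]$; here $(D(f))_0=D(\bar f)\subseteq\GG_0$ and $\Bbbk[(D(f))_0]=\Bbbk[\GG_0]_{\bar f}$, so $\Phi_{D(f)}$ reads $\Bbbk[\GG]_f\to\ul{\Hom}_{\Gamma_{\GG_0}}(\Gamma,\Bbbk[\GG_0]_{\bar f})$ and is the map induced, via the same adjunction, by the localized quotient $\Bbbk[\GG]_f\to\Bbbk[\GG_0]_{\bar f}$. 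Comparing with the localization of the isomorphism of Theorem~\ref{thm presentation k[G]}, one is reduced to the single assertion that the base-change map
\[
\ul{\Hom}_{\Gamma_{\GG_0}}(\Gamma,\Bbbk[\GG_0])\otimes_{\Bbbk[\GG]}\Bbbk[\GG]_f\;\longrightarrow\;\ul{\Hom}_{\Gamma_{\GG_0}}(\Gamma,\Bbbk[\GG_0]_{\bar f})
\]
is an isomorphism — i.e.\ that $\ul{\Hom}_{\Gamma_{\GG_0}}(\Gamma,-)$ is compatible with the localization $\Bbbk[\GG_0]\to\Bbbk[\GG_0]_{\bar f}$, equivalently with filtered colimits of $\Gamma_{\GG_0}$-modules.

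For this I would invoke the finite filtration $\Gamma_{\bullet}$: by Lemmas~\ref{lemma gr dist algs commutes} and~\ref{lemma grG iso}, $\Gamma_0=\Gamma_{\GG_0}$ and each graded piece $\Gamma_i/\Gamma_{i-1}\cong\Gamma_{\GG_0}\otimes(\Gamma_{\NN})_i$ is a finite free left $\Gamma_{\GG_0}$-module, with $(\Gamma_{\NN})_i$ of finite length. For a finite free module, $\ul{\Hom}_{\Gamma_{\GG_0}}(\Gamma_{\GG_0}\otimes V,-)\cong(-)\otimes V^{\ast}$ commutes with all colimits, and since finite-length objects are compact the higher $\ul{\Ext}^{\bullet}_{\Gamma_{\GG_0}}(\Gamma_{\GG_0}\otimes V,-)$ commute with filtered colimits as well; a dévissage up the finite filtration of $\Gamma$, using the long exact sequences for $\ul{\Hom}_{\Gamma_{\GG_0}}(-,-)$ and exactness of localization, then transfers the compatibility to $\Gamma$ itself. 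Compatibility of $\Phi$ with the restriction maps of $\FF$ is immediate from naturality of the adjunction.

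I expect the dévissage to be the delicate point: $\ul{\Hom}_{\Gamma_{\GG_0}}(-,N)$ is only left exact, and neither is $\Gamma_{\GG_0}$ Noetherian nor does $\mathscr{C}$ have enough projectives in general, so the groups $\ul{\Ext}^1_{\Gamma_{\GG_0}}(\Gamma/\Gamma_i,-)$ must be genuinely tracked rather than dismissed. The cleanest way to circumvent this, which I would try to prove first, is an internal PBW statement — $\Gamma\cong\Gamma_{\GG_0}\otimes\Gamma_{\NN}$ \emph{as left $\Gamma_{\GG_0}$-modules}, not merely on associated graded — since then $\ul{\Hom}_{\Gamma_{\GG_0}}(\Gamma,-)\cong(-)\otimes\Gamma_{\NN}^{\ast}$ commutes with all colimits on the nose and the Proposition follows at once from Theorem~\ref{thm presentation k[G]}.
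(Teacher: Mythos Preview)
Your approach is correct and uses the same key ingredient as the paper --- the finite filtration $\Gamma_\bullet$ with graded pieces free over $\Gamma_{\GG_0}$ --- but the paper organises the argument more economically. Rather than separately establishing sheaf-preservation (via limit-preservation) and localisation-compatibility (via filtered colimits), the paper proves once that the functor $\ul{\Hom}_{\Gamma_{\GG_0}}(\Gamma,-)$ is \emph{exact}: the cofiltration $\Gamma/\Gamma_\bullet$ induces a finite filtration on this functor with layers $\ul{\Hom}_{\Gamma_{\GG_0}}(\Gamma_{\GG_0}\otimes X_i,-)\cong(-)\otimes X_i^{\ast}$, each exact, hence the whole functor is exact. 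Exactness yields both the sheaf property and compatibility with localisation at one stroke, and then one simply reruns the filtered argument of Theorem~\ref{thm presentation k[G]} on each open.

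Your concern about the d\'evissage is in fact easily dispatched, and the PBW statement you hoped for is immediate: since each quotient $\Gamma_i/\Gamma_{i-1}\cong\Gamma_{\GG_0}\otimes(\Gamma_{\NN})_i$ is \emph{free} (hence projective) over $\Gamma_{\GG_0}$, the short exact sequences $0\to\Gamma_{i-1}\to\Gamma_i\to\Gamma_i/\Gamma_{i-1}\to 0$ split as left $\Gamma_{\GG_0}$-modules, and inductively $\Gamma\cong\Gamma_{\GG_0}\otimes\Gamma_{\NN}$ as a left $\Gamma_{\GG_0}$-module. So no $\Ext^1$ bookkeeping is needed; either your route or the paper's goes through without obstruction, and they differ only in packaging.
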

	\begin{proof}
		Let us view $\ul{\Hom}_{\Gamma_{\GG_0}}(\Gamma,-)$ as functor from $\Gamma_{\GG_0}$-modules in $\Vec$ to $\mathscr{C}$.  Then we claim that this functor is exact.  Indeed, the cofiltration $\Gamma/\Gamma_{\bullet}$ on $\Gamma$ induces a filtration on this functor. Since we showed that $\gr \Gamma \simeq \Gamma_{\gr \GG}\simeq \Gamma_{\GG_0}\otimes \Gamma_{\mathcal{N}}$ (see Lemma~\ref{lemma grG iso}) the layers are given by $\ul{\Hom}_{\Gamma_{\GG_0}}(\Gamma_{\GG_0}\otimes X_i,-)$ where $X_i\in\mathscr{C}$.  Since these layers are clearly exact functors, the whole functor is as well.  
		
		It follows that the functor $|\UU|\mapsto \ul{\Hom}_{\Gamma_{\GG_0}}(\Gamma,\Bbbk[\UU_0])$ defines a sheaf on $|\GG|$.  This sheaf admits a natural map from $\OO_{\GG}$, and by using a parallel argument to that in Theorem \ref{thm presentation k[G]}, it is an isomorphism.
	\end{proof}
	
	\section{Frobenius twist}\label{FrobTw}
	
	Write $\mathscr{C}^{(1)}$ for the Frobenius twist of $\mathscr{C}$, which is equivalent to $\mathscr{C}$ as an additive symmetric monoidal category,  but we alter the $\Bbbk$-linear structure by letting $\lambda$ act by $\lambda^p$ for $\lambda\in\Bbbk$.  Recall the Frobenius functor $\operatorname{Fr}:\mathscr{C}\to\mathscr{C}^{(1)}\boxtimes\operatorname{Ver}_p$ is a $\Bbbk$-linear symmetric monoidal functor defined by the composition of functors:
	\[
	\mathscr{C}\xto{\otimes p} \operatorname{Rep}_{\mathscr{C}}C_p\to \mathscr{C}^{(1)}\boxtimes\operatorname{Ver}_p.
	\]
	The first functor is given by $X\mapsto X^{\otimes p}$, and the second functor is described by certain cohomological functors (see \cite{C1,EO}).  {An equivalent description is given by} applying the semisimplification functor from $\Rep C_p$ to $\mathrm{Ver}_p$ for $\Rep_{\mathscr{C}}C_p$ viewed as a category of comodules in $\Rep C_p$ over some coalgebra, see \cite{CEO1} for more details.
	
	Now let $A$ be a commutative algebra in $\mathscr{C}$. View $A$ as an object of $\operatorname{Rep}_{C_p}\mathscr{C}$ with trivial $C_p$ action.  Then multiplication $A^{\otimes p}\to A$ is equivariant with respect to the $C_p$-action, and thus gives rise to a natural map:
	\[
	\phi_A:\operatorname{Fr}(A)\to A.
	\]
	Moreover, $\phi_A$ is an algebra morphism in $\mathscr{C}^{(1)}\boxtimes\operatorname{Ver}_p$, where $A=A\boxtimes \mathbf{1}$.  Write $A^{[1]}:=\operatorname{im}\phi_A$, so that $A^{[1]}$ is a subalgebra of $A$ inside of $\mathscr{C}$. 
    
    \begin{definition}
        We call $A^{[1]}$ the \emph{Frobenius twist} of $A$. For $r\in\N$, define $A^{[r]}$ inductively by $A^{[r]}:=(A^{[r-1]})^{[1]}$. 
    \end{definition}   For an object $X$ in a tensor category and $n\in\mathbb{N}$, we denote by $(X^{\otimes n})^{S_n}$ its $n$-th divided power, {\it i.e.} the maximal $S_n$-invariant subobject of $X^{\otimes n}$.
	
	\begin{lemma}
		We have $A^{[1]}=\operatorname{im}\left((A^{\otimes p})^{S_p}\to A\right)$.
	\end{lemma}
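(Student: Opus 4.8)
The plan is to unwind the definitions of $\operatorname{Fr}$ and of $\phi_A$ and reduce the claim to a Tate cohomology computation for the cyclic group $C_p$. Here $\Gamma^pA$ denotes the $p$th divided power of $A$, i.e.\ the subobject $(A^{\otimes p})^{C_p}\subseteq A^{\otimes p}$ of invariants for the cyclic permutation action, equipped with the morphism to $A$ obtained by restricting the multiplication $m\colon A^{\otimes p}\to A$; as the argument will show, replacing $C_p$-invariants by $S_p$-invariants here changes nothing, since the ``mixed'' invariants get sent to $0$ regardless.

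First I would recall, from the construction of $\operatorname{Fr}$ via cohomological functors / semisimplification (\cite{C1, EO, CEO1}), that the functor $\operatorname{Rep}_{\mathscr{C}}C_p\to\mathscr{C}^{(1)}\boxtimes\operatorname{Ver}_p$ in the definition of $\operatorname{Fr}$ has the following property: extracting the $L_1=\mathbf 1$-isotypic component in the $\operatorname{Ver}_p$-factor computes, up to the Frobenius twist of the $\Bbbk$-linear structure, Tate cohomology, $M\mapsto\hat{H}^0(C_p,M)=M^{C_p}/N\cdot M$, where $N=\sum_{i=0}^{p-1}\sigma^i$ is the norm. (Concretely, the semisimplification $\operatorname{Rep}C_p\to\operatorname{Ver}_p$ annihilates the free module, which is precisely the part of $M^{C_p}$ detected by $N\cdot M$.) Precomposing with $X\mapsto X^{\otimes p}$, the $\mathbf 1$-component of $\operatorname{Fr}(X)$ is thus $\hat{H}^0(C_p,X^{\otimes p})$. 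Since $A$ is equipped with the trivial $C_p$-action we have $\hat{H}^0(C_p,A)=A$ (the norm acts as multiplication by $p=0$), and the $C_p$-equivariant morphism $m$ induces $\hat{H}^0(C_p,m)\colon\hat{H}^0(C_p,A^{\otimes p})\to A$; by construction $\phi_A$ \emph{is} this induced morphism, because the target $A\boxtimes\mathbf 1$ of $\phi_A$ is $\mathbf 1$-isotypic, so the only nonzero isotypic component of $\phi_A$ is the $\mathbf 1$-component, namely $\hat{H}^0(C_p,-)$ applied to $m$.

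It then remains to compare images. Since $A$ is commutative and $\Bbbk$ has characteristic $p$, the multiplication satisfies $m\circ\sigma^i=m$, hence $m\circ N=p\,m=0$; thus $m|_{(A^{\otimes p})^{C_p}}$ annihilates $N\cdot A^{\otimes p}$ and factors as the epimorphism $(A^{\otimes p})^{C_p}\twoheadrightarrow\hat{H}^0(C_p,A^{\otimes p})$ followed by $\hat{H}^0(C_p,m)$. As the first map is an epimorphism, both have the same image, and therefore
\[
A^{[1]}=\operatorname{im}\phi_A=\operatorname{im}\hat{H}^0(C_p,m)=\operatorname{im}\bigl((A^{\otimes p})^{C_p}\xrightarrow{m}A\bigr)=\operatorname{im}(\Gamma^pA\to A).
\]
The one step that requires genuine care is the first: checking precisely that the $\mathbf 1$-isotypic part of the Frobenius functor is Tate $\hat{H}^0$ of the $p$th tensor power, and that $\phi_A$ agrees with the morphism this induces from $m$. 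This is bookkeeping with the definition of $\operatorname{Fr}$ in \cite{C1, EO, CEO1}, but one has to mind the $\mathscr{C}^{(1)}$-twist and the fact that $\operatorname{Fr}$ need not be exact — which is exactly why the proof is organized around the image of the explicit morphism $\hat{H}^0(C_p,m)$, where images are well behaved, rather than trying to commute $\operatorname{im}$ past $\operatorname{Fr}$.
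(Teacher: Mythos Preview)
There is a genuine gap. Your claimed identification of the $\mathbf 1$-isotypic component of the functor $\operatorname{Rep}_{\mathscr C}C_p\to\mathscr C^{(1)}\boxtimes\operatorname{Ver}_p$ with Tate cohomology $\hat H^0(C_p,-)$ is false. Already for $\mathscr C=\operatorname{Vec}$ and the indecomposable $C_p$-module $M_i$ of dimension $i$ with $1<i<p$, the norm $N=(\sigma-1)^{p-1}$ acts as zero on $M_i$, so $\hat H^0(C_p,M_i)=M_i^{C_p}\cong\Bbbk$; yet the semisimplification of $M_i$ is the simple $L_i\in\operatorname{Ver}_p$, whose $\mathbf 1$-component vanishes. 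Your parenthetical justification (``semisimplification annihilates the free module, which is precisely the part of $M^{C_p}$ detected by $N\cdot M$'') overlooks that non-free, non-trivial indecomposables contribute to $\hat H^0$ without contributing to the $\mathbf 1$-component. The correct description of the $\mathbf 1$-component is the quotient of $M^{C_p}$ by the negligible morphisms $\mathbf 1\to M$, namely by $M^{C_p}\cap(\sigma-1)M$, which strictly contains $N\cdot M$ whenever such $M_i$ occur.

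Your argument is easily repaired once this is corrected: since $m\circ(\sigma-1)=0$ by commutativity of $A$, the restriction $m|_{(A^{\otimes p})^{C_p}}$ kills all of $(\sigma-1)A^{\otimes p}\cap(A^{\otimes p})^{C_p}$ and hence factors through the correct quotient, so the image of $\phi_A$ still coincides with $\operatorname{im}\bigl((A^{\otimes p})^{C_p}\to A\bigr)$. The paper takes a different route that sidesteps this issue by working with $S_p$ rather than $C_p$: it invokes the enriched Frobenius $\operatorname{Fr}^{en}\colon\mathscr C\to\mathscr C^{(1)}\boxtimes(\operatorname{Rep}S_p)^{ss}$ and the functor $\operatorname{Fr}_+$ from \cite{C1}, so that the image is identified directly with that of $(A^{\otimes p})^{S_p}=\Gamma^pA$, the standard divided power, with no detour through $C_p$-invariants or Tate cohomology.
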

	
	\begin{proof}
		We may view $A^{\otimes p}$ as an object of either $\mathscr{C'}:=\mathscr{C}^{(1)}\boxtimes\operatorname{Rep}C_p$ or $\mathscr{C}'':=\mathscr{C}^{(1)}\boxtimes\operatorname{Rep}S_p$.  Notice that we have a natural tensor functor $\mathscr{C}''\to\mathscr{C}'$, and because $C_p$ is a Sylow subgroup of $S_p$, it induces a tensor functor fitting into the following diagram with $\operatorname{Fr}^{en}$ the enriched Frobenius functor from \cite{CEO2}:
		\[
		\xymatrix{
			& \mathscr{C}\ar[dr]\ar[dl] \ar@/^5pc/[ddr]^{\operatorname{Fr}} \ar@/_5pc/[ddl]_{\operatorname{Fr}^{en}}& \\
			\mathscr{C}''\ar[rr]\ar[d] & & \mathscr{C}'\ar[d]\\
			\mathscr{C}^{(1)}\boxtimes(\operatorname{Rep}S_p)^{ss}\ar@{-->}[rr] & & \mathscr{C}^{(1)}\boxtimes\operatorname{Ver}_p.
		}
		\]
		Here we write $(\operatorname{Rep}S_p)^{ss}$ for the semisimplification of $\Rep S_p$.  Because $A$ has a trivial action of $S_p$, upon semisimplification the morphism $A^{\otimes p}\to A$ will have image given by the image of $(A^{\otimes p})^{S_p}$ inside $A$ (or equivalently, the image of $\operatorname{Fr}_+(A)\to A$, with $\operatorname{Fr}_+$ the `identity component' of $\operatorname{Fr}^{en}$ as discussed in detail in \cite{C1}), and so we are done. 
	\end{proof}

	\begin{remark}
		For $\mathscr{C}=\mathrm{Vec}$, the extension of scalars $A^{(1)}=\Bbbk\otimes_{\Bbbk} A$ of $A$ along the Frobenius homomorphism on $\Bbbk$ is usually referred to as the Frobenius twist of $A$. There is the $\Bbbk$-linear $p$-th power map
		$$A^{(1)}\to A,\quad 1\otimes a\mapsto a^p.$$
		The algebra $A^{[1]}$ is the image of the above morphism. However, $A^{(1)}\twoheadrightarrow A^{[1]}$ is only an isomorphism if $A$ is reduced. This is why we use the notation $A^{[1]}$ in place of $A^{(1)}$ for our notion of Frobenius twist.
	\end{remark}

	\subsection{Properties of Frobenius twist}
	As above, $A$ denotes a commutative algebra in $\Cs$.
	
	\begin{lemma}\label{lemma A finite over Fr(A)}
		Suppose that $A$ is finitely generated.
		\begin{enumerate}
			\item If $A\neq\mathbf{1}$, then $A^{[1]}$ is a proper subalgebra of $A$.
			\item $A$ is finite over $A^{[1]}$. 
			\item $A^{[1]}$ is a finitely generated algebra.
			\item The kernel of the natural morphism $\ol{A^{[1]}}\to\bar{A}$ is a nilpotent ideal.  Further, the image contains the subalgebra $(\bar{A})^{[r]}=\{a^{p^r}|a\in \bar{A}\}$ for some $r\in\N$. 
			\item The morphism $\operatorname{Spec} A\to\operatorname{Spec}A^{[1]}$ is a universal homeomorphism.
		\end{enumerate}
	\end{lemma}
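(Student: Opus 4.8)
The plan is to prove the five assertions more or less in the order stated, using the identification $A^{[1]}=\operatorname{im}(\Gamma^p A\to A)$ from the preceding lemma as the working definition, together with the commutative-algebra tools already established ((GR), Lemma~\ref{lemma finite subalg}, Corollary~\ref{Cor:AM}, Lemma~\ref{lemma univ homeo stacks}). The key observation underlying everything is that over $\Vec$ one has the honest $p$-th power map $A^{(1)}\to A$ whose image is the classical Frobenius subalgebra $\{a^p\}$, and that the general statement reduces to the $\Vec$ case via the functors $A\mapsto \ol A$ and $A\mapsto A_{(0)}$, since those control both finiteness and the topology of $\Spec$.

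First I would treat (2), (3), (4) together, as they feed into (1) and (5). For (4): the composite $\Gamma^p A\to A\to\ol A$ factors through $\ol{A^{[1]}}$, and I claim its image contains $\ol A{}^{[r]}=\{a^{p^r}\mid a\in\ol A\}$ for suitable $r$. Indeed, pick finitely many algebra generators $x_1,\dots,x_n$ of $\ol A$ coming from a compact generating object $X\subset\ol A$; the $p$-th power of each monomial in the $x_i$ lies in the image of $(X^{\otimes p})^{S_p}\to\ol A$, hence in the image of $\Gamma^p A\to\ol A$ after lifting, so $\ol A{}^{[1]}$ contains enough $p$-th powers that $\ol A$ is integral over it; applying this inside $\ol A$ and using that $\ol A$ is reduced and finitely generated (so $\ol A{}^{[r]}$ is cofinal among the Frobenius subalgebras) gives the containment, and the kernel of $\ol{A^{[1]}}\to\ol A$ is nilpotent because passing to $A_{(0)}$ and invoking Lemma~\ref{lemma univ homeo stacks} (or directly: the kernel dies in $\ol A$, and $A_{nil}$ is nilpotent). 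Then (2): by Lemma~\ref{lemma finite subalg} applied to $A^{[1]}\to A$ it suffices that $\ol{A^{[1]}}\to\ol A$ be finite, which follows from (4) since $\ol A$ is a finitely generated algebra integral over the image of $\ol{A^{[1]}}$, hence finite over it, hence finite over $\ol{A^{[1]}}$. Then (3) is immediate from (2) and Lemma~\ref{lemma atiyah-mac}. For (1), if $A^{[1]}=A$ then $A$ would be generated by $p$-th powers in a way forcing, after passing to $\ol A_{(0)}$, the classical Frobenius map $\ol A_{(0)}^{(1)}\to \ol A_{(0)}$ to be surjective; for a finitely generated reduced $\Bbbk$-algebra that happens only for $\ol A_{(0)}=\Bbbk$ (look at dimensions/at a smooth point, or note the cotangent space would have to vanish), whence $A=\mathbf 1$ by Lemma~\ref{lemma inftsml is local fin length} and its surrounding discussion — I would phrase this cleanly as: $\Spec\ol A_{(0)}$ has a smooth $\Bbbk$-point where Frobenius is not surjective on local rings unless the point is isolated of dimension $0$, and a reduced finite-length $\Bbbk$-algebra that is its own Frobenius subalgebra is $\Bbbk$.

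Finally (5): the morphism $\Spec A\to\Spec A^{[1]}$ is a universal homeomorphism by Lemma~\ref{lemma even embedding univ homeo}(2), which reduces us to showing $(\Spec A)_0\to(\Spec A^{[1]})_0$ is a universal homeomorphism, i.e.\ that $\ol{A^{[1]}}\to\ol A$ induces one on spectra; further reducing along $A\mapsto A_{(0)}$ and using Lemma~\ref{lemma univ homeo stacks}, we need the kernel locally nilpotent (that is (4)) and every element of $\ol A_{(0)}$ to have a $p^n$-th power in the image — which is exactly the content of the second half of (4). I expect the main obstacle to be part (4): making precise and correct the claim that the image of $\Gamma^p A\to A$, after reducing mod the nilradical, contains an honest Frobenius subalgebra of $\ol A$. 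The subtlety is that $\Gamma^p A=(A^{\otimes p})^{S_p}$ need not literally be spanned by symmetrized tensors $x\otimes\cdots\otimes x$ in a categorical setting, so one must argue via a compact generating object and the $S_p$-equivariance of multiplication (as in the proof of the previous lemma) that the image nonetheless contains $p$-th powers of a generating set; once that is pinned down, integrality and the universal-homeomorphism statements are formal consequences of the machinery already in place.
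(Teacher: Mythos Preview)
Your overall architecture matches the paper's: parts (3) and (5) are deduced exactly as you suggest (from (2) via Lemma~\ref{lemma atiyah-mac}, and from (4) via Lemma~\ref{lemma univ homeo stacks} and Lemma~\ref{lemma even embedding univ homeo}(2)), and the paper likewise routes (2) through (GR) and Lemma~\ref{lemma finite subalg}. Two points, however, deserve correction.

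\textbf{Part (1).} Your reduction to Frobenius surjectivity on $\ol{A}_{(0)}$ is roundabout and not obviously valid (it is unclear why $A^{[1]}=A$ forces the classical Frobenius on $\ol{A}_{(0)}$ to be surjective). The paper's argument is a one-liner: pick a nonzero maximal ideal $\m$, split $A=\mathbf 1\oplus\m$, and observe that under $\Gamma^pA\to A$ the cross-terms $\Gamma^{p-j}\mathbf 1\otimes\Gamma^j\m$ with $0<j<p$ map to zero (the relevant binomial coefficients vanish mod $p$), so $A^{[1]}\subset\mathbf 1\oplus\m^p$, which is proper by Nakayama.

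\textbf{Part (4).} You have correctly located this as the crux, but you have misdiagnosed the obstacle. The worry about whether $\Gamma^pA$ is spanned by symmetric tensors is a red herring: for any $x\in A_{(0)}$ one has $x^p\in A^{[1]}$ tautologically. The genuine difficulty is the word ``after lifting'': generators $x_i$ of $\ol A$ typically do \emph{not} lift to $A_{(0)}$, so you cannot get $x_i^p$ into the image of $A^{[1]}\to\ol A$. What (GR) gives you is that some power $x_i^{n_i}$ lifts to $A_{(0)}$; then $(x_i^{n_i})^p=x_i^{pn_i}$ lies in $A_{(0)}^{[1]}\subset A^{[1]}$ and hence in the image. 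This is why $r>1$ in general---indeed the example $A=S(P)$ in $\mathrm{Ver}_4^+$ (Remark~\ref{rmk caution twist surj}) has image $\Bbbk[y^4]=\ol A^{[2]}\subsetneq\ol A^{[1]}=\Bbbk[y^2]$. Once you know $x_i^{pn_i}$ lies in the image, you can take the $n_i$ to be powers of $p$ (e.g.\ by applying Lemma~\ref{lemma univ homeo stacks} to $A_{(0)}\to\ol A$, which is a universal homeomorphism by Lemma~\ref{lemma even embedding univ homeo}(3)), and then a uniform $r$ works for all generators, hence for all of $\ol A$ by the Frobenius identity $(\sum c_\alpha a^\alpha)^{p^r}=\sum c_\alpha^{p^r}(a^\alpha)^{p^r}$. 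The paper organises (2) slightly differently---it builds an explicit subalgebra $B\subset A_{(0)}$ from the lifts and notes $B^{[1]}\subset A^{[1]}$ directly---but the content is the same.
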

	\begin{proof}
		Let $\m$ denote a nonzero maximal ideal of $A$, so that we have a splitting $A=\mathbf{1}\oplus\m$.  Then we see that $A^{[1]}\sub\mathbf{1}\oplus\m^p$.  Since $\m^p\neq\m$ unless $\m=0$ (Thm.~6.1.5 of \cite{C2}), $A^{[1]}$ must be a proper subalgebra.
		
		For (2), choose generators $f_1,\dots,f_k$ of $\bar{A}$ over $\Bbbk$.  Then by GR, there exist $n_1,\dots,n_k\in\N$ such that $f_1^{n_1},\dots,f_k^{n_k}$ lift to $A$.  Write $B=B_{(0)}\sub A$ for the subalgebra of $A$ generated by these lifts, so that $B$ splits off $A$ as an object in $\mathscr{C}$.  Then $B^{[1]}$ is generated by $f_1^{n_1p},\cdots,f_k^{n_kp}$, meaning that $B^{[1]}\to A^{[1]}\to A\to \bar{A}$ is a finite morphism by construction.  We conclude by Lemma \ref{lemma finite subalg}.
		
		Part (3) now follows from (2) and Lemma \ref{lemma atiyah-mac}.
		
		Part (4) is clear.  For (5), we use (4)  and Lemma \ref{lemma univ homeo stacks}  to show that $\operatorname{Spec}\bar{A}\to\operatorname{Spec}\ol{A^{[1]}}$ is a universal homeomorphism.  The result now follows from (2) of Lemma \ref{lemma even embedding univ homeo}.
	\end{proof}
	    
	\begin{lemma}\label{lemma Fr and ring maps localization}  Let $A,B$ be commutative algebras.
		\begin{enumerate}
			\item We have a canonical isomorphism of algebras $(A\otimes B)^{[1]}\cong A^{[1]}\otimes B^{[1]}$.
			\item If $A\to B$ is injective, then so is $A^{[1]}\to B^{[1]}$.  
			\item If $A\to B$ is finite and { $A$ is finitely generated}, then $A^{[1]}\to B^{[1]}$ is finite.
			\item If $f\in A_{(0)}$, then $f^p\in A^{[1]}$, and
			\[
			(A^{[1]})_{f^p}\cong (A_{f^p})^{[1]}.
			\]
		\end{enumerate}    
	\end{lemma}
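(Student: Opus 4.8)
The common engine for all four parts is the description $A^{[1]}=\operatorname{im}(\Gamma^pA\to A)$ from the previous lemma (equivalently $A^{[1]}=\operatorname{im}\phi_A$), together with the fact that $\Gamma^p(-)=((-)^{\otimes p})^{S_p}$, and hence $(-)^{[1]}$, is assembled from $(-)^{\otimes p}$, finite (co)products, kernels and images; since all of these commute with filtered colimits in the Grothendieck category $\mathscr{C}$ ($S_p$ being finite), so does $(-)^{[1]}$.

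Part (1) I would deduce from the fact that $\operatorname{Fr}\colon\mathscr{C}\to\mathscr{C}^{(1)}\boxtimes\operatorname{Ver}_p$ is symmetric monoidal, so $\operatorname{Fr}(A\otimes B)\cong\operatorname{Fr}(A)\otimes\operatorname{Fr}(B)$ as algebras; as the multiplication on $A\otimes B$ is built from $m_A$ and $m_B$ via the braiding, $\phi_{A\otimes B}$ corresponds under this identification to $\phi_A\otimes\phi_B$. Because $\otimes$ is biexact, $\operatorname{im}(\phi_A\otimes\phi_B)=\operatorname{im}\phi_A\otimes\operatorname{im}\phi_B$, and as the target is $(A\otimes B)\boxtimes\mathbf{1}$ this image, read inside $\mathscr{C}$, is the subalgebra $A^{[1]}\otimes B^{[1]}\subseteq A\otimes B$, which is the asserted algebra isomorphism.

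For (2) and (3) the preliminary point is that $\Gamma^pA\to A$ is natural in algebra morphisms, so for an algebra map $A\to B$ the composite $A^{[1]}\hookrightarrow A\to B$ has image in $B^{[1]}$ and thus factors through $B^{[1]}\hookrightarrow B$. For (2): if $A\to B$ is a monomorphism then $A^{[1]}\to B$ is a monomorphism factoring through the monomorphism $B^{[1]}\hookrightarrow B$, so $A^{[1]}\to B^{[1]}$ is a monomorphism. For (3): by Lemma~\ref{lemma A finite over Fr(A)}, $A$ is finite over $A^{[1]}$ and $A^{[1]}$ is finitely generated, hence Noetherian; together with $B$ finite over $A$ this gives $B$ finite over $A^{[1]}$, so the $A^{[1]}$-submodule $B^{[1]}\subseteq B$ is finite over $A^{[1]}$ as well.

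For (4), $f^p\in A^{[1]}$ because $f^{\otimes p}\colon\mathbf{1}=\mathbf{1}^{\otimes p}\to A^{\otimes p}$ is $S_p$-equivariant for the trivial action on the source, hence factors through $(A^{\otimes p})^{S_p}=\Gamma^pA$, and its composite with $\Gamma^pA\to A$ is $f^p$. For the localization, I would write $A_{f^p}=\varinjlim\bigl(A\xto{f^p}A\xto{f^p}\cdots\bigr)$ as a filtered colimit in $\mathscr{C}$. Applying $\Gamma^p$ (which commutes with the colimit), the structure map $\Gamma^p(A_{f^p})\to A_{f^p}$ becomes a colimit of copies of $\Gamma^pA\to A$; but since $\Gamma^p$ of ``multiplication by $f^p$'' followed by $\Gamma^pA\to A$ equals $\Gamma^pA\to A$ followed by ``multiplication by $f^{p^2}$'', the transition maps on the target become multiplication by $f^{p^2}\in A^{[1]}$. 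Passing to images, and using that $(-)^{[1]}$ commutes with filtered colimits, yields $(A_{f^p})^{[1]}\cong\varinjlim\bigl(A^{[1]}\xto{f^{p^2}}A^{[1]}\xto{f^{p^2}}\cdots\bigr)=(A^{[1]})_{f^{p^2}}=(A^{[1]})_{f^p}$, the last equality because $f^{p^2}=f^p\cdot(f^p)^{p-1}$ with $(f^p)^{p-1}\in A^{[1]}$. The main obstacle I anticipate is precisely this bookkeeping in (4): matching the localization with the filtered colimit and tracking the index shift (level $k$ of the $\Gamma^p$-tower lands in level $pk$ of the $A$-tower, so the transition acquires a $p$-th power), plus the minor observation that inverting $f^p$ also inverts $f$, so $A_{f^p}=A_f$. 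Parts (1)--(3) are essentially formal diagram manipulations with images in an abelian tensor category.
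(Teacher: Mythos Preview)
Your proof is correct and follows essentially the same approach as the paper for all four parts: monoidality of $\operatorname{Fr}$ for (1), the obvious factoring for (2), Noetherianity of $A^{[1]}$ for (3), and commuting images with filtered colimits for (4). The only cosmetic difference is in the bookkeeping for (4): the paper localizes $\Gamma^p A$ directly at the algebra element $f^{\otimes p}$ (which maps to $f^p\in A^{[1]}$, so the transition stays $f^p$ throughout and one ends with $\Gamma^p(A)_{f^{\otimes p}}\cong\Gamma^p(A_f)\cong\Gamma^p(A_{f^p})$), whereas you apply $\Gamma^p$ functorially to the map $\cdot f^p$, pick up the index shift $k\mapsto pk$ and the resulting $f^{p^2}$, and then observe $(A^{[1]})_{f^{p^2}}=(A^{[1]})_{f^p}$; both routes are valid and amount to the same computation.
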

	\begin{proof}
		Part (1) follows from the monoidality of the Frobenius functor.  Part (2) is immediate.  For (3), we use (2) of Lemma \ref{lemma A finite over Fr(A)} to obtain that $A^{[1]}\to A\to B$ is a finite morphism.  Since $B^{[1]}$ is an $A^{[1]}$-submodule of $B$, and $A^{[1]}$ is Noetherian by (3) of Lemma \ref{lemma A finite over Fr(A)}, we conclude that $B^{[1]}$ is also finite over $A^{[1]}$.
		
		For (4), we see that:
		\begin{eqnarray*}
			(A^{[1]})_{f^p}& = &\lim\limits_{\rightarrow}(A^{[1]}\xto{f^p}A^{[1]}\xto{f^p}\cdots)\\
			& = &\lim\limits_{\to}\left(\operatorname{im}((A^{\otimes p})^{S_p}\to A)\xto{f^{p}}\operatorname{im}((A^{\otimes p})^{S_p}\to A)\xto{f^{p}}\cdots\right)\\
			& = &\operatorname{im}(((A^{\otimes p})^{S_p})_{f^{\otimes p}}\to A_{f^p}).
		\end{eqnarray*}
		Here we have used the exactness of filtered colimits.  It remains to observe that 
		\[
		((A^{\otimes p})^{S_p})_{f^{\otimes p}}\cong (A_f^{\otimes p})^{S_p}\cong (A_{f^p}^{\otimes p})^{S_p},
		\]
        again by exactness of localisation.
	\end{proof}

    \begin{example}\label{example verp frob twist}
        If $\Cs=\operatorname{Ver}_p$, for any commutative algebra $A$ we may choose a surjection $S(X)\onto A$ for some $X\in\operatorname{Ver}_p$.  Because the Frobenius functor $\operatorname{Fr}$ is exact on $\operatorname{Ver}_p$, we obtain a surjection $S(X)^{[1]}\onto A^{[1]}$.  We have a decomposition $X=\bigoplus\limits_i L_i^{\oplus I_i}$ for some index sets $I_i$, then 
        \[
        S(X)^{[1]}\cong \bigotimes_{i}\bigotimes_{I_i}S(L_i)^{[1]}\cong \bigotimes_{I_0}S(\mathbf{1})^{[1]},
        \]
        where we take restricted tensor product.  Here we used (1), Lemma \ref{lemma Fr and ring maps localization} and exactness of $\operatorname{Fr}$ to distribute the Frobenius twist over the tensor product.  In particular, we see that $S(X)^{[1]}$ is a $\Bbbk$-algebra, implying that $A^{[1]}$ is too.
    \end{example}

\begin{remark}
    Frobenius twists and kernels of algebraic groups over $\operatorname{Ver}_p$ (using an equivalent definition) were studied and used in \cite{Ka} to prove the Steinberg tensor product theorem in this setting. 
\end{remark}

	\begin{remark}\label{rmk caution twist surj}
		It is \emph{not} true that Frobenius twist preserves surjectivity or pushouts of commutative algebras.  Indeed, let $\mathscr{C}=\operatorname{Ver}_4^+$ and $A=S(P)=\Bbbk[x,y]/x^2$ where $P=\Bbbk\langle x,y\rangle$ is the unique indecomposable projective.  Then we have $A^{[1]}=\Bbbk[y^4]$, but $\bar{A}=\Bbbk[y]$ and thus $\bar{A}^{[1]}=\Bbbk[y^2]$.  Further, we see that 
		\[
		\Bbbk[y^2]=(\bar{A}\otimes_A\bar{A})^{[1]}\not\cong \bar{A}^{[1]}\otimes_{A^{[1]}}\bar{A}^{[1]}=\Bbbk[y^2]\otimes_{\Bbbk[y^4]}\Bbbk[y^2]
		\]
	\end{remark}
	
	\begin{remark}
		We caution that $\ol{A^{[1]}}$ is not isomorphic to $\bar{A}^{[1]}$, as the example in Remark \ref{rmk caution twist surj} shows.
	\end{remark}
	
	\begin{definition}
		For a ringed space $\XX=(|\XX|,\OO_{\XX})$, write $\OO_X^{[1]}$ for the sheafification of the presheaf $U\mapsto\OO_X(U)^{[1]}$.  Then define $\XX^{[1]}$ to be the ringed space $(|\XX|,\OO_{X}^{[1]})$, and write \linebreak $\phi_{\XX}:\XX\to\XX^{[1]}$ for the natural map of locally ringed spaces.
	\end{definition}

	\begin{lemma}\label{lemma fr affine scheme}
		If $\XX=\operatorname{Spec}A$, then $\XX^{[1]}=\operatorname{Spec}A^{[1]}$.  
	\end{lemma}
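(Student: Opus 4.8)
The statement is that for an affine scheme $\XX=\Spec A$, the ringed space $\XX^{[1]}$ defined via sheafification of $U\mapsto\OO_\XX(U)^{[1]}$ agrees with $\Spec A^{[1]}$. First I would check that the two locally ringed spaces have the same underlying topological space. By (5) of Lemma~\ref{lemma A finite over Fr(A)}, the morphism $\Spec A\to\Spec A^{[1]}$ is a universal homeomorphism, and by construction $|\XX^{[1]}|=|\XX|$, so on underlying spaces both sides are identified with $|\Spec A|$ compatibly with the natural maps $\phi_\XX$ and $\phi_A$. Hence it remains to identify the structure sheaves as sheaves of commutative algebras in $\mathscr{C}$ on this common space.

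The key point is to compute sections over a basis of opens. Both $\Spec A$ and $\Spec A^{[1]}$ have a basis of principal opens, and since $A$ is finite over $A^{[1]}$ (part (2) of Lemma~\ref{lemma A finite over Fr(A)}) with both finitely generated (part (3)), the universal homeomorphism identifies principal opens appropriately; in particular, for $f\in A_{(0)}$ we have $f^p\in A^{[1]}$ by part (4) of Lemma~\ref{lemma Fr and ring maps localization}, and the principal opens $D(f)\sub\Spec A$ and $D(f^p)\sub\Spec A^{[1]}$ correspond under the homeomorphism. The structure sheaf of $\Spec A^{[1]}$ assigns to $D(f^p)$ the localization $(A^{[1]})_{f^p}$. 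On the other hand, the sheaf $\OO_\XX^{[1]}$ is the sheafification of $U\mapsto\OO_\XX(U)^{[1]}$, so its sections over $D(f)$ agree with the sections of the presheaf localized at $D(f)$; but the presheaf value $\OO_\XX(D(f))^{[1]}=(A_f)^{[1]}$ is already a sheaf on the basis of principal opens contained in $D(f)$, because part (4) of Lemma~\ref{lemma Fr and ring maps localization} gives exactly the localization compatibility $(A^{[1]})_{f^p}\cong(A_{f^p})^{[1]}$, which is the sheaf axiom on principal opens. So on the basis of principal opens the presheaf $U\mapsto\OO_\XX(U)^{[1]}$ already satisfies the sheaf condition, its sheafification does not change its values there, and those values coincide with the structure sheaf of $\Spec A^{[1]}$.

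Concretely, the steps are: (i) establish the identification of topological spaces via Lemma~\ref{lemma A finite over Fr(A)}(5); (ii) observe that principal opens $D(f^p)$ of $\Spec A^{[1]}$ for $f\in A_{(0)}$ form a basis, using that $A_{(0)}\sub A^{[1]}$-worth of elements generate enough opens together with the finiteness of $A$ over $A^{[1]}$; (iii) use Lemma~\ref{lemma Fr and ring maps localization}(4) to see that $U\mapsto\OO_\XX(U)^{[1]}$ restricted to this basis is already separated and satisfies gluing, hence its sheafification has the same sections on the basis; (iv) match these sections with $(A^{[1]})_{f^p}=\OO_{\Spec A^{[1]}}(D(f^p))$; (v) check the identification is compatible with restriction maps and with the algebra structure in $\mathscr{C}$, which is routine from the functoriality of $(-)^{[1]}$. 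I expect the main obstacle to be step (iii): verifying that the naive Frobenius-twist presheaf is already a sheaf on principal opens requires knowing that finite intersections and covers of principal opens in $\Spec A$ are controlled by elements whose $p$-th powers lie in $A^{[1]}$, and that the localization formula of Lemma~\ref{lemma Fr and ring maps localization}(4) propagates to finite covers; this is where one must be slightly careful that the relevant gluing data really does descend, rather than merely doing so after passing to the reduced or even subquotients. The rest is bookkeeping with the universal homeomorphism and the definitions.
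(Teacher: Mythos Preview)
Your approach is correct and essentially the same as the paper's: the paper's proof is a terse two-line argument invoking exactly the two ingredients you identify, namely part (4) of Lemma~\ref{lemma Fr and ring maps localization} and the fact that the $\DD(f^p)$ for $f\in A_{(0)}$ form a basis of the topology. Your worry in step (iii) is easily dispatched: once you know via Lemma~\ref{lemma Fr and ring maps localization}(4) that the presheaf $U\mapsto\OO_\XX(U)^{[1]}$ agrees on the basis with the restriction of the known sheaf $\OO_{\Spec A^{[1]}}$, the sheaf condition on the basis is automatic and no separate gluing verification is needed.
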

	\begin{proof}
		This follows from (4) of Lemma \ref{lemma Fr and ring maps localization}, and the fact that the distinguished open subschemes $\DD(f^p)\sub\XX$ give a basis for the topology, where $f$ runs over elements of $A_{(0)}$.
	\end{proof}
	
	\begin{lemma}\label{lemma finite univ homeo}
		If $\XX$ is an algebraic scheme, then so is $\XX^{[1]}$, and we have $\XX(\Bbbk)=\XX^{[1]}(\Bbbk)$.  Further, $\phi_{\XX}$ is a finite, universal homeomorphism.
	\end{lemma}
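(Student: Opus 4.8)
The plan is to reduce all four assertions to the affine case treated in Lemma~\ref{lemma A finite over Fr(A)}, together with Lemma~\ref{lemma fr affine scheme}. Pick a finite affine open cover $\XX=\bigcup_i\UU_i$ with every $\Bbbk[\UU_i]$ finitely generated. Because $\OO_{\XX}^{[1]}$ is the sheafification of $U\mapsto\OO_{\XX}(U)^{[1]}$, its restriction to the open subspace $|\UU_i|$ is $\OO_{\UU_i}^{[1]}$, and Lemma~\ref{lemma fr affine scheme} identifies the latter with the structure sheaf of $\Spec\Bbbk[\UU_i]^{[1]}$ — the relevant input being the localization formula $(\Bbbk[\UU_i]_{f^p})^{[1]}\cong(\Bbbk[\UU_i]^{[1]})_{f^p}$ of Lemma~\ref{lemma Fr and ring maps localization}(4) on the basic opens $\DD(f^p)$, $f\in\Bbbk[\UU_i]_{(0)}$. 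Hence $|\XX^{[1]}|=|\XX|$ is covered by the affine opens $\UU_i^{[1]}\cong\Spec\Bbbk[\UU_i]^{[1]}$, whose coordinate rings are finitely generated by Lemma~\ref{lemma A finite over Fr(A)}(3); so $\XX^{[1]}$ is an algebraic scheme, and under these identifications $\phi_{\XX}$ restricts over $|\UU_i|$ to the morphism $\Spec\Bbbk[\UU_i]\to\Spec\Bbbk[\UU_i]^{[1]}$ induced by the inclusion $\Bbbk[\UU_i]^{[1]}\hookrightarrow\Bbbk[\UU_i]$.

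The properties of being finite and of being a universal homeomorphism are local on the target (for the second: any base change of $\phi_{\XX}$ is covered by base changes of the $\phi_{\UU_i}$, and being a homeomorphism is local), so it suffices to verify them for each of the affine morphisms just described; these are precisely parts (2) and (5) of Lemma~\ref{lemma A finite over Fr(A)}.

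It remains to show $\phi_{\XX}$ is a bijection on $\Bbbk$-points, and this is \textbf{the one step that needs an idea rather than a localization}: I would pass to bodies. For any scheme $\YY$ in $\mathscr{C}$ a $\Bbbk$-point is a map of locally ringed spaces $\Spec\Bbbk\to\YY$, and since $\Spec\Bbbk$ lies in $\operatorname{Lrs}_{\Vec}$ while $\YY\mapsto\YY_0$ is right adjoint to the inclusion $\operatorname{Lrs}_{\Vec}\hookrightarrow\operatorname{Lrs}_{\mathscr{C}}$, one has $\YY(\Bbbk)=\YY_0(\Bbbk)$ functorially; thus the map $\XX(\Bbbk)\to\XX^{[1]}(\Bbbk)$ is identified with the one induced by $(\phi_{\XX})_0\colon\XX_0\to(\XX^{[1]})_0$. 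By the first part $\phi_{\XX}$ is a universal homeomorphism, hence so is $(\phi_{\XX})_0$ by Lemma~\ref{lemma even embedding univ homeo}(2); and a universal homeomorphism of finite-type schemes over the algebraically closed, hence perfect, field $\Bbbk$ is bijective on $\Bbbk$-points — base-changing along a $\Bbbk$-point of the target gives a finite-type $\Bbbk$-scheme with one-point underlying space, i.e.\ the spectrum of an Artinian local $\Bbbk$-algebra whose residue field, being purely inseparable over $\Bbbk$, equals $\Bbbk$, so it carries a unique $\Bbbk$-point. Therefore $\XX(\Bbbk)=\XX_0(\Bbbk)=(\XX^{[1]})_0(\Bbbk)=\XX^{[1]}(\Bbbk)$. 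Beyond the bookkeeping in the first paragraph I anticipate no real difficulty; the subtle points are that $\OO_{\XX}^{[1]}$ really does localize to the twist of $\OO_{\UU_i}$ (Lemma~\ref{lemma fr affine scheme}) and the body reduction above, both of which are in hand.
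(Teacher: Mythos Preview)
Your proof is correct and, for the assertions that $\XX^{[1]}$ is algebraic and that $\phi_{\XX}$ is finite and a universal homeomorphism, it is essentially identical to the paper's: reduce to the affine case via Lemma~\ref{lemma fr affine scheme} and then invoke parts (2), (3), (5) of Lemma~\ref{lemma A finite over Fr(A)}. The paper's proof is simply terser and does not spell out the localization bookkeeping you included in your first paragraph.

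The one genuine difference is the claim $\XX(\Bbbk)=\XX^{[1]}(\Bbbk)$: the paper's proof does not address it at all, presumably regarding it as implicit. Your argument via bodies is correct and is a reasonable way to fill this gap. A slightly more direct route, avoiding the detour through $(\phi_{\XX})_0$ and Lemma~\ref{lemma even embedding univ homeo}(2), is to stay in the affine chart: a $\Bbbk$-point of $\Spec A$ is an algebra map $A\to\Bbbk$, restriction gives the map to $(\Spec A^{[1]})(\Bbbk)$, and the fibre over any $\Bbbk$-point of the target is $\Spec(A\otimes_{A^{[1]}}\Bbbk)$, which is a one-point finite $\Bbbk$-scheme (by parts (2) and (5) of Lemma~\ref{lemma A finite over Fr(A)}) and hence has a unique $\Bbbk$-point since $\Bbbk$ is algebraically closed. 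This is the same idea as yours, just without first passing to $\XX_0$; either way the substance is the observation you isolated, that a finite universal homeomorphism over an algebraically closed field is bijective on rational points.
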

	
	\begin{proof}
		The ringed space $\XX^{[1]}$ is again an algebraic scheme by Lemma \ref{lemma fr affine scheme} and (3) of Lemma \ref{lemma A finite over Fr(A)}.  The fact that $\phi_{\XX}$ is finite follows from (2) of Lemma \ref{lemma A finite over Fr(A)}.  Finally, $\phi_{\XX}$ is a universal homeomorphism by (5) of Lemma \ref{lemma A finite over Fr(A)} and the fact that this may be checked affine locally.
	\end{proof}
	
	Given a morphism $f:\XX\to\YY$ of schemes, write $f^{[1]}:\XX^{[1]}\to\YY^{[1]}$ for the induced map on the Frobenius twists.  It is not difficult to see that $f^{[1]}$ is again a morphism of locally ringed spaces, and thus is a morphism of schemes.  
	
	\begin{lemma}\label{lemma fr twist morphisms schemes}  Let $\XX,\YY$ be algebraic schemes.
		\begin{enumerate}
			\item We have a canonical isomorphism $(\XX\times\YY)^{[1]}\cong\XX^{[1]}\times\YY^{[1]}$.
			\item If $f:\XX\to\YY$ is finite, then so is $f^{[1]}:\XX^{[1]}\to\YY^{[1]}$.
			\item If $f:\XX\to\YY$ is an open immersion, then so is $f^{[1]}:\XX^{[1]}\to\YY^{[1]}$.
			\item  $\XX$ is separated, affine, quasi-affine, or proper if and only if $\XX^{[1]}$ is likewise. 
		\end{enumerate}
	\end{lemma}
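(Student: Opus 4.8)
The plan is to prove each of the four assertions by reducing, as far as possible, to the affine case and then invoking the already-established facts about the Frobenius twist of commutative algebras (Lemmas~\ref{lemma A finite over Fr(A)} and~\ref{lemma Fr and ring maps localization}) together with Lemma~\ref{lemma fr affine scheme}, which identifies $(\operatorname{Spec}A)^{[1]}$ with $\operatorname{Spec}A^{[1]}$. For (1), I would cover $\XX$ and $\YY$ by affine opens $\UU_i=\operatorname{Spec}A_i$, $\VV_j=\operatorname{Spec}B_j$; then $\UU_i\times\VV_j=\operatorname{Spec}(A_i\otimes B_j)$ is an affine open cover of $\XX\times\YY$, and by part (1) of Lemma~\ref{lemma Fr and ring maps localization} we have $(A_i\otimes B_j)^{[1]}\cong A_i^{[1]}\otimes B_j^{[1]}$, compatibly with restriction maps; applying $\operatorname{Spec}$ and gluing gives the canonical isomorphism $(\XX\times\YY)^{[1]}\cong\XX^{[1]}\times\YY^{[1]}$. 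The only subtlety is checking that the isomorphisms glue, i.e.\ that they are compatible on overlaps, which follows from the naturality of the isomorphism in Lemma~\ref{lemma Fr and ring maps localization}(1).

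For (2), finiteness is local on the target, so I may assume $\YY=\operatorname{Spec}B$ is affine; then $f$ finite means $\XX=\operatorname{Spec}A$ with $A$ a finite $B$-module, and part (3) of Lemma~\ref{lemma Fr and ring maps localization} gives that $A^{[1]}$ is finite over $B^{[1]}$, so $f^{[1]}$ is finite by Lemma~\ref{lemma fr affine scheme}. For (3), since $\phi_\XX$ and $\phi_\YY$ are homeomorphisms on underlying spaces (Lemma~\ref{lemma finite univ homeo}), the underlying map of $f^{[1]}$ is an open embedding whenever that of $f$ is; it remains to see $f^{[1]}$ is an open \emph{immersion} of ringed spaces, for which it suffices to check on a distinguished affine open $\DD(g^p)\sub\YY$ with $g\in B_{(0)}$ (these form a basis by the proof of Lemma~\ref{lemma fr affine scheme}), where part (4) of Lemma~\ref{lemma Fr and ring maps localization} identifies the restriction of $\OO_{\YY^{[1]}}$ with $(B^{[1]})_{g^p}\cong(B_{g^p})^{[1]}$ and the corresponding statement for $\XX$; an open immersion pulls such a structure sheaf back to the right localization, so $f^{[1]}$ is an open immersion.

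For (4), the key point is that $\phi_\XX:\XX\to\XX^{[1]}$ is finite and a universal homeomorphism (Lemma~\ref{lemma finite univ homeo}). Being quasi-affine, affine, separated, or proper are all properties invariant under such morphisms in an appropriate two-out-of-three sense: for affine/quasi-affine this is immediate since $\phi_\XX$ is affine and surjective (and one can argue as in Proposition~\ref{prop scheme properties}); for separated and proper one applies Proposition~\ref{prop rel properties} (or \cite[Chpt.~I, \S 5, Prop.~2.2]{DG}) to the composite $\XX\to\XX^{[1]}\to\operatorname{Spec}\Bbbk$, using that $\phi_\XX$ is proper (being finite) and surjective. Concretely, if $\XX$ has property (P) then $\XX^{[1]}$ does because a finite surjection to $\XX^{[1]}$ has the property; conversely if $\XX^{[1]}$ has (P) then so does $\XX$ since $\phi_\XX$ is finite (hence affine, separated, proper). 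I expect the main obstacle to be part (3): one must be careful that "open immersion" is checked correctly at the level of locally ringed spaces in $\mathscr{C}$ and that the basis of distinguished opens $\DD(g^p)$ genuinely suffices, since Frobenius twist does not commute with all localizations—only with those at $p$-th powers of even elements, which is exactly what part (4) of Lemma~\ref{lemma Fr and ring maps localization} provides.
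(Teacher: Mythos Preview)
Your arguments for (1)--(3) are essentially identical to the paper's: reduce to the affine case and invoke the corresponding parts of Lemma~\ref{lemma Fr and ring maps localization}. For (4), your treatment of \emph{separated} and \emph{proper} via the two-out-of-three property of $\XX\to\XX^{[1]}\to\operatorname{Spec}\Bbbk$ (using that $\phi_\XX$ is finite, hence proper and surjective) is a valid alternative to the paper's more direct arguments, which check closedness of the diagonal image and universal closedness by hand using that $\phi_\XX$ is a homeomorphism.

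There is, however, a gap in your handling of \emph{quasi-affine}. Saying ``this is immediate since $\phi_\XX$ is affine and surjective (and one can argue as in Proposition~\ref{prop scheme properties})'' does not suffice: having an affine surjection $\XX\to\XX^{[1]}$ gives you the implication $\XX^{[1]}$ quasi-affine $\Rightarrow$ $\XX$ quasi-affine, but not the converse, and Proposition~\ref{prop scheme properties} concerns $\XX_0$ and $\XX_{(0)}$, whose proofs do not transfer verbatim. The paper instead uses the explicit characterisation of quasi-affineness from \cite[Prop.~5.4.2(5)]{C3} together with the compatibility $(\XX_{f^p})^{[1]}\cong(\XX^{[1]})_{f^p}$ from Lemma~\ref{lemma Fr and ring maps localization}(4). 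You should either spell out this argument or give another reason why $\XX$ quasi-affine forces $\XX^{[1]}$ to be quasi-affine.
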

	\begin{proof}
		Part (1) follows from (1) of Lemma \ref{lemma Fr and ring maps localization}, and (2) follows from (3) of the same lemma.  Part (3)  may be checked affine locally on $\YY$, so suppose that $\YY=\Spec A$.  Then we may write $\XX=\cup_i D(f_i^p)$ for some $f_i\in A_{(0)}$.  Now Lemma \ref{lemma Fr and ring maps localization} implies that $\XX^{[1]}=\cup_iD(f_i^p)^{[1]}$, and thus remains an open subscheme of $\YY^{[1]}$.  
		
		Finally, we prove (4).  Since $\phi_{\XX}:\XX\to\XX^{[1]}$ is a homeomorphism, their diagonal morphisms will give the same topological subspace of $|\XX\times\XX|=|\XX^{[1]}\times\XX^{[1]}|$.  Therefore one image is closed if and only if the other is.  This proves $\XX$ is separated if and only if $\XX^{[1]}$ is.  On the other hand, $\phi_{\XX}:\XX\to\XX^{[1]}$ is affine, so we obtain that $\XX$ is affine whenever $\XX^{[1]}$ is and the converse follows from Lemma~\ref{lemma fr affine scheme}.  For quasi-affinity, we use Part (5) of the characterization given in \cite[Prop.~5.4.2]{C3}, and that $(\XX_{f^p})^{[1]}\cong(\XX^{[1]})_{f^p}$ for all $f\in\Gamma(\XX,\OO_{\XX})_{(0)}$ by (4) of Lemma \ref{lemma Fr and ring maps localization}.  Finally for properness, let $\YY$ be any scheme and consider the commutative diagram:
		\[
		\xymatrix{
			\XX\times\YY\ar[rr]\ar[rd] & & \XX^{[1]}\times\YY.\ar[dl]\\
			& \YY &
		}
		\]
		By Lemma \ref{lemma finite univ homeo}, the horizontal arrow is a homeomorphism.  Therefore one diagonal arrow is a closed morphism if and only if the other is.  It follows that $\XX\to\Spec\Bbbk$ is universally closed if and only if $\XX^{[1]}\to\Spec\Bbbk$ is universally closed.  Since $\XX$ is separated if and only if $\XX^{[1]}$ is, this shows the same is true for properness.
	\end{proof}
	
	\begin{remark}
		We again caution that while we have $(\XX\times\YY)^{[1]}\cong\XX^{[1]}\times\YY^{[1]}$, it is not true that Frobenius twist will commute with arbitrary fibre products.  Further, while Frobenius twists preserve open embeddings, they need not preserve closed embeddings (see Remark \ref{rmk caution twist surj}).
	\end{remark}
	
	We conclude this subsection with a powerful lemma which we will not use in this paper. 
	\begin{lemma}\label{LemFinSim}
		If $\mathscr{C}_{fin}$ is a union of tensor subcategories with finitely many simples, then for an algebraic scheme $\XX$ in $\mathscr{C}$ there exists $r>0$ such that $\XX^{[r]}$ is purely even.
	\end{lemma}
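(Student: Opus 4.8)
The plan is to reduce the statement first to the affine case, then to the case that $\mathscr{C}_{fin}$ has only finitely many simple objects, and finally to settle that case by tracking what iterated Frobenius twisting does to a single finitely generated algebra. For the first reduction, cover $\XX$ by finitely many affine opens $\UU_i=\Spec A_i$ with $A_i$ finitely generated. The formation of the Frobenius twist is affine-local (Lemma~\ref{lemma fr affine scheme}) and takes open immersions to open immersions (Lemma~\ref{lemma fr twist morphisms schemes}), and $\YY\mapsto\YY_0$ is likewise affine-local, so $\XX^{[r]}$ is purely even precisely when each $\Spec A_i^{[r]}$ is, i.e.\ when $J_{A_i^{[r]}}=0$ for all $i$. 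Replacing $r$ by the largest of the finitely many exponents obtained, it suffices to prove: for a finitely generated commutative algebra $A$ there is $r>0$ with $J_{A^{[r]}}=0$.

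\emph{Reduction to finitely many simples.} Write $A$ as a quotient of $S(X)$ with $X$ compact. By hypothesis $\mathscr{C}_{fin}$ is a union of tensor subcategories with finitely many simple objects, so $X$ — and hence its quotient algebra $A$, together with $A_{(0)}$ and all the subalgebras $A^{[r]}\subseteq A$ — lies in a single such subcategory $\DD$. The conditions (GR) and (MN1-2) concern objects, morphisms, and their symmetric powers, all of which remain inside $\Ind\DD$, so they hold there, and the definition of $A^{[r]}$ is intrinsic. Thus we may assume outright that $\mathscr{C}_{fin}$ has finitely many simples $\mathbf 1=L_0,L_1,\dots,L_n$, and by (MN1) there is a single $d$ with $S^{>d}(L_i)=0$ for all $i\ge1$.

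\emph{Main step.} By Corollary~\ref{Cor:AM}, $A$ is finite over the finitely generated algebra $A_{(0)}$, which lies in $\Vec$; moreover $J:=J_A$ is a nilpotent ideal (it lies in the nilradical, which is nilpotent by Noetherianity) and $\overline A=A/J$ lies in $\Vec$. Each Frobenius twist ``raises generators to the $p$-th power'': for $f\in A_{(0)}$ one has $f^p\in A^{[1]}$ (Lemma~\ref{lemma Fr and ring maps localization}(4)), so $A^{[r]}$ sits between $(A_{(0)})^{[r]}$ and $A$, with $A$ finite over $(A_{(0)})^{[r]}$. On the infinitesimal side, analysing the associated graded of the $J$-adic filtration of $A$ — compare the decomposition $\operatorname{gr}S(X)\cong S(X/X_{nil})\otimes R$ with $R=\operatorname{im}(S(X_{nil})\to S(X))$ of finite length (\cite[Thm.~3.3.2]{CEO1}) — shows that the failure of $A$ to be purely even is carried by finite-length ``infinitesimal'' subquotients whose nontrivial simple constituents are controlled by the fixed bound $d$. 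Since the Frobenius twist is monoidal, preserves finiteness (Lemmas~\ref{lemma Fr and ring maps localization} and \ref{lemma A finite over Fr(A)}), and strictly drops the length of any finite-length local algebra not already equal to $\mathbf 1$ (Lemma~\ref{lemma A finite over Fr(A)}(1)), each application of $(-)^{[1]}$ decreases, and eventually removes, this non-purely-even part of $A^{[r]}$. After a number of twists bounded in terms of $d$ it is exhausted, so $J_{A^{[r]}}=0$.

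\emph{Main obstacle.} The principal difficulty is that the Frobenius twist does \emph{not} preserve surjections or fibre products (Remark~\ref{rmk caution twist surj}): one cannot simply reduce $A$ to $S(X)$, nor compute $A^{[r]}$ fibrewise over $\Spec A_{(0)}$, so the bookkeeping in the main step must be carried out directly with the subalgebras $A^{[r]}\subseteq A$ and their ideals $J_{A^{[r]}}$. The second delicate point is obtaining a \emph{uniform} termination of the twisting process, and this is exactly where the hypothesis is used: having only finitely many simples in the ambient subcategory $\DD$ bounds the length of all the infinitesimal blocks that can appear, so the descending process necessarily stops.
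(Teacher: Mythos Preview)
Your reductions to the affine case and to a subcategory $\DD$ with finitely many simples are correct and match the paper's argument exactly. The divergence is in the ``main step'', and there the proposal has a genuine gap.

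The paper's proof at this point is a one-line citation: once $\mathscr{C}$ has finitely many simples, \cite[Prop.~10.1]{EO} shows that iterating the Frobenius functor eventually carries $\mathscr{C}$ into $\operatorname{Ver}_p$; since $A^{[r]}$ is a subquotient of $\operatorname{Fr}^r(A)$ landing in the $\mathbf{1}$-component of the $\operatorname{Ver}_p$-factor, from that point on the twist is computed inside $\operatorname{Ver}_p$, where the statement is immediate. This is a nontrivial structural theorem about the Frobenius functor on finite tensor categories, and it is doing all the work.

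Your main step attempts to replace this by a direct length-drop argument, but the mechanism you invoke does not close. You correctly note (Lemma~\ref{lemma A finite over Fr(A)}(1)) that for a finite-length local algebra $R\neq\mathbf{1}$ one has $\ell(R^{[1]})<\ell(R)$, and you correctly note that $\operatorname{gr}_J A$ has a finite-length ``infinitesimal'' factor. But you then need to compare the infinitesimal factor of $A^{[1]}$ with the Frobenius twist of the infinitesimal factor of $A$, and as you yourself flag in the ``main obstacle'' paragraph, $(-)^{[1]}$ commutes with neither quotients nor associated gradeds (Remark~\ref{rmk caution twist surj}). So there is no established inequality $\ell(R_{A^{[1]}})\le \ell(R_A^{[1]})$, and the sentence ``each application of $(-)^{[1]}$ decreases, and eventually removes, this non-purely-even part'' is asserted but not proved. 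The bound $d$ on $S^{>d}(L_i)$ controls nilpotency of $J$, not the behaviour of simple constituents under iterated Frobenius; without something like \cite[Prop.~10.1]{EO} one cannot rule out a nontrivial simple $L$ for which $\operatorname{Fr}(L)$ again contains $L$ in its $\mathscr{C}^{(1)}$-component, which would prevent termination. In short, you have identified the obstacle precisely but not surmounted it; the missing ingredient is the cited result from \cite{EO}.
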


	\begin{proof}
		Since $\XX$ is algebraic, it must actually be a scheme in (the ind-completion of) a tensor subcategory in the union. Indeed, a finitely generated algebra $A$ in $\mathscr{C}$ is generated by a compact object, which thus lies in a tensor category in the union, and an algebraic scheme has a finite cover by spectra of such finitely generated algebras. Hence, we can assume that $\mathscr{C}$ itself has finitely many simple objects.
		
		 Then, by \cite[Cor.~10.3]{EO}, applying enough Frobenius twists will take $\mathscr{C}$ into a Frobenius exact tensor subcategory of $\Cs_{ex}$ of $\Cs$.  Since $\Cs$ satisfies (MN1-2), $\Cs_{ex}$ will also, which implies $\Cs_{ex}$ is a tensor subcategory of $\operatorname{Ver}_p$ by \cite[Theorem~7.2.3(2)]{CEO1} and \cite[Theorem~1.1]{CEO2}.   It thus suffices to assume $\Cs=\operatorname{Ver}_p$.  By Example \ref{example verp frob twist}, we see that $(\Spec A)^{[1]}$ will be purely even for any commutative algebra $A$, and therefore $\XX^{[1]}$ is purely even for any scheme $\XX$.
	\end{proof}
	
	\begin{remark}
		The main example of our theory is intended for is $\cC=\mathrm{Ver}_{p^\infty}$, for which the assumption in Lemma~\ref{LemFinSim} is satisfied.
	\end{remark}
	
	\subsection{Frobenius twists of algebraic groups}
	\begin{lemma}\label{lemma twist hopf alg}
		Suppose that $A$ is a commutative Hopf algebra.
		\begin{enumerate}
			\item $A^{[1]}$ is a Hopf subalgebra of $A$.
			\item Write $\m$ for the augmentation ideal of $A$, and $\m^{[1]}$ for the image of $\operatorname{Fr}(\m)\to\m$.  Then $\m^{[1]}$ is the augmentation ideal of $A^{[1]}$, and $\m^{[1]}\sub\m^p$.
		\end{enumerate}
	\end{lemma}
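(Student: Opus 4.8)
The plan is to deduce both statements from the description $A^{[1]}=\operatorname{im}\phi_A$ together with the naturality and monoidality of the comparison map $\phi$, the exactness of $\otimes$ in $\mathscr{C}$, and the $\Bbbk$-linearity of $\operatorname{Fr}$. For (1): since $\operatorname{Fr}$ is a $\Bbbk$-linear symmetric monoidal functor, $\operatorname{Fr}(A)$ is again a commutative Hopf algebra in $\mathscr{C}^{(1)}\boxtimes\operatorname{Ver}_p$, with structure morphisms obtained by applying $\operatorname{Fr}$ to those of $A$ and composing with the monoidal constraints. I would then check that $\phi_A\colon\operatorname{Fr}(A)\to A$ is a morphism of Hopf algebras: it is an algebra morphism by construction, and its compatibility with $\Delta$, $\varepsilon$ and $S_A$ follows by applying the naturality of $\phi$ to the algebra morphisms $\Delta_A$, $\varepsilon_A$ and $S_A$ — using here that $A$ is commutative, so that $S_A$ is an algebra morphism — together with the monoidality of $\phi$, which identifies $\phi_{A\otimes A}$ with $\phi_A\otimes\phi_A$ under the constraint $\operatorname{Fr}(A\otimes A)\cong\operatorname{Fr}(A)\otimes\operatorname{Fr}(A)$. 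Since $\mathscr{C}$ is abelian with exact tensor product, $A^{[1]}=\operatorname{im}\phi_A$ is then automatically a Hopf subalgebra: for instance $\Delta_A(A^{[1]})=\operatorname{im}(\Delta_A\circ\phi_A)=\operatorname{im}\big((\phi_A\otimes\phi_A)\circ\Delta_{\operatorname{Fr}(A)}\big)\sub\operatorname{im}(\phi_A\otimes\phi_A)=A^{[1]}\otimes A^{[1]}$, and likewise $S_A(A^{[1]})\sub A^{[1]}$, while the counit of $A^{[1]}$ is the restriction of $\varepsilon_A$.

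For (2), I would use the splitting $A=\operatorname{im}\eta_A\oplus\m$ given by the unit and counit. Applying the additive functor $\operatorname{Fr}$ and using $\operatorname{Fr}(\mathbf{1})=\mathbf{1}$ yields $\operatorname{Fr}(A)=\mathbf{1}\oplus\operatorname{Fr}(\m)$, with the first summand being the image of $\operatorname{Fr}(\eta_A)$. Restricting $\phi_A$ to this summand recovers $\eta_A$ (by naturality of $\phi$ at $\eta_A$, since $\phi_{\mathbf{1}}=\operatorname{id}$), so it maps onto $\operatorname{im}\eta_A$; restricting $\phi_A$ to $\operatorname{Fr}(\m)$ gives a morphism with image $\m^{[1]}$ which lands in $\m$, because $\varepsilon_A\circ\phi_A=\operatorname{Fr}(\varepsilon_A)$ vanishes on $\operatorname{Fr}(\m)$. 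Consequently $A^{[1]}=\operatorname{im}\eta_A\oplus\m^{[1]}$ as objects of $\mathscr{C}$, so $\m^{[1]}=A^{[1]}\cap\m$, which by (1) is exactly the augmentation ideal of $A^{[1]}$. Finally, $\m^{[1]}\sub\m^p$ because $\phi_A$ is induced by the iterated multiplication $A^{\otimes p}\to A$: by the same argument as in the lemma identifying $A^{[1]}=\operatorname{im}(\Gamma^pA\to A)$, the image of $\operatorname{Fr}(\m)\to A$ equals that of $\Gamma^p\m=(\m^{\otimes p})^{S_p}\to A$, and the iterated multiplication carries $\m^{\otimes p}$ into $\m^p$.

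I expect the one genuinely delicate point to be the compatibility of $\phi_A$ with the coalgebra structure in part (1): $\phi_A$ is not literally $\operatorname{Fr}$ applied to a morphism, so one must argue through the naturality and monoidality of the comparison transformation $\phi$ — or, unwinding the definition of $\operatorname{Fr}$, by pushing the iterated multiplication $A^{\otimes p}\to A$ and its compatibility with $\Delta_A$ through the functor $\operatorname{Rep}_{\mathscr{C}}C_p\to\mathscr{C}^{(1)}\boxtimes\operatorname{Ver}_p$. Everything else — the Hopf-subalgebra consequences of exactness of $\otimes$, and the bookkeeping around the splitting $A=\operatorname{im}\eta_A\oplus\m$ — is routine.
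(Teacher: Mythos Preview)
Your proposal is correct and follows essentially the same approach as the paper: both argue that $\operatorname{Fr}(A)$ is a Hopf algebra and $\phi_A$ a Hopf algebra morphism (hence the image is a Hopf subalgebra), then use the splitting $A=\mathbf{1}\oplus\m$ and additivity of $\operatorname{Fr}$ for part (2). Your write-up is in fact more careful than the paper's, which simply asserts that $\phi_A$ is a Hopf algebra morphism without spelling out the naturality-in-algebra-morphisms argument you give; your remark that commutativity is needed so that $S_A$ is an algebra morphism (and hence $\phi$-natural) is a point the paper leaves implicit.
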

	
	\begin{proof}
		For (1), we use again that $\operatorname{Fr}$ is symmetric monoidal to obtain that $\operatorname{Fr}(A)$ is a Hopf algebra and $\phi_A:\operatorname{Fr}(A)\to A$ is a morphism of Hopf algebras.
		
		For (2), we have a splitting $A=\m\oplus\mathbf{1}$.  By the additivity of $\operatorname{Fr}$, we have $\operatorname{Fr}(A)=\operatorname{Fr}(\mathbf{1})\oplus\operatorname{Fr}(\operatorname{\m})$, so that $A^{[1]}=\mathbf{1}\oplus\m^{[1]}$, implying that $\m^{[1]}$ is the augmentation ideal of $A^{[1]}$.  Further, $\m^{[1]}$ is the image of $\operatorname{Fr}(\m)\to\m$ which is induced by the multiplication morphism
		\[
		\m^{\otimes p}\to \m,
		\]
		and it is clear that the image lands in $\m^p$.  From this the statement follows.
	\end{proof}

	Let $\GG$ be an  algebraic group, so that $\GG^{[r]}:=\Spec(\Bbbk[\GG]^{[r]})$ will again be an  algebraic group for $r\in\mathbb{N}$.  We have a finite quotient of group schemes $\GG\to\GG^{[r]}$, and we define $\GG_r$ to be the kernel of this quotient.  We in turn obtain a Hopf subalgebra $\Gamma_{\GG_r}\sub\Gamma_\GG$, and by Lemma \ref{lemma ses dist algs} a short exact sequence:
	\[
	0\to(\Gamma_{\GG_r}^+)\to\Gamma_\GG\to\Gamma_{\GG^{[r]}}\to 0.
	\]

	\begin{lemma}\label{lemma gamma Gr has low order dists}
		For $r\in\N$ and for $n<p^r$, the inclusion $\GG_r\leq\GG$ induces an isomorphism 
		\[
		\Gamma_{\GG_{r}}^{n}\xto{\sim}\Gamma_{\GG}^{n}.
		\]
		
	\end{lemma}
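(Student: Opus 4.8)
The plan is to translate the statement into a question about finite-length quotients of $\Bbbk[\GG]$ and then dualise. Recall that $\GG_r=\ker(\GG\to\GG^{[r]})$ is represented by $\Bbbk[\GG_r]=\Bbbk[\GG]/I_r$, where $I_r$ is the ideal of $\Bbbk[\GG]$ generated by the augmentation ideal $\m_\varepsilon^{[r]}$ of the Hopf subalgebra $\Bbbk[\GG]^{[r]}\sub\Bbbk[\GG]$. Write $\pi:\Bbbk[\GG]\onto\Bbbk[\GG_r]$ for the quotient map; being a morphism of Hopf algebras it intertwines the counits, so it carries $\m_\varepsilon$ onto the augmentation ideal of $\Bbbk[\GG_r]$, which is thereby identified with $\m_\varepsilon/I_r$ (note $I_r\sub\m_\varepsilon$). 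Hence $\pi$ induces a surjection $\Bbbk[\GG]/\m_\varepsilon^n\onto\Bbbk[\GG]/(\m_\varepsilon^n+I_r)\cong\Bbbk[\GG_r]/(\m_\varepsilon/I_r)^n$ for every $n$, and, by the very definition of $\Gamma^n$, the embedding $\Gamma_{\GG_r}\hookrightarrow\Gamma_\GG$ restricts on degree-$n$ distributions to the $\Bbbk$-linear dual of this surjection. So it is enough to prove this surjection is an isomorphism for $n\le p^r$, i.e.\ that $I_r\sub\m_\varepsilon^n$ for such $n$.

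For that, the key input is Lemma~\ref{lemma twist hopf alg}(2), which gives $\m_\varepsilon^{[1]}\sub\m_\varepsilon^p$. I would iterate this along the chain $\Bbbk[\GG]\supseteq\Bbbk[\GG]^{[1]}\supseteq\cdots\supseteq\Bbbk[\GG]^{[r]}$: at the $k$-th stage one applies the lemma to the commutative Hopf algebra $\Bbbk[\GG]^{[k-1]}$ (whose augmentation ideal is $\m_\varepsilon^{[k-1]}$) to get $\m_\varepsilon^{[k]}\sub(\m_\varepsilon^{[k-1]})^p$, the power being taken inside $\Bbbk[\GG]^{[k-1]}$; combining with the inductive hypothesis $\m_\varepsilon^{[k-1]}\sub\m_\varepsilon^{p^{k-1}}$ (as subobjects of $\Bbbk[\GG]$) and the fact that a product of $p$ elements of the ideal $\m_\varepsilon^{p^{k-1}}$ of $\Bbbk[\GG]$ lies in $\m_\varepsilon^{p^k}$, one gets $\m_\varepsilon^{[k]}\sub\m_\varepsilon^{p^k}$, hence eventually $\m_\varepsilon^{[r]}\sub\m_\varepsilon^{p^r}$. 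Since $\m_\varepsilon^{p^r}$ is already an ideal of $\Bbbk[\GG]$, the ideal $I_r$ generated by $\m_\varepsilon^{[r]}$ satisfies $I_r\sub\m_\varepsilon^{p^r}\sub\m_\varepsilon^n$ whenever $n\le p^r$. Then $\m_\varepsilon^n+I_r=\m_\varepsilon^n$, so the surjection above is an isomorphism, and dualising (both sides are of finite length, where $(-)^\ast$ is exact and contravariant) gives the desired isomorphism $\Gamma_{\GG_r}^n\xto{\sim}\Gamma_\GG^n$, compatible with the embeddings into $\Gamma_\GG$.

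The one step that genuinely needs care is the inductive estimate $\m_\varepsilon^{[r]}\sub\m_\varepsilon^{p^r}$: Lemma~\ref{lemma twist hopf alg}(2) concerns a single Frobenius twist, so one must keep careful track of whether $\m_\varepsilon^{[k]}$ is regarded as the augmentation ideal of $\Bbbk[\GG]^{[k]}$ or as a subobject of $\Bbbk[\GG]$, and of the behaviour of ideal powers under the inclusions $\Bbbk[\GG]^{[k]}\hookrightarrow\Bbbk[\GG]$. Everything else — the identification of $\Bbbk[\GG_r]$, the comparison of counits, and the passage to $\Bbbk$-linear duals defining $\Gamma^n$ — is routine bookkeeping. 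I note in passing that this argument in fact establishes the isomorphism for all $n\le p^r$, marginally more than stated.
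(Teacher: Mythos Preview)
Your proof is correct and follows exactly the route the paper intends: the paper's proof is the single line ``This follows from (2) of Lemma~\ref{lemma twist hopf alg}'', and you have faithfully unpacked what that entails --- identifying $\Bbbk[\GG_r]=\Bbbk[\GG]/I_r$, reducing to $I_r\sub\m_\varepsilon^n$, and deriving $\m_\varepsilon^{[r]}\sub\m_\varepsilon^{p^r}$ by iterating the inclusion $\m_\varepsilon^{[1]}\sub\m_\varepsilon^p$. Your observation that the argument actually gives $n\le p^r$ is correct as well.
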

    
	\begin{proof}
	   By repeated application of Lemma \ref{lemma twist hopf alg}, we obtain that $\m^{[r]}\sub\m^{p^r}$. Hence for $n<p^r$, we obtain isomorphisms
       \[(\Gamma_G^n)^\ast=\Bbbk[G]/\mathfrak{m}^{n+1}\to \Bbbk[G]/(\mathfrak{m}^{n+1}+\mathfrak{m}^{[r]})=(\Gamma_{G_r}^n)^\ast,\]
       from which the result follows.
	\end{proof}
	
	\begin{prop}\label{prop twist alg gp}
		Suppose that $\GG$ is an algebraic group and $r\in\mathbb{N}$. \begin{enumerate}
			\item $\GG_r$ is an infinitesimal group scheme with $\operatorname{Lie}\GG_r=\g$.
			\item For $r$ large enough, $\GG/_{\hspace{-0.2em}gp}\GG_r\cong\GG^{[r]}$ is a purely even algebraic group.
		\end{enumerate}  
	\end{prop}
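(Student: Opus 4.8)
The plan is to prove the two statements in tandem, exploiting the short exact sequence $0\to(\Gamma_{\GG_r}^+)\to\Gamma_\GG\to\Gamma_{\GG^{[r]}}\to 0$ together with the description of $\Bbbk[\GG]$ furnished by Theorem~\ref{thm presentation k[G]}. First I would address (1). Since $\GG\to\GG^{[r]}$ is a finite morphism (by (2) of Lemma~\ref{lemma A finite over Fr(A)} applied to $\Bbbk[\GG]^{[r]}\sub\Bbbk[\GG]$) which is moreover a universal homeomorphism (Lemma~\ref{lemma finite univ homeo} iterated $r$ times, or directly (5) of Lemma~\ref{lemma A finite over Fr(A)}), its fibre over the identity point is an infinitesimal scheme; concretely $\Bbbk[\GG_r]=\Bbbk[\GG]\otimes_{\Bbbk[\GG]^{[r]}}\mathbf 1$ is a finitely generated algebra whose spectrum is a single point, hence infinitesimal by Lemma~\ref{lemma inftsml is local fin length}. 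For the Lie algebra computation, recall $\Gamma^{1}_\GG=\mathbf 1\oplus\g$; by Lemma~\ref{lemma gamma Gr has low order dists} the inclusion $\GG_r\sub\GG$ induces an isomorphism $\Gamma^{n}_{\GG_r}\xto{\sim}\Gamma^{n}_{\GG}$ for all $n<p^r$, and in particular (since $p^r\ge p>1$) for $n=1$. Taking primitives and using the standard fact $\operatorname{prim}\Gamma_{\GG_r}=\operatorname{Lie}\GG_r$ gives $\operatorname{Lie}\GG_r=\g$.

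For (2), the key point is that $\GG/_{\hspace{-0.2em}gp}\GG_r$ is represented by $\Bbbk[\GG]^{\GG_r}$, and one must identify this invariant Hopf subalgebra with $\Bbbk[\GG]^{[r]}$. One inclusion is immediate: $\Bbbk[\GG]^{[r]}$ is a Hopf subalgebra (Lemma~\ref{lemma twist hopf alg}) over which $\GG_r$ acts trivially by the very definition of $\GG_r$ as the kernel of $\GG\to\GG^{[r]}=\Spec\Bbbk[\GG]^{[r]}$, so $\Bbbk[\GG]^{[r]}\sub\Bbbk[\GG]^{\GG_r}$. For the reverse inclusion I would dualise: an element of $\Bbbk[\GG]$ is $\GG_r$-invariant precisely when it is killed by $\Gamma^+_{\GG_r}$ under the pairing, and by Lemma~\ref{lemma gamma Gr has low order dists} together with Lemma~\ref{lemma twist hopf alg}(2) the augmentation ideal $\Gamma^+_{\GG_r}$ contains, for $r$ large, all distributions of degree $<p^r$ in $\Gamma_\GG$ that pair nontrivially with $\m_\varepsilon/\m_\varepsilon^{p^r}$ — hence the orthogonal complement of $\Gamma^+_{\GG_r}$ inside $\Bbbk[\GG]$ is forced into $\mathbf 1\oplus\m_\varepsilon^{p^r}\subseteq$ the image of the $p^r$-th power operations, which is exactly $\Bbbk[\GG]^{[r]}$ once $r$ is large enough that the filtration \eqref{eqfbp} and the Koszul presentation of Theorem~\ref{thm presentation k[G]} stabilise. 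The statement that $\GG^{[r]}$ is purely even for $r$ large follows from Lemma~\ref{lemma twist hopf alg}(2): iterating, the augmentation ideal of $\Bbbk[\GG]^{[r]}$ lies in $\m_\varepsilon^{p^r}$, and since $\Bbbk[\GG]$ is finitely generated and Noetherian, Corollary~\ref{Cor:AM} (or (GR) directly) shows that for $r\gg0$ every generator's $p^r$-th power lies in $\Bbbk[\GG]_{(0)}$, so $\Bbbk[\GG]^{[r]}$ is generated by $(\Bbbk[\GG]^{[r]})_{(0)}$ and is therefore purely even by Lemma~\ref{lemma gamma even GG even} or directly; algebraicity of $\GG^{[r]}$ is (3) of Lemma~\ref{lemma A finite over Fr(A)}.

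The main obstacle I anticipate is the reverse inclusion $\Bbbk[\GG]^{\GG_r}\subseteq\Bbbk[\GG]^{[r]}$ in part (2): showing that $\GG_r$-invariance forces an element into the image of the Frobenius power map rather than merely into $\m_\varepsilon^{p^r}\oplus\mathbf 1$. The subtlety is exactly the one flagged in Remark~\ref{rmk caution twist surj} — $A^{[1]}$ is genuinely smaller than $\{a\in A : a\in\mathbf 1\oplus\m^p\}$ when $A$ is not reduced — so a naive degree count does not suffice. I would circumvent this by comparing associated graded objects: pass to $\gr\Bbbk[\GG]$ via the filtration \eqref{eqfbp}, use Lemma~\ref{lemma grG iso} to write $\gr\Bbbk[\GG]\cong\Bbbk[\NN]\otimes\Bbbk[\GG_0]$ with $\NN$ commutative infinitesimal, and there the Frobenius kernel and the Frobenius twist are transparent (for a commutative infinitesimal $\NN$, $\Bbbk[\NN]^{[r]}=\mathbf 1$ once $p^r$ exceeds the nilpotency degree, and $\Bbbk[\GG_0]^{\GG_0{}_{,r}}=\Bbbk[\GG_0]^{[r]}$ is the classical statement); a standard filtered-to-graded argument then lifts the equality $\Bbbk[\GG]^{\GG_r}=\Bbbk[\GG]^{[r]}$ from $\gr$ back to $\Bbbk[\GG]$, with "large enough $r$" meaning $p^r>N$ for $N$ as in \eqref{eqfbp} together with $p^r$ bounding the exponent of $\NN$.
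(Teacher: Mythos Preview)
Your argument for (1) is correct and matches the paper's: $\GG\to\GG^{[r]}$ is finite and a universal homeomorphism (Lemma~\ref{lemma finite univ homeo}), so its kernel is infinitesimal, and Lemma~\ref{lemma gamma Gr has low order dists} with $n=1$ gives $\operatorname{Lie}\GG_r=\g$.

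For (2), however, you are working much too hard on the wrong thing and not hard enough on the right thing. The identification $\GG/_{\hspace{-0.2em}gp}\GG_r\cong\GG^{[r]}$ requires no argument at all: $\GG_r$ is \emph{defined} as the kernel of the faithfully flat map $\GG\to\GG^{[r]}$, and the bijection between kernels and factor groups recalled in Section~\ref{sec subgrps} (from \cite[Thm.~7.2.2]{C2}) immediately gives $\Bbbk[\GG]^{\GG_r}=\Bbbk[\GG]^{[r]}$. Your entire ``main obstacle'' paragraph, and the orthogonality and filtered-to-graded machinery you propose to surmount it, are aimed at a non-problem.

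The actual content of (2) is that $\GG^{[r]}$ is purely even for large $r$, and here your sketch has a genuine gap. Your direct argument (``every generator's $p^r$-th power lies in $\Bbbk[\GG]_{(0)}$'') misreads (GR): that condition concerns splitting of maps \emph{to} $\mathbf{1}$, and gives no control over whether iterated Frobenius images land in $A_{(0)}$. Your fallback via associated graded founders precisely on the pathology you yourself cite from Remark~\ref{rmk caution twist surj}: Frobenius twist does not commute with quotients, so there is no clean comparison between $\gr(\Bbbk[\GG]^{[r]})$ and $(\gr\Bbbk[\GG])^{[r]}$, and the ``standard filtered-to-graded argument'' does not go through.

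The paper instead works entirely on the distribution side, which sidesteps these issues. By Corollary~\ref{cor fg gamma}, $\Gamma_\GG$ is finitely generated as a left $\Gamma_{\GG_0}$-module, say by a compact $X\subset\Gamma_\GG^n$. Choosing $r$ with $p^r>n$, Lemma~\ref{lemma gamma Gr has low order dists} gives $X\subset\Gamma_{\GG_r}$, hence $\Gamma_\GG=\Gamma_{\GG_0}\Gamma_{\GG_r}$. Then $\Gamma_{\GG^{[r]}}\cong\Gamma_\GG/(\Gamma_{\GG_r}^+)$ is a quotient of $\Gamma_{\GG_0}$ and therefore lies in $\mathrm{Vec}$, so Lemma~\ref{lemma gamma even GG even} forces $\GG^{[r]}$ to be purely even.
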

	\begin{proof}
		Since the morphism $\GG\to\GG^{[r]}$ is finite and gives an isomorphism on $\Bbbk$-points by Lemma~\ref{lemma finite univ homeo}, we obtain that $\GG_r$ is an infinitesimal group scheme, proving (1).  For (2), we apply Corollary \ref{cor fg gamma} and Lemma \ref{lemma gamma Gr has low order dists} which imply that $\Gamma_{\GG}$ is generated as an algebra by $\Gamma_{\GG_0}$ and $\Gamma_{\GG_r}^+$ for some $r>0$.  Thus $\Gamma_{\GG^{[r]}}\cong\Gamma_{\GG}/(\Gamma_{\GG_r}^+)$ will be purely even, which implies that $\GG^{[r]}$ is also purely even by Lemma \ref{lemma gamma even GG even}.
	\end{proof}
	
	\begin{cor}
		For all $n\in\N$, $\Gamma_{\GG}^n$  lies in a finite-length Hopf subalgebra of $\Gamma_\GG$. 
	\end{cor}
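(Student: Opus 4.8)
The plan is to read this off from the structure theory of Frobenius kernels established in the preceding subsection; no genuinely new input is needed. Fix $n\in\N$. First I would choose $r\in\N$ large enough that $p^r>n$, and pass to the Frobenius kernel $\GG_r=\ker(\GG\to\GG^{[r]})$. By Proposition~\ref{prop twist alg gp}(1), $\GG_r$ is an infinitesimal group scheme, and it is algebraic since $\Bbbk[\GG_r]$ is a quotient of the finitely generated Hopf algebra $\Bbbk[\GG]$.

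Next I would invoke the two facts already recorded about $\Gamma_{\GG_r}$. On the one hand, the inclusion $\GG_r\hookrightarrow\GG$ realizes $\Gamma_{\GG_r}$ as a \emph{Hopf subalgebra} of $\Gamma_{\GG}$ (noted just before Lemma~\ref{lemma gamma Gr has low order dists}). On the other hand, since $\GG_r$ is infinitesimal and algebraic, $\Gamma_{\GG_r}$ has \emph{finite length}: this is the observation stated before Lemma~\ref{lemma grG iso}, which follows from Lemma~\ref{lemma inftsml is local fin length} (the algebra $\Bbbk[\GG_r]$ is finite length and local, its augmentation ideal is nilpotent, hence $\Gamma_{\GG_r}=\Bbbk[\GG_r]^\ast$).

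Finally I would apply Lemma~\ref{lemma gamma Gr has low order dists} with the chosen $r$: because $n<p^r$, the inclusion induces an isomorphism $\Gamma_{\GG_r}^{n}\xto{\sim}\Gamma_{\GG}^{n}$, so $\Gamma_{\GG}^{n}$ is contained in $\Gamma_{\GG_r}$, which is the desired finite-length Hopf subalgebra of $\Gamma_{\GG}$.

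There is no real obstacle at this point; the substance has been absorbed into Proposition~\ref{prop twist alg gp}, whose proof relies in turn on the finite generation of $\Gamma_{\GG}$ over $\Gamma_{\GG_0}$ (Corollary~\ref{cor fg gamma}) together with Lemma~\ref{lemma gamma Gr has low order dists}. So the corollary is essentially a bookkeeping statement: "large enough Frobenius kernel" simultaneously captures any prescribed finite order of distributions and has a finite-length, Hopf, distribution algebra.
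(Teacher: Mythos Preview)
Your proposal is correct and follows exactly the paper's approach: choose $r$ with $p^r>n$, so that $\Gamma_{\GG}^n\subset\Gamma_{\GG_r}$ by Lemma~\ref{lemma gamma Gr has low order dists}, and $\Gamma_{\GG_r}$ is a finite-length Hopf subalgebra since $\GG_r$ is infinitesimal. The paper compresses this to a single line (``$\Gamma^{n}_{\GG}\sub\Gamma_{\GG_r}$ for $r$ large enough''), but your unpacking is accurate.
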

	\begin{proof}
		Indeed, $\Gamma^{n}_{\GG}\sub\Gamma_{\GG_r}$ when $n<p^r$.
	\end{proof}
	
	\begin{remark}
		If $\HH\leq\GG$ is a closed subgroup, then we have a natural map $\HH^{[1]}\to\GG^{[1]}$, but this need not be an immersion.  Indeed, a counterexample is given by $\G_a\leq\PP$ in Remark \ref{rmk caution twist surj}.
	\end{remark}
	
	\subsection{Properties determined by $\Bbbk$-points}
	

	\begin{lemma}\label{LemHH}
		Let $\HH'\leq\HH$ be algebraic groups.  If $\HH'(\Bbbk)=\HH(\Bbbk)$, then $\Gamma_{\HH}$ is a finitely generated $\Gamma_{\HH'}$-module.
	\end{lemma}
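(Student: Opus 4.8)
The plan is to reduce the statement to the already-established finite generation of $\Gamma_{\GG}$ over $\Gamma_{\GG_0}$ (Corollary \ref{cor fg gamma}), combined with the structural description of $\operatorname{gr}\Gamma$ from Lemma \ref{lemma grG iso}. The key point is that the hypothesis $\HH'(\Bbbk)=\HH(\Bbbk)$ forces the identity component $\HH'^{\circ}$ to have the same identity component as $\HH^{\circ}$, so that after restricting attention to identity components (recall $\Gamma_{\HH}=\Gamma_{\HH^\circ}$ and likewise for $\HH'$) we may assume $\HH$ and $\HH'$ are connected with the same $\Bbbk$-points, in fact with $\HH'_0=\HH_0$ as a subgroup of $\HH_0$.

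First I would reduce to the connected case using $\Gamma_{\GG}=\Gamma_{\GG^\circ}$, and observe that $\HH'(\Bbbk)=\HH(\Bbbk)$ implies $(\HH')_0(\Bbbk)=\HH_0(\Bbbk)$; since $\HH'_0\hookrightarrow\HH_0$ is a closed subgroup of algebraic groups in $\Vec$ inducing an isomorphism on $\Bbbk$-points and on identity components, we get $(\HH'_0)^\circ=(\HH_0)^\circ$, whence $\Gamma_{\HH'_0}=\Gamma_{\HH_0}$ after passing to identity components. Next I would apply Corollary \ref{cor fg gamma} to $\HH$: the object $\Gamma_{\HH}$ is finitely generated over $\Gamma_{\HH_0}$ under left multiplication. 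Combining this with $\Gamma_{\HH_0}=\Gamma_{\HH'_0}$ and the fact that $\HH'_0 \subseteq \HH'$ gives $\Gamma_{\HH'_0}\subseteq \Gamma_{\HH'}\subseteq\Gamma_{\HH}$, we see $\Gamma_{\HH}$ is generated over $\Gamma_{\HH_0}\subseteq\Gamma_{\HH'}$ by finitely many elements, so it is a finitely generated $\Gamma_{\HH'}$-module.

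To make this precise at the level of the associated graded, I would run the argument as in the proof of Corollary \ref{cor fg gamma}: choose a compact subobject $X\sub\Gamma_{\HH}$ whose image $\operatorname{gr}X$ in $\operatorname{gr}\Gamma_{\HH}=\Gamma_{\operatorname{gr}\HH}=\Gamma_{\HH_0}\otimes\Gamma_{\NN}$ contains the finite-length factor $\Gamma_{\NN}$ (using Lemmas \ref{lemma gr dist algs commutes} and \ref{lemma grG iso}). Then the multiplication map $\Gamma_{\HH'}\otimes X\to\Gamma_{\HH}$ becomes surjective after passing to associated graded with respect to the finite filtration $\Gamma_\bullet$ — here one needs that $\operatorname{gr}\Gamma_{\HH'}$ surjects onto $\Gamma_{\HH_0}$, which holds because $\Gamma_{\HH'_0}=\Gamma_{\HH_0}$ sits inside $\Gamma_{\HH'}$ in filtration degree $0$ — and a surjection on associated graded of a finite filtration lifts to a surjection, giving finite generation.

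The main obstacle I anticipate is bookkeeping around identity components and the compatibility of the two filtrations: one must check that the filtration $\Gamma_\bullet$ on $\Gamma_{\HH}$ restricts correctly to $\Gamma_{\HH'}$ and that $\HH'_0$ genuinely equals $\HH_0$ inside $\HH_0$ (not merely up to the $\Bbbk$-points statement), which uses that a monomorphism of algebraic groups in $\Vec$ that is bijective on $\Bbbk$-points and an isomorphism on identity components is an isomorphism on identity components — a standard classical fact, but one that should be invoked carefully. Everything else is a direct transcription of the proof of Corollary \ref{cor fg gamma}.
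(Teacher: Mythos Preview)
Your argument has a genuine gap: the claim that $\HH'_0(\Bbbk)=\HH_0(\Bbbk)$ forces $(\HH'_0)^\circ=(\HH_0)^\circ$, and hence $\Gamma_{\HH'_0}=\Gamma_{\HH_0}$, is false in positive characteristic. Take $\HH=\HH_0=\mu_p$ (purely even) and $\HH'=1$. Since $\ch\Bbbk=p$, we have $\mu_p(\Bbbk)=\{1\}=\HH'(\Bbbk)$, yet $\mu_p$ is connected and infinitesimal, so $(\HH_0)^\circ=\mu_p\neq 1=(\HH'_0)^\circ$ and $\Gamma_{\HH_0}$ is $p$-dimensional while $\Gamma_{\HH'_0}=\Bbbk$. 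More generally, any nontrivial infinitesimal subgroup of $\HH_0$ containing $\HH'_0$ will have the same $\Bbbk$-points, so equality of $\Bbbk$-points tells you nothing about equality of identity components. The sentence you flag as ``a standard classical fact'' is circular as written, and the non-circular version you presumably intend is simply not true over fields of positive characteristic.

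The paper avoids this by not claiming $\Gamma_{\HH'_0}=\Gamma_{\HH_0}$; instead it observes that $(\HH_0/\HH'_0)(\Bbbk)$ is a singleton, hence $\HH_0/\HH'_0$ is an \emph{infinitesimal} (finite) scheme, and then invokes Schneider's result that $\Gamma_{\HH_0}\cong D\otimes\Gamma_{\HH'_0}$ with $D=\Dist(\HH_0/\HH'_0)$ of finite length. This yields that $\Gamma_{\HH_0}$ is \emph{finitely generated} over $\Gamma_{\HH'_0}$ (not equal to it), and one then composes with Corollary~\ref{cor fg gamma} to conclude. Your reduction strategy via Corollary~\ref{cor fg gamma} is sound, but you need this extra step---finite generation of $\Gamma_{\HH_0}$ over $\Gamma_{\HH'_0}$ coming from the infinitesimality of the classical quotient---in place of the false identification $\Gamma_{\HH'_0}=\Gamma_{\HH_0}$.
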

	\begin{proof}
		By \cite[Thm.~III]{Sch}, we have an isomorphism of right $\Gamma_{\HH'_0}$-modules $\Gamma_{\HH_0}=D\otimes\Gamma_{\HH'_0}$, where $D:=\Gamma_{\HH_0}/\Gamma_{\HH_0}\Gamma_{\HH'_0}^+=\operatorname{Dist}(\HH_0/\HH'_0,e\HH'_0)$. By assumption $(\HH_0/\HH'_0)(\Bbbk)=\HH_0(\Bbbk)/\HH'_0(\Bbbk)$ is a singleton and thus $\HH/\HH'$ is infinitesimal. Hence $D$ is finite and  $\Gamma_{\HH_0}$ is finitely generated over $\Gamma_{\HH_0'}$.  This in turn implies that $\Gamma_{\HH}$ is finitely generated over $\Gamma_{\HH'}$ by Corollary~\ref{cor fg gamma}.
	\end{proof}
	
	Given subgroups $\NN,\HH\leq\GG$ such that $\NN$ is normal in $\GG$, then we write $\NN\HH$ for the subgroup of $\GG$ given by the image of the natural map $\NN\rtimes\HH\to\GG$ (see Section \ref{sec subgrps}).  By Lemma \ref{lemma semidirect prod dist alg}, we have that $\Gamma_{\NN\HH}=\Gamma_{\NN}\Gamma_{\HH}\sub\Gamma_{\GG}$.

	\begin{cor}\label{cor k pts subgp}
		Let $\HH'\leq\HH$ be algebraic groups with $\HH'(\Bbbk)=\HH(\Bbbk)$. There exists $r>0$ such that $\Gamma_{\HH}=\Gamma_{\HH'}\Gamma_{\HH_r}$. Further, for such an $r$ we have that $\HH'\to\HH^{[r]}$ is a quotient.
	\end{cor}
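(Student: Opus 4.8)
The plan is to deduce an identity of distribution algebras from Lemma~\ref{LemHH}, and then to read off that $\HH'\to\HH^{[r]}$ is a quotient by identifying the image of this morphism with all of $\HH^{[r]}$ via Corollary~\ref{cor iso alg gps criteria}. Throughout, $\HH_r:=\ker(\HH\to\HH^{[r]})$, which is infinitesimal by Proposition~\ref{prop twist alg gp}; the identity to prove is $\Gamma_{\HH}=\Gamma_{\HH'}\,\Gamma_{\HH_r}$, where $\Gamma_{\HH'}\,\Gamma_{\HH_r}$ denotes the image of the multiplication $\Gamma_{\HH'}\otimes\Gamma_{\HH_r}\to\Gamma_{\HH}$.

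First I would invoke Lemma~\ref{LemHH}: the hypothesis $\HH'(\Bbbk)=\HH(\Bbbk)$ makes $\Gamma_{\HH}$ a finitely generated left $\Gamma_{\HH'}$-module, so I can choose a compact subobject $X\sub\Gamma_{\HH}$, which we may enlarge to contain $\mathbf{1}$, with $\Gamma_{\HH'}\cdot X=\Gamma_{\HH}$. Since $\Gamma_{\HH}=\bigcup_{n}\Gamma^{n}_{\HH}$ is an increasing union of finite-length subobjects and $X$ is compact, $X$ is contained in some $\Gamma^{n}_{\HH}$. Now pick $r$ with $p^{r}>n$; by Lemma~\ref{lemma gamma Gr has low order dists} the inclusion $\HH_r\sub\HH$ identifies $\Gamma^{n}_{\HH_r}$ with $\Gamma^{n}_{\HH}$, so $X\sub\Gamma_{\HH_r}$ inside $\Gamma_{\HH}$. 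Hence $\Gamma_{\HH}=\Gamma_{\HH'}\cdot X\sub\Gamma_{\HH'}\,\Gamma_{\HH_r}\sub\Gamma_{\HH}$, which gives the identity; in particular the image of $\Gamma_{\HH'}\otimes\Gamma_{\HH_r}\to\Gamma_{\HH}$ is all of $\Gamma_{\HH}$.

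For the second assertion I fix such an $r$ and factor $\HH'\to\HH^{[r]}$ as $\HH'\onto\mathcal{Q}\embedsto\HH^{[r]}$ through its image subgroup $\mathcal{Q}$, so that $\mathcal{Q}\cong\HH'/_{\hspace{-0.2em}gp}\mathcal{K}$ with $\mathcal{K}=\HH'\cap\HH_r$; since $\HH_r$ is infinitesimal, so is $\mathcal{K}$, and $\mathcal{Q}$, being a closed subgroup of the algebraic group $\HH^{[r]}$, is itself algebraic. It then suffices to show the inclusion $\iota:\mathcal{Q}\to\HH^{[r]}$ is an isomorphism, which I would check via Corollary~\ref{cor iso alg gps criteria}(3). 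On $\Bbbk$-points: $\HH^{[r]}(\Bbbk)=\HH(\Bbbk)$ by Lemma~\ref{lemma finite univ homeo}, while $\mathcal{Q}(\Bbbk)=\HH'(\Bbbk)$ because $\mathcal{K}$ is infinitesimal, and $\HH'(\Bbbk)=\HH(\Bbbk)$ by hypothesis, so $\iota(\Bbbk)$ is an isomorphism. On distributions: Lemma~\ref{lemma ses dist algs} applied to $1\to\HH_r\to\HH\to\HH^{[r]}\to 1$ gives $\Gamma_{\HH^{[r]}}=\Gamma_{\HH}/(\Gamma_{\HH_r}^{+})$, and feeding in $\Gamma_{\HH}=\Gamma_{\HH'}\,\Gamma_{\HH_r}$ together with the splitting $\Gamma_{\HH_r}=\mathbf{1}\oplus\Gamma_{\HH_r}^{+}$ shows $\Gamma_{\HH'}\to\Gamma_{\HH^{[r]}}$ is surjective; since this map factors as $\Gamma_{\HH'}\onto\Gamma_{\mathcal{Q}}\xto{d\iota}\Gamma_{\HH^{[r]}}$, the map $d\iota$ is surjective, and it is also injective because $\iota$ is a closed immersion. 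Hence $d\iota$ is an isomorphism, and Corollary~\ref{cor iso alg gps criteria} yields $\mathcal{Q}=\HH^{[r]}$, i.e.\ $\HH'\to\HH^{[r]}$ is a quotient.

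The main point requiring care is the passage, in the second paragraph, from ``$\Gamma_{\HH}$ is finitely generated over $\Gamma_{\HH'}$'' to ``$\Gamma_{\HH}=\Gamma_{\HH'}\,\Gamma_{\HH_r}$ for $r\gg 0$'': one must note that a set of module generators spans a compact, hence finite-length, subobject, which therefore lies in some $\Gamma^{n}_{\HH}$ and so inside $\Gamma_{\HH_r}$ as soon as $p^{r}>n$. The other slightly delicate point is verifying that the classical and infinitesimal data of the image subgroup $\mathcal{Q}$ really agree with those of $\HH^{[r]}$; what makes this painless is that the kernel $\mathcal{K}$ is infinitesimal, so nothing is lost on passing to $\Bbbk$-points and Corollary~\ref{cor iso alg gps criteria}(3) is applicable. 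Neither step is deep — the substance of the argument sits in the already-established Lemma~\ref{LemHH}.
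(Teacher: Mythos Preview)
Your proof is correct and follows essentially the same approach as the paper. For the first assertion both you and the paper combine Lemma~\ref{LemHH} with Lemma~\ref{lemma gamma Gr has low order dists} exactly as you spell out (and you have correctly read the intended identity as $\Gamma_{\HH}=\Gamma_{\HH'}\Gamma_{\HH_r}$). For the second assertion the paper applies Corollary~\ref{cor iso alg gps criteria} to the inclusion $\HH'\HH_r\hookrightarrow\HH$, concluding $\HH'\HH_r\cong\HH$ and hence that $\HH'\to\HH^{[r]}$ is a quotient since $\HH_r$ lies in the kernel; you instead apply the same corollary to the inclusion $\mathcal{Q}\hookrightarrow\HH^{[r]}$ of the image subgroup, which is an equivalent reorganisation of the same idea.
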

	\begin{proof}
		The first statement follows from combining Lemma~\ref{LemHH} with Lemma~\ref{lemma gamma Gr has low order dists}.  For the second statement, for such $r$ we have that $\HH'\HH_r\cong\HH$ by Corollary \ref{cor iso alg gps criteria}, and thus $\HH'\HH_r\to\HH^{[r]}$ and then also $\HH'\to\HH^{[r]}$ is a quotient.
	\end{proof}
	
	\begin{cor}\label{cor G0 to Gr surj}
		For some $r>0$, the map $\GG_0\to\GG^{[r]}$ is a quotient.
	\end{cor}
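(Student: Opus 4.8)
The plan is to recognize this as the special case $\HH=\GG$, $\HH'=\GG_0$ of Corollary~\ref{cor k pts subgp}, so that all the real content is already packaged there and only two hypotheses need to be checked for the pair $\GG_0\sub\GG$: that $\GG_0$ is a genuine algebraic subgroup of $\GG$, and that $\GG_0(\Bbbk)=\GG(\Bbbk)$.

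For the first point I would note that the ideal $J\sub\Bbbk[\GG]$ generated by $\Bbbk[\GG]_{nil}$ is a Hopf ideal (this is exactly the computation $\Delta(J)\sub J\otimes\Bbbk[\GG]+\Bbbk[\GG]\otimes J$ recorded in the subsection on the filtration by powers of $J$), so $\Bbbk[\GG]\onto\Bbbk[\GG]/J=\Bbbk[\GG_0]$ is a surjection of Hopf algebras and $\Bbbk[\GG_0]$, being a quotient of the finitely generated $\Bbbk[\GG]$, is again finitely generated; thus $\GG_0$ is a closed (normal) algebraic subgroup. For the second point: a $\Bbbk$-point of $\GG$ is an algebra morphism $\Bbbk[\GG]\to\mathbf 1$, and by the very definition of $(-)_{nil}$ as the joint kernel of all maps to $\mathbf 1$, any such morphism kills $\Bbbk[\GG]_{nil}$, hence kills the ideal $J$ it generates, hence factors uniquely through $\Bbbk[\GG_0]$; conversely every $\Bbbk$-point of $\GG_0$ pulls back to one of $\GG$. (This is the same observation already used in the proof of Corollary~\ref{cor iso alg gps criteria}.) With both hypotheses verified, Corollary~\ref{cor k pts subgp} applied to $\GG_0\sub\GG$ produces an $r>0$ for which $\GG_0\to\GG^{[r]}$ is a quotient, which is the assertion.

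I do not expect a genuine obstacle here, since the substance lies in Corollary~\ref{cor k pts subgp} (whose proof in turn rests on the finite generation of $\Gamma_\GG$ over $\Gamma_{\GG_0}$ from Corollary~\ref{cor fg gamma}, the low-order distributions comparison of Lemma~\ref{lemma gamma Gr has low order dists}, and the isomorphism criterion of Corollary~\ref{cor iso alg gps criteria}). If a self-contained argument were wanted, I would instead show directly that $\GG_0\GG_r=\GG$ for $r$ large: $\Gamma_{\GG_0\GG_r}=\Gamma_{\GG_0}\Gamma_{\GG_r}$ by Lemma~\ref{lemma semidirect prod dist alg}, and since $\Gamma_\GG$ is finitely generated over $\Gamma_{\GG_0}$ a finite-length generating subobject lies in some $\Gamma_\GG^n$, hence in $\Gamma_{\GG_r}$ once $p^r>n$ by Lemma~\ref{lemma gamma Gr has low order dists}, giving $\Gamma_{\GG_0}\Gamma_{\GG_r}=\Gamma_\GG$; as $\GG_r(\Bbbk)$ is trivial (by Lemma~\ref{lemma finite univ homeo}, $\GG(\Bbbk)=\GG^{[r]}(\Bbbk)$) while $\GG_0(\Bbbk)=\GG(\Bbbk)$, the subgroup $\GG_0\GG_r\sub\GG$ has full $\Bbbk$-points, so Corollary~\ref{cor iso alg gps criteria} gives $\GG_0\GG_r=\GG$, whence $\GG_0\onto\GG/_{\hspace{-0.2em}gp}\GG_r\cong\GG^{[r]}$ is the desired quotient.
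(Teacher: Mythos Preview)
Your proposal is correct and matches the paper's proof exactly: the paper simply notes that $\GG_0(\Bbbk)\to\GG(\Bbbk)$ is an isomorphism and invokes Corollary~\ref{cor k pts subgp}. Your additional verification of the hypotheses and the alternative self-contained unwinding are accurate elaborations of the same argument.
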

	\begin{proof}
		Indeed, $\GG_0(\Bbbk)=\GG(\Bbbk)$, so we may apply Corollary \ref{cor k pts subgp}.
	\end{proof}
	
	\section{Generalities on quotients}\label{Faisc}
	
	Let $\XX$ be a scheme and $\HH$ an algebraic group in $\mathscr{C}$, where we recall that (GR) and (MN1-2) hold.  A right action of $\HH$ on $\XX$ is a morphism $a:\XX\times\HH\to \XX$ of schemes satisfying the usual commutative diagrams.
	
	Given a right $\HH$-action on $\XX$ we define the quotient scheme, if it exists, to be a scheme $\YY$ with a morphism $\pi:\XX\to \YY$ such that the following is a co-equalizer in the category of all schemes:
	\[
	\left(\XX\times\HH\rightrightarrows \XX\right)\xto{\pi}  \YY
	\]
	If the quotient $(\YY,\pi)$ exists, then we will write $\YY=\XX/\HH$.  Notice that if the quotient exists, then $\pi$ is necessarily an epimorphism.
	
	We say that an action $a:\XX\times\HH\to\XX$ is \emph{free} if $pr_{1}\times a:\XX\times\HH\to\XX\times\XX$ is a closed embedding.  The following is proven in exactly the same way as Chpt.~III, $\S$1, 2.4 and $\S3$, 2.5, and Chpt.~I, $\S2$, 3.9 of \cite{DG}.
	
	\begin{lemma}\label{lemma quot fppf can}
		If $\HH$ acts freely on $\XX$ and the quotient $(\YY,\pi)$ exists, then $\XX\times\HH\cong \XX\times_{\YY}\XX$ and $\pi$ is faithfully flat and affine. 
	\end{lemma}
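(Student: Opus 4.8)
The strategy is to reproduce, in the setting of $\mathscr{C}$-schemes, the classical proof (Demazure--Gabriel, Chpt.~III $\S1$, 2.4 and $\S3$, 2.5, and Chpt.~I $\S2$, 3.9), using the formalism of the fppf topology from Section~\ref{sec fppf topology}. First I would replace the scheme-level coequalizer by the sheaf quotient $\FF$, defined as the fppf sheafification of the presheaf $A\mapsto\XX(A)/\HH(A)$; by construction the canonical morphism $q:\XX\to\FF$ is an epimorphism of fppf sheaves. The free action then enters as follows: since $pr_1\times a:\XX\times\HH\to\XX\times\XX$ is a closed embedding, hence a monomorphism, the morphism $\XX\times\HH\to\XX\times_{\FF}\XX$ given by $(x,h)\mapsto(x,xh)$ is a monomorphism of sheaves, and it is an epimorphism because $q$ is one; thus $\XX\times\HH\xto{\sim}\XX\times_{\FF}\XX$, which exhibits $q$ as an $\HH$-torsor, i.e.\ fppf-locally on $\FF$ it is isomorphic to a projection $\HH\times\UU\to\UU$.

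Next I would identify $\FF$ with $\YY$. Since $\HH$ is an affine group scheme, $q$ is an affine, faithfully flat morphism, so by effectivity of fppf descent for affine morphisms $\FF$ is representable by a scheme $\YY'$; as $\YY'$ is the coequalizer of $\XX\times\HH\rightrightarrows\XX$ in fppf sheaves and $\mathscr{C}$-schemes embed fully faithfully into fppf sheaves, $\YY'$ is also the coequalizer in $\mathscr{C}$-schemes, whence $\YY'\cong\YY$ by uniqueness. In particular $\XX\times\HH\cong\XX\times_{\YY}\XX$. It remains to see that $\pi:\XX\to\YY$ is faithfully flat and affine. Base-changing $\pi$ along $\pi$ itself yields, via the isomorphism just obtained, the first projection $\XX\times\HH\cong\XX\times_{\YY}\XX\to\XX$, which is the pullback along $\XX\to\Spec\Bbbk$ of the structure morphism $\HH\to\Spec\Bbbk$. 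The latter is faithfully flat --- every object of $\mathscr{C}$ is flat over $\mathbf{1}$ since $\otimes$ is exact, and $\mathbf{1}\to\Bbbk[\HH]$ is split injective via the counit --- and affine, since $\HH$ is an affine group scheme. As faithful flatness and affineness are fppf-local on the target and $\pi$ is an fppf cover, $\pi$ itself is faithfully flat and affine.

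The only genuine difficulty is foundational rather than computational: one needs, in the generality of $\mathscr{C}$-schemes, the basic apparatus of the fppf topology used above --- sheafification and epimorphisms of sheaves, effectivity of fppf descent for affine morphisms (so that $\FF$ is a scheme), and fppf descent of faithful flatness and of affineness along covers. These are precisely the facts recorded in Section~\ref{sec fppf topology}; granting them, every step above is word-for-word the argument of \cite{DG}. Relative to the classical case there is even a small simplification: the flatness of $\HH\to\Spec\Bbbk$ needs no argument, as $\mathscr{C}$ has exact tensor product.
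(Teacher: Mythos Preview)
The paper gives no proof of its own here; it simply defers to \cite{DG}, so any correct unpacking of the cited argument counts as ``the same approach''. Your outline is in that spirit --- pass to the sheaf quotient $\FF$, use freeness to identify $\XX\times\HH\cong\XX\times_{\FF}\XX$, and descend affineness and faithful flatness along the fppf epimorphism --- and those parts are fine.

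There is, however, a genuine gap in your middle step. You write that ``by effectivity of fppf descent for affine morphisms $\FF$ is representable by a scheme $\YY'$''. Effectivity of descent for affine morphisms says that an affine morphism with descent data over an fppf cover of a \emph{scheme} descends; it does not say that an fppf sheaf receiving an affine epimorphism from a scheme is itself a scheme. In fact that conclusion is false already in $\mathrm{Vec}$: Hironaka's proper non-projective threefold carries a free action of the affine algebraic group $\mu_2$ whose sheaf quotient is an algebraic space that is not a scheme. Your argument for representability nowhere uses the hypothesis that the scheme coequalizer $\YY$ exists; were it valid, it would prove $\FF$ is always a scheme for free actions of affine algebraic groups, contradicting such examples. (A related minor point: Section~\ref{sec fppf topology} contains only Lemma~\ref{lemma schemes are sheaves}; the descent inputs you invoke are the ones later quoted from \cite{DG} as Lemmas~\ref{lemma affine fais scheme} and~\ref{LemCart}, not facts recorded in that section.)

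Note also that the paper's logic runs in the opposite direction from yours: the present lemma is used to \emph{prove} Lemma~\ref{lemma quot exists in sch iff fais} (that the scheme quotient, when it exists, represents $\FF$), so you cannot obtain $\YY\cong\FF$ by first proving $\FF$ representable and then appealing to uniqueness of coequalizers --- that would collapse the two lemmas and, as just noted, the representability half fails in general. The honest fix is to follow the specific \cite{DG} references more closely and use the \emph{given} $\YY$ throughout, rather than attempting to reconstruct it from $\FF$.
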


	\subsection{The fppf topology}\label{sec fppf topology} It is well known that quotient schemes need not exist already when $\mathscr{C}=\Vec$.  However if we work in the category of sheaves in the fppf topology, which we call faisceaux, then a quotient always exists.  Before we explain this, let us recall a few details about the fppf topology.  A morphism $R\to R'$ is called fppf if it is faithfully flat and finitely presented.  Open covers are collections of morphisms $R\to R_i$ such that $R\to \prod R_i$ is fppf.  One can show that a presheaf $\FF$ is a faisceau if and only if it preserves products and for every fppf morphism $R\to R'$, the following is an equalizer:
	\[
	\FF(R)\to\left(\FF(R')\rightrightarrows \FF(R'\otimes_RR')\right).
	\]
	The following is standard, see \cite[Lemma~4.3.5]{C3}.
	\begin{lemma}\label{lemma schemes are sheaves}
		If $\XX$ is a scheme, then $\XX$ is a sheaf in the fppf topology.
	\end{lemma}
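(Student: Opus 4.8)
The plan is to reduce the statement to the affine case and then invoke faithfully flat descent for modules in $\mathscr{C}$. First I would recall the characterisation of faisceaux stated just above: a presheaf $\FF$ on the category of commutative algebras in $\mathscr{C}$ is a sheaf in the fppf topology if and only if it preserves finite products and, for every fppf morphism $R\to R'$, the diagram $\FF(R)\to\FF(R')\rightrightarrows\FF(R'\otimes_R R')$ is an equalizer. For $\XX$ a scheme, $\XX(R)=\Hom_{\operatorname{Lrs}_{\mathscr{C}}}(\Spec R,\XX)$, and since $\Spec$ turns coproducts of algebras into disjoint unions of affine schemes, preservation of finite products is immediate. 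So the real content is the equalizer condition.

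Next I would handle the affine case $\XX=\Spec A$: here $\XX(R)=\Hom_{\operatorname{alg}}(A,R)$, and the equalizer condition says exactly that $R\to R'\rightrightarrows R'\otimes_R R'$ is an equalizer of commutative algebras in $\mathscr{C}$ whenever $R\to R'$ is fppf. This is faithfully flat descent along $R\to R'$ applied to the $R$-module $A$ — more precisely, it follows from faithful flatness of $R\to R'$ (so that $R=\ker(R'\rightrightarrows R'\otimes_R R')$ as $R$-modules, hence as algebras), which is the internal analogue of the classical statement and holds in $\mathscr{C}$ because $\otimes$ is exact and the usual Amitsur-complex / descent argument is formal. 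I would cite the development of flat descent in \cite{C3} (or reproduce the two-line Amitsur argument) for this.

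Then, for a general scheme $\XX$, I would argue as follows. Given $a\in\XX(R')$ whose two images in $\XX(R'\otimes_R R')$ agree, I want to descend it to $\XX(R)$. Since $R\to R'$ is faithfully flat, $\Spec R'\to\Spec R$ is surjective; choose a finite affine open cover $\XX=\bigcup_i\UU_i$ with $\UU_i=\Spec A_i$. The preimages $a^{-1}(\UU_i)$ form an open cover of $\Spec R'$; because the descent datum is compatible, these opens are pulled back from a cover of $\Spec R$ — here one uses that the topological space $|\Spec R'|\to|\Spec R|$ is a quotient map for a faithfully flat morphism, together with the fact that open subschemes of affines are determined by radical ideals, and an ideal of $R'$ stable under the descent datum comes from $R$ by the affine case applied to the structure sheaf. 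Refining, I reduce to the situation where $a$ factors through a single $\UU_i$, i.e. $a\in\UU_i(R')\subseteq\XX(R')$ with matching images in $\UU_i(R'\otimes_R R')$; now the affine case gives a unique $\tilde a\in\UU_i(R)\subseteq\XX(R)$ restricting to $a$. Uniqueness of the global descended map follows by covering $\Spec R$ and using uniqueness in the affine case on overlaps.

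The main obstacle is the bookkeeping in the last paragraph: making precise that a faithfully flat morphism of affine $\mathscr{C}$-schemes is a topological quotient map and that open subschemes (equivalently, quasi-coherent ideal sheaves cutting out reduced closed complements) descend along it, so that a map $\Spec R'\to\XX$ landing locally in the $\UU_i$ really does arrange to land in a single chart after pulling back along a faithfully flat cover of $\Spec R$. All of the input — exactness of $\otimes$, Noetherianity of finitely generated algebras, the homeomorphism $|\Spec A|\cong|\Spec A_{(0)}|$, and flat descent for modules — is available from Section \ref{Prel} and \cite{C3}, so the proof is a matter of assembling these standard ingredients exactly as in \cite[Ch.~III]{DG}; I would simply remark that the classical argument transports verbatim.
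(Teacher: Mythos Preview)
The paper does not actually prove this lemma: it simply prefaces it with ``The following is standard'' and moves on. Your proposal supplies the standard argument in full (affine case via the Amitsur/descent equalizer for faithfully flat $R\to R'$, then a Zariski-local reduction for general $\XX$), which is exactly what one would expect and is correct in outline. So there is nothing to compare here; you have written out what the paper chose to omit.

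One small comment on your reduction step: the honest way to descend the open cover is not via ideals but via the underlying topological spaces. Since $|\Spec R|\cong|\Spec R_{(0)}|$ and $|\Spec R'|\cong|\Spec R'_{(0)}|$ (from \cite{C2}), and $R_{(0)}\to R'_{(0)}$ is again faithfully flat (flatness is inherited, and surjectivity on spectra is a topological statement), you can invoke the classical fact that a faithfully flat map of ordinary affine schemes is a quotient map on spaces to descend the open sets $a^{-1}(\UU_i)$; this sidesteps any subtlety about descending ideals in $\mathscr{C}$. With that adjustment the bookkeeping goes through cleanly, and your concluding remark that the argument of \cite[Ch.~III]{DG} transports verbatim is fair.
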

	\subsection{Affine quotients} Suppose that $\XX=\Spec A$ is affine with a free action of $\HH$, and let $B=A^{\HH}$, so that we have a natural map $\XX\to \YY=\Spec B$.
	
	\begin{lemma}\label{lemma quots affine}
		We have $\YY=\XX/\HH$ if and only if $B\to A$ is fppf and the `canonical' map (following the terminology of \cite{Sch})
		\[
		\mathbf{can}:A\otimes_{B}A\to A\otimes\Bbbk[\HH],
		\]
		given by $i_1\otimes a$ is an isomorphism.  Here $i_1$ is inclusion of $A$ into the first tensor, and $a$ is the coaction $A\to A\otimes\Bbbk[\HH]$.
	\end{lemma}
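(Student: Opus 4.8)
The plan is to reduce the statement to faithfully flat descent in the category of faisceaux, and then transport the resulting universal property back to the category of all schemes using Lemma~\ref{lemma schemes are sheaves}. Throughout I identify a right $\HH$-action on $\XX=\Spec A$ with a right $\Bbbk[\HH]$-comodule algebra structure $a\colon A\to A\otimes\Bbbk[\HH]$ on $A$, so that $B=A^{\HH}$ is by definition the equalizer of $a$ and $\id_A\otimes\eta_{\HH}$, and $\pi\colon\XX\to\YY=\Spec B$ is the induced $\HH$-invariant morphism. Both directions revolve around comparing $\operatorname{can}$ with the isomorphism $\XX\times\HH\xto{\sim}\XX\times_{\YY}\XX$ provided by Lemma~\ref{lemma quot fppf can}: since $\pi$ is $\HH$-invariant, the morphism $pr_1\times a\colon\XX\times\HH\to\XX\times\XX$ lands in $\XX\times_{\YY}\XX$, and unwinding the definitions shows that the induced map on coordinate rings is precisely $\operatorname{can}$.

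For the forward direction, assume $\YY=\XX/\HH$. Since the action is free, Lemma~\ref{lemma quot fppf can} applies to $(\YY,\pi)$ and gives that $\pi$ is faithfully flat and that $\XX\times\HH\to\XX\times_{\YY}\XX$ is an isomorphism; by the identification above, $\operatorname{can}$ is an isomorphism. To see that $B\to A$ is in fact fppf, note that $\Bbbk[\HH]$ is finitely generated, hence, being Noetherian under the standing hypotheses, of finite presentation over $\Bbbk$; therefore $A\otimes_B A\cong A\otimes\Bbbk[\HH]$ is of finite presentation over $A$ via $i_1$, and since finite presentation descends along the faithfully flat map $B\to A$, the algebra $A$ is finitely presented over $B$, so $\pi$ is fppf.

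For the converse, assume $B\to A$ is fppf and $\operatorname{can}$ is an isomorphism. Then $\pi\colon\XX\to\YY$ is an fppf cover; since faisceaux form a topos, $\pi$ is an effective epimorphism, so $\YY$ is the coequalizer in faisceaux of its kernel pair $\XX\times_{\YY}\XX\rightrightarrows\XX$. The hypothesis on $\operatorname{can}$ identifies this kernel pair together with its two projections to $\XX$ with the pair $pr_1,a\colon\XX\times\HH\rightrightarrows\XX$; hence $\YY$ is the coequalizer of $\XX\times\HH\rightrightarrows\XX$ in the category of faisceaux. Finally, let $\ZZ$ be an arbitrary scheme and $f\colon\XX\to\ZZ$ a morphism with $f\circ pr_1=f\circ a$. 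By Lemma~\ref{lemma schemes are sheaves}, $\ZZ$ is a faisceau, so $f$ factors uniquely through $\pi$ by a morphism of faisceaux $\YY\to\ZZ$; since schemes embed fully faithfully into faisceaux (by Lemma~\ref{lemma schemes are sheaves} and the Yoneda lemma), this factorization takes place in the category of schemes, and uniqueness there follows from uniqueness in faisceaux. Thus $(\YY,\pi)$ is the coequalizer in all schemes, i.e.\ $\YY=\XX/\HH$.

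The technical crux, and the point requiring the most care, is the descent input used in the converse: that an fppf cover of schemes in $\mathscr{C}$ is an effective epimorphism of faisceaux, realizing the base as the coequalizer of the associated kernel pair. This is the $\mathscr{C}$-linear form of faithfully flat descent, and it is proved exactly as the classical statements of \cite[Ch.~III]{DG} invoked for Lemma~\ref{lemma quot fppf can}. The remaining ingredients — the explicit identification of $\operatorname{can}$ with the descent-datum isomorphism, and the finite-presentation bookkeeping in the forward direction — are routine.
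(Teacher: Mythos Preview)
Your proof is correct and follows essentially the same strategy as the paper: both use Lemma~\ref{lemma quot fppf can} for the forward direction and fppf descent for the converse, identifying the action diagram with the kernel pair via $\operatorname{can}$. The paper's proof is terser---it phrases the converse algebraically, noting that $B\to(A\rightrightarrows A\otimes_B A)$ is an equalizer by the fppf hypothesis and then invoking the isomorphism of diagrams, leaving the passage to the scheme-level coequalizer implicit---whereas you spell out explicitly the transport through faisceaux and the full faithfulness of schemes therein.

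Your version actually fills a small gap: Lemma~\ref{lemma quot fppf can} only yields that $\pi$ is faithfully flat, not fppf, and the paper's forward direction cites that lemma without further comment. Your descent argument for finite presentation (using that $\Bbbk[\HH]$ is finitely generated, hence $A\otimes_B A\cong A\otimes\Bbbk[\HH]$ is finitely presented over $A$, and descending along the faithfully flat $B\to A$) is a welcome clarification.
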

	\begin{proof}
		The forward direction is Lemma \ref{lemma quot fppf can}.  For the converse, our second condition implies that we have an isomorphism of equalizer diagrams 
		\[
		(A\rightrightarrows A\otimes\Bbbk[\HH])\cong (A\rightrightarrows A\otimes_{B}A),
		\]
        and thus an isomorphism of co-equalizer diagrams:
        \[
		(\XX\times\HH\rightrightarrows \XX)\cong (\XX\times_{\YY}\XX\rightrightarrows \XX).
		\]
        The right diagram has co-equaliser $\YY$, by \cite[Proposition~4.26]{C3}, and thus the left diagram does too. 
	\end{proof}
	
	\subsection{The faisceau $\XX/_{\hspace{-0.2em}f}\HH$}  Suppose that $\HH$ acts freely on $\XX$; the case of particular interest to us is when $\HH$ is a closed subgroup of $\GG$, and $\HH$ acts on $\GG$ by right multiplication. Write $\XX/_{\hspace{-0.2em}0}\HH$ for the presheaf given by
	\[
	A\mapsto \XX(A)/\HH(A).
	\]
	This is clearly the quotient in the category of presheaves.  We set $\XX/_{\hspace{-0.2em}f}\HH$ to be the associated faisceau of $\XX/_{\hspace{-0.2em}0}\HH$.  By its universal property, $\XX/_{\hspace{-0.2em}f}\HH$ will be the quotient of $\XX$ by $\HH$ in the category of faisceaux.  It has the following explicit description: for an algebra $A$, we have
	\[
	(\XX/_{\hspace{-0.2em}f}\HH)(A)=\lim\limits_{\rightarrow}\operatorname{eq}[\XX(B)/\HH(B)\rightrightarrows \XX(B\otimes_AB)/\HH(B\otimes_AB)],
	\]
	where $B$ runs over all fppf maps $A\to B$ (see, for instance, \cite[Sec.~5.4]{J}). 
	
	\begin{lemma}\label{lemma quot exists in sch iff fais}
		If $\XX/\HH$ exists, then it represents $\XX/_{\hspace{-0.2em}f}\HH$.
	\end{lemma}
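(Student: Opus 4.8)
The plan is to build the canonical comparison morphism of faisceaux and show it is an isomorphism by checking bijectivity on $A$-points for every commutative algebra $A$. Write $\YY:=\XX/\HH$ with projection $\pi\colon\XX\to\YY$. Since $\YY$ is the coequalizer of the two maps $pr_{1},a\colon\XX\times\HH\rightrightarrows\XX$ in schemes, $\pi$ satisfies $\pi\circ a=\pi\circ pr_{1}$; viewing everything through functors of points and using that $\YY$ is a faisceau (Lemma~\ref{lemma schemes are sheaves}), the map $\pi$ factors through the presheaf quotient $\XX/_{\hspace{-0.2em}0}\HH$ and hence, by the universal property of sheafification, through $\XX/_{\hspace{-0.2em}f}\HH$, producing $\theta\colon\XX/_{\hspace{-0.2em}f}\HH\to\YY$. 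The essential input is that, by Lemma~\ref{lemma quot fppf can} (which uses freeness), $\pi$ is faithfully flat and affine and the natural map $\XX\times\HH\to\XX\times_{\YY}\XX$ is an isomorphism; thus $\pi$ is an $\HH$-torsor and the argument reduces to the standard faithfully flat descent computation, exactly as in \cite{DG} and \cite[Sec.~5.4]{J}.

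For surjectivity of $\theta(A)$: given $y\in\YY(A)$, set $\Spec B:=\XX\times_{\YY}\Spec A$, so that the projection $\Spec B\to\Spec A$ is faithfully flat and affine (base change of $\pi$) and the tautological map $\Spec B\to\XX$ is a point $x\in\XX(B)$ with $\pi(x)=y|_{B}$. Its two pullbacks $x_{1},x_{2}\in\XX(B\otimes_{A}B)$ both lie over $y|_{B\otimes_{A}B}$, so $(x_{1},x_{2})$ corresponds under $\XX\times_{\YY}\XX\cong\XX\times\HH$ to a (unique) $h\in\HH(B\otimes_{A}B)$ with $x_{2}=x_{1}\cdot h$; hence the class of $x$ in $\XX(B)/\HH(B)$ equalizes the two maps to $\XX(B\otimes_{A}B)/\HH(B\otimes_{A}B)$ and defines an element $\xi\in(\XX/_{\hspace{-0.2em}f}\HH)(A)$. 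Unwinding the construction of $\theta$, the element $\theta(A)(\xi)$ is the unique descent of $\pi(x)=y|_{B}$ along $A\to B$, which is $y$. For injectivity: two elements of $(\XX/_{\hspace{-0.2em}f}\HH)(A)$ may, after passing to a common fppf refinement $A\to B$, be represented by $x,x'\in\XX(B)$; if their images in $\YY(A)$ agree then $\pi(x)=\pi(x')$ in $\YY(B)$, so $(x,x')$ under $\XX\times_{\YY}\XX\cong\XX\times\HH$ yields $h\in\HH(B)$ with $x'=x\cdot h$, i.e.\ the two representatives already coincide in $\XX(B)/\HH(B)$. Thus $\theta(A)$ is bijective for all $A$, so $\theta$ is an isomorphism.

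The one place requiring genuine care is that the auxiliary covers $A\to B$ appearing above are \emph{honestly} fppf, i.e.\ finitely presented and not merely faithfully flat, so that they are admissible in the colimit description of $\XX/_{\hspace{-0.2em}f}\HH$. This follows from the torsor isomorphism: base-changing $\XX\times\HH\cong\XX\times_{\YY}\XX$ along $\Spec A\to\YY$ identifies $B\otimes_{A}B$ with $B\otimes\Bbbk[\HH]$, which is finitely presented over $B$ since $\Bbbk[\HH]$ is a finitely presented algebra (finitely generated algebras in $\mathscr{C}$ are Noetherian, \cite[Lem.~6.4.4]{C2}); finite presentation of $A\to B$ then descends along the faithfully flat morphism $A\to B$ itself. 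I expect this descent bookkeeping (and keeping track of refinements in the colimit) to be the only real obstacle; the rest is a formal diagram chase identical to the classical case.
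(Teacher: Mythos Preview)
Your proof is correct and rests on the same essential input as the paper's, namely Lemma~\ref{lemma quot fppf can}. The paper argues more abstractly: the torsor isomorphism $\XX\times\HH\cong\XX\times_{\XX/\HH}\XX$ identifies the diagram $(\XX\times\HH\rightrightarrows\XX)$ with the kernel pair of $\pi$, and since $\pi$ is faithfully flat and affine, $\XX/\HH$ is the coequalizer of this kernel pair in faisceaux; hence $\XX/\HH$ and $\XX/_{f}\HH$ coequalize the same diagram and are isomorphic. You instead unwind this into an explicit bijectivity check on $A$-points via the colimit description of sheafification, which is the same argument one level down. One point where you are actually more careful than the paper: you explicitly justify that the auxiliary cover $A\to B$ is fppf (not merely faithfully flat) by descending finite presentation from $B\otimes_{A}B\cong B\otimes\Bbbk[\HH]$, whereas the paper's assertion that a faithfully flat affine $\pi$ makes $\XX/\HH$ a coequalizer in \emph{fppf} sheaves tacitly relies on the same fact.
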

	\begin{proof}
		By Lemma~\ref{lemma quot fppf can}, we have an isomorphism of diagrams 
		\[
		(\XX\times\HH\rightrightarrows\XX)\cong(\XX\times_{\XX/\HH}\XX\rightrightarrows\XX)
		\]
		Clearly, $\XX/_{\hspace{-0.2em}f}\HH$ is the co-equalizer of the LHS diagram in the category of faisceaux.  On the other hand, because $\XX\to\XX/\HH$ is faithfully flat and affine (Lemma~\ref{lemma quot fppf can}), $\XX/\HH$ is the co-equalizer of the RHS diagram in the category of faisceaux.   Indeed we may check this affine locally on $\XX/\HH$, and there it follows from the definition of a faisceau.  Thus $\XX/\HH$ and $\XX/_{\hspace{-0.2em}f}\HH$ are co-equalizers of the same diagram in the category of faisceaux.
	\end{proof}
	
	\begin{cor}\label{cor k pts quotient}
		If $\GG/\HH$ exists, then $(\GG/\HH)(\Bbbk)=\GG(\Bbbk)/\HH(\Bbbk)$.
	\end{cor}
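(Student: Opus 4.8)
The plan is to prove that the canonical map $\GG(\Bbbk)/\HH(\Bbbk)\to(\GG/\HH)(\Bbbk)$ is a bijection. The right-translation action of $\HH$ on $\GG$ is free, since the map $\GG\times\HH\to\GG\times\GG$, $(g,h)\mapsto(g,gh)$, factors as $\mathrm{id}\times(\HH\hookrightarrow\GG)$ followed by an automorphism of $\GG\times\GG$ and hence is a closed immersion; so Lemma~\ref{lemma quot exists in sch iff fais} identifies $\GG/\HH$ with the faisceau $\GG/_{\hspace{-0.2em}f}\HH$, and the map above is the one induced by sheafification from $(\GG/_{\hspace{-0.2em}0}\HH)(\Bbbk)=\GG(\Bbbk)/\HH(\Bbbk)$. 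It is well defined because $\pi\colon\GG\to\GG/\HH$ coequalises the action, so $\pi_\ast$ is constant on $\HH(\Bbbk)$-cosets. The main tool will be the explicit filtered-colimit formula for $\GG/_{\hspace{-0.2em}f}\HH$ recalled before Lemma~\ref{lemma quot exists in sch iff fais}: every element of $(\GG/\HH)(\Bbbk)$ is represented by some $\bar g\in\GG(B)/\HH(B)$ for an fppf morphism $\Bbbk\to B$, and two elements of $\GG(\Bbbk)$ have the same image precisely when they become $\HH(B)$-equivalent inside $\GG(B)$ for some such $B$.

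For injectivity, I would take $g,g'\in\GG(\Bbbk)$ with the same image; then for some fppf $\Bbbk\to B$ there is $h\in\HH(B)$ with $g'|_B=g|_B\,h$ in $\GG(B)$, so $x|_B=h\in\HH(B)$ where $x:=g^{-1}g'\in\GG(\Bbbk)$. Viewing $x$ as a morphism $\Spec\Bbbk\to\GG$ and $p\colon\Spec B\to\Spec\Bbbk$ for the structure map, this says $x\circ p$ factors through the closed subscheme $\HH\sub\GG$. Now $\Spec\Bbbk\times_{\GG}\HH\to\Spec\Bbbk$ is a closed immersion, hence, as $\mathbf 1$ is simple, either an isomorphism or the inclusion of $\emptyset$; base-changing along the faithfully flat $\Bbbk\to B$ and using that $x\circ p$ factors through $\HH$ while $B\neq 0$ rules out the empty case. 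Therefore $x\in\HH(\Bbbk)$, so $g,g'$ lie in one coset.

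For surjectivity, I would take $\xi\in(\GG/\HH)(\Bbbk)$, choose a representative $\bar g\in\GG(B)/\HH(B)$ over an fppf $\Bbbk\to B$ and a lift $g\in\GG(B)$, so that $\pi\circ g=\xi\circ p$ in $(\GG/\HH)(B)$. Since an fppf morphism is in particular finitely presented and faithfully flat, $B$ is a nonzero finitely generated algebra, so $\Spec B$ is a nonempty algebraic scheme and its body $(\Spec B)_0=\Spec\ol B$ is a nonempty algebraic scheme over the algebraically closed field $\Bbbk$; by the Nullstellensatz it has a $\Bbbk$-point, and $(\Spec B)(\Bbbk)=(\Spec B)_0(\Bbbk)$ since $\Spec\Bbbk$ is purely even, so we obtain a section $b\colon\Spec\Bbbk\to\Spec B$ of $p$. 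Then $g\circ b\in\GG(\Bbbk)$ and $\pi_\ast(g\circ b)=(\pi\circ g)\circ b=(\xi\circ p)\circ b=\xi$, so $\xi$ is in the image.

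The main obstacle is the surjectivity step, whose real content is twofold: that an abstract element of the sheafification $\GG/_{\hspace{-0.2em}f}\HH$ can be realised on an honest, finitely presented fppf cover $\Spec B\to\Spec\Bbbk$ over which it lifts to $\GG$, and that algebraic closedness of $\Bbbk$ then supplies a $\Bbbk$-point of the finite-type scheme $\Spec B$ (equivalently, of its body) along which to specialise the lift. The remaining ingredients---that $\HH\hookrightarrow\GG$ is a closed immersion, that $\mathbf 1$ is simple, and that $\XX(\Bbbk)=\XX_0(\Bbbk)$ for every scheme $\XX$---are formal, and the argument uses essentially nothing about $\GG/\HH$ beyond its representability as a faisceau.
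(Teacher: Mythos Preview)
Your argument is correct and follows the same overall strategy as the paper: identify $\GG/\HH$ with the faisceau $\GG/_{\hspace{-0.2em}f}\HH$ via Lemma~\ref{lemma quot exists in sch iff fais}, then compute its $\Bbbk$-points. The paper dispatches the second step by citing \cite[Chpt.~III, \S 1, 1.15]{DG}, whereas you unpack that argument directly: injectivity from the fact that a closed subscheme of $\Spec\Bbbk$ is either empty or all of $\Spec\Bbbk$ (simplicity of $\mathbf{1}$), and surjectivity from the Nullstellensatz applied to the body $\ol B$ of a nonzero finitely generated fppf cover, which supplies a $\Bbbk$-point along which to specialise. Your explicit route has the merit of being self-contained and of verifying that the classical reasoning in \cite{DG}, written for ordinary schemes over a field, carries over to $\mathscr{C}$; the only place where something beyond the classical argument is needed is the passage $\Spec B\to(\Spec B)_0=\Spec\ol B$ before invoking the Nullstellensatz, and you handle this correctly.
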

	\begin{proof}
		Indeed by \cite[Chpt.~III, \S 1, 1.15]{DG}, we have $(\GG/_{\hspace{-0.2em}f}\HH)(\Bbbk)=\GG(\Bbbk)/\HH(\Bbbk)$, so we may apply Lemma \ref{lemma quot exists in sch iff fais}.
	\end{proof}

	\begin{lemma}\label{lemma quot in Vec also in C}
		Suppose that $\HH_0\leq\GG_0$ are algebraic groups in $\operatorname{Vec}_{\Bbbk}$.  Then $\GG_0/\HH_0$ exists in the category of schemes over $\mathscr{C}$.  In particular, $\GG_0/\HH_0$ represents $\GG_0/_{\hspace{-0.2em}f}\HH_0$.
	\end{lemma}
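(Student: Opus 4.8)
The plan is to take the classical quotient $\YY_0:=\GG_0/\HH_0$ and show it continues to work as a quotient internally to $\mathscr{C}$. By the classical theory (\cite[Chpt.~III]{DG}), $\YY_0$ exists as a $\Bbbk$-scheme of finite type, the projection $\pi\colon\GG_0\to\YY_0$ is faithfully flat and affine, and $\YY_0$ has a finite cover by affine opens $V_i=\Spec B_i$ whose preimages $U_i:=\pi^{-1}(V_i)=\Spec A_i$ are $\HH_0$-stable, with $B_i=A_i^{\HH_0}$, with $B_i\to A_i$ faithfully flat, and with the canonical map $\operatorname{can}\colon A_i\otimes_{B_i}A_i\to A_i\otimes\Bbbk[\HH_0]$ an isomorphism. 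I would view all of this inside $\mathscr{C}$ via the embedding $\operatorname{Lrs}_{\Vec}\hookrightarrow\operatorname{Lrs}_{\mathscr{C}}$, so that $\GG_0,\HH_0,\YY_0$ become purely even schemes in $\mathscr{C}$.

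The key point is that $A_i,B_i,\Bbbk[\GG_0],\Bbbk[\HH_0]$ are purely even objects of $\mathscr{C}$ (direct sums of copies of $\mathbf{1}$), that the braiding restricts to the ordinary flip on the monoidal subcategory of purely even objects, and hence that tensor products, equalizers and filtered colimits of such objects are computed identically in $\Vec$ and in $\mathscr{C}$. Consequently the identities $B_i=A_i^{\HH_0}$ and the isomorphisms $\operatorname{can}$ persist verbatim in $\mathscr{C}$. The one assertion that genuinely requires an argument is that faithful flatness of $B_i\to A_i$ survives in $\mathscr{C}$. Flatness follows from Lazard's criterion: write $A_i$ as a filtered colimit of free $B_i$-modules $B_i\otimes_{\Bbbk}W_\alpha$, and use that $W_\alpha\otimes_{\Bbbk}(-)$ and filtered colimits are exact in $\mathscr{C}$. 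Faithfulness follows from Nakayama's lemma in $\mathscr{C}$ (\cite[Thm.~6.1.5]{C2}): if $M$ is a nonzero $B_i$-module in $\mathscr{C}$ with $A_i\otimes_{B_i}M=0$, replace $M$ by a nonzero finitely generated submodule and reduce modulo a maximal ideal of $B_i$ in its support, over which $A_i$ stays nonzero, a contradiction.

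It then follows that in $\mathscr{C}$-schemes $\pi\colon\GG_0\to\YY_0$ is still faithfully flat and affine (both properties being affine-local on the base), and that the affine isomorphisms $U_i\times\HH_0\cong U_i\times_{V_i}U_i$ supplied by $\operatorname{can}$ glue, along the coaction, to an isomorphism $\GG_0\times\HH_0\cong\GG_0\times_{\YY_0}\GG_0$ over $\GG_0\times\GG_0$; equivalently, by Lemma~\ref{lemma quots affine} each $V_i$ is the quotient $U_i/\HH_0$ in $\mathscr{C}$-schemes and these glue. Thus $\pi$ is an fppf cover whose kernel pair is the equivalence relation $\GG_0\times\HH_0\rightrightarrows\GG_0$, and effectivity of fppf descent (\cite[Chpt.~III, \S1]{DG}) identifies $\YY_0$ with the faisceau $\GG_0/_{\hspace{-0.2em}f}\HH_0$; in particular the latter is representable by a scheme. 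Finally, since every scheme is an fppf sheaf (Lemma~\ref{lemma schemes are sheaves}), any scheme equipped with an $\HH_0$-invariant morphism from $\GG_0$ factors uniquely through $\YY_0$ as a morphism of sheaves, hence of schemes; therefore $\YY_0$ is the coequalizer of $\GG_0\times\HH_0\rightrightarrows\GG_0$ in $\mathscr{C}$-schemes, so $\GG_0/\HH_0$ exists, and the ``in particular'' of the statement is exactly the representability just shown (alternatively deducible from Lemma~\ref{lemma quot exists in sch iff fais}).

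I expect the second step to be the main obstacle: one must ensure that faithful flatness, and more broadly the good behaviour of base change along a classically faithfully flat ring map, is not disrupted by passing from $\Vec$ to $\mathscr{C}$. Everything else is the standard faisceau formalism applied internally, and can be kept brief by quoting \cite{DG} together with the lemmas already in place.
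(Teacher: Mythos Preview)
Your argument is correct, but the paper takes a much shorter route. Rather than verifying by hand that the classical quotient data (invariants, the canonical isomorphism, faithful flatness) persists in $\mathscr{C}$, the paper observes that the inclusion $\operatorname{Sch}_\Bbbk\hookrightarrow\operatorname{Sch}_{\mathscr{C}}$ is left adjoint to $\XX\mapsto\XX_0$ (already recorded when $(-)_0$ was introduced), and left adjoints preserve colimits; since $\GG_0/\HH_0$ is by definition the coequalizer of $\GG_0\times\HH_0\rightrightarrows\GG_0$ in $\operatorname{Sch}_\Bbbk$, it remains that coequalizer in $\operatorname{Sch}_{\mathscr{C}}$. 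The ``in particular'' then follows from Lemma~\ref{lemma quot exists in sch iff fais}, exactly as you note. Your approach has the merit of making explicit that $\pi$ is fppf in $\mathscr{C}$ and that $\GG_0\times\HH_0\cong\GG_0\times_{\YY_0}\GG_0$ there, facts the paper obtains only a posteriori from Lemma~\ref{lemma quot fppf can}; but the adjoint argument bypasses the need to check faithful flatness in $\mathscr{C}$ directly, which is the one nontrivial verification in your route. Your Nakayama argument for faithfulness is fine once tightened slightly: reduce to $M$ of finite length, take a simple subobject $L$, note that since $B_i\in\Vec$ acts through $\End_{\mathscr{C}}(L)=\Bbbk$ the action factors through some $B_i/\mathfrak{m}$, and then $A_i\otimes_{B_i}L\cong(A_i/\mathfrak{m}A_i)\otimes_\Bbbk L\neq 0$.
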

	
	\begin{proof}
		Indeed, the inclusion functor $\operatorname{Sch}_{\Bbbk}\to \operatorname{Sch}_{\mathscr{C}}$ is left adjoint to the functor $\XX\mapsto\XX_0$, and thus preserves colimits.    For the second statement, we may apply Lemma \ref{lemma quot exists in sch iff fais}. 
	\end{proof}

	\section{Quotients of additive group schemes}\label{Additive}
	
	As a warm-up case, we discuss the existence and explicit description of quotients of additive group schemes. These results will not be used for the general case in the next section, and also do not rely on the theory developed in Section~\ref{FrobTw}.

    Let $0\to X\to Y\to Z\to0$ be a short exact sequence of compact objects in $\mathscr{C}$.  Write $\XX,\YY,\ZZ$ for the corresponding additive group schemes in $\mathscr{C}$, where e.g. $\XX(A)=(X\otimes A)_{(0)}$ for a commutative algebra $A$ in $\mathscr{C}$.  Alternatively, $\XX=\Spec S(X^*)$, so $\XX,\YY$ and $\ZZ$ are algebraic groups in $\mathscr{C}$.   In this case, $\Gamma_{\XX}$ is the divided power algebra of $X$, {\it i.e.} $\oplus_n (X^{\otimes n})^{S_n}$. 
	
	We have an induced algebra morphism $S(Z^*)\to S(Y^*)$ whose image\footnote{If $\mathscr{C}=\mathrm{Ver}_p$, since $\cC$ is semisimple, we simply have $R= S(Z^\ast)$. However, in $\mathrm{Ver}_{p^n}$ for $n>1$ there are short exact sequences for which $R$ is a proper quotient of $S(Z^\ast)$.} we call $R$, and a surjective map $S(Y^*)\to S(X^*)$.  
	We see that $Y^*$ admits a natural filtration $Z^*\sub Y^*$, and this induces a corresponding $\N$-filtration on $S(Y^*)$.  Then \cite[Thm.~3. 3.2]{CEO1} states that with respect to this filtration we have
	\[
	\operatorname{gr} S(Y^*)\cong R\otimes S(X^*),
	\]
	where the grading on the RHS comes from the standard one on $S(X^*)$.  It follows that $S(Y^*)$ admits a filtration by $R$-modules whose layers are faithfully flat, and therefore $S(Y^*)$ is a faithfully flat $R$-algebra, and thus fppf.  Alternatively, this follows from \cite[Theorem~6.3.1]{C2}.

    \subsection{The quotient} We have seen that $\XX$ is a subgroup of $\YY$, and now we describe the scheme quotient $\YY/\XX$.
	Consider the canonical map of algebras, from Lemma \ref{lemma quots affine}:
	\[
	\mathbf{can}:S(Y^*)\otimes_{R} S(Y^*)\to S(Y^*)\otimes S(X^*).
	\]
	View $S(X^*)$ as a graded algebra with $X^*$ in degree 1, and give it the natural induced filtration.  We continue to view $S(Y^*)$ as a filtered algebra arising from the inclusion $Z^*\sub Y^*$.  Give $S(X^*)\otimes S(Y^*)$ and $S(Y^*)\otimes_{R} S(Y^*)$ the total filtrations coming from the natural bifiltrations on each.  Then this morphism respects the filtration, and taking associated graded we obtain:
	\[
	\operatorname{gr}\mathbf{can}:(R\otimes S(X^*))\otimes_{R}(R\otimes S(X^*))\to (R\otimes S(X^*))\otimes S(X^*).
	\]
	This map is exactly the base change to $R$ of the canonical map for the action of $\mathcal{X}$ on itself, i.e.
	\[
	\mathbf{can}_{X}:S(X^*)\otimes S(X^*)\to S(X^*)\otimes S(X^*),
	\]  
	and this map is an isomorphism.  It follows that our original map was also an isomorphism, and thus we obtain by Lemma \ref{lemma quots affine}:
	\[
	\YY/\XX\cong \Spec R.
	\]
	
	\begin{example}\label{example ver4+ homog space}
		Let $\mathscr{C}=\operatorname{Ver}_4^+$, and let $\PP$ denote the additive group scheme on the projective indecomposable object $P$.  Then we have a subgroup $\G_a\leq\PP$, and by Example \ref{example ver4+}, we have $\PP/\G_a\cong\Spec\Bbbk[u]/u^2$, and this is a purely even affine scheme.   In particular $(\PP/\G_a)_0\cong \Spec\Bbbk[u]/u^2$, while $\PP_0/(\G_a)_0\cong \G_a/\G_a\cong \Spec\Bbbk$.
        
		Distinguished low-dimensional algebraic groups, such as $\PP$, in $\operatorname{Ver}_4^+$ were studied more in depth in \cite{H}.
		
	\end{example}
	
	\begin{remark}
		Example \ref{example ver4+ homog space} in particular illustrates that if $0\to X\to Y\to Z\to 0$ is a short exact sequence of compact objects in $\mathscr{C}$, we obtain a left exact sequence of additive algebraic groups $0\to\XX\to\YY\to\ZZ$, and this need not be exact.  However this sequence will be exact if $\mathscr{C}$ is Frobenius exact, i.e.~$\mathscr{C}\sub\operatorname{Ver}_p$.  
	\end{remark}
	\section{Existence and properties of homogeneous spaces}\label{MainSec}
	
	In this section, we closely follow the idea suggested in \cite[Rmk.~9.11]{MZ}.  
	
	\subsection{Existence}
	We first need some preparatory lemmas.  Recall the definition of an infinitesimal scheme in Definition \ref{defn infinitsml}.

	\begin{lemma}\label{lemma infinitesimal quot exist}
		Suppose that an algebraic group $\HH$ acts on an infinitesimal scheme $\XX$.  Then $\XX/\HH$ exists and is equal to $\operatorname{Spec}\Bbbk[\XX]^{\HH}$.
	\end{lemma}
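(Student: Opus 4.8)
The plan is to show directly that $\Spec B$, with $B:=\Bbbk[\XX]^{\HH}$, satisfies the coequalizer universal property. First I would record that since $\XX$ is infinitesimal it is affine, say $\XX=\Spec A$ with $A:=\Bbbk[\XX]$ a finite-length local algebra by Lemma~\ref{lemma inftsml is local fin length}, and that the $\HH$-action amounts to a coaction $\rho\colon A\to A\otimes\Bbbk[\HH]$. By the very definition of $\HH$-invariants, $B$ is the equalizer in $\mathscr{C}$, hence in commutative algebras, of $\rho$ and $\id_A\otimes\eta_{\Bbbk[\HH]}$; applying $\Hom_{\mathrm{CAlg}}(C,-)$ for varying $C$ then shows that $\Spec B$, with structure map $\pi\colon\XX\to\Spec B$ dual to $B\hookrightarrow A$, is the coequalizer of the two arrows $\XX\times\HH\rightrightarrows\XX$ \emph{in the category of affine schemes}. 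It remains only to promote this to a coequalizer in the category of all schemes.

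Two ingredients are needed for that promotion. First, $\Spec B$ is again infinitesimal: $B$ is a subobject of the finite-length object $A$, hence finite length and finitely generated, and $B_{(0)}$ is a finite-dimensional subalgebra of the local algebra $A_{(0)}$, whose maximal ideal $\m$ is nilpotent with residue field $\Bbbk$ (Lemma~\ref{lemma inftsml is local fin length} and Corollary~\ref{Cor:AM}); then $B_{(0)}\cap\m$ is a nilpotent ideal with $B_{(0)}/(B_{(0)}\cap\m)\hookrightarrow A_{(0)}/\m=\Bbbk$, so $B_{(0)}$ is local Artinian with nilpotent maximal ideal, and Lemma~\ref{lemma inftsml is local fin length} gives that $\Spec B$ is infinitesimal. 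Second, any morphism from an infinitesimal scheme to a scheme $\YY$ factors through an affine open of $\YY$, because the only nonempty open of a one-point space is the space itself, and this factorization is unique as a morphism of schemes into that open since open immersions are monomorphisms. Granting these, if $g\colon\XX\to\YY$ coequalizes the two arrows then $g$ and both composites $\XX\times\HH\to\XX\to\YY$ (the latter being constant onto the single point $g(|\XX|)$) factor through a common affine open $\Spec C\ni g(|\XX|)$, and they still agree there because $\Spec C\hookrightarrow\YY$ is a monomorphism; the affine coequalizer property then yields a map $\Spec B\to\Spec C\hookrightarrow\YY$ through which $g$ factors, and uniqueness of such a map into $\YY$ follows by factoring any competitor $\Spec B\to\YY$ through an affine open the same way and invoking the affine universal property again.

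The genuine content — and the one point where the hypothesis ``infinitesimal'' rather than merely ``affine'' is used — is that $|\Spec B|$ is a single point: without this, morphisms \emph{out of} $\Spec B$ need not factor through affine opens and the passage from the affine coequalizer to the global one collapses. Everything else is a diagram chase with universal properties together with the elementary fact that a subalgebra of a finite-length local algebra is again finite-length local. Finally, once $\XX/\HH=\Spec\Bbbk[\XX]^{\HH}$ is established, Lemma~\ref{lemma quot exists in sch iff fais} shows it also represents the faisceau quotient $\XX/_{\hspace{-0.2em}f}\HH$.
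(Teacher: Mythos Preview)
Your argument is correct and follows essentially the same route as the paper's: show that $\Spec\Bbbk[\XX]^{\HH}$ is the coequalizer among affine schemes, then use that morphisms out of infinitesimal schemes factor through affine opens to promote this to all schemes. You are in fact more careful than the paper on two points: you verify explicitly that $\Spec\Bbbk[\XX]^{\HH}$ is itself infinitesimal, and you use this to establish \emph{uniqueness} of the factoring map into an arbitrary scheme $\YY$ (the paper's proof only spells out existence). One small caveat: your closing sentence invoking Lemma~\ref{lemma quot exists in sch iff fais} is extraneous to the statement being proved and not quite sound as written, since that lemma's proof relies on Lemma~\ref{lemma quot fppf can}, which assumes the action is free --- a hypothesis not present here (though it does hold in the paper's intended application to $\GG_r/(\GG_r\cap\HH)$).
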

	\begin{proof}
		First, an observation: let $\ZZ$ be any infinitesimal scheme and let $\YY$ be any scheme. Then since $|\ZZ|$ is a singleton, any morphism $\ZZ\to \YY$ must factor through an open affine subscheme of $\YY$.
		
		Now we prove the lemma. By construction, $\Spec\Bbbk[\XX]^{\HH}$ satisfies the universal property of the quotient $\XX/\HH$ inside the category of affine schemes, so let us show that it also satisfies the universal property in the category of all schemes.  Let $\YY$ be a scheme admitting a map $p:\XX\to\YY$ from the equalizer $(\XX\times\HH\rightrightarrows\XX)$.  Since $\XX$ is infinitesimal, $p$ factors through an affine subscheme $\Spec B$ of $\YY$.  Hence $p$ factors through $\XX\to\Spec\Bbbk[\XX]^{\HH}$.  
	\end{proof}
	
	If $\XX,\YY$ are faisceaux then we say a morphism $f:\XX\to\YY$ is \emph{affine} if for any morphism $\Spec R\to\YY$ we have that $\Spec R\times_{\YY}\XX$ is an affine scheme.  The following is \cite[Chpt.~I, \S 2, Prop.~3.3]{DG}.  
	\begin{lemma}\label{lemma affine fais scheme}
		Let $\XX,\YY$ be faisceaux.  If $f:\XX\to\YY$ is an affine morphism and $\YY$ is a scheme, then $\XX$ is also a scheme.
	\end{lemma}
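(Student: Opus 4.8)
The plan is to transport the scheme structure from $\YY$ to $\XX$ by gluing the affine fibres of $f$ over an affine open cover of $\YY$; this is the classical argument of \cite[Chpt.~I, \S 2]{DG} carried out in our setting. First I would fix an affine open cover $\YY=\bigcup_i\Spec R_i$. Since $f$ is affine, each $\XX_i:=\Spec R_i\times_{\YY}\XX$ is an affine scheme, and the projection $\XX_i\to\XX$ is the base change along $f$ of the open immersion $\Spec R_i\hookrightarrow\YY$. As open immersions are stable under base change among faisceaux, each $\XX_i\to\XX$ is an open immersion of faisceaux, and $\coprod_i\XX_i\to\XX$ is an epimorphism of faisceaux, being the base change of the cover $\coprod_i\Spec R_i\to\YY$ (a Zariski, hence fppf, cover).

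Next I would record the gluing data. For each pair $i,j$ one computes $\XX_i\times_{\XX}\XX_j\cong\XX\times_{\YY}(\Spec R_i\cap\Spec R_j)$, which is the open subscheme of the affine scheme $\XX_i$ cut out by the open $\Spec R_i\cap\Spec R_j\subseteq\Spec R_i$: indeed, covering the scheme $\Spec R_i\cap\Spec R_j$ by affine opens $\Spec R_{ij\ell}$ and using affineness of $f$ once more, this faisceau is covered by the affine schemes $\Spec R_{ij\ell}\times_{\YY}\XX$, each an open subscheme of $\XX_i$. These identifications satisfy the cocycle condition inherited from the one on $\YY$, so gluing the affine schemes $\XX_i$ along them yields a scheme $\widetilde{\XX}$ together with open immersions $\XX_i\hookrightarrow\widetilde{\XX}$ that cover it.

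Finally I would identify $\widetilde{\XX}$ with $\XX$. The scheme $\widetilde{\XX}$ is a faisceau by Lemma~\ref{lemma schemes are sheaves}, and the morphisms $\XX_i\to\XX$ agree on the overlaps $\XX_i\times_{\XX}\XX_j$ by construction, so they glue to a morphism of faisceaux $g\colon\widetilde{\XX}\to\XX$. Pulling back the open cover $\{\XX_i\}$ of $\XX$ along $g$ recovers the open cover $\{\XX_i\}$ of $\widetilde{\XX}$, and over each member $g$ restricts to the identity of $\XX_i$; hence $g$ is an isomorphism of faisceaux and $\XX\cong\widetilde{\XX}$ is a scheme.

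The only non-formal content — and the main thing to be checked — is that the basic toolkit of scheme theory in $\mathscr{C}$ behaves as in the classical case: that ``affine morphism of faisceaux'' and ``open immersion into a faisceau'' are the right notions, that open immersions and affine morphisms pull back along $f$ to morphisms of the same type, and that the Zariski gluing of affine schemes produces a scheme representing the glued faisceau. Granting the scheme theory of \cite{C3}, all of these are routine, and the argument then reduces verbatim to the set-up in \cite{DG}.
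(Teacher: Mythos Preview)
Your argument is correct and is precisely the classical proof from \cite[Chpt.~I, \S 2, Prop.~3.3]{DG}, which is exactly what the paper cites without further elaboration. There is no difference in approach: you have simply unpacked the reference.
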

	
	The following is \cite[Chpt.~III, \S 1, Cor.~2.12]{DG}.
	\begin{lemma}\label{LemCart}
		Suppose we have a Cartesian square of faisceaux:
		\[
		\xymatrix{
			\XX \ar[r]\ar[d]_{u} & \YY \ar[d]^{v}\\
			\mathcal{W} \ar[r]_{p} & \ZZ
		}
		\]
		If $u$ is affine and $p$ is an epimorphism, then $v$ is also affine.
	\end{lemma}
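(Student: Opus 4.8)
The plan is to follow the proof of the cited result \cite[Chpt.~III, \S 1, Cor.~2.12]{DG}, checking that every step makes sense for faisceaux over $\mathscr{C}$; the only ingredient beyond formal manipulation of fibre products is faithfully flat descent of affine schemes. To prove $v:\YY\to\ZZ$ is affine we must show $\YY\times_{\ZZ}\Spec R$ is an affine scheme for each morphism $\Spec R\to\ZZ$. Since base change of faisceaux takes Cartesian squares to Cartesian squares, affine morphisms to affine morphisms, and epimorphisms to epimorphisms, I would first base change the entire square along $\Spec R\to\ZZ$; it then suffices to treat the case $\ZZ=\Spec R$ and to prove that $\YY$ itself is an affine scheme (over an affine base these are equivalent).

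So assume $\ZZ=\Spec R$. Because $p:\mathcal{W}\to\Spec R$ is an epimorphism of faisceaux, the section $\id\in(\Spec R)(R)$ lifts locally in the fppf topology: there is an fppf morphism $\Spec R'\to\Spec R$ and a morphism $s:\Spec R'\to\mathcal{W}$ over $\Spec R$. The map $u:\XX\to\mathcal{W}$ is, by the Cartesian hypothesis, the base change of $v$ along $p$; base changing it further along $s$ and using that $p\circ s$ is the structure morphism $\Spec R'\to\Spec R$, we obtain a canonical identification
\[
\XX\times_{\mathcal{W},\,s}\Spec R'\;\cong\;\YY\times_{\Spec R}\Spec R'.
\]
Since $u$ is affine, the left-hand side is an affine scheme, say $\Spec S$ with $S$ an algebra in $\mathscr{C}$. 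Thus $\YY$ becomes an affine scheme after the fppf base change $\Spec R'\to\Spec R$.

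It remains to descend. Writing $R''=R'\otimes_R R'$, the gluing isomorphism over $\Spec R''$ coming from the fact that $\YY$ is a faisceau defined over $\Spec R$ furnishes $S$ with a descent datum as an $R'$-algebra in $\mathscr{C}$, and the cocycle condition holds automatically. By faithfully flat descent for quasi-coherent algebras in $\mathscr{C}$ (which follows, exactly as in the classical case, from faithfully flat descent of modules in $\mathscr{C}$), this datum is effective, yielding an algebra $M$ in $\mathscr{C}$ over $R$ whose pullback to $\Spec R'$ is $\Spec S$, compatibly with the descent data. The faisceau $\Spec M$ then has the same restriction as $\YY$ along the fppf cover $\Spec R'\to\Spec R$, compatibly with descent, so $\YY\cong\Spec M$ by the sheaf property. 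Hence $\YY$ is affine, as required.

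The hard part is the last paragraph: one must know that faithfully flat descent is available for modules --- hence for quasi-coherent algebras and affine schemes --- in $\mathscr{C}$, and one must be slightly careful that the descended algebra $M$ genuinely represents $\YY$ rather than merely having the correct fppf-local behaviour. The reduction in the first paragraph and the identification of the base change of $u$ with that of $v$ are routine once the relevant associativity isomorphisms for fibre products of faisceaux are written out.
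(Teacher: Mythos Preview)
The paper does not give its own proof of this lemma; it simply cites \cite[Chpt.~III, \S 1, Cor.~2.12]{DG} and relies on the fact that the argument there carries over verbatim to faisceaux in $\mathscr{C}$. Your proposal is exactly a faithful unwinding of that argument, so it is correct and in the same spirit as the paper's (implicit) proof. The one point worth emphasising, which you do flag, is that fppf descent for modules---and hence for commutative algebras and affine schemes---in $\mathscr{C}$ goes through unchanged because it is a purely diagrammatic argument using exactness of filtered colimits and of the tensor product, both of which hold in $\mathscr{C}$; once that is accepted, the identification $\YY\cong\Spec M$ really does follow from the sheaf condition for $\YY$ and $\Spec M$ along the fppf cover $\Spec R'\to\Spec R$.
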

	
	Let $\GG$ be an algebraic group, and use Lemma \ref{cor G0 to Gr surj} to find $r\in\N$ such that $\GG_0\to \GG^{[r]}\cong\GG/_{\hspace{-0.2em}gp}\GG_r$ is a quotient.  Let $\HH\leq\GG$ be a subgroup, and write $\widetilde{\HH}:=\GG_r\HH$.  Since $\GG_r$ is a normal subgroup, $\widetilde{\HH}$ is a subgroup of $\GG$ with $\HH\leq\widetilde{\HH}\leq \GG$.  
	
	\begin{lemma}
		The quotient scheme $\widetilde{\HH}/\HH$ exists and is affine.
	\end{lemma}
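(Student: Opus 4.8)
The plan is to reduce the statement to the already-settled infinitesimal case by identifying $\widetilde{\HH}/\HH$ with the quotient of $\GG_r$ by the closed subgroup $\KK:=\GG_r\cap\HH$. First I would record the structural facts. Since $\GG_r=\ker(\GG\to\GG^{[r]})$ it is normal in $\GG$, so $\widetilde{\HH}=\GG_r\HH$ is genuinely a subgroup of $\GG$ (as noted before the statement) and hence an algebraic group; by Proposition~\ref{prop twist alg gp}(1) the group $\GG_r$ is infinitesimal, and therefore $\KK$, being a closed subgroup of $\GG_r$, is an infinitesimal algebraic group. Finally, $\KK$ acts freely on $\GG_r$ by right translation, since right translation of a group on itself restricts to a free action of any closed subgroup (the map $pr_1\times a$ is the graph of multiplication $(g,k)\mapsto(g,gk)$, a closed immersion), so the results of Section~\ref{Faisc} apply to this action.

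The heart of the argument is a second-isomorphism-theorem statement at the level of faisceaux: the composite $\GG_r\hookrightarrow\widetilde{\HH}\to\widetilde{\HH}/_{\hspace{-0.2em}f}\HH$ is $\KK$-invariant, so it descends to a morphism $\iota\colon\GG_r/_{\hspace{-0.2em}f}\KK\to\widetilde{\HH}/_{\hspace{-0.2em}f}\HH$, and I claim $\iota$ is an isomorphism of faisceaux. For surjectivity I would peel off two layers of fppf-local lifting: an $A$-point of $\widetilde{\HH}/_{\hspace{-0.2em}f}\HH$ lifts, after an fppf base change, to a point of $\widetilde{\HH}(A')=(\GG_r\HH)(A')$, and since $\GG_r\HH$ is by definition the faisceau image of $\GG_r\rtimes\HH\to\GG$, after a further fppf base change this point lifts to a product $g\cdot h$ with $g\in\GG_r(A'')$ and $h\in\HH(A'')$, whose class in $\widetilde{\HH}/_{\hspace{-0.2em}f}\HH$ agrees with that of $g$. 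For injectivity, if $g_1,g_2\in\GG_r(A)$ have the same image in $\widetilde{\HH}/_{\hspace{-0.2em}f}\HH$, then the explicit description of the associated faisceau provides an fppf $A\to B$ with $g_2=g_1h$ in $\widetilde{\HH}(B)$ for some $h\in\HH(B)$; computing $h=g_1^{-1}g_2$ inside $\GG(B)$ shows $h\in\GG_r(B)$, hence $h\in\KK(B)$, so $g_1$ and $g_2$ already coincide in $\GG_r/_{\hspace{-0.2em}f}\KK$. Thus $\iota$ is simultaneously a monomorphism and an epimorphism, hence an isomorphism.

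To conclude, Lemma~\ref{lemma infinitesimal quot exist} applied to the infinitesimal scheme $\GG_r$ with its $\KK$-action shows that $\GG_r/\KK$ exists and equals the affine scheme $\Spec\Bbbk[\GG_r]^{\KK}$; by Lemma~\ref{lemma quot exists in sch iff fais} this scheme represents $\GG_r/_{\hspace{-0.2em}f}\KK$, and hence, through $\iota$, also $\widetilde{\HH}/_{\hspace{-0.2em}f}\HH$. It remains to observe that a scheme representing $\widetilde{\HH}/_{\hspace{-0.2em}f}\HH$ is automatically the coequalizer of $\widetilde{\HH}\times\HH\rightrightarrows\widetilde{\HH}$ in the category of all schemes: every scheme is a faisceau by Lemma~\ref{lemma schemes are sheaves}, and a morphism from $\widetilde{\HH}$ into a faisceau that coequalizes the projection and the action factors through the presheaf quotient $\widetilde{\HH}/_{\hspace{-0.2em}0}\HH$, hence through its sheafification $\widetilde{\HH}/_{\hspace{-0.2em}f}\HH$. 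Therefore $\widetilde{\HH}/\HH$ exists and is isomorphic to the affine scheme $\Spec\Bbbk[\GG_r]^{\KK}$. The step I expect to require the most care is the faisceau isomorphism $\iota$ — in particular keeping track of the two separate fppf-local liftings and of the sheafifications involved; everything else is straightforward bookkeeping with the lemmas already in place.
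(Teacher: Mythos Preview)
Your proof is correct and follows essentially the same approach as the paper: both establish the faisceau isomorphism $\GG_r/_{\hspace{-0.2em}f}(\GG_r\cap\HH)\cong\widetilde{\HH}/_{\hspace{-0.2em}f}\HH$ and then invoke Lemma~\ref{lemma infinitesimal quot exist} for the infinitesimal group $\GG_r$. The paper simply asserts this isomorphism as ``natural'' and leaves the passage from faisceau quotient to scheme quotient implicit, whereas you spell out both of these steps carefully; your added detail is accurate and does not diverge from the intended argument.
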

	\begin{proof}
		We have a natural isomorphism of faisceaux $\GG_r/_{\hspace{-0.2em}f}(\GG_r\cap\HH)\cong \widetilde{\HH}/_{\hspace{-0.2em}f}\HH$.  Since $\GG_r$ is infinitesimal by Proposition \ref{prop twist alg gp}, we may now apply Lemmas \ref{lemma infinitesimal quot exist} and \ref{lemma quot exists in sch iff fais}. 
	\end{proof}
	
	\subsection{Main theorem and consequences}
	
	\begin{thm}\label{Main}
		Let $\GG$ be an algebraic group in $\mathscr{C}$, and let $\HH\leq\GG$ be a subgroup.  Then 
		\begin{enumerate}
			\item $\GG/\HH$ exists as a scheme, is algebraic and separated.
			\item The natural map $\GG_0/\HH_0\to\GG/\HH$ is a closed immersion and a universal homeomorphism.  In particular, the same is true for $\GG_0/\HH_0\to(\GG/\HH)_0$.
			\item $\GG/\HH$ is affine, quasi-affine, or proper if and only if $\GG_0/\HH_0$ is likewise.
		\end{enumerate}  
	\end{thm}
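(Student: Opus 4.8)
The plan is to build $\GG/\HH$ in two stages, using the Frobenius kernels of Section~\ref{FrobTw} to reduce the ``even'' part of the problem to the classical theory of quotients of affine $\Bbbk$-group schemes, and Lemma~\ref{lemma infinitesimal quot exist} to handle the infinitesimal part. Fix $r$ as in Corollary~\ref{cor G0 to Gr surj}, so that $\GG_0\onto\GG^{[r]}\cong\GG/_{\hspace{-0.2em}gp}\GG_r$ is faithfully flat and, by Proposition~\ref{prop twist alg gp}, $\GG^{[r]}$ is a purely even algebraic group; set $\widetilde\HH:=\GG_r\HH$, so that $\HH\sub\widetilde\HH\sub\GG$, and let $\ol\HH\sub\GG^{[r]}$ be the (closed, purely even) image of $\HH$. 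Since $\GG_r$ is normal in $\GG$ and contained in $\widetilde\HH$, the ``quotient in stages'' principle for faisceaux gives $\GG/_{\hspace{-0.2em}f}\widetilde\HH\cong\GG^{[r]}/_{\hspace{-0.2em}f}\ol\HH$, and by Lemma~\ref{lemma quot in Vec also in C} the latter is represented by the classical quotient $\GG^{[r]}/\ol\HH$; thus $\GG/\widetilde\HH$ exists as a scheme and is algebraic, separated and purely even.

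For (1), the map $\GG\onto\GG/\widetilde\HH$ is faithfully flat (it is $\GG\onto\GG^{[r]}\onto\GG^{[r]}/\ol\HH$), hence an epimorphism of faisceaux, and since $\widetilde\HH$ acts freely on $\GG$ it is a $\widetilde\HH$-torsor. As $\GG/_{\hspace{-0.2em}f}\HH$ is the bundle $\GG\times^{\widetilde\HH}(\widetilde\HH/\HH)$ associated to this torsor with fibre the affine scheme $\widetilde\HH/\HH$ (which exists and is affine by the lemma preceding the theorem), pulling back along $\GG\to\GG/\widetilde\HH$ trivializes it: $\GG\times_{\GG/\widetilde\HH}(\GG/_{\hspace{-0.2em}f}\HH)\cong\GG\times(\widetilde\HH/\HH)$, which is affine over $\GG$. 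By Lemma~\ref{LemCart} the map $\GG/_{\hspace{-0.2em}f}\HH\to\GG/\widetilde\HH$ is affine, so $\GG/_{\hspace{-0.2em}f}\HH$ is a scheme by Lemma~\ref{lemma affine fais scheme}; and since $\GG\to\GG/_{\hspace{-0.2em}f}\HH$ is faithfully flat and affine---which may be checked after the fppf base change $\GG\times(\widetilde\HH/\HH)\to\GG/_{\hspace{-0.2em}f}\HH$, over which it becomes $\operatorname{id}_\GG\times(\widetilde\HH\to\widetilde\HH/\HH)$---the faisceau $\GG/_{\hspace{-0.2em}f}\HH$ is also the coequalizer in all schemes, i.e.\ equals $\GG/\HH$. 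It is algebraic, resp.\ separated, because $\GG/\widetilde\HH$ is and $\GG/\HH\to\GG/\widetilde\HH$ is affine and of finite type, resp.\ affine.

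For (2), the presheaf map $\GG_0/_{\hspace{-0.2em}0}\HH_0\to\GG/_{\hspace{-0.2em}0}\HH$ is injective, because $\GG_0\cap\HH$ is a purely even closed subscheme of $\HH$ and hence equals $\HH_0$; as sheafification preserves monomorphisms, $\GG_0/\HH_0\to\GG/\HH$ is a monomorphism of schemes. The map $\GG/\HH\to\GG/\widetilde\HH$ is finite, being fppf-locally on $\GG/\widetilde\HH$ a product with the finite $\Bbbk$-scheme $\widetilde\HH/\HH$; and the composite $\GG_0/\HH_0\to\GG/\widetilde\HH=\GG^{[r]}/\ol\HH$ is finite as well, since it factors as $\GG_0/\HH_0\to\GG_0/(\KK\HH_0)=\GG^{[r]}/\ol\HH'\to\GG^{[r]}/\ol\HH$, where $\KK:=\GG_0\cap\GG_r$ is infinitesimal and $\ol\HH'\sub\ol\HH$ is the image of $\HH_0$: the first map is fppf-locally a product with the finite scheme $\KK/(\KK\cap\HH_0)$, and the second is fppf-locally a product with $\ol\HH/\ol\HH'$, which has a single $\Bbbk$-point and is therefore a finite $\Bbbk$-scheme. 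As $\GG/\HH\to\GG/\widetilde\HH$ is separated, it follows that $\GG_0/\HH_0\to\GG/\HH$ is finite, hence, being a monomorphism, a closed immersion. Finally, by Corollary~\ref{cor k pts quotient} this closed immersion is bijective on $\Bbbk$-points; since the underlying space of $\GG/\HH$ is Jacobson with closed points its $\Bbbk$-points, it is surjective, hence a universal homeomorphism.

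For (3), since $\GG_0/\HH_0$ is purely even, the map of (2) factors through a morphism $\iota:\GG_0/\HH_0\to(\GG/\HH)_0$ by the universal property of the body, and $\iota$ is a closed immersion (as $(\GG/\HH)_0\to\GG/\HH$ is a monomorphism) which is surjective (as $(\GG/\HH)_0\to\GG/\HH$ is a homeomorphism); thus $\GG_0/\HH_0$ and $(\GG/\HH)_0$ have the same reduction. Each of the properties affine, quasi-affine, proper is insensitive to such a nilpotent thickening, and by Proposition~\ref{prop scheme properties} holds for $\GG/\HH$ if and only if it holds for $(\GG/\HH)_0$; combining these gives (3). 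I expect the crux to be part~(1): making precise that $\GG/_{\hspace{-0.2em}f}\HH\to\GG/\widetilde\HH$ is an affine morphism of faisceaux via the torsor and associated-bundle description, so that Lemmas~\ref{LemCart} and~\ref{lemma affine fais scheme} apply; a secondary difficulty, relevant to (2) and (3), is the bookkeeping showing that $\GG_0/\HH_0\to(\GG/\HH)_0$ is in general only an infinitesimal thickening and not an isomorphism.
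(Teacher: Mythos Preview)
Your proof of existence in (1) is essentially the paper's: the same Cartesian square with vertices $\GG$, $\GG\times\widetilde\HH/\HH$, $\GG/_{\hspace{-0.2em}f}\HH$, $\GG/_{\hspace{-0.2em}f}\widetilde\HH$, the same appeal to Lemmas~\ref{LemCart} and~\ref{lemma affine fais scheme}, and the same identification of $\GG/_{\hspace{-0.2em}f}\widetilde\HH$ with a purely even homogeneous space. (You write this as $\GG^{[r]}/\ol\HH$; the paper writes it as $\GG_0/(\GG_0\cap\widetilde\HH)$; these agree.)

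For the remaining claims you diverge from the paper. The paper proves a separate lemma (Lemma~\ref{Lem Twist Hom}) that $(\GG/\HH)^{[r]}$ is again a homogeneous space for $\GG^{[r]}$, and then runs everything through the tower
\[
\GG_0/\HH_0\;\to\;\GG/\HH\;\to\;(\GG/\HH)^{[r]}\cong\GG_0/\HH'\;\to\;(\GG_0/\HH_0)^{[s]},
\]
reading off ``algebraic and separated'', the universal homeomorphism, and the affine/quasi-affine/proper equivalence all at once from Lemma~\ref{lemma finite univ homeo} and Lemma~\ref{lemma fr twist morphisms schemes}. You instead stay over the auxiliary base $\GG/\widetilde\HH$: algebraic/separated because $\GG/\HH\to\GG/\widetilde\HH$ is finite; closed immersion in (2) by showing $\GG_0/\HH_0\to\GG/\HH$ is a finite monomorphism (factoring both through $\GG/\widetilde\HH$); universal homeomorphism by bijectivity on $\Bbbk$-points and the Jacobson property; and (3) by passing to $(\GG/\HH)_0$ via Proposition~\ref{prop scheme properties} and noting that $\GG_0/\HH_0\hookrightarrow(\GG/\HH)_0$ is a nilpotent thickening. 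Both routes are correct. Yours is arguably more elementary, since it avoids proving Lemma~\ref{Lem Twist Hom} and the transfer of scheme-theoretic properties along the Frobenius twist; the paper's route is more uniform and yields Lemma~\ref{Lem Twist Hom} as a byproduct of independent interest. One small point: your step ``finite monomorphism $\Rightarrow$ closed immersion'' and ``surjective closed immersion $\Rightarrow$ universal homeomorphism'' are being used for $\cC$-schemes; both reduce immediately to the corresponding facts for the underlying even schemes via Lemma~\ref{lemma even embedding univ homeo}, but it is worth saying so explicitly.
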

	
	We prove Theorem \ref{Main} in a couple of steps.
	
	\begin{proof}[Proof of existence of $\GG/\HH$] We continue working with the algebraic group $\widetilde{\HH}$ as introduced in the previous subsection.  We have the following Cartesian square:
		\[
		\xymatrix{
			\GG\times \widetilde{\HH}/_{\hspace{-0.2em}f}\HH\ar[r]^f\ar[d]_u & \GG/_{\hspace{-0.2em}f}\HH\ar[d]^v\\
			\GG\ar[r]_p & \GG/_{\hspace{-0.2em}f}\widetilde{\HH}
		}
		\]
		Indeed, we have such a Cartesian square in the category of presheaves if we take the set-theoretic quotients, and then we may use that sheafification commutes with finite limits.
		The morphism $u$ is the projection onto $\GG$, $p$ is the canonical quotient morphism, $v$ is the morphism induced by the universal property of quotients, and $f$ is given by $(g,h'\HH(A))\mapsto(gh'\HH(A))$ for an algebra $A$.  The quotient morphism $p$ is always an epimorphism and the morphism $u$ is affine since $\widetilde{\HH}/\HH=\widetilde{\HH}/_{\hspace{-0.2em}f}\HH$ is affine by Lemma \ref{lemma infinitesimal quot exist}. In particular, $v$ is affine by Lemma~\ref{LemCart}.

		Observe that we have an isomorphism of faisceaux
		\[
		\GG_0/_{\hspace{-0.2em}f}(\GG_0\cap\widetilde{\HH})\xto{\sim} \GG/_{\hspace{-0.2em}f}\widetilde{\HH},
		\]
		which arises from the universal property of $\GG_0/_{\hspace{-0.2em}f}(\GG_0\cap\widetilde{\HH})$.  It is clearly a monomorphism of faisceaux; to see that it is an {epimorphism}, we use that $\GG_0\to\GG^{[r]}=\GG/_{\hspace{-0.2em}gp}\GG_r$ is a quotient of faisceaux by \cite[Prop.~7.2.4 (1)]{C2}.   Now we may use that a morphism of faisceaux is an isomorphism if and only if it is a monomorphism and an epimorphism. 
		
		The faisceau $\GG_0/_{\hspace{-0.2em}f}(\GG_0\cap\widetilde{\HH})$ is a quotient of even group schemes, and thus exists by Lemma~\ref{lemma quot in Vec also in C}. Thus $\GG/\widetilde{\HH}$ exists, and so by Lemma~\ref{lemma affine fais scheme} it follows that $\GG/\HH$ exists as a scheme as well.
	\end{proof}
	
	Suppose that $\GG$ acts on a scheme $\XX$.  Then it is easy to check that $\GG^{[1]}$ will naturally act on $\XX^{[1]}$.  In particular, $\GG^{[1]}$ acts on $(\GG/\HH)^{[1]}$.
	
	\begin{lemma}\label{Lem Twist Hom}
		For $r\in\N$, $(\GG/\HH)^{[r]}$ is a homogeneous space for $\GG^{[r]}$.  
	\end{lemma}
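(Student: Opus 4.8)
The statement asserts that $(\GG/\HH)^{[r]}$, equipped with the $\GG^{[r]}$-action obtained by twisting the action of $\GG$ on $\GG/\HH$, is $\GG^{[r]}$-equivariantly isomorphic to a quotient $\GG^{[r]}/\mathcal{S}$. My plan is to take $\mathcal{S}:=\GG_r\HH/_{\hspace{-0.2em}gp}\GG_r\sub\GG^{[r]}$, the scheme-theoretic image of $\HH$ under the faithfully flat quotient $\GG\twoheadrightarrow\GG^{[r]}=\GG/_{\hspace{-0.2em}gp}\GG_r$. By the already-established existence part of Theorem \ref{Main}, $\GG^{[r]}/\mathcal{S}$ exists, is algebraic, and its quotient map is faithfully flat and affine (Lemma \ref{lemma quot fppf can}); it is a homogeneous space for $\GG^{[r]}$, and by the isomorphism theorem for faisceaux (\cite[Ch.~III]{DG}) it is isomorphic to $\GG/(\GG_r\HH)$. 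Twisting the $\GG$-equivariant morphism $\pi\colon\GG\to\GG/\HH$ produces a $\GG^{[r]}$-equivariant morphism $\pi^{[r]}\colon\GG^{[r]}\to(\GG/\HH)^{[r]}$ which, as the image of $\HH$ lies in the stabiliser of $\pi^{[r]}(e)$, factors through a $\GG^{[r]}$-equivariant comparison morphism $\Psi\colon\GG^{[r]}/\mathcal{S}\to(\GG/\HH)^{[r]}$; everything reduces to showing $\Psi$ is an isomorphism.

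I would prove this by induction on the least integer $N$ with $\GG^{[N]}$ purely even, which exists by Proposition \ref{prop twist alg gp}(2). If $N=0$ then $\GG$, and hence $\HH$, are purely even, $\GG/\HH$ is an ordinary $\Bbbk$-scheme, and $(\GG/\HH)^{[r]}\cong\GG^{[r]}/\mathcal{S}$ is the classical fact that Frobenius twists commute with quotients of ordinary group schemes. For the inductive step the case $r=0$ is trivial, so fix $r\geq1$ and note $N(\GG^{[r]})<N$. The main tool is the morphism $v\colon\GG/\HH\to\GG^{[r]}/\mathcal{S}\cong\GG/(\GG_r\HH)$ induced by $\HH\sub\GG_r\HH$: exactly as in the proof of existence of $\GG/\HH$, this $v$ is $\GG$-equivariant, faithfully flat, and \emph{affine}, since its fibre $\GG_r\HH/\HH\cong\GG_r/(\GG_r\cap\HH)$ is a quotient of the infinitesimal group $\GG_r$, hence affine (Lemma \ref{lemma infinitesimal quot exist}). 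The crucial point is that $\GG_r^{[r]}$ is the trivial group: from $\m_\varepsilon^{[r]}\sub\m_\varepsilon^{p^r}$ (Lemma \ref{lemma twist hopf alg}(2)) one deduces $\Bbbk[\GG_r]^{[r]}=\mathbf{1}$, and hence $\Bbbk[\GG_r/(\GG_r\cap\HH)]^{[r]}=\mathbf{1}$ as well. Choosing an fppf cover of $\GG^{[r]}/\mathcal{S}$ that trivialises $v$, so that locally $\Bbbk[\GG/\HH]\cong\Bbbk[\GG^{[r]}/\mathcal{S}]\otimes\Bbbk[\GG_r/(\GG_r\cap\HH)]$, monoidality of $\operatorname{Fr}$ (Lemma \ref{lemma Fr and ring maps localization}(1)) gives $\Bbbk[\GG/\HH]^{[r]}\cong\Bbbk[\GG^{[r]}/\mathcal{S}]^{[r]}$, so the $r$-fold twist contracts the fibre of $v$, making $v^{[r]}\colon(\GG/\HH)^{[r]}\to(\GG^{[r]}/\mathcal{S})^{[r]}$ an isomorphism. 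Finally, since $\GG^{[r]}/\mathcal{S}$ is a homogeneous space for $\GG^{[r]}$ with $N(\GG^{[r]})<N$, the inductive hypothesis applied to $(\GG^{[r]},\mathcal{S})$ with twist index $r$ shows $(\GG^{[r]}/\mathcal{S})^{[r]}$ is a homogeneous space for $(\GG^{[r]})^{[r]}=\GG^{[2r]}$, hence for $\GG^{[r]}$ via the faithfully flat surjection $\GG^{[r]}\twoheadrightarrow\GG^{[2r]}$. Tracking the group actions through the twist, $v^{[r]}$ is $\GG^{[r]}$-equivariant, so $(\GG/\HH)^{[r]}$ is a homogeneous space for $\GG^{[r]}$ and $\Psi$ is the resulting isomorphism.

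The step I expect to be the main obstacle is making the fppf-local argument for $v^{[r]}$ rigorous. Because the Frobenius twist is neither exact nor compatible with arbitrary fibre products nor with surjections (Remarks following Lemmas \ref{rmk caution twist surj} and \ref{lemma fr twist morphisms schemes}), one cannot conclude abstractly that ``$v^{[r]}$ is an isomorphism because $v$ is an fppf torsor whose fibre is killed by $(-)^{[r]}$''; instead one must verify on an explicit trivialising cover that the local models of $v$ become isomorphisms after twisting — where the identity $\Bbbk[\GG_r]^{[r]}=\mathbf{1}$ together with $(A\otimes B)^{[1]}\cong A^{[1]}\otimes B^{[1]}$ does the work — and then glue, using Lemma \ref{lemma fr affine scheme}. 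A secondary point requiring care is the classical base case $N=0$, for which one invokes the corresponding (standard) statement for ordinary group schemes.
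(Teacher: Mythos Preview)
Your approach has a genuine gap at precisely the point you flag as the main obstacle, and it does not seem repairable along the lines you suggest. You want to show that $v^{[r]}\colon(\GG/\HH)^{[r]}\to(\GG^{[r]}/\mathcal{S})^{[r]}$ is an isomorphism by trivialising $v$ over an fppf cover, twisting the local models, and gluing. But the only trivialising cover at hand is the fppf cover $\GG\to\GG^{[r]}/\mathcal{S}$ coming from the Cartesian square in the existence proof, and Frobenius twist does \emph{not} commute with fppf base change: it respects absolute products (Lemma~\ref{lemma Fr and ring maps localization}(1)) and Zariski localisation (Lemma~\ref{lemma Fr and ring maps localization}(4)), but not arbitrary fibre products (Remark~\ref{rmk caution twist surj} and the remark after Lemma~\ref{lemma fr twist morphisms schemes}). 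Hence even if the pullback of $v$ along the cover is the projection $\GG\times F\to\GG$ with $F^{[r]}$ trivial, the twisted square has no reason to be Cartesian, and knowing that $(\GG\times F)^{[r]}\to\GG^{[r]}$ is an isomorphism says nothing about $v^{[r]}$ itself. Lemma~\ref{lemma fr affine scheme} only identifies the twist on affine opens and gives no fppf descent statement. A secondary problem: your deduction of $\Bbbk[\GG_r]^{[r]}=\mathbf{1}$ from $\m_\varepsilon^{[r]}\subset\m_\varepsilon^{p^r}$ is incomplete, since the augmentation ideal of $\Bbbk[\GG_r]$ need not satisfy $\m^{p^r}=0$ (already classically $\m_{(\G_a^2)_1}^{p}\neq0$), and the alternative argument via surjectivity of $\Bbbk[\GG]^{[r]}\to\Bbbk[\GG_r]^{[r]}$ fails exactly because twist does not preserve surjections.

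The paper avoids all of this with a short direct argument. It reduces to $r=1$, takes $\HH'\subset\GG^{[1]}$ to be the actual isotropy subgroup of the basepoint (rather than your candidate $\mathcal{S}$), and uses \cite[Lem.~7.25]{C3} together with transitivity of $\GG^{[1]}(\Bbbk)=\GG(\Bbbk)$ on $(\GG/\HH)^{[1]}(\Bbbk)=(\GG/\HH)(\Bbbk)$ to produce a \emph{closed} immersion $p\colon\GG^{[1]}/\HH'\hookrightarrow(\GG/\HH)^{[1]}$. Then $p^*$ is already surjective, and injectivity follows from a two-line diagram chase: both $\OO_{(\GG/\HH)^{[1]}}$ and $p_*\OO_{\GG^{[1]}/\HH'}$ embed into $q_*\OO_{\GG}^{[1]}$ (with $q\colon\GG^{[1]}\to(\GG/\HH)^{[1]}$ the orbit map), forcing $p^*$ to be a monomorphism as well. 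No induction, no identification of the stabiliser a priori, and no interaction between twist and base change are needed.
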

	
	\begin{proof}
		It suffices to check this for $r=1$.  Consider the orbit map $q:\GG^{[1]}\to(\GG/\HH)^{[1]}$ at the base point $e=\HH(\Bbbk)$. Writing $\HH'\leq\GG^{[1]}$ for the isotropy subgroup of $e$, we have an immersion $p:\GG^{[1]}/\HH'\to(\GG/\HH)^{[1]}$ by \cite[Lem.~7.25]{C3}.  Since $\GG^{[1]}(\Bbbk)=\GG(\Bbbk)$ acts transitively on $(\GG/\HH)^{[1]}(\Bbbk)=(\GG/\HH)(\Bbbk)$ by Corollary \ref{cor k pts quotient}, the morphism $p$ must be a closed immersion.

		To show that $p$ is an isomorphism, consider the following diagram of sheaves of commutative rings on $|\GG/\HH|$:
		\[
		\xymatrix{
			\OO_{\GG/\HH} \ar@{^{(}->}[rr] & & q_*\OO_{\GG} \\
			\OO_{\GG/\HH}^{[1]}\ar@{^{(}->}[u]\ar[r]^{p^*} & p_*\OO_{\GG^{[1]}/\HH'} \ar@{^{(}->}[r] & q_*\OO_{\GG}^{[1]}\ar@{^{(}->}[u]
		}
		\]
		We want to show that $p^*$ is an isomorphism, and we already know it is an epimorphism since $p$ is a closed embedding.  However, all other morphisms in the diagram are monomorphisms, thus $p^*$ must be also.  This completes the proof.
	\end{proof}
	
	\begin{remark}
		There is a natural map $\phi:\HH^{[1]}\to \HH'$, where $\HH'\leq\GG^{[1]}$ is the subgroup appearing in the proof of Lemma \ref{Lem Twist Hom}.  However this map need not be an isomorphism.  An example is provided by Example \ref{example ver4+ homog space}, where $\HH^{[1]}\cong\HH'\cong\mathbb{G}_a$, and the morphism $\phi$ is the quotient morphism arising from the Frobenius morphism.  We do not know of any examples where $\phi$ is not a quotient morphism.
	\end{remark}

	\begin{proof}[Finishing the proof of Theorem \ref{Main}.]
		Let $r\in\N$ be such that $\GG^{[r]}$ is purely even.  Then $(\GG/\HH)^{[r]}$ is a homogeneous space for $\GG^{[r]}$ by Lemma~\ref{Lem Twist Hom}, and thus is a separated algebraic scheme by \cite[Chpt.~III, \S3, 5.4]{DG}. Since the morphism $\GG/\HH\to(\GG/\HH)^{[r]}$ is finite by Lemma~\ref{lemma finite univ homeo}, $\GG/\HH$ must also be a separated, algebraic scheme.  Further, by Lemma \ref{lemma fr twist morphisms schemes}, $\GG/\HH$ is affine, quasi-affine, or proper if and only if $(\GG/\HH)^{[r]}$ is likewise.
		
		By Corollary \ref{cor G0 to Gr surj}, $\GG_0\to\GG^{[r]}$ is a quotient for some $r\in\N$.  Thus $(\GG/\HH)^{[r]}$ is a homogeneous space for $\GG_0$, which we may write as $\GG_0/\HH'$.  Clearly, the orbit map at the base point factors through $\GG_0/\HH_0\to\GG_0/\HH'$, implying that $\HH_0\leq\HH'$.  On the other hand, this implies that $\HH_0(\Bbbk)=\HH'(\Bbbk)$, so by Corollary \ref{cor k pts subgp} we have $\HH'\leq(\GG_0)_s\HH_0$ for some $s\in\N$.   Thus we have the following commutative diagram:
		\[
		\xymatrix{
			\GG_0/\HH_0 \ar[rr]^{i} \ar[ddrr]_{g} \ar[drr]^{h} & & \GG/\HH\ar[d]^{p} \\
			&& (\GG/\HH)^{[r]}\cong \GG_0/\HH'\ar[d]^q \\
			&& (\GG_0/\HH_0)^{[s]}
		}
		\]
		where $q$ is the projection $\GG_0/\HH'\to\GG_0/(\GG_0)_s\HH_0$ along with the isomorphism:
		\[  \GG_0/(\GG_0)_s\HH_0\cong\GG_0/(\GG_0)_s\bigg/\HH_0/(\HH_0\cap\GG_s)\cong(\GG_0/\HH_0)^{[s]}.
		\]
		Here we use that $(\GG_0)_s\cap\HH_0=(\HH_0)_s$, which follows from the exactness of the classical Frobenius functor.  All morphisms in the diagram are finite, and by Lemma \ref{lemma fr twist morphisms schemes}, $\GG_0/\HH_0$ is affine, quasi-affine, or proper if and only if $(\GG_0/\HH_0)^{[s]}$ is. Therefore we obtain that $\GG/\HH$ is correspondingly affine, quasi-affine, or proper if and only if $\GG_0/\HH_0$ is.  
		
		Finally, we show that $i$ is a universal homeomorphism. Observe that the morphisms $g$ and $p$ are universal homeomorphisms by Lemma \ref{lemma finite univ homeo}.  The maps $h$ and $q$ are affine, and induce monomorphisms on coordinate rings of any affine open.  Thus we may apply Lemma \ref{lemma univ homeo stacks} to obtain that $h$ is a universal homeomorphism, which in turn by Lemma \ref{lemma univ homeo prep} implies that $i$ is a universal homeomorphism. Note that $i$ must factor through a morphism $i_0:\GG_0/\HH_0\to(\GG/\HH)_0$, since $(\GG/\HH)_0$ is the maximal, purely even closed subscheme of $\GG/\HH$.  We obtain that $i_0$ is a universal homeomorphism and closed embedding because both $i$ and $(\GG/\HH)_0\to\GG/\HH$ are, which completes the proof.
	\end{proof}
	
	\begin{cor}\label{cor immersion}
		Suppose that $\XX$ is an algebraic scheme with an action of $\GG$, let $x\in \XX(\Bbbk)$, and write $a_x:\GG\to\XX$ for the orbit map.  Then we have an immersion $\GG/\GG_x\to\XX$, where $\GG_x$ is the stabilizer subgroup of $\GG$ (see \cite[7.2.1]{C3}).
	\end{cor}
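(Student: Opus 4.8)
The plan is to obtain this as an immediate consequence of the existence statement in Theorem~\ref{Main} together with the immersion criterion \cite[Lem.~7.25]{C3}. First I would record that the stabilizer $\GG_x$ is, by \cite[7.2.1]{C3}, an algebraic subgroup of $\GG$ --- it is represented by the fibre product $\GG\times_{\XX}\Spec\Bbbk$ along the orbit map $a_x$ and the point $x$, a subscheme of the algebraic scheme $\GG$. Theorem~\ref{Main}(1) therefore applies and yields the quotient scheme $\GG/\GG_x$, which is separated and algebraic and, by Lemma~\ref{lemma quot exists in sch iff fais}, represents the faisceau $\GG/_{\hspace{-0.2em}f}\GG_x$.

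Next I would produce the morphism $\GG/\GG_x\to\XX$ to which the criterion is applied. On $A$-points the orbit map satisfies $a_x(gh)=g\cdot(h\cdot x)=g\cdot x=a_x(g)$ for every $h\in\GG_x(A)$, so $a_x$ coequalizes the projection and the action map $\GG\times\GG_x\rightrightarrows\GG$ and hence, by the defining universal property of the quotient, factors uniquely as $a_x=\bar a_x\circ\pi$ with $\bar a_x:\GG/\GG_x\to\XX$. The same identity read backwards shows that $\GG(A)/\GG_x(A)\to\XX(A)$ is injective; since $\XX$ is a faisceau (Lemma~\ref{lemma schemes are sheaves}) and sheafification is left exact, $\bar a_x$ is a monomorphism of schemes. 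Upgrading $\bar a_x$ from a monomorphism to an immersion is then precisely \cite[Lem.~7.25]{C3}, whose hypothesis --- that $\GG/\GG_x$ exist as a scheme --- is now guaranteed by Theorem~\ref{Main}; this is exactly how \cite[Lem.~7.25]{C3} is invoked in the proof of Lemma~\ref{Lem Twist Hom}.

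I do not anticipate a genuine obstacle, since the only new input is the existence of $\GG/\GG_x$. The one routine check is that the $\bar a_x$ built above is literally the morphism fed into \cite[Lem.~7.25]{C3}, i.e.\ that the factorization used there is compatible with the faisceau description of $\GG/\GG_x$ from Lemma~\ref{lemma quot exists in sch iff fais}. Alternatively, in the spirit of Section~\ref{MainSec} one could Frobenius-twist: for $r\gg 0$ the group $\GG^{[r]}$ is purely even (Proposition~\ref{prop twist alg gp}) and $(\GG/\GG_x)^{[r]}$ is a $\GG^{[r]}$-homogeneous space by Lemma~\ref{Lem Twist Hom}, reducing the immersion claim for $\bar a_x^{[r]}$ to the classical orbit lemma; transporting it back along the finite universal homeomorphism $\GG/\GG_x\to(\GG/\GG_x)^{[r]}$ of Lemma~\ref{lemma finite univ homeo} would then be the only delicate point.
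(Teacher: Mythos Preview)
Your approach is correct and matches the paper's: the proof there is a single sentence observing that \cite[Lem.~7.25]{C3} was stated under the hypothesis that Theorem~\ref{Main} holds, and now that hypothesis is verified. Your additional detail (constructing $\bar a_x$, checking it is a monomorphism, and the alternative Frobenius-twist argument) is fine but unnecessary, since all of this is already packaged into \cite[Lem.~7.25]{C3}.
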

	
	\begin{proof}
		This is \cite[Lem.~7.2.5]{C3}, which was written under the hypothesis that Theorem~\ref{Main} is valid.
	\end{proof}
	
	Observe that $\GG/\HH$ admits a left action by $\GG$.  The following corollary is clear.
	
	\begin{cor}
		Suppose that $\GG/\KK$ is isomorphic to $\GG/\HH$ as schemes with a left $\GG$-action.  Then $\HH$ is conjugate to $\KK$ inside of $\GG$.  
	\end{cor}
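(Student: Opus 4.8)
The plan is to deduce the statement from the way a $\GG$-equivariant isomorphism must match stabilizer subgroup schemes of corresponding $\Bbbk$-points, together with the identification of the stabilizer of the base point of $\GG/\HH$ with $\HH$ itself.

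First I would record the one input that needs a word of justification: for the natural left action of $\GG$ on $\GG/\HH$, the stabilizer subgroup scheme $\GG_{e\HH}$ of the base point $e\HH\in(\GG/\HH)(\Bbbk)$ equals $\HH$. Since $\HH$ acts freely on $\GG$ by right multiplication, Lemma~\ref{lemma quot fppf can} gives a $\GG$-equivariant isomorphism $\GG\times\HH\cong\GG\times_{\GG/\HH}\GG$ over the first projection; base-changing along the base point $e\in\GG(\Bbbk)$ of the second factor identifies the scheme-theoretic fibre of the (faithfully flat) quotient map $\pi\colon\GG\to\GG/\HH$ over $e\HH$ with $\HH$, and this fibre is by definition $\GG_{e\HH}$. (This is also part of the content of \cite[Lem.~7.25]{C3}, cf.\ Corollary~\ref{cor immersion}.) More generally, for $g\in\GG(\Bbbk)$ the automorphism of $\GG/\HH$ given by left translation by $g$ sends $e\HH$ to $g\HH$ and conjugates stabilizers, so $\GG_{g\HH}=g\HH g^{-1}$.

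Then I would argue as follows. Let $\phi\colon\GG/\KK\xrightarrow{\ \sim\ }\GG/\HH$ be an isomorphism of schemes compatible with the left $\GG$-actions, and set $x_0=e\KK\in(\GG/\KK)(\Bbbk)$. By Corollary~\ref{cor k pts quotient} we have $(\GG/\HH)(\Bbbk)=\GG(\Bbbk)/\HH(\Bbbk)$, so $\phi(x_0)=g\HH$ for some $g\in\GG(\Bbbk)$. A $\GG$-equivariant isomorphism carries the stabilizer of a point onto the stabilizer of its image, so applying the previous paragraph twice gives $\KK=\GG_{x_0}=\GG_{\phi(x_0)}=g\HH g^{-1}$; thus $\HH$ and $\KK$ are conjugate inside $\GG$.

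There is essentially no obstacle here --- which is why the authors call it clear. The only mildly delicate point is pinning down the base-point stabilizer scheme-theoretically rather than merely up to passing to bodies, and this is exactly what the freeness of the $\HH$-action and Lemma~\ref{lemma quot fppf can} provide.
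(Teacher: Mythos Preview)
Your argument is correct and is exactly the standard reasoning the paper has in mind when it declares the corollary ``clear'' without giving a proof. The only point requiring care---that the scheme-theoretic stabilizer of $e\HH$ is $\HH$ itself---you handle correctly via Lemma~\ref{lemma quot fppf can}, and the rest is the routine transport of stabilizers under an equivariant isomorphism together with Corollary~\ref{cor k pts quotient}.
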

	
	Recall from Section \ref{sec subgrps}, we introduced the quotient groups $\GG/_{\hspace{-0.2em}gp}\NN$ for a normal subgroup $\NN\leq\GG$.  The next corollary provides a positive answer to Question~7.2.8 from \cite{C2}: is every normal subgroup of an affine group scheme the kernel of a group homomorphism?

	\begin{cor}
		Suppose that $\NN$ is a normal subgroup of an affine group scheme $\GG$ in $\cC$.
        \begin{enumerate}
            \item $\NN$ is the kernel of a group homomorphism with source $\GG$.
            \item If $\GG$ is algebraic, then $\GG\gp\NN\cong\GG/\NN$ as schemes; in other words, $\GG\gp\NN$ is a quotient in the category of schemes and the kernel of $\GG\to\GG\gp\NN$ is $\NN$.
        \end{enumerate}
        
	\end{cor}
	\begin{proof} We start by proving (2). 
		We have that $\NN_0$ is normal in $\GG_0$, and thus $\GG_0/\NN_0$ is affine, implying the same is true about $\GG/\NN$ by Theorem \ref{Main}.  Thus $\Bbbk[\GG/\NN]=\Bbbk[\GG]^{\NN}=\Bbbk[\GG\gp\NN]$. Since $\GG/\NN$ is affine, \cite[Theorem~7.3.1]{C3} shows that $\NN$ is observable in $\GG$, and hence $\NN$ is the kernel of a group homomorphism out of $\GG$ by \cite[Proposition~7.5.7]{C2}. By the universal property, $\NN$ is then more precisely the kernel of $\GG\to\GG\gp\NN$.

We have already proved (1) in case $\GG$ is algebraic. In general, we can write $\GG=\varprojlim \GG_\alpha$ for algebraic $\GG_\alpha$ and $\NN=\varprojlim\NN_\alpha$ for normal subgroups $\NN_\alpha$ of $\GG_\alpha$. Now the combination of \cite[7.5.4 and 7.5.7]{C2} demonstrates that $\NN$ is also a kernel.
	\end{proof}

    \subsection{Equivariant sheaves and induction}
	
	Recall that we write $\pi:\GG\to\GG/\HH$ for the quotient morphism.  Write $p_1,p_2:\GG\times_{\GG/\HH}\GG\to\GG$ for the two projections, and $p,a:\GG\times\HH\to\GG$ for the morphisms of projection onto $\GG$ and action of $\HH$ on $\GG$ by right multiplication.  We consider the category $\operatorname{QCoh}^\HH(\GG)$ of $\HH$-equivariant sheaves on $\GG$ with respect to this action by right multiplication.
	
	\begin{prop}\label{prop equiv qcoh shvs}
		The functors $\pi^*:\operatorname{QCoh}(\GG/\HH)\to\operatorname{QCoh}^{\HH}(\GG)$ and $(-)^{\HH}\circ\pi_*:\operatorname{QCoh}^\HH(\GG)\to \operatorname{QCoh}(\GG/\HH)$ induce equivalences of categories.
	\end{prop}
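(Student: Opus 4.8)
The plan is to recognise $\operatorname{QCoh}^{\HH}(\GG)$ as the category of descent data for the faithfully flat morphism $\pi\colon\GG\to\GG/\HH$ and then to run faithfully flat descent, after checking that the descent formalism transfers to $\mathscr{C}$. Since $\HH$ acts freely on $\GG$ by right multiplication and $\GG/\HH$ exists by Theorem~\ref{Main}, Lemma~\ref{lemma quot fppf can} gives that $\pi$ is faithfully flat and affine and that $(p,a)\colon\GG\times\HH\to\GG\times_{\GG/\HH}\GG$ is an isomorphism. Iterating this, the Čech nerve of $\pi$ is identified with the bar construction, $\GG\times_{\GG/\HH}\cdots\times_{\GG/\HH}\GG\cong\GG\times\HH^{\times n}$, in a way carrying $p_1,p_2$ to $p,a$ and the higher coface maps to the structure maps of the $\HH$-action. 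Under this identification a descent datum on a quasi-coherent sheaf on $\GG$ relative to $\pi$ is precisely an $\HH$-equivariant structure, so (after this relabelling) $\operatorname{QCoh}^{\HH}(\GG)$ is the category $\operatorname{Desc}(\pi)$ of descent data for $\pi$, and the functor $\pi^*\colon\operatorname{QCoh}(\GG/\HH)\to\operatorname{QCoh}^{\HH}(\GG)$ becomes ``pullback along $\pi$ equipped with the canonical descent datum''.

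Next I would establish faithfully flat descent in $\mathscr{C}$. As $\pi$ is affine, this can be done affine-locally: cover $\GG/\HH$ by open affines $\UU_i=\operatorname{Spec}B_i$, set $\operatorname{Spec}A_i=\pi^{-1}(\UU_i)$, which is $\HH$-stable because $\pi$ is $\HH$-invariant, and observe that $B_i\to A_i$ is faithfully flat. The statement to prove is that for a faithfully flat algebra morphism $B\to A$ in $\mathscr{C}$ the category of $B$-modules is equivalent to the category of $A$-modules with descent data, via $N\mapsto A\otimes_B N$ and $(M,\theta)\mapsto\operatorname{eq}(M\rightrightarrows A\otimes_B M)$. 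The classical Amitsur-complex proof goes through unchanged: the only ingredients are that $-\otimes_B A$ is exact and conservative, and that for an $A$-module $M$ the Amitsur complex $0\to M\to A\otimes_B M\to A\otimes_B A\otimes_B M\to\cdots$ is split exact as a complex of $A$-modules after applying $-\otimes_B A$ (using the section $m_A$), hence already exact; both inputs are available since $\otimes$ is exact in $\mathscr{C}$ and $\mathscr{C}$ admits the needed (co)limits. Descending along the cover, and using that both $\operatorname{QCoh}$ and $\operatorname{QCoh}^{\HH}$ satisfy Zariski descent, yields an equivalence $\operatorname{QCoh}(\GG/\HH)\simeq\operatorname{Desc}(\pi)\simeq\operatorname{QCoh}^{\HH}(\GG)$.

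Finally I would identify the quasi-inverse of $\pi^*$ with $(-)^{\HH}\circ\pi_*$. On $\UU_i$, an $\HH$-equivariant sheaf $\FF$ is a pair $(M,\rho)$ with $\rho\colon M\to M\otimes\Bbbk[\HH]$ the coaction; under the canonical identification $A_i\otimes_{B_i}A_i\cong A_i\otimes\Bbbk[\HH]$ of Lemma~\ref{lemma quots affine} the descent datum $\theta$ attached to $\FF$ corresponds to $\rho$, so the descended $B_i$-module is $\operatorname{eq}(M\rightrightarrows M\otimes\Bbbk[\HH])=M^{\HH}$. Since $M=(\pi_*\FF)(\UU_i)$ and the equalizer is $\Bbbk[\UU_i]$-linear, this is exactly $\big((\pi_*\FF)^{\HH}\big)(\UU_i)$, so the quasi-inverse is $(-)^{\HH}\circ\pi_*$, as asserted. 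The one step requiring care is the transfer of the descent machinery to $\mathscr{C}$ in the second paragraph; I expect this to be the main obstacle, but it is routine, being purely a matter of rerunning the standard arguments with exactness of $\otimes$ in place of exactness over a base ring.
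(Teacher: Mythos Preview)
Your proposal is correct and follows essentially the same approach as the paper: both identify $\operatorname{QCoh}^{\HH}(\GG)$ with descent data for $\pi$ via the isomorphism $\GG\times_{\GG/\HH}\GG\cong\GG\times\HH$ of Lemma~\ref{lemma quot fppf can}, invoke fppf descent, and then read off the quasi-inverse as $(-)^{\HH}\circ\pi_*$ from the equalizer description. The only difference is that you spell out the Amitsur-complex argument showing descent transfers to $\mathscr{C}$, whereas the paper simply asserts that fppf descent applies.
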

	\begin{proof}
		This result seems fairly standard, but we outline the proof.  We may apply fppf descent to the morphism $\pi:\GG\to\GG/\HH$ to obtain that $\operatorname{QCoh}(\GG/\HH)$ is equivalent to the category of quasicoherent sheaves $\FF$ on $\GG$ with an isomorphism $\alpha:p_1^*(\FF)\xto{\sim} p_2^*(\FF)$ satisfying a cocycle condition. On the other hand, by Lemma \ref{lemma quot fppf can} we have an isomorphism $\GG\times_{\GG/\HH}\GG\cong\GG\times\HH$ such that $p_1$ corresponds to $p$ and $p_2$ corresponds to $a$.  Further, the cocycle condition on $\alpha$ corresponds to the cocycle condition necessary for $\FF$ to be an $\HH$-equivariant sheaf on $\GG$.  Thus these data are equivalent, and $\pi^*:\operatorname{QCoh}(\GG/\HH)\to\operatorname{QCoh}^{\HH}(\GG)$ is an equivalence.  On the other hand, given an $\HH$-equivariant sheaf $\FF$ on $\GG$, the sections of the corresponding sheaf on $\GG/\HH$ over an open set $\UU\sub\GG/\HH$ correspond to the equalizer of the diagram
		\[
		\FF(\pi^{-1}(\UU))\rightrightarrows \FF(\pi^{-1}(\UU))\otimes\Bbbk[\HH], 
		\]
		where the two arrows are  the pullback morphisms along $p^*$ and $a^*$.  Thus we exactly obtain the $\HH$-invariants.
	\end{proof}
	
	\begin{prop}\label{prop equiv shvs}
		We have an equivalence of categories $V\mapsto \FF^V$ between $\Rep\HH$ and the category of $\GG$-equivariant quasi-coherent sheaves on $\GG/\HH$. For an open subscheme $\UU\sub\GG/\HH$, we have
		\[
		\FF^V(\UU)=(\Bbbk[\pi^{-1}(\UU)]\otimes V)^{\HH},
		\]
		where $\HH$ acts diagonally.
	\end{prop}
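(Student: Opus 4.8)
The plan is to deduce this from Proposition~\ref{prop equiv qcoh shvs} together with the fact that $\GG$ is a trivial $\GG$-torsor over $\Spec\Bbbk$. Since $\pi\colon\GG\to\GG/\HH$ is $\GG$-equivariant for the left translation actions, the equivalence $\pi^*\colon\operatorname{QCoh}(\GG/\HH)\xto{\sim}\operatorname{QCoh}^{\HH}(\GG)$ of Proposition~\ref{prop equiv qcoh shvs} intertwines the left $\GG$-actions on both sides, and because left translation on $\GG$ commutes with the right $\HH$-action, restricting to $\GG$-equivariant objects gives an equivalence between the category of $\GG$-equivariant quasi-coherent sheaves on $\GG/\HH$ and the category $\operatorname{QCoh}^{\GG\times\HH}(\GG)$ of quasi-coherent sheaves on $\GG$ that are equivariant for the $\GG\times\HH$-action by left and right translation.

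Next I would identify $\operatorname{QCoh}^{\GG\times\HH}(\GG)$ with $\Rep\HH$. The structure morphism $\GG\to\Spec\Bbbk$ is faithfully flat: every object of $\mathscr{C}$ is flat over $\mathbf{1}$ since $\otimes$ is exact, and $\mathbf{1}\to\Bbbk[\GG]$ is split by the counit, so $\Bbbk[\GG]$ is faithfully flat over $\mathbf{1}$; moreover $\GG$ is a $\GG$-torsor over $\Spec\Bbbk$ for left translation, with $\GG\times_{\Spec\Bbbk}\GG\cong\GG\times\GG$. By fppf descent along this torsor --- the internal analogue of the classical statement that equivariant sheaves on a trivial torsor are pulled back from the base, cf.~\cite{DG} --- the category of left-$\GG$-equivariant quasi-coherent sheaves on $\GG$ is equivalent to $\operatorname{QCoh}(\Spec\Bbbk)=\mathscr{C}$, an equivariant sheaf $\FF$ corresponding to the object $V:=e^*\FF$ of $\mathscr{C}$ (its fibre at the identity $e$), and conversely $V$ to $\OO_{\GG}\otimes V$ with $\GG$ acting through $\OO_{\GG}$. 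The residual right $\HH$-action commutes with left translation, hence descends to $\Spec\Bbbk$, i.e.\ to a $\Bbbk[\HH]$-comodule structure on $V$; concretely, the corresponding $\HH$-equivariant structure on $\OO_{\GG}\otimes V$ is the diagonal one, combining right translation on $\OO_{\GG}$ with the coaction on $V$. This produces the asserted equivalence $V\mapsto\FF^V$, with $\pi^*\FF^V\cong\OO_{\GG}\otimes V$ carrying the diagonal $\HH$-equivariant structure.

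Finally, the explicit formula follows by unwinding the inverse to $\pi^*$ from Proposition~\ref{prop equiv qcoh shvs}, which is $(-)^{\HH}\circ\pi_*$. For an open subscheme $\UU\sub\GG/\HH$ one has $\pi_*(\OO_{\GG}\otimes V)(\UU)=(\OO_{\GG}\otimes V)(\pi^{-1}(\UU))=\Bbbk[\pi^{-1}(\UU)]\otimes V$, and passing to $\HH$-invariants for the diagonal action yields $\FF^V(\UU)=(\Bbbk[\pi^{-1}(\UU)]\otimes V)^{\HH}$, as claimed.

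I expect the main obstacle to be the middle step: setting up fppf descent for equivariant quasi-coherent sheaves internal to $\mathscr{C}$ and verifying that a left-$\GG$-equivariant sheaf on $\GG$ is necessarily of the form $\OO_{\GG}\otimes V$ for $V$ its fibre at $e$, together with the bookkeeping that the residual $\HH$-equivariant structure translates precisely into a $\Bbbk[\HH]$-comodule structure compatible with the relevant cocycle conditions. This is routine and parallel to the classical case, but it is the part requiring genuine verification; the remaining steps are formal manipulations with the equivalences already in hand.
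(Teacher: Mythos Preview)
Your argument is correct and follows the same strategy as the paper: pass from $\operatorname{QCoh}^{\GG}(\GG/\HH)$ to $\operatorname{QCoh}^{\GG\times\HH}(\GG)$ via Proposition~\ref{prop equiv qcoh shvs}, then from there to $\operatorname{QCoh}^{\HH}(\Spec\Bbbk)\simeq\Rep\HH$, and unwind the inverse $(-)^{\HH}\circ\pi_*$ for the explicit formula. The ``obstacle'' you flag in the middle step is in fact just a second application of Proposition~\ref{prop equiv qcoh shvs}, now to the faithfully flat morphism $\GG\to\Spec\Bbbk$ with its left $\GG$-action, so no separate descent verification is needed.
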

	
	\begin{proof}
		Observe that $\GG$ and $\GG/\HH$ admit left actions by $\GG$, and the morphism $\pi:\GG\to\GG/\HH$ is equivariant with respect to this action.  Further, the left action of $\GG$ on $\GG$ commutes with the right action of $\HH$ on $\GG$, meaning that the data of a $\GG\times\HH$ equivariant structure on a sheaf is equivalent to the data of commuting equivariant structures for $\GG$ and $\HH$.  Thus the equivalence $\pi^*$ from Proposition \ref{prop equiv qcoh shvs} defines a functor $\operatorname{QCoh}^{\GG}(\GG/\HH)\to\operatorname{QCoh}^{\GG\times\HH}(\GG)$.  Similarly, $(-)^{\HH}\circ\pi_*$ will take $\GG\times\HH$-equivariant sheaves on $\GG$ to $\GG$ equivariant sheaves on $\GG/\HH$, so that it defines a functor $\operatorname{QCoh}^{\GG\times\HH}(\GG)\to\operatorname{QCoh}^{\GG}(\GG/\HH)$.  Now we may apply Proposition \ref{prop equiv qcoh shvs} to obtain $\pi^*$ and $(-)^{\HH}\circ\pi_*$ give equivalences $\operatorname{QCoh}^{\GG}(\GG/\HH)\simeq \operatorname{QCoh}^{\GG\times\HH}(\GG)$.
		
		However by Proposition \ref{prop equiv qcoh shvs}, the latter category is equivalent to $\operatorname{QCoh}^{\HH}(\Spec\Bbbk)\simeq\operatorname{Rep}\HH$.  The description of the sections of $\FF^V$ follows from tracing through these equivalences.
	\end{proof}

	Just as in classical algebraic geometry, the category of quasicoherent sheaves on an algebraic scheme $\XX$ admits enough injectives (e.g.~\cite[Chp.~III, Exer.~3.6]{Hartshorne}), and thus one may develop the machinery of sheaf cohomology.  For a quasicoherent sheaf $\FF$ on $\XX$, write $H^\bullet(\XX,\FF)$ for the $\mathbb{N}$-graded cohomology object in $\mathscr{C}$.  
	
	On the other hand, write $\operatorname{Ind}_{\HH}^{\GG}:\operatorname{Rep}\HH\to\operatorname{Rep}\GG$ for the right adjoint to the restriction functor $\operatorname{Res}_\HH:\operatorname{Rep}\GG\to\operatorname{Rep}\HH$.  This functor is only left exact in general, and we write $R^i\operatorname{Ind}_{\HH}^{\GG}(-)$ for its right derived functors.
	
	\begin{lemma}
		We have isomorphisms $R^i\operatorname{Ind}_\HH^\GG(V)\simeq H^i(\GG/\HH,\FF^{V})$, natural in $V\in\Rep\mathcal{H}$.
	\end{lemma}
	\begin{proof}
		The proof given in \cite[Prop.~5.12]{J} works verbatim.
	\end{proof}
	
	\begin{cor}
		For a subgroup $\HH\leq\GG$, if $\GG/\HH$ is proper then $R^i\Ind_{\HH}^{\GG}(-)$ preserves compact objects.
	\end{cor}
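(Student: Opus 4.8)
The plan is to realize $\Ind_\HH^\GG(V)$ as the global sections of a coherent sheaf on the proper scheme $\GG/\HH$, and then to invoke finiteness of pushforward along a proper morphism.

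Applying the preceding lemma with $i=0$, and using that $\Ind_\HH^\GG$ is left exact so that it coincides with $R^0\Ind_\HH^\GG$, one obtains for every $V\in\Rep\HH$ a natural isomorphism $\Ind_\HH^\GG(V)\cong H^0(\GG/\HH,\FF^V)$ in $\Rep\GG$, where $\FF^V$ is the $\GG$-equivariant quasicoherent sheaf of Proposition~\ref{prop equiv shvs}. It therefore suffices to establish: (i) if $V$ is compact, then $\FF^V$ is a coherent sheaf; (ii) if $\FF$ is a coherent sheaf on $\GG/\HH$ and $\GG/\HH$ is proper, then $H^0(\GG/\HH,\FF)$ is a finite length (equivalently, compact) object of $\mathscr{C}$; and (iii) a $\GG$-module whose underlying object in $\mathscr{C}$ is compact is itself compact in $\Rep\GG$.

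For (ii), I would observe that the structure morphism $f\colon\GG/\HH\to\Spec\Bbbk$ is a proper morphism of algebraic schemes ($\Spec\Bbbk$ being algebraic since $\mathbf{1}=S(0)$ is finitely generated), so that Corollary~\ref{cor proper pushforward} gives that $f_*\FF$ is coherent; but $f_*\FF$ is precisely $H^0(\GG/\HH,\FF)$ regarded as an object of $\mathscr{C}$, and a coherent sheaf on $\Spec\Bbbk$ is exactly a finite length object. For (iii), the forgetful functor $\Rep\GG\to\mathscr{C}$ is faithful and exact, so the poset of subcomodules of a $\GG$-module $M$ embeds order-preservingly into the poset of subobjects of $|M|$ in $\mathscr{C}$; when $|M|$ has finite length this poset has finite length, whence $M$ is a finite length comodule, i.e.\ a compact object of $\Rep\GG=\Ind\Rep\GG_{fin}$ (see \cite[\S 7.3]{C2}).

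The substantive point is (i). Here I would use that the quotient morphism $\pi\colon\GG\to\GG/\HH$ is faithfully flat and affine by Lemma~\ref{lemma quot fppf can} and, since $\GG$ is algebraic, of finite type, hence an fppf cover; and that, by the constructions underlying Propositions~\ref{prop equiv qcoh shvs} and~\ref{prop equiv shvs}, the pullback $\pi^*\FF^V$ is, with its canonical descent datum, the free sheaf $\OO_\GG\otimes V$. Since $V\in\cC_{fin}$ and $\GG$ is algebraic, $\OO_\GG\otimes V$ is a coherent sheaf on $\GG$, and since coherence is local for the fppf topology this descends to coherence of $\FF^V$ on $\GG/\HH$. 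The only real obstacle, such as it is, is thus purely a matter of bookkeeping: tracing the identification of $\pi^*\FF^V$ through the equivalences already established, and appealing to fppf descent of coherence — no geometric input beyond Corollary~\ref{cor proper pushforward} is required.
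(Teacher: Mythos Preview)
Your proposal is correct and follows essentially the same approach as the paper's proof, which simply reads ``Indeed we may apply Corollary~\ref{cor proper pushforward}.'' You have carefully unpacked the steps the paper leaves implicit---identifying $\Ind_\HH^\GG(V)$ with global sections of $\FF^V$ via the preceding lemma, verifying coherence of $\FF^V$ for compact $V$, and checking that compactness in $\mathscr{C}$ yields compactness in $\Rep\GG$---but the underlying argument is the same.
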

	
	\begin{proof}
		Indeed we may apply Corollary \ref{cor proper pushforward}.
	\end{proof}

	\section{Existence of affine quotients}
	\label{Affine}
	We continue to assume that $\mathscr{C}$ satisfies (GR) and (MN1-2) in this section. Building further on the results from Section~\ref{Faisc}, we give an additional criterion for quotient schemes to (exist and) be affine when $\cC$ is semisimple. For subgroups the result reduces to the statement that $\GG/\HH$ exists and is affine if and only if $\Bbbk[\GG]$ is injective as a $\Bbbk[\HH]$-comodule.

    Let $\HH$ be an affine group scheme (not necessarily algebraic), and for the remainder of this section write $H:=\Bbbk[\HH]$.  Let $\XX=\Spec A$ be an affine scheme with a right $\HH$-action, meaning that $A$ is a right $H$-comodule such that the coaction $\rho=\rho_A:A\to A\otimes H$ is an algebra morphism.  Set $B=A^{\HH}$, a subalgebra of $A$, and $\YY=\Spec B$.

	In what follows we will write $\MM_{B}$ for the category of right $B$-modules in $\cC$, and $\MM_{A}^{H}$ for the category of $(A,H)$ Hopf modules. In geometric terms, the latter is equivalent to the category of $\HH$-equivariant quasi-coherent sheaves on $\XX$. 
	More precisely, $M\in\MM_{A}^{H}$ is a right $A$-module and right $H$-comodule such that, equivalently, $\rho_{M}:M\to M\otimes H$ is a morphism of right $A$-modules, or $a_M:M\otimes A\to M$ is a morphism of right $H$-comodules.  Here we view $M\otimes H$ as a right $A$ module via 
	\begin{equation}\label{eqn A action}
	M\otimes H\otimes A\xto{1\otimes1\otimes\rho_A}M\otimes H\otimes A\otimes H\xto{1\otimes\sigma\otimes 1}(M\otimes A)\otimes (H\otimes H)\xrightarrow{a_M\otimes m_H} M\otimes H,
	\end{equation}
    and $M\otimes H$ as right $H$-comodule via
    \begin{equation}\label{eqn H coaction}
    M\otimes H\xto{1\otimes\rho_H}M\otimes H\otimes H.
    \end{equation}

	\begin{thm}\label{thm affine quotient}
		The following are equivalent:
		\begin{enumerate}
			\item $B\to A$ is faithfully flat, and the canonical map (see Lemma~\ref{lemma quots affine})
			\[
			\XX\times \HH\to\XX\times_{\YY} \XX, \ \ \ \mathbf{can}:A\otimes_{B}A\to A\otimes H
			\]
			\color{black}is an isomorphism.
			\item $\Spec B\cong\XX/_{f}\HH\cong\XX/\HH$.
			\item We have an equivalence of categories $\MM^{H}_{A}\to\MM_{B}$ given by $M\mapsto M^{\HH}$, for $M\in\MM^{H}_A$, with inverse $N\mapsto N\otimes_BA$ for $N\in\MM_B$.
		\end{enumerate} 
		If additionally $\mathscr{C}$ is semisimple, then (1)-(3) are also equivalent to:
		\begin{enumerate}
			\item[(4)] $A$ is an injective $H$-comodule, and $\mathbf{can}$ is an epimorphism in $\cC$.
		\end{enumerate}
	\end{thm}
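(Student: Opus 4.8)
The plan is to read Theorem~\ref{thm affine quotient} as the internal analogue, in $\mathscr{C}$, of the faithfully flat descent theorem for Hopf--Galois extensions, reducing each implication to the corresponding classical statement (Schneider \cite{Sch}). For the equivalence (1)~$\iff$~(2): by Lemma~\ref{lemma quots affine}, $\Spec B$ represents $\XX/\HH$ exactly when $B\to A$ is fppf and $\operatorname{can}$ is an isomorphism, and by Lemma~\ref{lemma quot exists in sch iff fais} such a quotient, once it exists, also represents $\XX/_{f}\HH$. The only discrepancy with (1) is the replacement of ``fppf'' by ``faithfully flat''; but faithfully flat descent of modules along $B\to A$ uses only faithful flatness, so under (1) the sheaf $\XX/_{f}\HH$ is already represented by $\Spec B$, hence (using Lemma~\ref{lemma schemes are sheaves}) by $\XX/\HH$. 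I would just record this mild strengthening of Lemma~\ref{lemma quots affine} when citing it.

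For (1)~$\Rightarrow$~(3), assume $B\to A$ faithfully flat. Faithfully flat descent in $\mathscr{C}$ (the usual cosimplicial argument, all tensor products being exact) identifies $\MM_B$ with the category of descent data: pairs $(M,\theta)$ with $M\in\MM_A$ and an isomorphism $\theta$ of the two base changes of $M$ along $A\rightrightarrows A\otimes_B A$, subject to the cocycle identity. The hypothesis that $\operatorname{can}:A\otimes_B A\xto{\sim}A\otimes H$ is an isomorphism of the ambient $A$-bimodule/$H$-comodule structures turns $\theta$ into a coaction $M\to M\otimes H$ making $M$ an object of $\MM_A^H$, the cocycle identity becoming coassociativity, and conversely. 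Tracing the quasi-inverse of descent through this translation identifies the resulting equivalence $\MM_A^H\simeq\MM_B$ with $M\mapsto M^{\HH}$ and its quasi-inverse with $N\mapsto N\otimes_B A$; the two isomorphisms to be checked are the unit $N\xto{\sim}(N\otimes_B A)^{\HH}$ (flat base change of the equalizer defining $(-)^{\HH}$, using $A^{\HH}=B$) and the counit $M^{\HH}\otimes_B A\xto{\sim}M$, which is exactly the descent isomorphism for $M$. For the converse (3)~$\Rightarrow$~(1): composing the equivalence with the faithful exact forgetful functor $\MM_A^H\to\MM_A$ shows $-\otimes_B A$ is faithful and exact, so $A$ is faithfully flat over $B$; and applying the inverse equivalence to the cofree Hopf module $A\otimes H$ (with $A$-action $(x\otimes h)\cdot a=xa_{(0)}\otimes ha_{(1)}$ and coaction cofree in the $H$-slot), whose coinvariants are canonically $A$, the counit isomorphism $(A\otimes H)^{\HH}\otimes_B A\xto{\sim}A\otimes H$ becomes an isomorphism $A\otimes_B A\xto{\sim}A\otimes H$ which one checks on the nose is $\operatorname{can}$.

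For (4)~$\iff$~(1)--(3) when $\mathscr{C}$ is semisimple: since $A$ is a \emph{commutative} Hopf algebra its antipode satisfies $S^2=\id$, so $H$ has bijective antipode and the internal form of Schneider's criterion \cite{Sch} should apply --- $B\to A$ is a faithfully flat Hopf--Galois extension if and only if $\operatorname{can}$ is surjective and $A$ is relatively injective as an $H$-comodule (equivalently, a total integral $H\to A$ exists). The role of semisimplicity of $\mathscr{C}$ is that the cofree comodules $V\otimes H$ with $V\in\mathscr{C}$ are then genuinely injective objects of $\Rep\HH$, so ``relatively injective'' and ``injective'' coincide and Schneider's (and the underlying Doi's) homological arguments transfer verbatim. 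In the forward direction, given (1) the multiplication $A\otimes_B A\to A$ is $H$-colinear and split by $a\mapsto 1\otimes_B a$, so $A$ is an $H$-comodule summand of $A\otimes_B A\cong A\otimes H$, hence injective; the converse is the substantive half of Schneider's criterion.

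The serious work is the last paragraph: transporting Schneider's and Doi's homological results on Hopf--Galois extensions into $\mathscr{C}$, and verifying that semisimplicity of $\mathscr{C}$ is precisely what is needed to collapse ``injective'' and ``relatively injective'' $H$-comodules. The descent arguments behind (1)--(3) are formal, but they still require care with the internal $\otimes$ and $\ul{\Hom}$ bookkeeping and with the twisted $A$-module structure on $A\otimes H$ underlying $\operatorname{can}$.
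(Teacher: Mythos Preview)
Your plan is essentially the paper's: $(1)\Leftrightarrow(2)$ via Lemmas~\ref{lemma quots affine} and~\ref{lemma quot exists in sch iff fais}, $(1)\Leftrightarrow(3)$ via faithfully flat descent (the paper points to the argument of Proposition~\ref{prop equiv qcoh shvs}, and for $(3)\Rightarrow(1)$ checks $\operatorname{can}$ is an isomorphism ``after taking $\HH$-invariance'', which unwinds to your counit-on-$A\otimes H$ computation), and $(4)\Rightarrow(3)$ by transporting Schneider into $\mathscr{C}$, which the paper carries out explicitly as Lemmas~\ref{lemma rel inj}--\ref{lemma rel inj implies iso} and Theorem~\ref{thm rel inj eq}. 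Your observation that Lemma~\ref{lemma quots affine} really only needs faithful flatness rather than fppf is correct and is exactly what the paper uses implicitly.

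The one genuine local difference is $(1)\Rightarrow(4)$. You split $A$ off $A\otimes_B A\cong A\otimes H$ as an $H$-comodule using multiplication and the unit section; this works, but be careful that for both $m$ and $a\mapsto 1\otimes_B a$ to be $H$-colinear you need the \emph{diagonal} coaction on $A\otimes_B A$, and under $\operatorname{can}$ this matches not the cofree coaction on $A\otimes H$ but the twisted one $a\otimes h\mapsto a_{(0)}\otimes h_{(1)}\otimes a_{(1)}h_{(2)}$, which you must then identify with the cofree one via the standard Hopf-module trick. The paper instead proves $A$ is coflat via the chain $A\otimes_B(A\square_H V)\cong(A\otimes_B A)\square_H V\cong(A\otimes H)\square_H V\cong A\otimes V$ (using \cite{Tak} and $\operatorname{can}$), and then observes that coflat, relatively injective, and injective coincide when $\mathscr{C}$ is semisimple. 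Your route is more direct once the comodule bookkeeping is sorted; the paper's sidesteps that bookkeeping at the cost of invoking Takeuchi.
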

	
	\begin{remark}
		By definition, the condition that $\mathbf{can}$ is an epimorphism is equivalent to asking that $\HH$ acts freely on $\XX$ (see Section 5).
	\end{remark}
	
	\begin{remark}
		If $\mathscr{C}$ is semisimple and satisfies (MN1-2), then $\mathscr{C}\sub\operatorname{Ver}_p$ by \cite[\S~3.2]{C2}.
		
		On the other hand, condition (4) cannot be equivalent to (1)-(3) if $\mathscr{C}$ is not semisimple.  Indeed, if $A$ is a finitely generated algebra, then $\mathbf{1}$ must split off $A$ as an object of $\mathscr{C}$.  However, if $A$ is injective as an $H$-comodule, it must be injective as an object of $\mathscr{C}$, so this would imply the unit object is injective, meaning $\mathscr{C}$ is semisimple.
	\end{remark}

	\begin{proof}[Proof of $(1)\iff(2)\iff(3)$]
		Indeed (1)$\Rightarrow$(2) is Lemma \ref{lemma quots affine} along with Lemma \ref{lemma quot exists in sch iff fais} and (2)$\Rightarrow$(3) may be proven just as in Proposition \ref{prop equiv qcoh shvs} (using fpqc descent for the general setting).  For (3)$\Rightarrow(1)$ it is clear that $B\to A$ must be  faithfully flat, while $\mathbf{can}$ is an isomorphism as this may be checked after taking $\HH$-invariants, which yields a morphism $(A\otimes_BA)^{\HH}\to A$, where the $\HH$-action is only on the right factor. That the latter is an isomorphism follows the functors in (3) being inverse.
	\end{proof}
	
	The rest of the section is devoted to proving that (4) is equivalent to (1)-(3) when $\mathscr{C}$ is semisimple.  However, we will only impose semisimplicity of $\cC$ at the very end of the proof.  We will largely follow Schneider \cite{Sch}, who works in $\mathscr{C}=\operatorname{Vec}$, or more generally in the category of modules over some commutative ring in $\operatorname{Vec}$.  We will simply generalize Schneider's arguments to the setting of more general tensor categories which works without much obstruction.  Because of this, we will try to be brief in our proofs, and point to the relevant literature on Hopf algebras in $\operatorname{Vec}$ which prove the result we need.   Finally, we remark that some version of Theorem \ref{thm affine quotient} should hold even if $A,B$ and $H$ are not commutative (which is the more general setting considered in \cite{Sch}) but we do not discuss this here.
	
	\subsection{Relative injectivity and coflatness}  We say a right $H$-comodule $V$ is \emph{relatively injective} if for any $\mathscr{C}$-split monomorphism $i:U\to W$ of $H$-comodules, every $H$-comodule morphism $U\to V$ admits an extension to $W$ via $i$.  If $\mathscr{C}$ is semisimple, then this is equivalent to asking that $V$ be injective as an $H$-comodule.  We observe that any direct summand of a relatively injective module is again relatively injective.  Further, one can verify that $M\otimes H$ is relatively injective for any $H$-comodule $M$.
	
	The following works exactly as in \cite[Thm.~1.6]{D2}; we give the formulas as presented there.
	
	\begin{lemma}\label{lemma rel inj}
		The following are equivalent:
		\begin{enumerate}
			\item $A$ is relatively injective right $H$-comodule;
			\item there exists a morphism $\phi:H\to A$ of $H$-comodules which is unital. We call $\phi$ a \emph{total integral};
			\item any Hopf module in $\MM_{A}^{H}$ is relatively injective over $H$;
			\item there exists a morphism $\varphi:A\otimes H\to A$ in $\MM_{A}^{H}$ such that $\varphi\circ\rho=\id$, see (\ref{eqn A action}), and (\ref{eqn H coaction}) for (co)module structure on $A\otimes H$.
		\end{enumerate}
	\end{lemma}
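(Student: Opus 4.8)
The plan is to adapt the proof of \cite[Thm.~1.6]{D2} (see also \cite{Sch}) to $\mathscr{C}$: the only substantive change is that ``$\Bbbk$-split'' is everywhere replaced by ``$\mathscr{C}$-split'', and every map is written as a morphism of $\mathscr{C}$ using the braiding $\sigma$ and the antipode $S$ of the Hopf algebra $H=\Bbbk[\HH]$. Concretely, I would establish the cycle $(1)\Rightarrow(2)\Rightarrow(3)\Rightarrow(1)$ and, separately, the equivalence $(2)\Leftrightarrow(4)$.

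For $(1)\Rightarrow(2)$: note that $\eta_H\colon\mathbf{1}\to H$ is a $\mathscr{C}$-split monomorphism of right $H$-comodules (equip $\mathbf{1}$ with the trivial coaction; $\eta_H$ is split in $\mathscr{C}$ by $\varepsilon_H$), and that $\eta_A\colon\mathbf{1}\to A$ is a morphism of $H$-comodules since $\rho_A$ is unital. Relative injectivity of $A$ then extends $\eta_A$ along $\eta_H$ to a comodule morphism $\phi\colon H\to A$ with $\phi\circ\eta_H=\eta_A$, i.e.\ a total integral. For $(3)\Rightarrow(1)$: simply apply $(3)$ to $M=A\in\MM_A^H$. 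For $(2)\Leftrightarrow(4)$: the morphism $\varphi$ constructed for $M=A$ in the next paragraph lies in $\MM_A^H$ and satisfies $\varphi\circ\rho_A=\id_A$, giving $(4)$; conversely, given $\varphi$ as in $(4)$, set $\phi:=\varphi\circ(\eta_A\otimes 1_H)\colon H\to A$, whose unitality is immediate from $\rho_A\circ\eta_A=\eta_A\otimes\eta_H$ and which is a comodule morphism because $\varphi$ is.

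The crux is $(2)\Rightarrow(3)$. Given a total integral $\phi$, for $M\in\MM_A^H$ I would let $\pi_M\colon M\otimes H\to M$ be the composite
\[
M\otimes H\xto{\rho_M\otimes 1_H}M\otimes H\otimes H\xto{1_M\otimes S\otimes 1_H}M\otimes H\otimes H\xto{1_M\otimes m_H}M\otimes H\xto{1_M\otimes\phi}M\otimes A\xto{a_M}M,
\]
i.e.\ the internalisation of the classical formula $m\otimes h\mapsto\sum m_{(0)}\cdot\phi(S(m_{(1)})h)$. One then checks that $\pi_M\circ\rho_M=\id_M$ (using coassociativity, the antipode axiom, and unitality of $\phi$) and that $\pi_M$ is a morphism in $\MM_A^H$, in particular of $H$-comodules. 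Hence $M$ is a retract, as an $H$-comodule, of $M\otimes H$. But the comodule structure on $M\otimes H$ is $1_M\otimes\Delta_H$, so $M\otimes H$ is the cofree $H$-comodule on the object $M\in\mathscr{C}$; cofree comodules are relatively injective because $\Hom$ from a comodule into $M\otimes H$ is naturally identified with $\Hom_{\mathscr{C}}(-,M)$, which sends $\mathscr{C}$-split monomorphisms to epimorphisms. As a retract of a relatively injective $H$-comodule is relatively injective, $(3)$ follows.

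I expect the only genuine obstacle to be the verification that $\pi_M$ respects both the $A$-action and the $H$-coaction on $M\otimes H$: because $\phi$ is only a morphism of comodules and not of algebras, this step is the braided-diagram computation of \cite{D2}, but it relies solely on the Hopf-algebra, comodule and naturality-of-$\sigma$ axioms and so transfers without change. Everything else is formal. Finally, when $\mathscr{C}$ is semisimple (equivalently $\mathscr{C}\subseteq\operatorname{Ver}_p$, by \cite[\S~3.2]{C2}) every monomorphism is $\mathscr{C}$-split and ``relatively injective'' is the same as ``injective''.
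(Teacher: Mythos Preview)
Your proposal is correct and follows essentially the same route as the paper: the same cycle $(1)\Rightarrow(2)\Rightarrow(3)\Rightarrow(1)$ with the identical retraction formula $\pi_M=a_M\circ(1\otimes\phi)\circ(1\otimes m_H)\circ(1\otimes S\otimes 1)\circ(\rho_M\otimes 1)$ for $(2)\Rightarrow(3)$, and the same idea of precomposing with $\eta_A\otimes 1_H$ for $(4)\Rightarrow(2)$. The one small difference is that for $(2)\Rightarrow(4)$ you reuse $\pi_A$ and appeal to the stronger fact that $\pi_M$ lies in $\MM_A^H$, whereas the paper writes down a separate formula; your route is fine here because $A$ and $H$ are commutative in this paper's setting (the key identity $\sum a_{(0)}\otimes S(a_{(1)})a_{(2)}=a\otimes 1_H$ together with commutativity of $A$ is exactly what makes $\pi_M$ $A$-linear), though note that for $(2)\Rightarrow(3)$ alone only $H$-colinearity of $\pi_M$ is actually required.
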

	\begin{proof}
		$(1)\Rightarrow(2)$ Because $\mathbf{1}\to H$ is split in $\mathscr{C}$ by the counit, we may use the relative injectivity of $A$ to get a unital morphism:
		\[
		\xymatrix{
			\mathbf{1}\ar[d] \ar[r] & H\ar@{-->}[dl]\\ 
			A &
		}
		\]
		
		$(2)\Rightarrow(3)$ Define $M\otimes H\to M$ by 
		\[
		a_M\circ(1\otimes \phi)\circ(1\otimes m)\circ(1\otimes S\otimes 1)\circ(\rho\otimes1).
		\]
		Then this morphism splits the coaction morphism $M\to M\otimes H$, so that $M$ is a summand of $M\otimes H$, implying it is relatively injective.
		$(3)\Rightarrow(1)$ is clear.
		
		$(2)\iff(4)$ Given $\phi:H\to A$, we set $A\otimes H\to A$ to be
		\[
		m_{A}\circ (1\otimes(\phi\circ S))\circ(1\otimes m_{H})\circ (\rho\otimes 1).
		\]
		Conversely, given $\psi:A\otimes H\to A$, define $H\to A$ by $\psi\circ (\eta\otimes\id)$.     
	\end{proof}

	Recall that given a right $H$-comodule $V$ and a left $H$-comodule $W$, we may form the cotensor product $V\square_HW$ defined by the equalizer:
	\begin{equation}\label{eq1}
		V\square_HW\to V\otimes W\rightrightarrows V\otimes H\otimes W.
	\end{equation}
	If $W$ is also a right $H$-comodule (e.g.~$W=H$), then $V\square_HW$ admits the structure of a right $H$-comodule via viewing (\ref{eq1}) in the category of right $H$-comodules.  For a right $H$-comodule $V$, write $V^{tw}$ for the left $H$-comodule on the same underlying object $V$ but with coaction:
	\[
	V\to V\otimes H\xto{\sigma}H\otimes V\xto{s_H\otimes 1}H\otimes V,
	\]
	where $s_H$ is the antipode.  The following is proven in the case of $\mathscr{C}=\operatorname{Vec}$ in, for instance, \cite[Lem.~3.1]{Sch}. 
	
	\begin{lemma}\label{lemma twist comodules}
		The functor $(-)^{tw}$ is an equivalence of categories between right and left $H$-comodules.  Further, for any two right $H$-comodules $V,W$, we have
		\[
		V\square_HW^{tw}\cong (V\otimes W)^{\HH}.
		\]
	\end{lemma}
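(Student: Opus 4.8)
The plan is to establish the two assertions separately; both are formal consequences of the Hopf algebra axioms together with the fact that $H=\Bbbk[\HH]$ is \emph{commutative}, so that its antipode is an involution, $s_H\circ s_H=\id_H$, and any product of elements of $H$ may be reordered. For the statement that $(-)^{tw}$ is an equivalence, I would first check that $V^{tw}$ is genuinely a left $H$-comodule: the counit axiom reduces to $\varepsilon\circ s_H=\varepsilon$, and coassociativity follows from the anti-coalgebra identity $\Delta_H\circ s_H=\sigma\circ(s_H\otimes s_H)\circ\Delta_H$ together with coassociativity of $\rho_V$ and naturality of the braiding $\sigma$. Then I would define a functor in the opposite direction by the mirror formula, sending a left coaction $\lambda_W\colon W\to H\otimes W$ to the right coaction $W\xto{\lambda_W}H\otimes W\xto{\sigma}W\otimes H\xto{1\otimes s_H}W\otimes H$, and verify that the two composites with $(-)^{tw}$ are the identity functors; here the only inputs are $s_H\circ s_H=\id_H$ and that $\sigma$ is an involution. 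Naturality is automatic.

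For the identification $V\square_H W^{tw}\cong(V\otimes W)^{\HH}$, the key point is that both objects are equalizers of a pair of parallel morphisms out of $V\otimes W$: by \eqref{eq1}, $V\square_H W^{tw}$ is the equalizer of $\rho_V\otimes 1_W$ and $1_V\otimes\lambda_{W^{tw}}$ with values in $V\otimes H\otimes W$, whereas $(V\otimes W)^{\HH}$ is the equalizer of the diagonal coaction $\rho_{V\otimes W}$ and $1_{V\otimes W}\otimes\eta$ with values in $V\otimes W\otimes H$. I would then build two morphisms
\[
\Psi\colon V\otimes H\otimes W\longrightarrow V\otimes W\otimes H,\qquad \Phi\colon V\otimes W\otimes H\longrightarrow V\otimes H\otimes W,
\]
each assembled from $\rho_W$, $m_H$, $s_H$ and the braiding — for $\Psi$ one coacts on the $W$-factor, braids the new $H$-factor into position and multiplies, and $\Phi$ is the analogous map with an antipode inserted — and check that $\Psi$ carries the first pair of arrows to the second and $\Phi$ carries the second to the first. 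Each check is a short diagram chase: one expands an iterated coaction of $W$ using coassociativity, then cancels a factor using the antipode axiom $m_H\circ(s_H\otimes 1_H)\circ\Delta_H=\eta\circ\varepsilon$, after invoking commutativity of $H$ to put the product in the required order. It follows that a morphism from any object of $\mathscr{C}$ to $V\otimes W$ equalizes the first pair if and only if it equalizes the second, so the two subobjects of $V\otimes W$ coincide, giving the isomorphism; naturality in $V$ and $W$ is clear from the construction of $\Psi$ and $\Phi$.

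I expect the only genuine work to be the two diagram chases defining and testing $\Psi$ and $\Phi$: in a general symmetric tensor category one must thread the braiding through every step, and the cancellations that over $\operatorname{Vec}$ are the familiar Sweedler-notation manipulations must be recast as identities of morphisms, repeatedly using commutativity of $H$ to reorder multiplications. This is precisely Schneider's argument (cf.\ \cite[Lem.~3.1]{Sch}) transported to $\mathscr{C}$, and presents no real obstacle, since the two facts it relies on — the involutivity of $s_H$ and the freedom to reorder products in $H$ — both hold because $\Bbbk[\HH]$ is a commutative Hopf algebra.
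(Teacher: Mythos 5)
Your proposal is correct and follows exactly the route the paper intends: the paper gives no proof of this lemma, merely citing \cite[Lem.~3.1]{Sch} for the case $\mathscr{C}=\operatorname{Vec}$, and your argument is precisely Schneider's, transported to $\mathscr{C}$ by replacing Sweedler-notation manipulations with diagrammatic ones and using that commutativity of $\Bbbk[\HH]$ gives both $s_H^2=\id$ and the reordering $m_H\circ(s_H\otimes 1)\circ\Delta_H=m_H\circ\sigma\circ(s_H\otimes 1)\circ\Delta_H=\eta\circ\varepsilon$ needed in the equalizer comparison. The observation that $\Psi$ and $\Phi$ intertwine the two parallel pairs out of $V\otimes W$, so that the two equalizers coincide as subobjects (which also yields naturality), is exactly the right way to package the second assertion.
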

	
	We say that a right $H$-comodule $V$ is coflat if the functor $V\square_H(-)$ is exact.
	\begin{lemma}[Cor.~3.2 of \cite{Sch}]
		The following are equivalent:
		\begin{enumerate}
			\item $A$ is a coflat $H$-comodule;
			\item $\MM^{H}_{A}\to\MM_B$, $M\mapsto M^{\HH}$ is exact.
		\end{enumerate}
	\end{lemma}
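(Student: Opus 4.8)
The plan is to adapt the proof of \cite[Cor.~3.2]{Sch} to the tensor category $\mathscr{C}$; the two ingredients are the twist/cotensor identity of Lemma~\ref{lemma twist comodules} and the elementary observation that $(-)^{\HH}$ on Hopf modules is a natural retract of a ``cotensor with $A$'' functor. First I would introduce the functor $F\colon\{\text{right }H\text{-comodules}\}\to\MM_{A}^{H}$ sending a comodule $V$ to $A\otimes V$, with $A$ acting by multiplication on the first tensorand and $H$ coacting diagonally. That $A\otimes V$ is genuinely a Hopf module reduces to commutativity of $H=\Bbbk[\HH]$ together with the braiding axioms, and $F$ is exact since $\otimes$ is exact in $\mathscr{C}$ and all structure morphisms are natural. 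Applying Lemma~\ref{lemma twist comodules} with first argument $A$ (viewed as a right comodule) gives a natural isomorphism $(A\otimes V)^{\HH}\cong A\,\square_{H}V^{tw}$, i.e.\ a natural isomorphism of functors $(-)^{\HH}\circ F\cong A\,\square_{H}(-)^{tw}$.

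Granting this, $(2)\Rightarrow(1)$ is immediate: if $M\mapsto M^{\HH}$ is exact on $\MM_{A}^{H}$, then $(-)^{\HH}\circ F$ is exact on right $H$-comodules, hence so is $V\mapsto A\,\square_{H}V^{tw}$; since $(-)^{tw}$ is an exact equivalence between right and left $H$-comodules by Lemma~\ref{lemma twist comodules}, every short exact sequence of left comodules is the $(-)^{tw}$-image of one of right comodules, so $A\,\square_{H}(-)$ is exact and $A$ is coflat.

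For $(1)\Rightarrow(2)$, the point is that $M^{\HH}$ is a natural direct summand of $(A\otimes M)^{\HH}$ for every $M\in\MM_{A}^{H}$. Indeed $\eta_{A}\otimes\id_{M}\colon M\to A\otimes M$ and $a_{M}\circ\sigma\colon A\otimes M\to M$ are morphisms of $H$-comodules --- the latter being precisely the Hopf-module axiom that the $A$-action is a comodule map for the diagonal coaction --- and their composite is $\id_{M}$, so $M$ is a comodule retract of $A\otimes M$ naturally in $M$; applying $(-)^{\HH}$ exhibits $M^{\HH}$ as a retract of $(A\otimes M)^{\HH}\cong A\,\square_{H}M^{tw}$. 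If $A$ is coflat, $A\,\square_{H}(-)$ is exact on left comodules, so $M\mapsto A\,\square_{H}M^{tw}$ is exact on $\MM_{A}^{H}$ (it is $A\,\square_{H}(-)$ precomposed with the exact functor $M\mapsto M^{tw}$); since the idempotent natural endomorphism of this functor cutting out $(-)^{\HH}$ splits in $\MM_{B}$, the subfunctor $(-)^{\HH}$ is exact as well. I do not expect a genuine obstacle anywhere: Lemma~\ref{lemma twist comodules} already handles the only category-theoretically delicate step in the required generality, and what remains is the braiding/antipode bookkeeping of \cite{Sch} (and \cite{D2}) showing $F$ lands in $\MM_{A}^{H}$ and that $\eta_{A}\otimes\id_{M}$, $a_{M}\circ\sigma$ are comodule morphisms, together with the routine fact that natural retracts of exact functors between abelian categories are exact.
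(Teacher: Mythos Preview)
Your proof is correct and essentially follows the paper's approach: both directions rest on Lemma~\ref{lemma twist comodules} to identify $(A\otimes V)^{\HH}$ (resp.\ $(V\otimes A)^{\HH}$) with a cotensor, and then compare $(-)^{\HH}$ on Hopf modules to cotensoring with $A$. For $(1)\Rightarrow(2)$ the paper uses the surjection $M\,\square_{H}A^{tw}\to M^{\HH}$ induced by the module action together with left exactness; your observation that $\eta_{A}\otimes\id_{M}$ splits this surjection (so that $(-)^{\HH}$ is a natural retract of the exact functor $M\mapsto A\,\square_{H}M^{tw}$) is a slightly cleaner packaging of the same idea.
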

	\begin{proof}
		$(1)\Rightarrow(2)$ This functor is always left exact.  On the other hand, since $M\square_H\mathbf{1}=M^{\HH}$ is a submodule of the left-hand side, we have an epimorphism
		\[
		\psi:M\square_{H}A^{tw}\to M^{\HH},
		\]
		given by the module action of $A$ on $M$.  Since $(-)^{tw}$ is an equivalence of categories between right and left $H$-comodules, $A^{tw}$ will be a coflat left comodule, so that $M\mapsto M\square_{H} A^{tw}$ is exact by assumption.  From this a commutative diagram shows that $(-)^{\HH}$ will also preserve epimorphisms.
		
		$(2)\Rightarrow(1)$ For any right $H$-comodule $V$ we have that $V\mapsto (V\otimes A)^{\HH}$ is exact.  However by Lemma \ref{lemma twist comodules}, $(V\otimes A)^{\HH}\cong V\square_{H}A^{tw}$, so this implies that $A^{tw}$ is coflat, which is equivalent to $A$ being coflat.
	\end{proof}
	In the following, note that $(-)\otimes_BA$ is left adjoint to $(-)^{\HH}$.
	\begin{lemma}[Lem.~3.4 of \cite{Sch}]\label{lemma rel inj implies iso}
		If $A$ is a relatively injective right $H$-comodule, then the unit morphism $M\to(M\otimes_BA)^{\HH}$ is an isomorphism.
	\end{lemma}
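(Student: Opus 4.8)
The plan is to follow the argument for \cite[Lem.~3.4]{Sch} internally to $\mathscr{C}$: reduce the statement to free $B$-modules, where it is a direct computation, and bootstrap to arbitrary $M$ by a presentation, the key point being that relative injectivity of $A$ makes the endofunctor $F:=((-)\otimes_B A)^{\HH}$ of $\MM_B$ right exact. For the latter I would first upgrade relative injectivity to coflatness of $A$: by Lemma~\ref{lemma rel inj} the coaction $\rho_A\colon A\to A\otimes H$ admits a retraction $\varphi\colon A\otimes H\to A$ in $\MM_A^H$, and since coassociativity of $\rho_A$ says exactly that $\rho_A$ is a morphism of right $H$-comodules into the cofree comodule $(A\otimes H,\id_A\otimes\Delta)$, this presents $A$ as a direct summand of $A\otimes H$ \emph{in the category of $H$-comodules}. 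Now $(A\otimes H)\square_H W\cong A\otimes W$ for every left $H$-comodule $W$, and $W\mapsto A\otimes W$ is exact on short exact sequences of comodules (the forgetful functor to $\mathscr{C}$ is exact and faithful, and $A\otimes(-)$ is exact on $\mathscr{C}$); being a functorial retract of an exact functor, $A\square_H(-)$ is exact, i.e.\ $A$ is coflat. By the preceding lemma (Cor.~3.2 of \cite{Sch}), the invariants functor $(-)^{\HH}\colon\MM_A^H\to\MM_B$ is then exact, so $F$ is right exact; the adjunction $(-)\otimes_B A\dashv(-)^{\HH}$ supplies a natural transformation $\eta\colon\id_{\MM_B}\Rightarrow F$.

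Next I would verify $\eta_M$ is an isomorphism for $M=X\otimes B$ free, $X\in\mathscr{C}$. Here $(X\otimes B)\otimes_B A\cong X\otimes A$ as a Hopf module with trivial coaction on $X$, and since $X\otimes(-)$ is exact it commutes with the equalizer defining $(-)^{\HH}$, so $F(X\otimes B)=X\otimes A^{\HH}=X\otimes B$; unwinding the adjunction (it suffices to treat $M=B$, where $(B\otimes_B A)^{\HH}=A^{\HH}=B$ and $\eta_B$ is visibly the identity) identifies $\eta_{X\otimes B}$ with $\id_X\otimes\eta_B=\id$. For a general $M\in\MM_B$, I would pick a presentation $X_1\otimes B\to X_0\otimes B\to M\to 0$ by free $B$-modules — available since the action $N\otimes B\to N$ is split epi via $\id_N\otimes\eta_B$ for any $B$-module $N$ — apply the right exact functor $F$ to obtain an exact lower row, and use naturality of $\eta$ to get a morphism of right exact sequences which is an isomorphism on the two free terms; comparing cokernels then forces $\eta_M$ to be an isomorphism.

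The only step needing care is the passage to coflatness: one must be sure the splitting of the coaction can be taken in the comodule category, so that $A$ is a genuine comodule summand of the cofree comodule $A\otimes H$ rather than merely $\mathscr{C}$-split off it, and that the cotensor computations remain valid with $\mathscr{C}$ possibly non-semisimple — but both are formal, resting only on coassociativity of $\rho_A$ and exactness of the forgetful functor from $H$-comodules to $\mathscr{C}$. The rest is routine diagram chasing.
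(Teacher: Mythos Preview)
Your argument is correct but takes a genuinely different route from the paper's. The paper constructs an explicit inverse: from the total integral $\phi:H\to A$ guaranteed by Lemma~\ref{lemma rel inj} it defines a trace $\operatorname{tr}=m_{A}\circ(1\otimes\phi)\circ(1\otimes S)\circ\rho_A:A\to B$, and checks directly that $\id_M\otimes\operatorname{tr}$, restricted to $(M\otimes_B A)^{\HH}$, inverts the unit map $c:M\otimes_B(A\square_H\Bbbk)\to(M\otimes_B A)\square_H\Bbbk$; neither coflatness nor the preceding lemma is invoked. You instead first deduce coflatness of $A$ from the comodule retraction $A\hookrightarrow A\otimes H$ supplied by Lemma~\ref{lemma rel inj}(4), then use the preceding lemma to obtain exactness of $(-)^{\HH}$ on $\MM_A^H$, and finish with a free-presentation argument. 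The paper's route is shorter and yields an explicit formula for the inverse; yours is more structural and, as a byproduct, establishes ``relatively injective $\Rightarrow$ coflat'' with no semisimplicity assumption on $\mathscr{C}$---a fact the paper only records later, and only under the blanket hypothesis that $\mathscr{C}$ be semisimple.
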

	
	\begin{proof}
		Indeed, it suffices to show that the canonical map
		\[
		c:M\otimes_{B}(A\square_{H}\Bbbk)\to(M\otimes_{B}A)\square_{H}\Bbbk
		\]
		is an isomorphism.  Let $\phi:H\to A$ be a total integral, and define a trace map $\operatorname{tr}:A\to B$ by 
		\[
		\operatorname{tr}=m_{A}\circ(1\otimes\phi)\circ(1\otimes S)\circ\Delta.
		\]
		Then an inverse to $c$ is given by $1\otimes\operatorname{tr}$.
	\end{proof}
	
	The following is essentially \cite[Thm.~3.5]{Sch}.
	\begin{thm}\label{thm rel inj eq}
		Suppose that $A$ is a relatively injective $H$-comodule and either:
		\begin{enumerate}
			\item $\mathbf{can}$ is an isomorphism; or,
			\item $\mathbf{can}$ is an epimorphism and $\mathscr{C}$ is semisimple.
		\end{enumerate}   
		Then the induction functor $\MM_{B}\to\MM_{A}^{H}$ is an equivalence.
	\end{thm}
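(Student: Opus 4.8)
The plan is to follow Schneider's proof of \cite[Thm.~3.5]{Sch}, transporting each step into $\mathscr{C}$. Write $\Phi=(-)\otimes_B A\colon\MM_B\to\MM_A^H$ for the induction functor; it is left adjoint to $\Psi=(-)^{\HH}\colon\MM_A^H\to\MM_B$, with unit $\eta_N\colon N\to(N\otimes_B A)^{\HH}$ and counit $\epsilon_M\colon M^{\HH}\otimes_B A\to M$ given by the $A$-action. By Lemma~\ref{lemma rel inj implies iso}, relative injectivity of $A$ forces $\eta$ to be a natural isomorphism, so $\Phi$ is already fully faithful and $\Psi\Phi\cong\id_{\MM_B}$; all that remains is to show that $\epsilon_M$ is an isomorphism for every $M\in\MM_A^H$, i.e.\ that $\Phi$ is essentially surjective.

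First I would observe, from the triangle identity $\Psi(\epsilon_M)\circ\eta_{\Psi(M)}=\id_{\Psi(M)}$ together with the fact that $\eta_{\Psi(M)}$ is an isomorphism, that $\Psi(\epsilon_M)$ is an isomorphism for every $M$. It therefore suffices to show that $\Psi=(-)^{\HH}$ is exact and faithful, for then it reflects isomorphisms. Exactness of $\Psi$ amounts, by the coflatness criterion \cite[Cor.~3.2]{Sch} recorded above, to coflatness of $A$ as an $H$-comodule; and both exactness and faithfulness of $\Psi$ would follow from the single statement that $A$ is faithfully flat over $B$ (this appears among the equivalent conditions in \cite[Thm.~3.5]{Sch}, given the hypothesis on $\operatorname{can}$). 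So the target is: \emph{$A$ is a faithfully flat $B$-module.}

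To prove this from relative injectivity, the tools are: the total integral $\phi\colon H\to A$ of Lemma~\ref{lemma rel inj}; the trace map $\operatorname{tr}\colon A\to B$ built from $\phi$ in the proof of Lemma~\ref{lemma rel inj implies iso}, which splits the inclusion $B\hookrightarrow A$ as $B$-modules, exhibiting $B$ as a retract of $A$; the twist equivalence of Lemma~\ref{lemma twist comodules}; and the generalized canonical maps $\operatorname{can}_M\colon M\otimes_B A\to M\otimes H$ for $M\in\MM_A^H$, obtained from $\operatorname{can}$ by applying $M\otimes_A(-)$ and hence isomorphisms (resp.\ epimorphisms) under our hypotheses. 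Assembling these as in \cite[\S\S 3--4]{Sch} --- via coflatness of $A^{tw}$, the natural commuting square relating $\operatorname{can}_M$, $\operatorname{can}_{M^{\HH}\otimes_B A}$ and $\epsilon_M$, and the trace to split off coinvariants --- one gets that $A\otimes_B(-)$ is faithful and exact. With $A$ faithfully flat over $B$, $\Psi$ is exact and faithful, hence conservative, so every $\epsilon_M$ is an isomorphism and $\Phi$ is an equivalence.

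In the semisimple case (hypothesis (2)) the argument is cleaner: relative injectivity becomes injectivity of $A$ as an $H$-comodule, every monomorphism of comodules is $\mathscr{C}$-split so $\Psi$ is automatically exact, and the abundance of splittings allows the faithful-flatness step to go through with $\operatorname{can}$ only assumed surjective, as in the semisimple half of \cite[Thm.~3.5]{Sch}. I expect the main obstacle to be precisely the implication emphasized above: ``$A$ relatively injective $\Rightarrow$ $A$ faithfully flat over $B$'' is the one genuinely non-formal input and cannot simply be quoted, so it must be re-derived inside $\mathscr{C}$ --- the work being to check that Schneider's manipulations with cotensor products, the trace map and the canonical map use nothing beyond exactness of $\otimes$ in $\mathscr{C}$ and the existence of a total integral, both of which hold in our setting.
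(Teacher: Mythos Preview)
Your plan --- unit isomorphism via Lemma~\ref{lemma rel inj implies iso}, then the triangle identity plus conservativity of $\Psi$ to force the counit to be an isomorphism --- diverges from the paper's route, and the reduction step contains a genuine confusion. Faithful flatness of $A$ over $B$ is a statement about $\Phi=(-)\otimes_B A$, not about $\Psi=(-)^{\HH}$, so the sentence ``both exactness and faithfulness of $\Psi$ would follow from the single statement that $A$ is faithfully flat over $B$'' does not do what you need. Exactness of $\Psi$ is indeed easy (relative injectivity makes $A$ an $H$-comodule summand of $A\otimes H$, hence coflat, and then the cited coflatness lemma applies). But \emph{faithfulness} of $\Psi$ --- equivalently, that $N^{\HH}=0$ forces $N=0$ in $\MM_A^H$ --- is precisely the substantive point, and you defer it to ``assembling these as in \cite{Sch}''. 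In Schneider's argument faithful flatness of $A$ over $B$ is \emph{deduced from} the equivalence of categories rather than used to prove it, so invoking it here would be circular.

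The paper (following \cite{Sch}) instead attacks the counit directly. Using the splitting $\varphi\colon A\otimes H\to A$ of Lemma~\ref{lemma rel inj}(4), one writes down an explicit Hopf-module map $f\colon N\otimes A\otimes H\to N$ that is split in $\mathscr{C}$; precomposing with $\id_N\otimes\operatorname{can}$ (an isomorphism under (1), and a $\mathscr{C}$-split surjection under (2) by semisimplicity) gives a Hopf-module surjection $g\colon M:=N\otimes A\otimes_B A\to N$ whose kernel, being again in $\MM_A^H$, is relatively injective, so $g$ is split as an $H$-comodule map. For this particular $M$ one computes $M^{\HH}\cong N\otimes A$ directly from the unit isomorphism already established, whence $\epsilon_M$ is an isomorphism; since $N$ is an $H$-comodule retract of $M$ via a Hopf-module map, one deduces that $\epsilon_N$ is an isomorphism. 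This explicit construction is exactly the ``non-formal input'' you correctly flagged, but it has to be carried out rather than quoted --- and once it is, there is no need to pass through faithful flatness at all.
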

	\begin{proof}
		By Lemma \ref{lemma rel inj implies iso}, the unit morphism of the adjunction $M\to (M\otimes_{B}A)^{\HH}$ is an isomorphism for $M\in\MM_{B}$.  Thus it remains to check that the other adjunction, $A\otimes_{B}N^{\HH}\to N$ is also an isomorphism for $N\in\MM_{A}^{H}$.    
		
		For this, choose $\varphi:A\otimes H\to A$ in $\MM^H_A$ which splits $\rho$, see Lemma~\ref{lemma rel inj}. Consider $f:N\otimes A\otimes H\to N$, given by 
		\[
		f=m\circ(1\otimes\varphi)\circ(1\otimes1\otimes m)\circ(1\otimes1\otimes S\otimes 1)\circ(1\otimes\sigma\otimes 1)\circ(\rho\otimes1\otimes1).
		\]
		We view $N\otimes A\otimes H$ as an $(A,H)$-Hopf module where $A$ acts via the $A\otimes H$ factor, and $H$ coacts via the $H$-factor  as in \eqref{eqn H coaction}.  Then $f$ is a Hopf module morphism, and is split in $\mathscr{C}$ via $(\id_N\otimes \eta_A\otimes\id_H)\circ\rho_N$.  
        
		Now by our assumption on $\mathbf{can}$, we deduce that the composition $g=f\circ (\id_N\otimes \mathbf{can}):N\otimes A\otimes_BA\to N\otimes A\otimes H\to N$ is a split surjection.  We may view $N\otimes A\otimes_BA$ as an $(A,H)$-Hopf module via the rightmost factor $A$.  Then $g$ is a morphism of Hopf modules, so its kernel $K$ is once again a Hopf module.  By Lemma \ref{lemma rel inj}, we obtain that $K$ is a relatively injective $H$-comodule, so that $g$ is split as a $H$-comodule morphism.  
		
		Write $M=N\otimes A\otimes_{B}A$.  Then we claim the adjunction morphism $M\to M^{\HH}\otimes_{B}A$ is an isomorphism.  Indeed, we have $(N\otimes A\otimes_{B} A)^{\HH}\cong N\otimes A$ by our other adjunction, and from this it is clear.  Since $N$ is a direct summand of $M$ as an $H$-comodule, the adjunction is also an isomorphism for it, and we are done.
	\end{proof}

	We may now complete the proof of Theorem \ref{thm affine quotient}:
	\begin{proof}[Conclusion of Proof of Theorem~\ref{thm affine quotient}]
		We already proved $(4)\Rightarrow(3)$ (in Theorem~\ref{thm rel inj eq}) and $(1)\iff(2)\iff(3)$.  Thus it remains to show that $(1)\Rightarrow(4)$.  Because $\mathscr{C}$ is semisimple, $A$ is relatively injective if and only if it is injective if and only if it is coflat (where, as in the classical case, the latter equivalence reduces to the case of coalgebras in $\Cs_{fin}$). For a right $H$-comodule $V$, we have isomorphisms
		\[
		A\otimes_B(A\square_{H}V)\cong (A\otimes_BA)\square_{H}V\cong(A\otimes H)\square_HV\cong A\otimes(H\square_HV)\cong A\otimes V.
		\]
		Here, we have applied \cite[Prop.~1.3]{Tak} for the first isomorphism, our assumption that $\mathbf{can}:A\otimes_BA\to A\otimes H$ is an isomorphism for the second isomorphism, and the isomorphism $H\square_HV\cong V$ which holds for any comodule.  Since $B\to A$ is faithfully flat, we learn that $A\square_H-$ is exact, meaning that $A$ is coflat, which completes our proof.
	\end{proof}

	\section{Finer description of homogeneous spaces for $\operatorname{Ver}_p$}\label{Verp}
	
	We now specialize to $\mathscr{C}=\operatorname{Ver}_p$, which we assume for the remainder of this paper.  We will give another proof of the existence of quotients in $\operatorname{Ver}_p$ by mimicking the proof for $\operatorname{sVec}$ from \cite{MT}.
	This is an important special case, since by the main theorem of \cite{CEO2}, it is the only moderate growth, incompressible tensor category (together with its tensor subcategories) that is semisimple (more generally: Frobenius exact).
	We will provide a more explicit description of homogeneous spaces using the semisimplicity of $\mathscr{C}$.  In what follows we write $\g$ for the Lie algebra of $\GG$.
    
    We begin by recalling the following theorem, which is proven in \cite[Lem.~7.15]{V1} and \cite[Thm.~5.1]{M}. Recall the notation $X_{\neq 0}$ from Example~\ref{example ver_p}. 
    
    \begin{thm}
    We have an isomorphism of $\GG_0$-modules and algebras:
        \[
	\Bbbk[\GG]\cong \Bbbk[\GG_0]\otimes S\g_{\neq0}^*,
	\]
    where $\GG_0$ acts by infinitesimal right translation on both $\Bbbk[\GG]$ and $\Bbbk[\GG_0]$, and $\GG_0$ acts on $\g_{\neq0}$ via the adjoint action.
    \end{thm}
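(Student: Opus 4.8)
The plan is to derive this from the abstract presentation of $\Bbbk[\GG]$ already established in Theorem~\ref{thm presentation k[G]}, using the semisimplicity of $\operatorname{Ver}_p$ in an essential way to turn that presentation into a concrete ``product'' form. Recall that Theorem~\ref{thm presentation k[G]} provides an isomorphism of algebras and $\Gamma_\GG$-modules $\Bbbk[\GG]\cong\ul{\Hom}_{\Gamma_{\GG_0}}(\Gamma_{\GG},\Bbbk[\GG_0])$, induced by the (equivariant) quotient map $\pi_\GG\colon\Bbbk[\GG]\to\Bbbk[\GG_0]$; restricting the $\Gamma_\GG$-action to $\Gamma_{\GG_0}$ and matching it with right translation by $\GG_0$, it therefore suffices to understand $\Gamma_{\GG}$ as a left $\Gamma_{\GG_0}$-module and then to identify the resulting $\ul{\Hom}$-object. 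So the first task is to split the finite filtration $\Gamma_\bullet$ on $\Gamma_\GG$.

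By Lemmas~\ref{lemma gr dist algs commutes} and~\ref{lemma grG iso} we have $\operatorname{gr}\Gamma_\GG\cong\Gamma_{\operatorname{gr}\GG}\cong\Gamma_{\GG_0}\otimes\Gamma_\NN$ as (smash products of) Hopf algebras, where $\NN$ is a commutative infinitesimal normal subgroup of $\operatorname{gr}\GG$ with $\operatorname{Lie}\NN=\g/\g_{(0)}$, and since $\operatorname{Ver}_p$ is semisimple this identifies $\g/\g_{(0)}$ with the non-trivial isotypic part $\g_{\neq0}$ of $\g$; in particular $\operatorname{gr}\Gamma_\GG$ is free as a left $\Gamma_{\GG_0}$-module on the finite-length object $\Gamma_\NN$. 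As $\mathscr{C}$ is semisimple, a splitting of the filtration lifts, so the multiplication map $\Gamma_{\GG_0}\otimes\Gamma_\NN\to\Gamma_\GG$ is a morphism of filtered left $\Gamma_{\GG_0}$-modules which is an isomorphism on associated gradeds, hence an isomorphism. Feeding this into the presentation and using the $\otimes$-$\ul{\Hom}$ adjunction \eqref{eqn tensor hom ring} gives
\[
\Bbbk[\GG]\;\cong\;\ul{\Hom}_{\Gamma_{\GG_0}}\bigl(\Gamma_{\GG_0}\otimes\Gamma_\NN,\Bbbk[\GG_0]\bigr)\;\cong\;\Bbbk[\GG_0]\otimes\Gamma_\NN^{*}\;\cong\;\Bbbk[\GG_0]\otimes\Bbbk[\NN],
\]
the last step because $\NN$ is infinitesimal (Lemma~\ref{lemma inftsml is local fin length}). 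It then remains to see $\Bbbk[\NN]\cong S\g_{\neq0}^{*}$: from the proof of Lemma~\ref{lemma grG iso}, $\Bbbk[\NN]$ is an $\N$-graded Hopf algebra generated in degree one by the primitive subobject $J/(\Bbbk^{+}[\GG_0]J+J^{2})\cong\g_{\neq0}^{*}$, so there is a surjection of graded Hopf algebras $S\g_{\neq0}^{*}\twoheadrightarrow\Bbbk[\NN]$; in $\operatorname{Ver}_p$ this is an isomorphism because $\g_{\neq0}$ is a sum of copies of $L_2,\dots,L_{p-1}$ and, for each such simple, the symmetric algebra $S(L_i^{*})$ has no proper graded Hopf quotient with the same degree-one part, as one reads off from the explicit structure of $\operatorname{Ver}_p$ (see \cite{EOV,O,V1}). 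This is exactly the place where specialising to $\operatorname{Ver}_p$ is used: for $\operatorname{Ver}_{p^n}$ with $n>1$ the algebra $\Bbbk[\NN]$ is genuinely a proper quotient of $S\g_{\neq0}^{*}$ in general, as already flagged by the footnote in Section~\ref{Additive}.

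The main obstacle is the remaining upgrade: one must show the object isomorphism $\Bbbk[\GG]\cong\Bbbk[\GG_0]\otimes S\g_{\neq0}^{*}$ can be taken to respect the algebra structures (equivalently, that the $J$-adic filtered algebra $\Bbbk[\GG]$ is isomorphic, as a filtered algebra, to $\Bbbk[\GG_0]\otimes S\g_{\neq0}^{*}$ with the grading filtration), and that it intertwines right translation by $\GG_0$ on $\Bbbk[\GG]$ with right translation on $\Bbbk[\GG_0]$ and the adjoint action on $\g_{\neq0}$. The equivariance is essentially formal once the algebra splitting is arranged, since the smash-product description $\operatorname{gr}\Gamma_\GG=\Gamma_{\GG_0}\#\Gamma_\NN$ already records $\GG_0$ acting adjointly on $\Gamma_\NN$ and all the identifications above are natural in the equivariant map $\pi_\GG$; the real content is choosing the lift $\Gamma_\NN\hookrightarrow\Gamma_\GG$ (dually, an algebra section $\Bbbk[\GG_0]\hookrightarrow\Bbbk[\GG]$ of $\pi_\GG$) compatibly with the multiplication, i.e.\ checking that the relevant obstruction to deforming the trivial product vanishes. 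This is precisely the Koszul-type computation, carried out in the super setting in \cite{K,MS} and for $\operatorname{Ver}_p$ in \cite{V1,M}; I would either transcribe that argument or, at this last step, simply invoke \cite[Lem.~7.15]{V1} and \cite[Thm.~5.1]{M}. I expect this algebra-structure/lifting step to carry essentially all of the difficulty, the rest being bookkeeping with the finite filtration and with semisimplicity.
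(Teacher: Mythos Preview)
The paper does not give its own proof of this theorem: it simply states the result and attributes it to \cite[Lem.~7.15]{V1} and \cite[Thm.~5.1]{M}. Your proposal therefore already goes further than the paper, by showing how the statement connects to the paper's own machinery (Theorem~\ref{thm presentation k[G]}, Lemmas~\ref{lemma gr dist algs commutes} and~\ref{lemma grG iso}, and the semisimplicity of $\operatorname{Ver}_p$) before falling back, at the algebra-structure step, to exactly the same two references. In that sense your approach is entirely compatible with the paper's.

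Two small remarks on the outline itself. First, when you lift $\Gamma_\NN$ into $\Gamma_\GG$ via a $\mathscr{C}$-splitting of the filtration, the resulting isomorphism $\Gamma_{\GG_0}\otimes\Gamma_\NN\xrightarrow{\sim}\Gamma_\GG$ is only of left $\Gamma_{\GG_0}$-modules; the $\GG_0$-action you want on the target (to recover the adjoint action on $\g_{\neq0}^*$) comes from \emph{right} multiplication by $\Gamma_{\GG_0}$, and there is no reason an arbitrary object-level splitting is equivariant for that. So the equivariance claim is not quite ``formal once the algebra splitting is arranged'': one really needs the equivariant section produced in \cite{V1,M}, which is precisely the content you defer to those references. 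Second, your identification $\Bbbk[\NN]\cong S\g_{\neq0}^*$ is correct but the justification via ``no proper graded Hopf quotient with the same degree-one part'' is a little indirect; a cleaner route is to compare lengths, since $S(L_i)$ is already finite of the right length in $\operatorname{Ver}_p$ by \cite[Prop.~2.4]{EOV}. Neither point is a genuine gap, given that you explicitly allow yourself to invoke \cite{V1,M} at the end, which is all the paper does.
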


Let $\HH\leq\GG$ be a subgroup, and write $p_0:\GG_0\to\GG_0/\HH_0$ for the quotient morphism which we know exists in $\Vec$.  We would like to construct a quotient $\GG/\HH$ with $(\GG/\HH)_0=\GG_0/\HH_0$; in particular, $\GG/\HH$ and $\GG_0/\HH_0$ will have the same underlying topological spaces.  To do this, we will construct a quotient affine locally, and glue. 

The following lemma is implicitly used in \cite{MT}.  For an open subscheme $\VV\sub \GG_0$, write $\widetilde{\VV}\sub\GG$ for the open subscheme of $\GG$ it determines.  Observe that if $\VV$ is stable under translation by $\HH_0$ on the right, then $\widetilde{\VV}$ will be stable under translation by $\HH$ on the right.

	\begin{lemma}
		Suppose that for every affine open subscheme $\UU\sub\GG_0/\HH_0$ there exists an affine quotient $\VV_{\UU}\cong \widetilde{p_0^{-1}(\UU)}/\HH$ with $(\VV_\UU)_0=\UU$.  Suppose further that whenever $\UU'\sub\UU$ is an inclusion of affine opens, the natural map $\VV_{\UU'}\to\VV_{\UU}$ is the open embedding of $\VV_{\UU'}$ into $\VV_{\UU}$ determined by $\UU'\sub\UU$.  Then we may glue the schemes $\{\VV_\UU\}$ to construct a quotient $\GG/\HH$.
	\end{lemma}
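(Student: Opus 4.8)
The plan is a routine gluing argument over the basis of affine opens of $\GG_0/\HH_0$; the hypotheses have been arranged precisely so that it goes through.

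First I would fix notation. Write $Y_0:=\GG_0/\HH_0$, with underlying topological space $|Y_0|$, let $\pi_0\colon\GG_0\to Y_0$ be the quotient map, and let $\mathcal B$ be the set of affine open subschemes of $Y_0$, a basis for $|Y_0|$. For $\UU\in\mathcal B$ the hypothesis supplies an affine scheme $\VV_\UU$ over $\mathscr C$ together with its quotient morphism $\pi_\UU\colon\widetilde{\pi_0^{-1}(\UU)}\to\VV_\UU$ and with $(\VV_\UU)_0=\UU$; since $(\VV_\UU)_0\to\VV_\UU$ is a homeomorphism by Lemma~\ref{lemma even embedding univ homeo}, I identify $|\VV_\UU|=|\UU|$. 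For $\UU'\subseteq\UU$ in $\mathcal B$, the ``natural map'' $\VV_{\UU'}\to\VV_\UU$ of the hypothesis is the morphism induced, via the coequalizer property of $\VV_{\UU'}\cong\widetilde{\pi_0^{-1}(\UU')}/\HH$, from the $\HH$-invariant composite $\widetilde{\pi_0^{-1}(\UU')}\hookrightarrow\widetilde{\pi_0^{-1}(\UU)}\xto{\pi_\UU}\VV_\UU$, and by assumption it is the open immersion of $\VV_{\UU'}$ onto the open subscheme of $\VV_\UU$ lying over $|\UU'|$. Being the unique morphisms determined by a universal property, these open immersions satisfy the cocycle condition for triples $\UU''\subseteq\UU'\subseteq\UU$ automatically.

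Next I would assemble the scheme and the quotient map. The collection $\{\VV_\UU\}_{\UU\in\mathcal B}$ together with these transition maps is gluing data over the basis $\mathcal B$, so gluing schemes along a basis --- concretely, the structure sheaf sends an open $W$ to $\varprojlim_{\UU\in\mathcal B,\,\UU\subseteq W}\Bbbk[\VV_\UU]$, and the sheaf axiom over $\mathcal B$ holds because each $\VV_\UU$ is already a scheme --- produces a scheme $Y$ over $\mathscr C$ with $|Y|=|Y_0|$ admitting $\{\VV_\UU\}$ as an affine open cover compatible with the transition maps. Since $\pi_0$ is surjective, $\{\pi_0^{-1}(\UU)\}_{\UU\in\mathcal B}$ covers $\GG_0$, hence $\{\widetilde{\pi_0^{-1}(\UU)}\}$ covers $\GG$; the $\pi_\UU$ are compatible with the transition maps by the very construction of the latter, so they glue to an $\HH$-invariant morphism $\pi\colon\GG\to Y$. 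As the right-translation action of $\HH$ on $\GG$ is free ($\HH$ being closed in $\GG$), each $\pi_\UU$ is faithfully flat and affine by Lemma~\ref{lemma quot fppf can}; both being properties local on the target, $\pi$ is faithfully flat and affine, in particular an epimorphism of schemes.

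Finally I would check the universal property, namely that $(\GG\times\HH\rightrightarrows\GG)\xto{\pi}Y$ is a coequalizer in the category of all schemes over $\mathscr C$. Given a scheme $\ZZ$ and an $\HH$-invariant morphism $f\colon\GG\to\ZZ$, uniqueness of a factorization $f=\bar f\circ\pi$ follows from $\pi$ being an epimorphism. For existence, each restriction $f|_{\widetilde{\pi_0^{-1}(\UU)}}$ is $\HH$-invariant, hence factors uniquely as $\bar f_\UU\circ\pi_\UU$ through $\VV_\UU=\widetilde{\pi_0^{-1}(\UU)}/\HH$, using that the latter is a quotient in the category of all schemes; restricting to $\UU'\subseteq\UU$ and invoking uniqueness gives $\bar f_{\UU'}=\bar f_\UU|_{\VV_{\UU'}}$, so the $\bar f_\UU$ glue to $\bar f\colon Y\to\ZZ$ with $\bar f\circ\pi=f$. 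Hence $Y$ with $\pi$ is $\GG/\HH$, and as a byproduct $(\GG/\HH)_0=\GG_0/\HH_0$. I expect no serious obstacle here: the work is bookkeeping. The one point worth care is that the intersection of two affine opens of $Y_0$ need not be affine, which is exactly why one glues over the full basis $\mathcal B$ (invoking gluing of schemes along a basis) instead of patching finitely many affine charts, and why the cocycle condition is read off from the universal property of the $\VV_\UU$ rather than simply posited.
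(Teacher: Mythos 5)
Your argument is correct and follows exactly the route the paper intends: the paper's own proof is a one-line remark that one glues the $\VV_\UU$ and verifies the gluing/coequalizer conditions via the universal property of the quotient, which is precisely what you carry out in detail (gluing over the basis of affine opens, cocycle condition from uniqueness in the universal property, and the factorization of an arbitrary $\HH$-invariant morphism through the glued scheme). No discrepancy to report.
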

	\begin{proof}
		Under these assumptions we may glue the schemes $\{\VV_\UU\}$ using the universal property of the quotient to check that the gluing conditions hold.
	\end{proof}

 We now write
	\[
	W_{\GG}:=(\m_{\GG}/\m_{\GG}^2)_{\neq0}\cong(\g_{\neq0})^*, \ \ \ \ W_{\HH}:=(\m_{\HH}/\m_{\HH}^2)_{\neq0}\cong(\h_{\neq0})^*.
	\]
    The natural quotient map $\Bbbk[\GG]\to\Bbbk[\HH]$ induces an epimorphism $\g^*\to\h^*$. Thus we may define $Z\in\Rep\HH_0$ by the following SES of $\HH_0$-modules:
	\[
	0\to Z\to W_{\GG}\to W_{\HH}\to 0.
	\]
    
	Let $\UU\sub \GG_0/\HH_0$ be an affine open which we fix for the remainder of the paper, and consider $p_0^{-1}(\UU)\sub \GG_0$.  Write $B=\Bbbk[\UU]$ and $A:=\Bbbk[p_0^{-1}(\UU)]$.  We would like to describe $\OO_{\GG/\HH}$ locally on $\UU$ in terms of $A$ and $Z$.  To this end, we consider the algebra $S_A(Z)=A\otimes S(Z)$ which lives in $\MM_{A}^{\Bbbk[\HH_0]}$ (see Section 8 for notation).  Then we set:
	\[
	\A:=S_A(Z)\square_{\Bbbk[\HH_0]}\Bbbk[\HH],
	\]
	and
	\[
	\mathbb{B}:=S_A(Z)^{\HH_0}=S_A(Z)\square_{\Bbbk[\HH_0]}\Bbbk.
	\]
	The following mimics \cite[Prop.~4.2]{MT} and is proven verbatim.
	\begin{prop}\label{prop A and B}
		\begin{enumerate}
			\item $\mathbb{A}$ is a finitely generated algebra, is injective as an $\HH$-module, and  we have $\mathbb{A}^{\HH}\cong\mathbb{B}$.
			\item $\mathbb{B}$ is an $\N$-graded subalgebra of $S_A(Z)$ with $\mathbb{B}_0=B$, and $\mathbb{B}_1=(A\otimes Z)^{\HH_0}$ is a finitely generated projective $B$-module with every fibre isomorphic to $Z$.  Further, the natural map
			\[
			S_{B}(\mathbb{B}_{1})\to \mathbb{B}
			\]
			is an isomorphism of algebras.  In particular, $\mathbb{B}$ is also Noetherian.
		\end{enumerate}
	\end{prop}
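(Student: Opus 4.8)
The plan is to transport all the required properties from the classical $\HH_0$-torsor $\pi_0^{-1}(\UU_0)\to\UU$ by means of Theorem~\ref{thm affine quotient} and faithfully flat descent, together with the isomorphism $\Bbbk[\HH]\cong\Bbbk[\HH_0]\otimes S\h_{\neq0}^*$ of algebras and $\HH_0$-modules from the structure theorem recalled above; throughout, the argument is parallel to \cite[Prop.~4.2]{MT}. The first input I would record is that, since $\pi_0^{-1}(\UU_0)\to\UU$ is a classical $\HH_0$-torsor, the pair $(B,A)$ with $B=A^{\HH_0}=\Bbbk[\UU]$ satisfies condition~(1) of Theorem~\ref{thm affine quotient}; hence $B\to A$ is faithfully flat, $M\mapsto M^{\HH_0}$ is an equivalence $\MM_A^{\Bbbk[\HH_0]}\xto{\sim}\MM_B$ with quasi-inverse $N\mapsto N\otimes_BA$, and --- as $\operatorname{Ver}_p$ is semisimple, so condition~(4) applies --- $A$ is an injective $\Bbbk[\HH_0]$-comodule, whence so is $S_A(Z)=A\otimes S(Z)$.

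For part~(1), the identification $\mathbb{A}^{\HH}\cong\mathbb{B}$ is the formal cotensor computation $\mathbb{A}^{\HH}=\mathbb{A}\,\square_{\Bbbk[\HH]}\mathbf{1}=S_A(Z)\,\square_{\Bbbk[\HH_0]}\bigl(\Bbbk[\HH]\,\square_{\Bbbk[\HH]}\mathbf{1}\bigr)=S_A(Z)\,\square_{\Bbbk[\HH_0]}\mathbf{1}=\mathbb{B}$, with associativity of $\square$ available because $\mathscr{C}$ is semisimple. That $\mathbb{A}$ is an injective $\HH$-module follows since $\mathbb{A}=\Coind_{\HH_0}^{\HH}S_A(Z)$ is the value of $\Coind_{\HH_0}^{\HH}=(-)\,\square_{\Bbbk[\HH_0]}\Bbbk[\HH]$ --- a right adjoint of the exact functor $\Rep\HH\to\Rep\HH_0$ --- on an injective comodule. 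For finite generation of $\mathbb{A}$ I would feed the isomorphism $\Bbbk[\HH]\cong\Bbbk[\HH_0]\otimes S\h_{\neq0}^*$ into the cotensor product: the factor $S\h_{\neq0}^*$ has finite length because $\HH(\Bbbk)=\HH_0(\Bbbk)$ forces $\HH/\HH_0$ to be infinitesimal, and cotensoring a comodule algebra over $\Bbbk[\HH_0]$ against $\Bbbk[\HH]$, which is cofree over $\Bbbk[\HH_0]$ on the finite-length object $S\h_{\neq0}^*$, exhibits $\mathbb{A}$ as a finite module over a subalgebra isomorphic to the finitely generated algebra $S_A(Z)$, whence $\mathbb{A}$ is finitely generated.

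For part~(2), the grading $S_A(Z)=\bigoplus_nA\otimes S^n(Z)$ is preserved by the diagonal $\HH_0$-coaction, so $\mathbb{B}=S_A(Z)^{\HH_0}$ is $\N$-graded with $\mathbb{B}_0=A^{\HH_0}=B$ and $\mathbb{B}_1=(A\otimes Z)^{\HH_0}$. Applying the equivalence $\MM_A^{\Bbbk[\HH_0]}\simeq\MM_B$ to the Hopf module $A\otimes Z$ gives $\mathbb{B}_1\otimes_BA\cong A\otimes Z$, which is a finite projective $A$-module since $Z$ has finite length; faithfully flat descent along $B\to A$ then shows $\mathbb{B}_1$ is a finite projective $B$-module, and its fibres --- computed after base change to $A$, where $A\otimes Z$ has all fibres isomorphic to $Z$ --- are all isomorphic to $Z$. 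The natural graded algebra map $S_B(\mathbb{B}_1)\to\mathbb{B}$ is an isomorphism in degrees $0$ and $1$ by construction, and after the faithfully flat base change $(-)\otimes_BA$ it is identified, via $S_A(A\otimes Z)\cong S_A(Z)$ and $\mathbb{B}\otimes_BA\cong S_A(Z)$, with the identity of $S_A(Z)$; hence it is an isomorphism. Since $B=\Bbbk[\UU]$ is finitely generated --- $\GG_0/\HH_0$ being algebraic by Theorem~\ref{Main} --- so is $\mathbb{B}\cong S_B(\mathbb{B}_1)$, and $\mathbb{B}$ is then Noetherian by \cite[Lem.~6.4.4]{C2}.

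The step I expect to be the main obstacle is the finite generation of $\mathbb{A}$: unlike $\mathbb{B}$ it is not visibly a symmetric (or any familiar) algebra, and one must carry the cotensor-product computation with $\Bbbk[\HH]\cong\Bbbk[\HH_0]\otimes S\h_{\neq0}^*$ far enough to control the \emph{ring} structure of $\mathbb{A}$, not merely its underlying object. An alternative is to first establish that $\HH$ acts freely on $\Spec\mathbb{A}$ with quotient $\Spec\mathbb{B}$ --- so that $\Spec\mathbb{A}\to\Spec\mathbb{B}$ is an $\HH$-torsor and finite generation descends along it --- but this requires surjectivity of the canonical map in advance and reverses the natural dependence on part~(2). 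The remaining ingredients are a routine transcription into $\operatorname{Ver}_p$ of the Schneider-style descent arguments of \cite{Sch} and \cite{MT}, semisimplicity guaranteeing exactness of the cotensor and invariants functors used throughout.
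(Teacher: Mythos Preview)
Your proposal is correct and follows essentially the same route as the paper, which simply defers to \cite[Prop.~4.2]{MT} and asserts that the argument carries over verbatim to $\operatorname{Ver}_p$; you have supplied precisely those details. Your self-identified concern about finite generation of $\mathbb{A}$ is legitimate but resolvable along the lines you sketch: the finiteness of $S\h_{\neq0}^*$ follows directly from (MN1) (no need to invoke infinitesimality of $\HH/\HH_0$), and the cofreeness of $\Bbbk[\HH]$ over $\Bbbk[\HH_0]$ then makes $\mathbb{A}$ finite over $S_A(Z)$ as a module, hence finitely generated as an algebra.
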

	
	\begin{lemma}\label{lemma splitting compatible}
		The inclusion $A\otimes Z\hookrightarrow A\otimes W_{\GG}$ splits in $\MM_{A}^{\Bbbk[\HH_0]}$.  Further, if $\mathcal{U}'\sub\UU\sub\GG_0/\HH_0$ is an affine open subscheme of $\UU$, and we set $A':=\Bbbk[\pi_0^{-1}(\UU')]$ then this splitting can be chosen compatibly for both $A$ and $A'$.  
	\end{lemma}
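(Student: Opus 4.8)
The plan is to reduce the claim to a statement about the $\HH_0$-torsor $\pi_0\colon\GG_0\to\GG_0/\HH_0$ and then invoke faithfully flat descent. The first step is to record that $\HH_0$ acts freely on $\GG_0$ by right translation, so that $\pi_0$ is a faithfully flat affine quotient; consequently, for every affine open $\UU\subseteq\GG_0/\HH_0$ the preimage $\pi_0^{-1}(\UU)=\Spec A$ is affine with a free $\HH_0$-action, $A^{\HH_0}=B:=\Bbbk[\UU]$, and the canonical morphism $A\otimes_BA\to A\otimes\Bbbk[\HH_0]$ is an isomorphism (this is exactly the torsor condition, and follows from Lemma~\ref{lemma quot fppf can}). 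Then Theorem~\ref{thm affine quotient} applies and shows that taking $\HH_0$-invariants is an equivalence $(-)^{\HH_0}\colon\MM_A^{\Bbbk[\HH_0]}\xto{\sim}\MM_B$ of abelian categories, with quasi-inverse $N\mapsto N\otimes_BA$; in particular this functor is exact, and it sends $A\otimes V\mapsto(A\otimes V)^{\HH_0}$ for an $\HH_0$-module $V$.

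Next I would restrict the defining short exact sequence $0\to Z\to W_\GG\to W_\HH\to0$ of $\HH$-modules to $\HH_0$ and apply the exact functor $A\otimes(-)$, obtaining a short exact sequence in $\MM_A^{\Bbbk[\HH_0]}$; applying the equivalence $(-)^{\HH_0}$ turns this into a short exact sequence $0\to(A\otimes Z)^{\HH_0}\to(A\otimes W_\GG)^{\HH_0}\to(A\otimes W_\HH)^{\HH_0}\to0$ of $B$-modules. The heart of the matter is that for any dualizable object $V$ carrying an $\HH_0$-action, $(A\otimes V)^{\HH_0}$ is a finitely generated projective $B$-module (a vector bundle): applying the quasi-inverse of the equivalence, $(A\otimes V)^{\HH_0}\otimes_BA\cong A\otimes V$, which is a finitely generated projective $A$-module since $V$ is dualizable, and being a vector bundle descends along the faithfully flat morphism $B\to A$ (one reduces to the classical statement for modules in $\Vec$ using semisimplicity of $\operatorname{Ver}_p$). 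In particular $(A\otimes W_\HH)^{\HH_0}$ is projective over $B$, so the surjection $(A\otimes W_\GG)^{\HH_0}\twoheadrightarrow(A\otimes W_\HH)^{\HH_0}$ splits in $\MM_B$; the complementary summand yields a $B$-linear retraction $(A\otimes W_\GG)^{\HH_0}\to(A\otimes Z)^{\HH_0}$, and transporting it back through the equivalence gives a retraction of $A\otimes Z\hookrightarrow A\otimes W_\GG$ in $\MM_A^{\Bbbk[\HH_0]}$, which is the first assertion.

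For the compatibility statement, if $\UU'\subseteq\UU$ is a smaller affine open with $B':=\Bbbk[\UU']$, then $A':=\Bbbk[\pi_0^{-1}(\UU')]=A\otimes_BB'$, and under the equivalences above the restriction functor $\MM_A^{\Bbbk[\HH_0]}\to\MM_{A'}^{\Bbbk[\HH_0]}$ corresponds to $(-)\otimes_BB'\colon\MM_B\to\MM_{B'}$. Hence, fixing once and for all the retraction $r$ over $B$ produced above, the base change $r\otimes_BB'$ is a retraction over $B'$ for the sequence attached to $A'$, so the splittings chosen for $A$ and $A'$ are compatible by construction. I do not anticipate a genuine obstacle: the only points requiring care are the verification that $\pi_0$ is honestly an $\HH_0$-torsor (so that Theorem~\ref{thm affine quotient} is applicable) and the descent of local freeness, both standard. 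The conceptual content is simply that, although $W_\GG\to W_\HH$ need not split as a morphism of $\HH$-modules, it does split after pushing forward along the torsor, because associated bundles of dualizable representations are locally free.
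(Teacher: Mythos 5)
Your argument is correct, but it takes a genuinely different route from the paper. The paper follows Doi's total-integral argument: since $\operatorname{Ver}_p$ is semisimple one picks a splitting $W_{\GG}\to Z$ in $\mathscr{C}$, extends it $A$-linearly, and then \emph{averages} it into an $\HH_0$-colinear map using a total integral $\Bbbk[\HH_0]\to A$ (relative injectivity of $A$, Lemma~\ref{lemma rel inj}); compatibility for $\UU'\sub\UU$ is automatic because both input data --- the splitting in $\mathscr{C}$ and the total integral --- restrict along $A\to A'$. You instead exploit that $\pi_0^{-1}(\UU)\to\UU$ is an $\HH_0$-torsor, invoke Theorem~\ref{thm affine quotient}(3) to identify $\MM_A^{\Bbbk[\HH_0]}\simeq\MM_B$, and reduce the problem to splitting a short exact sequence of finitely generated projective $B$-modules, with compatibility obtained by base change along $B\to B'$. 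Both proofs are valid; yours is conceptually cleaner (``associated bundles of dualizable representations are locally free'') and avoids explicit Hopf-algebra formulas, at the cost of routing through the heavier Theorem~\ref{thm affine quotient} and a descent-of-projectivity step (which does need the reduction to $\Vec$ via isotypic components that you indicate, using that $A$ and $B$ are purely even). One caveat worth flagging: your splittings are produced by a non-canonical choice over each affine open, and since the descended exact sequence of vector bundles on $\GG_0/\HH_0$ need not split globally, your method does not by itself yield a system of splittings coherent over the \emph{whole} poset of affine opens --- only over a fixed chain $\UU'\sub\UU$ as in the statement. The paper's construction, being canonical in the total integral (which can be chosen globally on $\Bbbk[\GG_0]$) and the fixed splitting $W_\GG\to Z$, gives this stronger coherence for free, and that is what is implicitly used in the subsequent gluing. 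For the lemma as literally stated your proof suffices.
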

	
	\begin{proof}
		We follow \cite[Thm.~1.7 (ii)]{D2}.  By Lemma \ref{lemma rel inj}, there exists a total integral $\phi:\Bbbk[\HH_0]\to A$, and a total integral for $A'$ may be chosen by postcomposing $\phi$ with the natural morphism $A\to A'$.   Choose a splitting $W_{\GG}\to Z$ in $\operatorname{Ver}_p$ of the inclusion $Z\sub W_{\GG}$, and use this to induce a splitting $p:A\otimes W_{\GG}\to A\otimes Z$ of $A$-modules.  Set $q=\lambda_{A\otimes W_{\GG}}\circ(p\otimes 1)\circ\rho_{A\otimes W_{\GG}}$, where $\lambda_M:M\otimes A\to M$ is given by 
		\[
		\lambda_M=m_{A}\circ(1\otimes\phi)\circ (1\otimes m_{\Bbbk[\HH]})\circ(1\otimes S\otimes 1)\circ(\rho_M\otimes 1).
		\]
		Then one can check that $q$ is our desired splitting.  Since $q$ only depends on a choice of total integral $\phi:\Bbbk[\HH_0]\to A$ and our splitting $W_{\GG}\to Z$, and these can be chosen compatibly for $A$ and $A'$ via the natural map $A\to A'$, we have the desired compatibility as well.
	\end{proof}
	
	Using Lemma \ref{lemma splitting compatible}, choose a retraction $A\otimes W_{\GG}\to A\otimes Z$ in $\MM_{A}^{\Bbbk[\HH_0]}$.  Define $\psi:\Bbbk[\GG]\to \A$ by:
	\[
	\Bbbk[\GG]\xto{\Delta_{\GG}}\Bbbk[\GG]\square_{\Bbbk[\HH_0]}\Bbbk[\HH]\cong (\Bbbk[\GG_0]\otimes S(W_{\GG}))\square_{\Bbbk[\HH_0]}\Bbbk[\HH]\to(A\otimes S(Z))\square_{\Bbbk[\HH_0]}\Bbbk[\HH]=\A
	\]
	
	\begin{lemma}
		The morphism $\psi$ corresponds to an $\HH$-equivariant open embedding of schemes $\Spec\A\hookrightarrow \GG$ corresponding to the open embedding $\widetilde{\pi_0^{-1}(\UU)}\sub \GG$.
	\end{lemma}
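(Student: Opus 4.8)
The plan is to factor $\psi$ as a restriction onto an affine open subscheme followed by an $\HH$-equivariant isomorphism. Precisely, I will first show that $\widetilde{\pi_0^{-1}(\UU)}\sub\GG$ is affine, and then produce an $\HH$-equivariant isomorphism of algebras $\theta\colon\Bbbk[\widetilde{\pi_0^{-1}(\UU)}]\xrightarrow{\ \sim\ }\A$ with the property that $\psi$ equals the composite $\Bbbk[\GG]\to\Bbbk[\widetilde{\pi_0^{-1}(\UU)}]\xrightarrow{\theta}\A$, the first arrow being restriction of functions to the affine open subscheme $\widetilde{\pi_0^{-1}(\UU)}$. Granting this, $\Spec\theta$ identifies $\Spec\A$ with $\widetilde{\pi_0^{-1}(\UU)}$ and $\Spec\psi$ becomes the open immersion $\widetilde{\pi_0^{-1}(\UU)}\hookrightarrow\GG$, which is $\HH$-equivariant since $\widetilde{\pi_0^{-1}(\UU)}$ is stable under right translation by $\HH$.

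For affineness, I would invoke the theorem at the start of this section: $\Bbbk[\GG]\cong\Bbbk[\GG_0]\otimes S\g_{\neq 0}^\ast$ as algebras, and $S\g_{\neq 0}^\ast$ is finite since $\g_{\neq 0}$ is a sum of nontrivial simples, and local since it is $\N$-graded connected. Hence the algebra inclusion $\Bbbk[\GG_0]\hookrightarrow\Bbbk[\GG]$ exhibits $\GG$ as finite over $\GG_0$, with the induced projection $\GG\to\GG_0$ a homeomorphism on underlying spaces (a finite local algebra with residue field $\Bbbk$ contributes no new points). Since $\pi_0$ is faithfully flat and affine, $\pi_0^{-1}(\UU)$ is an affine open of $\GG_0$ with $\Bbbk[\pi_0^{-1}(\UU)]=A$, so $\widetilde{\pi_0^{-1}(\UU)}=\GG\times_{\GG_0}\pi_0^{-1}(\UU)$ is affine with $\Bbbk[\widetilde{\pi_0^{-1}(\UU)}]\cong\Bbbk[\GG]\otimes_{\Bbbk[\GG_0]}A\cong A\otimes S(W_\GG)=S_A(W_\GG)$, where $W_\GG=\g_{\neq 0}^\ast$.

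The construction of $\theta$ is the core step, and is where semisimplicity of $\operatorname{Ver}_p$ enters. As $0\to Z\to W_\GG\to W_\HH\to 0$ splits in $\operatorname{Ver}_p$, Lemma~\ref{lemma splitting compatible} provides an $\HH_0$-equivariant decomposition $A\otimes W_\GG\cong(A\otimes Z)\oplus(A\otimes W_\HH)$ in $\MM_A^{\Bbbk[\HH_0]}$, and hence an isomorphism $A\otimes S(W_\GG)\cong S_A(Z)\otimes S(W_\HH)$ of $\Bbbk[\HH_0]$-comodule algebras. On the other hand, applying the decomposition theorem to $\HH$ in its left-translation form (transported along the antipode) together with the tensor identity shows that $\Bbbk[\HH]$ is cofree as a left $\Bbbk[\HH_0]$-comodule, i.e.\ $\Bbbk[\HH]\cong\Bbbk[\HH_0]\otimes S(W_\HH)$ with $\Bbbk[\HH_0]$ the regular comodule. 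Consequently
\[
\A=S_A(Z)\square_{\Bbbk[\HH_0]}\Bbbk[\HH]\;\cong\;S_A(Z)\square_{\Bbbk[\HH_0]}\bigl(\Bbbk[\HH_0]\otimes S(W_\HH)\bigr)\;\cong\;S_A(Z)\otimes S(W_\HH),
\]
and composing with the previous isomorphism yields the desired identification $\A\cong A\otimes S(W_\GG)\cong\Bbbk[\widetilde{\pi_0^{-1}(\UU)}]$, which I take as the definition of $\theta$.

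It then remains to verify two compatibilities. First, $\theta$ should carry the right $\HH$-coaction on $\A$ — coming from right translation on the $\Bbbk[\HH]$ factor of the cotensor product — to the restriction to $\widetilde{\pi_0^{-1}(\UU)}$ of right translation on $\GG$; this is a matter of transporting the $\HH$-comodule structure across $\Bbbk[\HH]\cong\Bbbk[\HH_0]\otimes S(W_\HH)$ and through the cotensor product, and comparing with the way $\HH$ acts by right translation on $\Bbbk[\GG]\cong\Bbbk[\GG_0]\otimes S\g_{\neq 0}^\ast$, where it mixes the two tensor factors. Second, unwinding the definition of $\psi$ — the right $\HH$-coaction into the cotensor product, the decomposition $\Bbbk[\GG]\cong\Bbbk[\GG_0]\otimes S(W_\GG)$, the restriction $\Bbbk[\GG_0]\to A$, and the very retraction $A\otimes W_\GG\to A\otimes Z$ chosen just before the statement of the lemma, which is also the one entering $\theta$ — one checks that $\psi$ is exactly $\theta$ composed with restriction to $\widetilde{\pi_0^{-1}(\UU)}$. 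The main obstacle is precisely this bookkeeping: making the identification of $\A$ with $\Bbbk[\widetilde{\pi_0^{-1}(\UU)}]$ $\HH$-equivariantly and tracking that it matches $\psi$; this is a faithful transcription of the corresponding argument of \cite{MT} in the super setting, whose computations go through verbatim in $\operatorname{Ver}_p$ once the decomposition theorem is available.
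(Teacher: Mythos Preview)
Your approach is different from the paper's. The paper (following \cite[Prop.~4.8]{MT}) does not construct an explicit isomorphism $\theta$; instead it passes to the associated graded with respect to the $J$-filtration and invokes the criterion of Lemma~\ref{lemma open emb}: a ring map is an open embedding if and only if its associated graded is, and the latter can be checked stalkwise once one knows the degree-zero piece is an open embedding. After passing to $\operatorname{gr}$, the map decouples into the classical restriction $\Bbbk[\GG_0]\to A$ tensored with a finite piece, and the verification becomes routine. Your route is more direct: you build $\theta$ as a composite of two concrete isomorphisms coming from semisimplicity, and then must verify that $\psi$ equals $\theta$ composed with restriction and that $\theta$ intertwines the $\HH$-coactions.

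Your outline is sound and the intermediate identifications you write down are correct. The affineness step is fine (and in fact is immediate from Proposition~\ref{prop scheme properties}, since $(\widetilde{\pi_0^{-1}(\UU)})_0=\pi_0^{-1}(\UU)$ is affine). The one misstep is your appeal to \cite{MT} for the final compatibility check: that paper argues via the associated graded, exactly as the present paper does, and does not carry out the direct bookkeeping you describe. So the ``faithful transcription'' claim is misplaced, and the verification you acknowledge as ``the main obstacle'' is genuinely left open in your writeup. The paper's approach buys precisely the ability to sidestep this: on $\operatorname{gr}$, the $\HH$-action and the algebra structure separate, so no delicate tracking through the cotensor identification is required. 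Your approach, if completed, would yield a more explicit description of $\Spec\A$ inside $\GG$, at the cost of that bookkeeping.
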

	
	\begin{proof}
		Pass to the associated graded as in \cite[Prop.~4.8]{MT}, and use Proposition \ref{lemma str sheaf desc} and Lemma \ref{lemma open emb}.
	\end{proof}

 In the following lemma, for a commutative algebra $R$ we write $\operatorname{gr}R=\bigoplus\limits_{i}J_R^i/J_R^{i+1}$.
	\begin{lemma}\label{lemma open emb}
		Suppose that $\phi^*:R\to S$ is a morphism of algebras such that $J_R$ is nilpotent.  Then the induced map $\phi:\Spec S\to\Spec R$ is an open immersion if and only if $\operatorname{gr}\phi:\Spec(\operatorname{gr}S)\to\Spec(\operatorname{gr}R)$ is an open immersion.  The latter condition holds if  we have both
		\begin{enumerate}
			\item $\Spec S/J_{S}\to\Spec R/J_{R}$ is an open immersion; and, 
			\item for every $\p\in\Spec S$, the map on stalks $\operatorname{gr}R_{\p}\to\operatorname{gr}S_{\p}$ is an isomorphism.
		\end{enumerate}
	\end{lemma}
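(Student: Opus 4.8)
The plan is to reduce ``corresponds to an open embedding'' to the stalk-wise criterion for open immersions and then to observe that passing to $\operatorname{gr}$ changes neither the underlying topological data nor, locally, the stalk maps. Here $R$ and $S$ are finitely generated commutative algebras in $\mathscr{C}=\operatorname{Ver}_p$ (as in the intended application), so the filtrations by powers of $J_R$ and $J_S$ are finite and $\operatorname{gr}$ is taken with respect to them; note $\phi(J_R)\sub J_S$ since images of nilpotents are nilpotent, so $\operatorname{gr}\phi$ makes sense. I will use three standing facts, all valid because $\mathscr{C}$ is semisimple and the $J$-filtrations are finite. First, for a finitely generated commutative algebra $A$ the surjections $A\onto\ol A$ and $\operatorname{gr}A\onto\ol A$ (the latter being the quotient by the positive-degree ideal, which is nilpotent) both induce homeomorphisms of spectra, hence $|\Spec A|\cong|\Spec\ol A|\cong|\Spec\operatorname{gr}A|$ canonically, compatibly with any algebra map $\phi\colon R\to S$ and with $\operatorname{gr}\phi$, whose degree-$0$ part is $\ol\phi$. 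Second, $(-)_{nil}$ commutes with localisation in $\operatorname{Ver}_p$, so $J_{A_{\p}}=J_AA_{\p}$ and consequently $\operatorname{gr}(A_{\p})=(\operatorname{gr}A)_{\p}$ as filtered rings, with finite filtration still (and likewise for localisation at an element). Third, since isomorphisms of sheaves of objects of $\mathscr{C}$ are detected on stalks, a morphism of schemes in $\mathscr{C}$ is an open embedding if and only if its map of topological spaces is a homeomorphism onto an open subset and all its stalk maps are isomorphisms.

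Granting these, the equivalence is nearly formal. By the first fact, $\phi$ and $\operatorname{gr}\phi$ have the same map of topological spaces; so by the third fact it suffices to compare stalk maps. For $\p\in|\Spec S|$ with image $\q\in|\Spec R|$, the stalk map of $\phi$ is the localised map $R_{\q}\to S_{\p}$, which is filtered for the finite filtrations by powers of $J_RR_{\q}$ and $J_SS_{\p}$; by the second fact its associated graded is exactly the stalk map $(\operatorname{gr}R)_{\q}\to(\operatorname{gr}S)_{\p}$ of $\operatorname{gr}\phi$. Now a filtered homomorphism of rings with finite filtrations is an isomorphism if and only if its associated graded is: one direction is trivial, and for the other one argues by downward induction on filtration degree, applying the five lemma to the short exact sequences $0\to J^{i+1}\to J^{i}\to\operatorname{gr}_i\to 0$ (finiteness of the filtration starts the induction, and also bounds it). Hence $R_{\q}\to S_{\p}$ is an isomorphism iff $(\operatorname{gr}R)_{\q}\to(\operatorname{gr}S)_{\p}$ is, for every $\p$, and the third fact then gives that $\phi$ is an open embedding iff $\operatorname{gr}\phi$ is.

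For the sufficiency of (1) and (2), apply the third fact directly to $\operatorname{gr}\phi$: by the first fact its map of spaces coincides with that of $\ol\phi\colon\Spec\ol S\to\Spec\ol R$, which is a homeomorphism onto an open subset by (1); and its stalk maps $(\operatorname{gr}R)_{\p}\to(\operatorname{gr}S)_{\p}$ are isomorphisms by (2). Therefore $\operatorname{gr}\phi$ is an open embedding, as claimed.

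The only step demanding care is the implication ``$\operatorname{gr}$ of a stalk map is an isomorphism $\Rightarrow$ the stalk map is an isomorphism'': this is the deformation-theoretic heart of the statement, and it works precisely because the $J$-filtration is finite, so the associated-graded functor in play reflects isomorphisms. The accompanying identity $\operatorname{gr}(A_{\p})=(\operatorname{gr}A)_{\p}$ rests on $(-)_{nil}$ being compatible with localisation, which is automatic in $\operatorname{Ver}_p$ by semisimplicity; in a general tensor category satisfying (GR) and (MN1-2) this compatibility is exactly the point that would require additional argument.
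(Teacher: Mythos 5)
Your proposal is correct and is essentially a fully worked-out version of the paper's argument, whose entire proof is the observation that taking associated graded (for the finite $J$-adic filtrations) reflects isomorphisms; you apply exactly this fact stalk-wise, together with the standard characterisation of open immersions of locally ringed spaces and the compatibility of $\operatorname{gr}$ with localisation in $\operatorname{Ver}_p$. The extra care you take with $\operatorname{gr}(A_{\p})=(\operatorname{gr}A)_{\p}$ and with the functoriality of the $J$-filtration under isomorphisms fills in precisely the details the paper leaves implicit.
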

	
	\begin{proof}
		If $\phi$ is an open immersion, it is clear that $\operatorname{gr}\phi$ is as well.  Conversely, if $\operatorname{gr}\phi$ is an open immersion then there exists elements $\{f_i\in R/J_R:i\in I\}$ inducing isomorphisms 
\begin{equation}\label{eqn iso}
    (\operatorname{gr}R)_{f_i}\cong (\operatorname{gr}S)_{f_i},
\end{equation} and $\Spec(\operatorname{gr} S)=\bigcup\limits_{i\in I}\Spec (\operatorname{gr}S)_{f_i}$.  If we choose lifts $f_i'\in R_{(0)}$ of the elements $f_i$, then the natural maps $R_{f_i'}\to S_{f_i'}$ will induce the isomorphisms (\ref{eqn iso}) on the associated graded.  Since $J_R$ is a nilpotent ideal, this isomorphism must reflect an isomorphism of algebras $R_{f_i'}\cong S_{f_i'}$, and from here it is easy to deduce that $\operatorname{gr}\phi$ is an open immersion.  The equivalence of $\operatorname{gr}\phi$ being an open immersion with conditions (1) and (2) is straightforward.\end{proof}
		
	\begin{lemma}
		The morphism $\Spec\A\to \Spec\mathbb{B}$ realizes $\Spec\mathbb{B}\cong(\Spec\A)/\HH$. 
	\end{lemma}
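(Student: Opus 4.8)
The plan is to read this off from Theorem~\ref{thm affine quotient}. Since $\mathscr{C}=\operatorname{Ver}_p$ is semisimple, conditions (1)--(4) of that theorem are all equivalent, so it is enough to verify condition~(4) for the triple $(\mathbb{A},\mathbb{B},\Bbbk[\HH])$, with $\mathbb{A}$ in the role of $A$ and $\mathbb{B}$ in the role of $A^{\HH}$. There are exactly two things to check: that $\mathbb{A}$ is an injective $\Bbbk[\HH]$-comodule, and that the canonical map $\operatorname{can}\colon\mathbb{A}\otimes_{\mathbb{B}}\mathbb{A}\to\mathbb{A}\otimes\Bbbk[\HH]$ is surjective, equivalently that $\HH$ acts freely on $\Spec\mathbb{A}$. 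The first, together with the identification $\mathbb{A}^{\HH}\cong\mathbb{B}$ needed even to state Theorem~\ref{thm affine quotient} in this situation, is exactly Proposition~\ref{prop A and B}(1); so only freeness requires an argument.

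For freeness I would use the preceding lemma, which realizes $\psi\colon\Bbbk[\GG]\to\mathbb{A}$ as an $\HH$-equivariant open embedding $\Spec\mathbb{A}\hookrightarrow\GG$ onto the $\HH$-stable open subscheme $\widetilde{\pi_0^{-1}(\UU)}$, with $\HH$ acting by right translation. Now the right-translation action of any closed subgroup $\HH\sub\GG$ on $\GG$ is free: the morphism $pr_1\times a\colon\GG\times\HH\to\GG\times\GG$, $(g,h)\mapsto(g,gh)$, is the base change of the closed immersion $\HH\hookrightarrow\GG$ along $\GG\times\GG\to\GG$, $(g_1,g_2)\mapsto g_1^{-1}g_2$, hence itself a closed immersion. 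Restricting to the $\HH$-stable open $\Spec\mathbb{A}$ gives a Cartesian square exhibiting $\Spec\mathbb{A}\times\HH=(\GG\times\HH)\times_{\GG\times\GG}(\Spec\mathbb{A}\times\Spec\mathbb{A})$, so $pr_1\times a\colon\Spec\mathbb{A}\times\HH\to\Spec\mathbb{A}\times\Spec\mathbb{A}$ is again a closed immersion; thus $\HH$ acts freely on $\Spec\mathbb{A}$ and $\operatorname{can}$ is surjective.

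Condition~(4) of Theorem~\ref{thm affine quotient} therefore holds, and the theorem yields $\Spec\mathbb{B}\cong(\Spec\mathbb{A})/_{f}\HH\cong(\Spec\mathbb{A})/\HH$, the displayed morphism being the one induced by $\mathbb{B}=\mathbb{A}^{\HH}\hookrightarrow\mathbb{A}$. I do not expect a real obstacle: the only subtlety is to confirm that the $\HH$-comodule structure on $\mathbb{A}$ built from the cotensor product agrees with the geometric right-translation action used for freeness, but this is precisely the $\HH$-equivariance of the open embedding $\Spec\mathbb{A}\hookrightarrow\GG$ supplied by the preceding lemma, so nothing further is needed. In short, the lemma is a repackaging of Theorem~\ref{thm affine quotient} applied to an $\HH$-stable affine open of $\GG$.
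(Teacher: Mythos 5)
Your proposal is correct and follows essentially the same route as the paper: invoke Theorem~\ref{thm affine quotient} (condition (4), available since $\operatorname{Ver}_p$ is semisimple), get injectivity of $\A$ and $\A^{\HH}\cong\B$ from Proposition~\ref{prop A and B}, and deduce surjectivity of $\operatorname{can}$ from freeness of the right $\HH$-action on $\GG$ restricted to the $\HH$-stable open $\Spec\A$. The paper leaves the restriction-to-the-open-subscheme step implicit, whereas you spell it out via base change of the closed immersion $\GG\times\HH\to\GG\times\GG$; this is just added detail, not a different argument.
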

	
	\begin{proof}
		By Theorem \ref{thm affine quotient}, it suffices to check that $\A$ is an injective $\HH$-module and that our canonical morphism is a surjection.  We showed that $\A$ is injective in Proposition \ref{prop A and B}, so we only need to show that the canonical morphism 
		\[
		\A\otimes_{\mathbb{B}}\A\to\A\otimes\Bbbk[\HH]
		\]
		is an { epimorphism}.  However, this follows from the fact that $\HH$ acts freely on $\GG$, meaning that $\GG\times\HH\to \GG\times \GG$ is a closed embedding. 
	\end{proof}
	
	\begin{lemma}
		We have compatibility of quotients upon restriction to open subschemes.
	\end{lemma}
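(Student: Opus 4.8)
The plan is to fix an inclusion $\UU'\sub\UU$ of affine opens in $\GG_0/\HH_0$, put $A=\Bbbk[\pi_0^{-1}(\UU)]$, $A'=\Bbbk[\pi_0^{-1}(\UU')]$, and form the associated algebras $\mathbb{B},\A$ and $\mathbb{B}',\A'$ using the compatible retractions supplied by Lemma~\ref{lemma splitting compatible}. The goal is to show that the restriction map $\mathbb{B}\to\mathbb{B}'$ exhibits $\VV_{\UU'}=\Spec\mathbb{B}'$ as the open subscheme of $\VV_{\UU}=\Spec\mathbb{B}$ determined by $\UU'\sub\UU$, which is exactly the compatibility required in the gluing lemma above. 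Since $\mathbb{B}\to\mathbb{B}'$ and $\A\to\A'$ both arise from the $\HH_0$-equivariant algebra map $S_A(Z)\to S_{A'}(Z)$ by applying $(-)^{\HH_0}$, resp.\ $(-)\square_{\Bbbk[\HH_0]}\Bbbk[\HH]$, the compatibility of the retractions makes these maps compatible with $\psi,\psi'$ and with the open embeddings $\Spec\A\hookrightarrow\GG$, $\Spec\A'\hookrightarrow\GG$; so it suffices to treat $\mathbb{B}\to\mathbb{B}'$.

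First I would identify $\overline{\mathbb{B}}$. By Proposition~\ref{prop A and B}(2), $\mathbb{B}=S_B(\mathbb{B}_1)$ is $\N$-graded with $\mathbb{B}_0=B=\Bbbk[\UU]$ and with $\mathbb{B}_1$ a finitely generated projective $B$-module whose fibres are isomorphic to $Z$. Since $Z$ is a sum of nontrivial simples, every morphism $\mathbb{B}_1\to\mathbf{1}$ vanishes, so $(\mathbb{B}_1)_{nil}=\mathbb{B}_1$; combining this with $\Hom(\bigoplus_k\mathbb{B}_k,\mathbf{1})=\prod_k\Hom(\mathbb{B}_k,\mathbf{1})$ and with $B_{nil}=0$ gives $\mathbb{B}_{nil}=\bigoplus_{k\ge1}(\mathbb{B}_k)_{nil}\sub S_B^{\ge1}(\mathbb{B}_1)$, hence $J_{\mathbb{B}}=S_B^{\ge1}(\mathbb{B}_1)$ and $\overline{\mathbb{B}}=B=\Bbbk[\UU]$, and likewise $\overline{\mathbb{B}'}=\Bbbk[\UU']$. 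In particular $J_{\mathbb{B}}^{k}=S_B^{\ge k}(\mathbb{B}_1)$, so $\gr\mathbb{B}\cong\mathbb{B}$ and $\gr\mathbb{B}'\cong\mathbb{B}'$ canonically, and $|\Spec\mathbb{B}|=|\Spec B|=|\UU|$.

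Then I would apply Lemma~\ref{lemma open emb} to $\mathbb{B}\to\mathbb{B}'$. Condition~(1) holds because on quotients by $J$ the map is $\Bbbk[\UU]\to\Bbbk[\UU']$, an open embedding of affine schemes. For condition~(2), take $\p\in\Spec\mathbb{B}'=|\UU'|$: since $\UU'\sub\UU$ is open we have $\Bbbk[\UU]_\p=\Bbbk[\UU']_\p=:B_\p$, and since $\pi_0^{-1}(\UU')=\pi_0^{-1}(\UU)\times_{\UU}\UU'$ we get $A'=A\otimes_BB'$ with $B\to B'$ flat; as flat base change over the $\HH_0$-fixed ring $B$ commutes with forming $\HH_0$-invariants, $\mathbb{B}_1'=\mathbb{B}_1\otimes_BB'$, so $(\mathbb{B}_1')_\p=(\mathbb{B}_1)_\p$ and $\mathbb{B}'_\p=S_{B_\p}((\mathbb{B}_1)_\p)=\mathbb{B}_\p$. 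Thus the stalk map $\gr\mathbb{B}_\p\to\gr\mathbb{B}'_\p$ is an isomorphism, so $\mathbb{B}\to\mathbb{B}'$ is an open embedding; by condition~(1) together with $|\Spec\mathbb{B}|=|\UU|$ its image is the open subset $|\UU'|\sub|\UU|$, which is exactly what the gluing lemma requires.

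I expect the only steps needing genuine care to be the computation $\overline{\mathbb{B}}=\Bbbk[\UU]$ and the commutation of $\HH_0$-invariants with the flat base change $B\to B'$; the rest is bookkeeping. As an alternative, one can avoid coordinates entirely: the open subscheme of $\VV_{\UU}$ lying over $\UU'$ pulls back under $\pi$ to the open subscheme $\widetilde{\pi_0^{-1}(\UU')}\sub\GG$, and an open restriction of a faithfully flat affine quotient is again a quotient, so that subscheme is canonically $\widetilde{\pi_0^{-1}(\UU')}/\HH\cong\VV_{\UU'}$.
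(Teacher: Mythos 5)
Your proposal is correct and rests on the same key input as the paper's (one-line) proof, namely the compatible choice of retractions from Lemma~\ref{lemma splitting compatible}; you simply write out the routine verification the paper leaves implicit, identifying $\overline{\mathbb{B}}=\Bbbk[\UU]$ and checking via flat base change along $B\to B'$ that $\mathbb{B}'\cong\mathbb{B}\otimes_BB'$, so that $\Spec\mathbb{B}'\to\Spec\mathbb{B}$ is the open embedding over $\UU'\sub\UU$. Both the main argument and the coordinate-free alternative you sketch are sound.
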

	
	\begin{proof}
		This follows from the fact that we could choose our splittings compatibly, see Lemma \ref{lemma splitting compatible}.
	\end{proof}

    \ We have now given another proof of the existence of $\GG/\HH$ in $\operatorname{Ver}_p$.  However, we have done more by giving an explicit local construction of this quotient scheme.  In particular, we learn the following. 
	
	\begin{thm}\label{thm ver p quotient}
		Let $\GG$ be an algebraic group, and let $\HH\leq\GG$ be a closed subgroup. Then:
		\begin{enumerate}
			\item  
			we have an identification: $(\GG/\HH)_0=\GG_0/\HH_0$;
			\item $\JJ_{\GG/\HH}/\JJ_{\GG/\HH}^2$ is the $\GG_0$-equivariant sheaf on $\GG_0/\HH_0$ with fibre  $Z=\operatorname{ker}(W_{\GG}\to W_{\HH})$;
			\item $\GG/\HH$ is affine locally isomorphic to the scheme $(|\GG_0/\HH_0|,S(\FF^{Z}))$ (see Proposition \ref{prop equiv shvs} for meaning of $\FF^Z$).
		\end{enumerate}
	\end{thm}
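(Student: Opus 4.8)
The plan is to assemble the local computations of the preceding lemmas. By the construction carried out above (gluing the affine quotients $\Spec\mathbb{B}\cong(\Spec\A)/\HH$ along the open cover $\{\widetilde{\pi_0^{-1}(\UU)}\}$ of $\GG$, compatibly with restriction), the scheme $\GG/\HH$ is obtained by gluing, over the affine open subschemes $\UU\subseteq\GG_0/\HH_0$, the affine schemes $\Spec\mathbb{B}_\UU$, where $\mathbb{B}_\UU=S_{A_\UU}(Z)^{\HH_0}$ with $A_\UU=\Bbbk[\pi_0^{-1}(\UU)]$. Moreover $\mathbb{B}_\UU$ is $\N$-graded with $(\mathbb{B}_\UU)_0=B_\UU:=A_\UU^{\HH_0}=\Bbbk[\UU]$ and $(\mathbb{B}_\UU)_1=(A_\UU\otimes Z)^{\HH_0}$, and $S_{B_\UU}\bigl((\mathbb{B}_\UU)_1\bigr)\xrightarrow{\sim}\mathbb{B}_\UU$ by Proposition~\ref{prop A and B}.

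The crucial local point is that the defining ideal $J_{\mathbb{B}_\UU}$ of $(\Spec\mathbb{B}_\UU)_0$ coincides with the augmentation ideal $(\mathbb{B}_\UU)_{>0}$ of the graded algebra. Since $Z\subseteq W_\GG=(\m_\GG/\m_\GG^2)_{\neq0}$, the object $Z$ admits no nonzero morphism to or from $\mathbf{1}$; as $A_\UU$ is a coproduct of copies of $\mathbf{1}$, the same holds for $A_\UU\otimes Z$, hence for its subobject $(\mathbb{B}_\UU)_1$. In particular $\Hom\bigl((\mathbb{B}_\UU)_1,\mathbf{1}\bigr)=0$, so $(\mathbb{B}_\UU)_1\subseteq(\mathbb{B}_\UU)_{nil}$ and therefore $(\mathbb{B}_\UU)_{>0}\subseteq J_{\mathbb{B}_\UU}$; since $\mathbb{B}_\UU/(\mathbb{B}_\UU)_{>0}=B_\UU$ lies in $\Vec$, for which $(-)_{nil}$ vanishes, equality follows. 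Thus $\overline{\mathbb{B}_\UU}=B_\UU$ and $(\Spec\mathbb{B}_\UU)_0=\Spec B_\UU=\UU$; because $\XX\mapsto\XX_0$ commutes with open immersions and the gluing is compatible with restriction, gluing these identities gives $(\GG/\HH)_0=\GG_0/\HH_0$, proving (1).

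For (2), restricting $\JJ_{\GG/\HH}/\JJ_{\GG/\HH}^2$ to $\UU$ yields $(\mathbb{B}_\UU)_{>0}/(\mathbb{B}_\UU)_{>0}^2=(\mathbb{B}_\UU)_1=(A_\UU\otimes Z)^{\HH_0}$, which by Proposition~\ref{prop equiv shvs} applied to $\HH_0\subseteq\GG_0$ is exactly the module of sections over $\UU$ of the $\GG_0$-equivariant sheaf $\FF^Z$ on $\GG_0/\HH_0$ attached to $Z$ (restricted to $\HH_0$). The action of $\GG_0\subseteq\GG$ on $\GG/\HH$ preserves $\JJ_{\GG/\HH}$ and the closed subscheme $(\GG/\HH)_0=\GG_0/\HH_0$, so $\JJ_{\GG/\HH}/\JJ_{\GG/\HH}^2$ is naturally $\GG_0$-equivariant, and by Proposition~\ref{prop A and B}(2) its fibre at the base point is $Z$ as an $\HH_0$-representation; since a $\GG_0$-equivariant sheaf on the homogeneous space $\GG_0/\HH_0$ is determined by that fibre, we obtain $\JJ_{\GG/\HH}/\JJ_{\GG/\HH}^2\cong\FF^Z$. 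Finally, for (3), over each $\UU$ the relative symmetric algebra $S(\FF^Z)$ has sections $S_{B_\UU}\bigl(\FF^Z(\UU)\bigr)=S_{B_\UU}\bigl((\mathbb{B}_\UU)_1\bigr)\cong\mathbb{B}_\UU$ (Proposition~\ref{prop A and B}), and $|\Spec\mathbb{B}_\UU|=|\Spec B_\UU|=\UU$, so $\Spec\mathbb{B}_\UU\cong(\UU,S(\FF^Z)|_\UU)$ compatibly with restriction; hence $\GG/\HH$ is affine locally isomorphic to $(|\GG_0/\HH_0|,S(\FF^Z))$.

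The main obstacle, beyond bookkeeping with the already-established local structure, is the identification $J_{\mathbb{B}_\UU}=(\mathbb{B}_\UU)_{>0}$; the genuinely new input it requires is that $Z$, being carved out of the $\neq0$ part $W_\GG$, contains no copy of $\mathbf{1}$, so that the $\N$-grading on $\mathbb{B}_\UU$ refines the $J$-adic filtration. One should also keep track that the sheaf identified in (2) is equivariant for the $\GG_0$-action, which is handled by comparing fibres at the base point.
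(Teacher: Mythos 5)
Your proposal is correct and follows essentially the same route as the paper, whose proof of (1) and (2) is simply the assertion that they follow from the preceding local construction $\Spec\mathbb{B}_\UU\cong\widetilde{\pi_0^{-1}(\UU)}/\HH$ and Proposition~\ref{prop A and B}; your write-up just makes explicit the one point the paper leaves implicit, namely that $J_{\mathbb{B}_\UU}=(\mathbb{B}_\UU)_{>0}$ because $Z\subseteq(\g_{\neq 0})^*$ has no trivial constituents in the semisimple category $\operatorname{Ver}_p$. The identification of $(\mathbb{B}_\UU)_1=(A_\UU\otimes Z)^{\HH_0}$ with $\FF^Z(\UU)$ via Proposition~\ref{prop equiv shvs} and the gluing/compatibility statements are exactly as in the paper.
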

	\begin{proof}
		The proofs of (1) and (2) follow from our work above, while (3) is explained in \cite[Rmk.~4.3]{MT}.
	\end{proof}

\begin{remark}
    Theorem \ref{thm ver p quotient}, especially part (3), has important implications in representation theory, as it tells us that the associated graded of a $\GG$-equivariant sheaf on $\GG/\HH$ is given by a directly computable $\GG_0$-equivariant sheaf on $\GG_0/\HH_0$.  This fact has been used heavily in the computation of characters of irreducible representations of supergroups, as briefly discussed in the introduction.  
\end{remark}

	
	
	\bibliographystyle{amsalpha}

	\textsc{\footnotesize K.C.: School of Mathematics and Statistics, University of Sydney, NSW 2006, Australia} 
	
	\textit{\footnotesize Email address:} \texttt{\footnotesize kevin.coulembier@sydney.edu.au}
	
	\textsc{\footnotesize A.S.: School of Mathematics and Statistics, University of Sydney, NSW 2006, Australia} 
	
	\textsc{\footnotesize School of Mathematics and Statistics, UNSW Sydney, NSW 2052,
		Australia}
	
	\textit{\footnotesize Email address:} \texttt{\footnotesize xandersherm@gmail.com}

\end{document}